\let\oldabstract\abstract
\renewcommand\abstract{%
  \providecommand\keywords{\par\medskip\noindent\textit{Keywords:}\xspace}
  \oldabstract\noindent\ignorespaces}
\newtheorem{theo}{Theorem}
\newtheorem{prop}[theo]{Proposition}
\newtheorem{lem}[theo]{Lemma}
\newtheorem{cor}[theo]{Corollary}
\newcommand\R{\mathbb{R}}
\newcommand\N{\mathbb{N}}
\newcommand\E{\mathbb{E}}
\renewcommand\P{\mathbb{P}}
\newcommand\F{\mathcal{F}}
\newcommand\varep{\varepsilon}
\newcommand\var{\mathbb{V}}
\newcommand\li{\lim_{n \rightarrow +\infty}\limits}
\newcommand{\1}{{\mathbf 1}}
\newcommand{\one}{\ifmmode {\sf 1}\hspace{-.26em}{\sf
l}\hspace{-.35em}{\sf \_} \else ${\sf 1}\hspace{-.26em}{\sf
l}\hspace{-.35em}{\sf \_}$ \fi}
\newcommand\Et{E}
\renewcommand{\theequation}{\arabic{equation}}
\renewcommand{\thetheo}{\arabic{theo}}
\renewcommand{\Box}{\mbox{\rule{1ex}{1ex}}}
\newcounter{exno}
\newcommand{\ex}[1]{\refstepcounter{exno}\label{#1}}
\title{Spatial birth-death-move processes : basic properties and estimation of their intensity functions}
\author[1]{Fr\'ed\'eric Lavancier}
\author[2]{Ronan Le Gu\'evel}
\affil[1]{LMJL, BP 92208,  2 Rue de la Houssini\`ere, F-44322 Nantes Cedex 03, France, \texttt{frederic.lavancier@univ-nantes.fr}}
\affil[2]{Univ Rennes, CNRS, IRMAR - UMR 6625, F-35000 Rennes, France,

\texttt{ronan.leguevel@univ-rennes2.fr}}
\begin{document}

\date{}

\maketitle

\begin{abstract} 

Many spatio-temporal data record the time of birth and death of individuals, along with their spatial trajectories during their lifetime, whether through continuous-time observations or discrete-time observations. Natural applications include epidemiology, individual-based modelling in ecology, spatio-temporal dynamics observed in bio-imaging, and computer vision.  The aim of this article is to estimate in this context the birth and death intensity functions, that depend in full generality on the current spatial configuration of all alive individuals.  While the temporal evolution of the  population size is a simple birth-death process, observing the lifetime and trajectories of all individuals calls for a new paradigm. To formalise this framework, we introduce spatial birth-death-move processes, where the birth and death dynamics depends on the current spatial configuration of the population and where individuals can move during their lifetime according to a continuous Markov process with possible interactions.
We consider non-parametric kernel estimators of their birth and death intensity functions. The setting is original because each observation in time belongs to a non-vectorial, infinite dimensional space and the dependence between observations is barely tractable. We prove the consistency of the estimators in presence of  continuous-time and discrete-time observations, under fairly simple conditions. We moreover discuss how we can take advantage in practice of structural assumptions made on the intensity functions and we explain how data-driven bandwidth selection can be conducted, despite the unknown (and sometimes undefined) second order moments of the estimators. We  finally apply our statistical method to the analysis of the spatio-temporal dynamics of proteins involved in exocytosis in cells, providing new insights on this complex mechanism.

\keywords  birth-death process, branching processes, growth-interaction model, individual-based model, kernel estimator. 
 \end{abstract}

\section{Introduction}

Simple birth-death processes have a long history, ever since at least \cite{Feller1939} and \cite{Kendall49}. They constitute the basic model to explain the temporal evolution of the size of a population. However in many applications, we not only observe the size of population over time, but we have access to the spatio-temporal information of all individuals of the population, whether through continuous-time observations or discrete-time observations. 
This happens in epidemiology \citep{masuda2017} and ecology \citep{pommerening2019} where ``individuals'' may represent animals, plants or infected subjects, and a birth can represent  the biological birth, the time of appearance in some region or the time of infection, for instance. In these applications, the birth time and death time of all individuals are observed, along with their trajectories (e.g. their spatial displacement or their growth) during their lifetime.  This context also occurs  in bio-imaging: the dataset that we will study later for illustration concerns the dynamics of proteins involved in the exocytosis mechanisms in cells. For biological reasons, these proteins are visible at some time point (their birth) and disappear some time later (their death), their ``lifetime'' being closely related  to their activity during the exocytosis process. Meanwhile, they follow some motion in the cell. As a last example, this kind of dynamics is also encountered in computer vision  \citep{wang2002}, where new objects (like flying birds or snowflakes) appear at some time point in a video, then follow some motions before disappearing from the video.

Our aim in this article is to provide a non-parametric method to estimate the intensity function of births and the intensity function of deaths for this kind of data. These functions rule the waiting time before a new birth and a new death, respectively. 
For instance, if there are no interactions between the individuals and if each individual has the same constant birth rate $\beta$  over time (i.e. the same probability $\beta dt$ to give birth in the infinitesimal small interval $[t,t+dt]$ for any $t$), then the global birth intensity function of the population is just a linear function of the size of the population.
This is the standard setting of simple birth-death processes and the parametric estimation of the birth and death rates in this case has been widely studied \citep{darwin1956, wolff1965, keiding1975}. A more general approach for simple birth-death processes is to ignore the linearity assumption for the intensity functions, and only  assume that they depend on the size of the population. A natural non-parametric estimator in this case is studied in \cite{wolff1965} and \cite{reynolds1973}: for each size $n$ of the population, this estimator just counts the number of  observed births (respectively deaths) when the population has this cardinality $n$, divided by the time spent by the population at this size. However, in full generality, the intensity functions may depend on the spatial locations of the individuals, and not only on the size of the population, due to spatial competitions or/and spatial dispersions, for example. Observing the trajectories of all individuals, as in the applications we have in mind, opens the possibility to investigate this general dependence, which is the objective of this contribution. A major difficulty comes from the fact that the intensity functions now depend on the current spatial configuration of all individuals of the population, the number of which varies over time. This means that these functions are defined on the space of finite point configurations, a non-vectorial and infinite dimensional space. 
Furthermore, the underlying process becomes much more complex than a simple birth-death process and it needs to be formalised precisely.

There is a rich amount of articles dealing with the non-parametric estimation of intensity functions in other contexts. 
For temporal and spatial point processes, kernel estimators or penalised projection estimators of the intensity of events are for instance considered in \cite{diggle1985,guan2008,reynaud2003,reynaud2010}. For L\'evy processes observed at discrete times, the density of jumps is addressed by similar methods and also specific Fourier approaches in \cite{van2007,figueroa2009,comte2011} to cite a few.  These works can be viewed as generalisations of the standard problem of non-parametric estimation of a density \citep{silverman1986}. 
A common feature is also that the space in which the considered processes evolve (that is the state space) is $\mathbb R$ or $\mathbb R^d$.
In our case, the state space is more general and the intensity depends on the current value of the process, which makes the estimation challenge closer to a non-parametric regression problem than to  a density estimation problem.
Even if non-parametric regression estimators are nowadays routinely used  \citep{hardle1990}, even for functional data (see \cite{ferraty2006} for an introductory review), few applications concern the estimation of intensity of events in stochastic processes.  
A non-parametric estimator of the death intensity of a branching diffusion process is  considered in \cite{hopfner2002}, but in the very particular case where the diffusions evolve in $\R$ and independently of each other.
Another range of works concerns the estimation of the intensity function in survival analysis, commonly called hazard rate function. Assuming a multiplicative intensity model, a standard non-parametric estimator consists in smoothing the famous Nelson–Aalen estimator by kernel methods \citep{ramlau1983}. This approach can be extended when the intensity not only depends on time but also on  covariates, see  \cite{martinussen2007} and the references therein. In the latter case, the problem resembles a non-parametric regression problem. Generalisations to the estimation of the jump intensity of piecewise-constant processes and piecewise deterministic Markov processes are considered in \cite{azais2013,azais2014}, where the value of the process plays the role of covariate in the jump intensity. The state space of the process in these works can be quite general,  
but the jump intensity and the estimator have a very specific form which implies a decoupling between time and space, in line with the multiplicative model of Nelson–Aalen. 
The main differences of our setting with the existing literature therefore lie in two aspects: first our intensity functions depend on the actual value $X_t$ of the process, without any decoupling between time and space; and second, our state space is infinite dimensional and non-vectorial, a noticeable difference with most setting of non-parametric regression, including functional spaces (which are vectorial).

To achieve our objective, the first task is to formalise properly the stochastic process $(X_t)_{t\geq 0}$ corresponding to the dynamics described earlier. Here $X_t$ denotes the spatial configuration of all alive individuals at time $t$. In other words $X_t$ is a set of points, the number of which corresponds to the number of alive individuals at time $t$, each point representing the location of one individual. 
When the individuals do not move during their lifetime, then $(X_t)_{t\geq 0}$ is a spatial birth-death process, introduced by \cite{preston}. These processes have been used in \cite{moller1994} to describe the evolution of dunes and in \cite{Sadahiro19} to analyse the opening and closure of shops and restaurants in a city. 
The idea of allowing the individuals to move during their lifetime in this process is not new and has been considered in several ways.  
An instance is the growth-interaction process  introduced in forestry  to model the appearance of new plants, their growth and their death \citep{renshaw2001,sarkka2006,pommerening2019}. In these applications, the growth process plays the role of the motion and is deterministic, making this process part of the class of piecewise-deterministic Markov processes \citep{davis1984}. 
When each individual  evolves independently according to a stochastic diffusion, the dynamics is a particular instance of general branching diffusion processes, widely studied in probability theory \citep{skorokhod1964,athreya2012}. 
On the opposite way, when all individuals move on the same stochastic flow, we obtain a birth death process on a flow, introduced  by \cite{cinlar1991}. 
Intermediate specific interactions between the individual diffusions are considered in \cite{eisele1981} and \cite{locherbach2002likelihood}.
Finally, a completely different stochastic motion has been considered in \cite{huber2012}, where the individuals can only jump during their lifetime, which defines the so-called birth-death swap (or shift) process. 
Spatial birth-death-move processes, as we introduce them in Section~\ref{basics}, allow any  continuous Markov motion of the individuals during their lifetime, with possible interactions between them. As such, they contain all previous examples, to the exception of birth-death swap processes (the motion being not continuous). 
Apart from the motion between two jumps, whether they are a birth or a death, the dynamics of a birth-death-move process depends on the birth intensity  $\beta$ and the death intensity $\delta$, that are in full generality two functions of the current spatial configuration of $(X_t)_{t\geq 0}$, and on probability transition kernels for the births and the deaths. 
We present in Section~\ref{basics} all basic ingredients to clearly understand the full dynamics of this mechanistic model,  
without technical details. 
We then describe some examples, including the aforementioned references as particular cases, and we provide an algorithm of simulation on a finite interval.  
 A more formal presentation is given in the supplementary material along with some theoretical properties needed for the statistical study conducted in the rest of the article.

We address in Section~\ref{sec:estimation}  the non-parametric estimation of the birth and death intensity functions $\beta$ and $\delta$, and of the total intensity function $\alpha=\beta+\delta$.  We consider two settings, whether the process $(X_t)_{t\geq 0}$ is observed continuously in the time interval $[0,T]$ or at discrete time points $t_0,\dots,t_m$. In both settings we introduce a kernel estimator and prove that it is consistent under natural conditions, when $T$ tends to infinity and the discretisation step tends to 0. This consistency holds true whatever the birth and death transitions kernels are, and whatever the inter-jumps motions are.  
Our estimator typically involves a bandwidth $h_T$, which for consistency must tend to 0 but not too fast, as usual in non-parametric inference. The second order moments of our estimator are not available in closed form, making least squares cross-validation or standard plug-in methods impossible to set up to choose the bandwidth.  We however explain how $h_T$ can be selected in practice by  a partial likelihood cross-validation procedure based on the counting process defined by the cumulative number of jumps. 
We moreover discuss several strategies of estimation, depending on structural assumptions made on the intensity functions. Their performances are assessed in two simulation studies carried out in Section~\ref{simulations}.
Consider for instance the estimation of $\alpha(x)$ in the continuous case, where $x$ is a configuration of points. In a pure non-parametric approach, this estimation relies through the kernel on the distance between $x$ and the observed configurations of $(X_t)$ for $t\in[0,T]$. This  approach is ambitious given the infinite dimension of the state space of the process.  It is however consistent if $\alpha$ is regular enough, as a consequence of our theoretical results and confirmed in the simulation study. This first approach may constitute in practice a first step towards more structural hypotheses on $\alpha$. 
In a second approach, we exploit this kind of hypotheses by assuming that $\alpha(x)$ only depends on some specific $p$ features of $x$. For instance, we may assume that it depends only on the cardinality of $x$,  the common setting in standard birth-death processes, or we may assume that it depends on geometric characteristics of the configuration $x$. Under this assumption, we make our estimator depend only on the distance between the relevant characteristics of $x$ and the ones of the observations $(X_t)$ for $t\in[0,T]$. 
In this case the estimation problem reduces to a non-parametric problem in dimension $p$, improving the quality of estimation. This effect is clearly demonstrated in our simulation study and also reflected in our theoretical results where the rate of convergence becomes in such framework the usual rate of kernel estimators in dimension $p$.
A particular case is the estimator of  \cite{wolff1965} and \cite{reynolds1973} discussed before, where only observations with exactly the same cardinality as $x$ are used for the estimation of $\alpha(x)$. We instead allow to use all observations, which makes sense if $\alpha$ has some regularity properties, a situation where our estimator outperforms the previous one in our simulation study. 
This second approach is a way to question the classical assumption in birth-death processes, namely  that each individual has constant birth rate and death rate, implying that $\alpha(x)$ is a linear function in the cardinality of $x$. Testing formally this hypothesis based on our non-parametric estimator is an interesting perspective for future investigation.

We finally apply our methodology in Section~\ref{sec:data} to the analysis of the spatio-temporal dynamics of proteins involved in exocytosis.   
We observe a sequence of 1199 frames  showing the dynamics of proteins in a living cell. Each frame contains several tens   of proteins, that can be of two types (Langerin or Rab11). During the sequence, proteins appear, disappear and move, in keeping with a birth-death-move process. Classical approaches either study the trajectories of each protein independently without considering spatial interactions \citep{Briane19, pecot2018}, or study the spatial configurations of proteins at some time points without temporal insight \citep{Costes2004,Bolte2006,Lagache2015,Lavancier18}. 
In contrast, our method allows to investigate the joint spatio-temporal dynamics of all proteins, a new approach. 
We question several biological hypotheses, which imply that the birth intensity function should be constant whatever the current configuration of proteins is, and that the death intensity function could instead depend on the current activity. Our study confirms these hypotheses and further reveals the temporal interaction between Langerin and Rab11 during the exocytosis mechanisms.

Some complements on the data analysis and technical results, including the proofs, are available in the supplementary material. The codes and data are accessible in the GitHub repository at \url{https://github.com/lavancier-f/Birth-Death-Move-process}.

\section{Spatial birth-death-move processes}\label{basics}

We introduce in this section spatial birth-death-move processes. A more formal presentation and some theoretical properties are given in the supplementary material. 
After their general definition presented in the first part,
we discuss several examples including the well-known simple birth-death processes, spatial birth-death processes, growth-interaction processes,  
and other models and applications considered in the literature so far. 
We conclude by the presentation of a simulation algorithm on a finite time interval.

\subsection{Definition and notation}\label{BDM def}
The birth-death-move dynamics takes place in a space $E = \bigcup_{n=0}^{+\infty} E_n$ where the spaces $E_n$ are disjoint and $E_0$ consists of a single element, written $\varnothing$ for short, i.e. $E_0=\{\varnothing\}$. The main example we have in mind for $E_n$ is the space of point configurations in $\R^d$ with cardinality $n$, meaning that  $x\in E_n$ if and only if $x=\{x_1,\dots,x_n\}$ where $x_i\in\R^d$, $i=1,\dots,n$. The presentation of this section refers for simplicity to this situation, but more general spaces $E_n$ can be considered, as exemplified in Section~\ref{sec-examples} and detailed in the supplementary material. Accordingly, since a birth-death-move process $(X_t)_{t\geq 0}$ takes its values in $E_n$, we will often call the value of $X_t$ a configuration.  We denote by $n(x)$ the cardinality of $x$ defined by $n(x)=n$ if and only if $x\in E_n$.  Three ingredients rule the birth-death-move process  $(X_t)_{t\geq 0}$:
\begin{enumerate}
\item  The birth intensity function $\beta : E \rightarrow \R^+$ and the death intensity function $\delta : E \rightarrow \R^+$, both assumed to be continuous on $E$. The value $\beta(x)$ (resp. $\delta(x))$ governs the probability to have a birth (resp. a death) at time $t$ given that the current configuration of the process is $X_t=x$. We denote by $T_1, T_2, \dots$ the sequence of jump times (whether a birth or a death) of the process $X_t$ and we let $T_0=0$. The specific distribution of the waiting time before the next jump  is given below in \eqref{wait}. Given that a jump occurs at time $T_j$, it is a birth with probability $\beta(X_{T_j^-})/(\beta(X_{T_j^-})+\delta(X_{T_j^-}))$ and it is a death otherwise.  Here $X_{T_j^-}$ denotes the configuration of the process just before the jump. 
\item Given that a birth (resp. a death) occurs and the process is in configuration $x$ just before the jump, we need the probability transition kernel for a birth $K_{\beta}(x,.)$ (resp. for a death $K_{\delta}(x,.)$). Specifically, if $x=\{x_1,\dots,x_n\}\in E_n$,  $K_{\beta}(x,.)$ is the probability distribution that indicates where the new born point is more likely to appear in $\R^d$. Similarly if there is a death, $K_{\delta}(x,.)$ indicates which point of $x$ is more likely to disappear. 
\item Between two jumps (whether a birth or a death), we assume that $X_t$ evolves according to a continuous Markov process, which is the ``move step''. After each jump $T_j$, we denote the process that drives this motion  as $(Y_t)_{t\geq 0}$ with $Y_0=X_{T_j}$.  This means that between two jumps $T_{j}$ and $T_{j+1}$, $X_t=Y_{t-T_j}$ for $T_j\leq t<T_{j+1}$. 
Strictly speaking, the process $Y_t$ depends on $j$ since a different process is generated after each jump $T_j$ conditionally on $Y_0=X_{T_j}$.  
In the formal presentation in the supplementary material, we use the notation $Y_t^{(j)}$ to stress this dependence. However we assume that the distribution of $Y_t^{(j)}$ does not depend on $j$ and is similar after all jumps. In the following, we will  simply write $Y_t$ when there is no confusion.
The process $Y_t$ also depends on $n=n(X_{T_j})$ between $T_{j}$ and $T_{j+1}$, meaning that it is a multivariate process  with $n$ components, each being a stochastic process evolving in $\R^d$. This dependence in $n$ being implicit, we also omit it in the notation. 
\end{enumerate}
 
With this construction, the birth-death-move process is continuous except at the jump times where it is right continuous.
As introduced above, we denote by $X_{t^-}$ the limit from the left of $X_t$, i.e. $X_{t-}:=\lim_{s\to t, s<t} X_s$. At each jump time $T_j$, $X_{T_j^-}$ is therefore the configuration of the process before the jump while $X_{T_j}$ corresponds to the configuration after the jump. If $t$ is not a jump time, then $X_{t^-}=X_t$.

To fully define the dynamics, it remains to specify the distribution of the waiting times before each jump. We denote by $\alpha=\beta+\delta$ the intensity of jumps, whether it is a birth or a death. We will assume that $\alpha$ is bounded from below and above, i.e. there exist $\alpha_*>0$ and $\alpha^*<\infty$ such that for every $x \in E$,  $\alpha_* \leq \alpha(x) \leq \alpha^*$. We also prevent a death in $E_0$ by assuming that $\delta(\varnothing)=0$. Given the jump time $T_j$, given $X_{T_j}$ and given a realisation of the process $(Y_t)_{t\geq 0}$ conditionally on $Y_0=X_{T_j}$, the next jump $T_{j+1}$ is distributed as 
\begin{equation}\label{wait}
\P(T_{j+1}-T_j>t | T_j, X_{T_j}, (Y_t)_{t\geq 0})=\exp\left( - \int_0^t \alpha( Y_u)du \right).
\end{equation}
This specific form is necessary to ensure the Markov property of the process, as verified in the supplementary material.  Note that to be able to compute this formula for any $t$,  the process $(Y_t)_{t\geq 0}$ needs to be known up to $t=\infty$.  We show however in Section~\ref{sec:simu} how we can simulate $X_t$ on a finite time interval  without this obligation. The proposed algorithm may also constitute an illuminating alternative point of view to understand the dynamics.

\bigskip

The thorough probabilistic study of the birth-death-move process is beyond the scope of this article, but we establish in the supplementary material some important properties that are needed for its statistical study. 
Among them, we first verify that the birth-death-move process is a proper time-homogeneous Markov process. We then specialise our results to the following hypothesis, that is assumed henceforth. 
Let $\beta_{n}=\sup_{x\in E_{n}} \beta(x)$ and $\delta_{n}=\inf_{x\in E_{n}} \delta(x)$. 
 \begin{enumerate}[label={\bf(H\arabic*)},ref=H\arabic*]
\item  There exists $n^*$ such that for all $n\geq n^*$, $\beta_n = 0$. Furthermore $\delta_{n}>0$ for all $n\geq 1$.\label{existence}
\end{enumerate}

In light of this hypothesis, we can redefine $E=\bigcup_{n=0}^{n^*} E_n$, meaning that there is a maximal number  $n^*$ of possible individuals in the population. Under  \eqref{existence},  the process $(X_t)_{t\geq 0}$ converges to a stationary distribution,  denoted by $\mu_\infty$, and we get  a uniform rate of convergence. Recall that the existence of a stationary distribution is a prerequisite for the consistency of most statistical procedures of a stochastic process.
It is worth noticing that $\mu_\infty$ may exist even if  \eqref{existence} is not satisfied, provided the sequence $(\delta_n)$ compensates in a proper way the sequence $(\beta_n)$ to avoid explosion of the process. This is for instance proved in \cite{preston} for a pure spatial birth-death process, the special case of a birth-death-move process where there is no move between the jumps. The same kind of statement in the general case  is not straightforward to establish and would imply technicalities that we prefer to avoid in this article.  
We think however that assuming the existence of $n^*$ is reasonable for most statistical applications, especially since the value of $n^*$ has not to be known for our statistical procedure.

To help the reader, we finally summarise the main notation: $X_t$ and $X_{t^-}$ denote the birth-death-move process at time $t$ and its limit from the left; $E=\bigcup_{n=0}^{n^*} E_n$ is the state space of $(X_t)$ and $E_0=\{\varnothing\}$; $\beta(.), \delta(.), \alpha(.)$ are the birth (resp. death and total) intensity functions ($\alpha=\beta+\delta$); $T_1,T_2,\dots$ denote the jump times of the process and $T_0=0$; 
$N_t$ will denote the number of total jumps (births and deaths) up to time $t$; $K_\beta(x,.)$ and $K_\delta(x,.)$ are the probability transition kernels for a birth (resp. a death) given that the configuration of the process before jump is $x$; $(Y_t)_{t\geq 0}$ is the continuous Markov process that drives the motion of $X_t$ between $T_j$ and $T_{j+1}$; $\mu_\infty$ denotes the stationary distribution of $(X_t)_{t\geq 0}$.

\subsection{Examples}\label{sec-examples}

The general definition above includes spatial birth-death processes, as introduced by \cite{preston}.
 They correspond to $(Y_t)_{t\geq 0}$ being the constant random variable $Y_t=Y_0$ for any $t\geq 0$, so that $(X_t)_{t\geq 0}$  does not move between two jumps. 
When $E_n$ is the set of point configurations in $\R^d$ with cardinality $n$ (the main example described in the previous section), we obtain a spatial birth-death process in $\R^d$. This important special case is treated in details in \cite{preston} and further studied in \cite{moller1989}. Special instances are discussed in \cite{comas2008} and some applications to real data sets are considered in \cite{moller1994} and  \cite{Sadahiro19} for example. Perfect simulation of spatial point processes moreover relies on these processes \citep[Chapter~11]{moeller:waagepetersen:03}. 
But other spaces $E_n$ can also be considered. The simplest choice 
$E_n=\{n\}$, implying $E=\N$, leads to the simple birth-death process where $(X_t)_{t\geq 0}$ is interpreted as the evolution of a population size, each jump corresponding to the addition  of a new individual (a birth) or to a diminution (a death). In this case the intensity functions $\beta$ and $\delta$ are just sequences, i.e. functions of $n$. The standard historical simple birth-death process, as introduced in  \cite{Feller1939} and \cite{Kendall49}, corresponds to a constant birth rate and a constant death rate for each individual of the population, leading to linear functions of $n$ for $\beta$ and $\delta$. The case of general sequences $\beta$ and $\delta$ is considered in \cite{wolff1965} and \cite{reynolds1973}, who studied their estimation by maximum likelihood. This approach will be a particular case of our procedure, see Example \ref{ex simple} (i) in Section~\ref{sec:estimation}.

Allowing each point of a spatial birth-death process in $\R^d$ to move according to a continuous Markov process leads to a spatial birth-death-move process  in $\R^d$. A simple example is to assume that each point independently follows a  Brownian motion in $\R^d$, which means that $(Y_t)_{t\geq 0}$ is  in $E_n$ a vector of $n$ independent Brownian motions. 
In fact, when the motions  are independent of each other, the process can be viewed as a branching process with immigration \cite{athreya2012}, where the branching mechanism just consists in a death.  
In \cite{wang2002}, a spatial birth-death-move dynamics in the plane has been adopted to model and track the joint trajectories of elements in a video, like snowflakes or flying birds. The motion in this application is composed of independent autoregressive discrete-time processes. 
Some interacting motions are considered in \cite{comas2008b}, where the authors study by simulation the effect of some deterministic and stochastic
Gibbsian motions in a spatial birth-death  process. Some specific interacting diffusion processes are also considered from a more theoretical point of view in  \cite{eisele1981,locherbach2002likelihood,cinlar1991,hopfner1999}, the two last references concerning the special case where all individuals follow the same stochastic flow.  
 In our real-data application in Section~\ref{sec:data}, we observe the location of proteins in a planar projection of a cell during some time interval. The motion can be different from a protein to another and some interactions may occur. Previous studies \citep{Briane19} have identified three main regimes: Brownian motion, superdiffusive motion (like a Brownian process with drift) and subdiffusive motion (like the Ornstein-Uhlenbeck process). The process $(Y_t)_{t\geq 0}$ in this application could then be a vector of $n$ such processes, and some interactions between these $n$ processes  may further been introduced. In Section~\ref{sec:data}, we do not actually address the choice of a model for   $(Y_t)_{t\geq 0}$, but rather focus on the estimation of the intensity functions $\beta$ and $\delta$ by the procedures developed in Section~\ref{sec:estimation}. Fortunately, no knowledge about the process $(Y_t)_{t\geq 0}$  is needed for these estimations, except that it is a continuous Markov process.

Spatial birth-death-move processes also include spatio-temporal growth interaction models used in individual-based modelling in ecology. In this case $E_n$ is the set of point configurations in $\R^2\times \R_+$ with cardinality $n$, where $\R^2$ represents the space of  location of the points (the plants in ecology) and  $\R_+$ the space of their associated mark (the height of plants, say). Each birth in the process corresponds to the emergence of a new plant in $\R^2$ associated with some positive mark. Through the birth kernel $K_\beta$, we may favor a new plant to appear  nearby existing ones (or not in case of competitions) and  the new mark  may be set to zero or be generated according to some specific distribution (\cite{renshaw2001} chose for instance a uniform distribution on $[0,\epsilon]$ for some small $\epsilon>0$). The growth process only concerns the marks. Let us denote by $(U_i(t),m_i(t))_{t\geq 0}$, for $i=1,\dots,n$, each component of the process  $(Y_t)_{t\geq 0}$, to distinguish the location $U_i(t)\in\R^2$ to the mark $m_i(t)\in\R_+$ of a plant $i$. We thus have $U_i(t)=U_i(0)$ for all $i$, and some continuous Markov dynamics can be chosen for $(m_1(t),\dots,m_n(t))$. In \cite{renshaw2001,renshaw2009,comas2009,habel2019} several choices for this so-called growth interaction process are considered. Furthermore, while in the previous references the birth rate of each plant is constant, the death rate may depend on the location and size of the other plants, leading to a non trivial death intensity function $\delta$.

\subsection{Simulation on a finite time interval}\label{sec:simu}

Algorithm~\ref{algo_simu} shows how we can simulate a birth-death-move process on the time interval  $[0,T]$ for some $T<\infty$, starting  from an initial configuration $X_0$. It requires as an input to be able to simulate  $Y_t$ on a finite interval and to be able to simulate a birth and a death with respect to the kernels $K_\beta$ and $K_\delta$, respectively. The algorithm is a straightforward implementation of the construction explained in Section~\ref{BDM def}, where after each jump we first simulate whether the next jump will occur before $T$ (this happens with probability $1-p$ using the notation of Algorithm~\ref{algo_simu}) or not.

\begin{algorithm}
\caption{Simulation of a birth-death-move process on the finite interval $[0,T]$}
\label{algo_simu}
\begin{algorithmic}
 \STATE {\bf set}  $t=0$ and $j=0$.
 \WHILE{$t<T$}
 \STATE {\bf generate}  $Y_s$ for $s\in [0, T-T_j]$ conditionally on $Y_0=X_{T_j}$
  \STATE {\bf set}  $p=\exp\left( - \int_0^{T-T_j} \alpha( Y_u)du \right)$ 
    \STATE {\bf generate} $U_1\sim U([0,1])$
  \IF{$U_1\leq p$} 
    \STATE {\bf set} $X_s=Y_{s-T_j}$ for $s\in [T_j,T]$ and $t\leftarrow T$
  \ELSE  
   \STATE {\bf generate} the waiting time $\tau_j$ before the next jump according to the distribution
   $$\forall s\in [0, T-T_j],\quad P(\tau_{j}< s| \tau_j\leq T-T_j)=\frac 1 {1-p} \left(1-\exp\left( - \int_0^{s} \alpha( Y_u)du \right)\right)$$
   \STATE {\bf set} $T_{j+1}=T_j+\tau_j$ and $X_s=Y_{s-T_j}$ for $s\in [T_j,T_{j+1})$
     \STATE {\bf generate} $U_2\sim U([0,1])$
     \IF{$U_2\leq \beta(Y_{\tau_j})/\alpha(Y_{\tau_j})$}  
     	\STATE {\bf generate}  $X_{T_{j+1}}$ according to the transition kernel $K_\beta(Y_{\tau_j},.)$
     	\ELSE 
	\STATE {\bf generate} $X_{T_{j+1}}$ according to the transition kernel $K_\delta(Y_{\tau_j},.)$
    \ENDIF
 \STATE  $t\leftarrow t+\tau_j$ and $j\leftarrow j+1$
\ENDIF
 \ENDWHILE
 \end{algorithmic}
\end{algorithm}

The simulation can be speeded up by noticing that most inter-jump times $\tau_j=T_{j+1}-T_j$ are  likely to happen much before  $T-T_j$,  so that instead of generating at each step $(Y_t)$ on $[0,T-T_j]$, it is often sufficient to simulate it on a smaller interval. Choose for instance $\tau_{\max}=\alpha_*\log (1/\epsilon)$ for some $\epsilon>0$ where we recall that $\alpha_*$ is a lower bound of the total intensity function $\alpha$. In view of \eqref{wait}, this choice implies that $P(\tau_j>\tau_{\max})<\epsilon$ for any $j$. Then it is sufficient in most cases to generate $(Y_t)$ on $[0,\min(\tau_{\max},T-T_j)]$ only, since $\tau_j<\min(\tau_{\max},T-T_j)$ with high probability. The rare situations when 
$\tau_j>\min(\tau_{\max},T-T_j)$ can be handled as in Algorithm~\ref{algo_simu}. The detailed procedure is provided in the supplementary material.

\newpage	

\section{Estimation of the intensity functions}\label{sec:estimation}

\subsection{Continuous time observations}
Assume that we observe continuously the birth-death-move process $(X_t)$ in the time interval $[0,T]$ for some $T>0$. Let $(k_{T})_{T \geq 0}$ be a family of  non-negative functions on $\Et\times\Et$ such that $k^* := \sup_{x,y \in \Et} \sup_{T \geq 0} |k_{T}(x,y)| < \infty$. For $x\in E$ and $y\in E$, the role of $k_T(x, y)$ is to quantify the proximity of $x$ and $y$. It can be an indicator function or a kernel function that involves a bandwidth $h_T$. Some typical choices for $k_T$ are discussed in the examples below.
Using the convention $0/0=0$, a natural estimator of $\alpha(x)$ for a given configuration $x\in\Et$ is
\begin{equation}\label{est_cont}
\hat{\alpha}(x) = \frac{1}{\hat T(x)}\int_0^T k_T( x,X_{s^-})dN_s  = \frac{1}{\hat T(x)} \sum_{j=1}^{N_T}  k_T( x,X_{T_j^-}), \end{equation}
where  $N_T$ is the number of jumps before $T$, i.e. $N_T=Card\{j\geq 1 : T_j \leq T\}$ and 
\begin{equation}\label{defT}\hat T(x)=\int_0^T  k_T( x,X_{s}) ds\end{equation}
is an estimation of the time spent by $(X_s)_{0 \leq s \leq T}$ in configurations similar to $x$. In words, $\hat\alpha(x)$ counts the number of times  $(X_s)_{0 \leq s \leq T}$ has jumped when it was in configurations similar to $x$ over the time spent in these configurations. 
Similarly, we consider the following estimators of $\beta(x)$ and $\delta(x)$:
 \[\hat{\beta}(x) =  \frac{1}{\hat T(x)} \sum_{j=1}^{N_T}  k_T( x,X_{T_j^-}) \1_{\{\text{a birth occurs at $T_{j}$}\}},\]
 \[\hat{\delta}(x) = \frac{1}{\hat T(x)} \sum_{j=1}^{N_T}  k_T( x,X_{T_j^-}) \1_{\{\text{a death occurs at $T_{j}$}\}}.\]

\ex{ex BD}
\noindent{\it Example \ref{ex BD}}:  
If $(X_t)_{t\geq 0}$ is a pure spatial birth-death process, corresponding to the case where there is no motion between its jumps, then $X_{T_j^-}=X_{T_{j-1}}$ and $\hat T(x)$ becomes a discrete sum, so that 
\[\hat\alpha(x)= \frac{\sum_{j=0}^{N_T-1} k_T(x,X_{T_j})}{\sum_{j=0}^{N_T-1}(T_{j+1}-T_j)  k_T(x,X_{T_j}) + (T-T_{N_T}) k_T(x,X_{T_{N_T}})}.\]
Similar simplifications occur in this case for $\hat\beta(x)$ and $\hat\delta(x)$.

\bigskip

The next proposition establishes the consistency of our estimators under the following assumptions.
 \begin{enumerate}[label={\bf(H\arabic*)},ref=H\arabic*]
\setcounter{enumi}{1} 
\item Setting $v_T(x) =  \int_{\Et} k_T(x,z)\mu_{\infty}(dz)$,
  \[\lim_{T \to \infty}  T v_T(x) = \infty.\] \label{H3}
 \item Let  $\gamma$ be either $\gamma=\beta$ or $\gamma=\delta$ or $\gamma=\alpha$. Setting $w_T(x) =  \frac{1}{v_T(x)} \int_{\Et} (\gamma(z) - \gamma(x)) k_T(x,z)\mu_{\infty}(dz)$,
    \[\lim_{T \to \infty} w_T(x) = 0.\]\label{H4}
 \end{enumerate}

The meaning of Assumptions \eqref{H3} and \eqref{H4} will appear more clearly in the following examples. In fact, $1/(Tv_T(x))$ may be understood as a variance term, while $w_T(x)$ can be seen as a bias term. Strictly speaking, this interpretation is wrong as discussed in the supplementary material.  
However, in line with this point of view, the following rate of convergence is a standard bias-variance tradeoff in non-parametric kernel-based estimation. Its proof is given in the supplementary material.
\begin{prop}\label{Maintheo}
Let  $\gamma$ be either $\gamma=\beta$ or $\gamma=\delta$ or $\gamma=\alpha$. Assume \eqref{existence}, \eqref{H3} and \eqref{H4}, then for any $x\in E$ and any $\epsilon>0$
\begin{equation*}
\P(|\hat{\gamma}(x) - \gamma(x)|>\epsilon)\leq  c\left( \frac{1}{T v_T(x)} + w_T^2(x)\right)
\end{equation*}
where $c$ is a positive constant depending on $\epsilon$. Consequently $\hat{\gamma}(x)$ is a consistent estimator as $T\to\infty$.  
\end{prop}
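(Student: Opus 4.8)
The plan is to treat the case $\gamma=\alpha$ in detail; the cases $\gamma=\beta$ and $\gamma=\delta$ are identical after replacing the counting process $N$ by the process counting only births, resp. only deaths, whose compensators are $\int_0^t\beta(X_u)\,du$ and $\int_0^t\delta(X_u)\,du$, while the denominator $\hat T(x)$ is unchanged. The estimator is a ratio, and the key structural observation is that its numerator linearises through the compensator of $N$. Since the jump intensity at time $s$ is $\alpha(X_s)$, the process $M_t=N_t-\int_0^t\alpha(X_u)\,du$ is a martingale (this is exactly the compensator forced by \eqref{wait} to obtain the Markov property), so that
\[
\hat\alpha(x)-\alpha(x)=\frac{\mathcal M_T+C_T}{\hat T(x)},\qquad \mathcal M_T=\int_0^T k_T(x,X_{s^-})\,dM_s,\quad C_T=\int_0^T\big(\alpha(X_s)-\alpha(x)\big)k_T(x,X_s)\,ds .
\]
Here $\mathcal M_T$ is the ``variance'' part and $C_T$ the ``bias'' part, and crucially both sit in the numerator. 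The whole difficulty is that $\hat T(x)$ is random, may be small, and that the second moment of the ratio need not exist; the strategy is therefore never to take a moment of the ratio itself.

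Second, I would bound the second moment of the numerator, which is finite and explicit. By the predictable quadratic variation $\langle M\rangle_t=\int_0^t\alpha(X_u)\,du$ and the stochastic–integral isometry,
\[
\E[\mathcal M_T^2]=\E\Big[\int_0^T k_T^2(x,X_s)\,\alpha(X_s)\,ds\Big]\le \alpha^* k^*\,\E[\hat T(x)],
\]
using $k_T^2\le k^* k_T$ and $\alpha\le\alpha^*$. For $C_T$, working in the stationary regime (the transient from an arbitrary initial law contributes only lower–order terms, by the uniform ergodicity granted under \eqref{existence}), the definitions of $v_T$ and $w_T$ give $\E[C_T]=Tv_T(x)w_T(x)$, while $\var(C_T)=O(Tv_T(x))$ by the covariance estimate below. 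Hence $\E[\mathcal M_T+C_T]=Tv_T(x)w_T(x)$ and
\[
\E\big[(\mathcal M_T+C_T)^2\big]\le 2\,\var(\mathcal M_T)+2\,\var(C_T)+2\big(Tv_T(x)w_T(x)\big)^2=O\big(Tv_T(x)\big)+2\big(Tv_T(x)\big)^2 w_T^2(x).
\]

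Third, I would control the denominator and combine. Both $\E[\hat T(x)]=:b_T$ and $\var(\hat T(x))$ are governed by the time–covariances of $s\mapsto k_T(x,X_s)$. Writing $f=k_T(x,\cdot)$, with $\|f\|_\infty\le k^*$ and $\mu_\infty(f)=v_T(x)$, the uniform geometric ergodicity from \eqref{existence}, say $\sup_z\|P_t(z,\cdot)-\mu_\infty\|_{TV}\le Ce^{-\kappa t}$, yields $\sup_z|P_tf(z)-v_T(x)|\le 2k^*Ce^{-\kappa t}$ and hence $\mathrm{Cov}\big(f(X_s),f(X_u)\big)\le 4Ck^*v_T(x)e^{-\kappa|s-u|}$ (using $\E_{\mu_\infty}|f-v_T(x)|\le 2v_T(x)$). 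Integrating over $[0,T]^2$ gives $\var(\hat T(x))=O(Tv_T(x))$, and the same argument gives $\var(C_T)=O(Tv_T(x))$; moreover $b_T\asymp Tv_T(x)\to\infty$ by \eqref{H3}. I would then split
\[
\P\big(|\hat\alpha(x)-\alpha(x)|>\epsilon\big)\le \P\big(\hat T(x)\le b_T/2\big)+\P\big(|\mathcal M_T+C_T|>\epsilon b_T/2\big),
\]
bounding the first term by $4\var(\hat T(x))/b_T^2=O(1/(Tv_T(x)))$ (Chebyshev) and the second by $4\,\E[(\mathcal M_T+C_T)^2]/(\epsilon^2 b_T^2)$ (Markov), which by the previous step is $O(1/(Tv_T(x)))+O(w_T^2(x))$. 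Adding the two gives the announced inequality, and consistency follows since the right–hand side tends to $0$ under \eqref{H3} and \eqref{H4}.

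I expect the main obstacle to be exactly the interplay between the random denominator and the possibly infinite second moment of the ratio: the resolution is the event split above, which confines all moment computations to the numerator $\mathcal M_T+C_T$, whose second moment is always finite and decomposes cleanly into an $O(Tv_T(x))$ variance piece and an $(Tv_T(x))^2w_T^2(x)$ bias piece. The $w_T^2(x)$ term in the statement is produced precisely by the squared mean $\big(\E[\mathcal M_T+C_T]\big)^2$ in the Markov bound, and it conveniently makes the inequality trivially true in the large–bias regime where $w_T(x)$ is not small. The secondary technical point is securing the uniform covariance bound $\mathrm{Cov}(k_T(x,X_s),k_T(x,X_u))=O(v_T(x)e^{-\kappa|s-u|})$, which is what converts the raw ergodic rate into the sharp $O(Tv_T(x))$ variance scaling; this is where \eqref{existence}, i.e. a finite maximal population together with uniform ergodicity, is essential.
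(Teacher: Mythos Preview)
Your proof is correct and follows essentially the same strategy as the paper: the martingale decomposition via the compensator, the event split on $\{\hat T(x)\le \E[\hat T(x)]/2\}$ to avoid taking moments of the ratio, and the ergodic variance bounds on $\hat T(x)$ and on the bias integral. The only organizational difference is that you keep the bias term $C_T$ in the numerator and perform a single event split, whereas the paper separates $R_T=C_T/\hat T(x)$ and bounds $\E(R_T^2)$ via a second event split together with the pointwise bound $|R_T|\le 2\alpha^*$; your version is slightly more economical, though your remark that the non-stationary initial law contributes only lower-order terms to $\E[C_T]$ and $\var(C_T)$ would need the explicit justification the paper gives in Corollary~\ref{lemVar}.
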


 \bigskip

\ex{ex kernel}
\noindent{\it Example \ref{ex kernel}}:  
A standard choice for $k_T$ is 
\begin{equation}\label{generalkT} k_T(x,y)=k\left(\frac{d(x,y)}{h_T}\right)\end{equation}
where $k$ is a bounded kernel function on $\R$, $d$ is a pseudo-distance on $E$ and $h_T>0$ is a bandwidth parameter.  
In this case, \eqref{H3} and \eqref{H4} can be understood as hypotheses on the bandwidth $h_T$, demanding that $h_T$ tends to 0 as $T\to\infty$ but not too fast, as usual in non-parametric estimation. To make this interpretation clear, assume that $k(u)=\1_{|u|<1}$ and 
that $\gamma$ is Lipschitz with constant $\ell$. 
Then, denoting $B(x,h_T):=\{y\in E, d(x,y)<h_T\}$, we have that $v_T(x)=\mu_\infty(B(x,h_T))$ and
\begin{align*}
|w_T(x)| &\leq \frac{1}{v_T(x)}  \int_{B(x,h_T)} \left|\gamma(z) - \gamma(x)\right| \mu_{\infty}(dz)
\leq  \frac{\ell}{v_T(x)} \int_{B(x,h_T)} d(x,z) \mu_{\infty}(dz)\leq \frac{\ell}{v_T(x)} h_T v_T(x) = \ell h_T. 
\end{align*}
In this setting, \eqref{H3} and \eqref{H4} are satisfied whenever  $h_T\to 0$ and $T\mu_\infty(B(x,h_T))\to\infty$.
Note moreover that the rate $w_T(x)=O(h_T)$ in this example is in agreement with the standard rate for the bias of the kernel estimator of a Lipschitz function, justifying the interpretation of $w_T(x)$ as a bias term.  On the other hand $1/Tv_T(x)=O(1/(T\mu_\infty(B(x,h_T))))$ depends on the underlying dimension through the pseudo-distance $d$ defining the ball $B(x,h_T)$, as usual for the variance term of a kernel estimator. These rates are made explicit in the particular setting of Example~\ref{ex geom}.

\bigskip

\ex{ex Hausdorff}
\noindent{\it Example \ref{ex Hausdorff}}: 
When $E_n$ is the space of point configurations in $\R^d$ with cardinality $n$, we can take $k_T$ as in \eqref{generalkT} where $d(x,y)$ is a pseudo-distance on the space $E$ of finite point configurations in $\R^d$. Several choices for this  pseudo-distance are possible. A first standard option is the Hausdorff distance
\[d_H(x,y)= \max\{ \max_{u\in x} \min_{v\in y} \|u-v\| ,  \max_{v\in y} \min_{u\in x} \|v-u\| \}\]
if $x\neq \varnothing$ and $y\neq\varnothing$, while $k_T(x,y)=\1_{x=y}$ if $x=\varnothing$ or $y=\varnothing$.
Some alternatives are discussed in \cite{mateu2015}. 
Another option, that will prove to be more appropriate for our applications in Section~\ref{simulations}, is the optimal matching distance  
introduced by \cite{schuhmacher2008}. 
Letting $x=\{x_1,\dots,x_{n(x)}\}$, $y=\{y_1,\dots,y_{n(y)}\}$ and assuming that $n(x)\leq n(y)$, this distance is defined for some $\kappa>0$ by 
\[d_{\kappa}(x,y)=\frac{1}{n(y)} \left(\min_{\pi\in\mathfrak{S}_{n(y)}} \sum_{i=1}^{n(x)}(\|x_i-y_{\pi(i)}\|\wedge\kappa) +  \kappa (n(y)-n(x))  \right),\]
where $\mathfrak{S}_{n(y)}$ denotes the set of permutations of $\{1,\dots,n(y)\}$. 
In words, $d_{\kappa}$ calculates the total (truncated) distance between $x$ and its best match with a sub-pattern of $y$ with cardinality $n(x)$, and then it adds a penalty $\kappa$ for the difference of cardinalities between $x$ and $y$. 
If all point configurations belong to a bounded subset $W$ of $\R^d$, a natural choice for $\kappa$ is to take the diameter of $W$, in which case the distance between two point patterns with the same cardinalities corresponds to the average distance between their optimal matching. 
The definition of $d_{\kappa}$ when $n(x)\geq n(y)$ is similar by inverting the role played by $x$ and $y$. 
The choice of $d_H$ or $d_{\kappa}$ does not exploit any particular structural form of $\gamma(x)$, allowing for a pure non-parametric estimation. Note however that some regularities are implicitly demanded on $\gamma(x)$ because of \eqref{H4}, as illustrated in Example~\ref{ex kernel} where $\gamma$ is assumed to be Lipschitz.

\bigskip

\ex{ex simple}
\noindent{\it Example \ref{ex simple}}:  If we assume that $\gamma(x)=\gamma_0(n(x))$ only depends on the cardinality of $x$ through some function $\gamma_0$ defined on $\N$, the setting becomes similar to simple birth-death processes, except that we allow continuous motions between jumps. We may consider two strategies in this case:
\begin{itemize}
\item[(i)] We recover the standard non-parametric likelihood estimator of the intensity  studied in \cite{wolff1965} and \cite{reynolds1973} by choosing $k_T(x,y)=1$ if $n(x)=n(y)$ and $k_T(x,y)=0$ otherwise. 
Then $\hat\alpha(x)$ (resp. $\hat\beta(x)$, $\hat\delta(x)$) just counts the number of jumps (resp. of births, of deaths) of the process $(X_s)_{0 \leq s \leq T}$ when it is in $E_{n(x)}$ divided by the time spent by the process in $E_{n(x)}$. We get in this case that $v_T(x)=\mu_\infty(E_{n(x)})$ does not depend on $T$ and $v_T(x)w_T(x)=0$, so \eqref{H3} and \eqref{H4} are satisfied whenever $\mu_\infty(E_{n(x)})\neq 0$. 

\item[(ii)] Alternatively, we may choose $k_T$ as in \eqref{generalkT} with $d(x,y)=|n(x)-n(y)|$, in which case $\hat\gamma(x)$ differs from the previous estimator in that not only configurations in $E_{n(x)}$ are taken into account in $\hat\gamma(x)$ but all configurations in $E_{n}$ provided $n$ is close to $n(x)$.  
For this reason this new estimator can be seen as a smoothing version of the previous one, the latter being in fact  the limit when $h_T\to 0$ of the former. This smoothing makes sense if we assume some regularity properties on $n\mapsto\gamma_0(n)$, as demanded by \eqref{H4}, and results in a less variable estimation (see the simulation study of Section~\ref{simulations}).
\end{itemize}

 \bigskip

\ex{ex geom}
\noindent{\it Example \ref{ex geom}}: In the spirit of the previous example, we may assume that  $\gamma(x)=\gamma_0(f(x))$ only depends on some specific characteristics of $x$ encoded in a function $f: E\to \R^p$ for some $p\geq 1$ and $\gamma_0$ is defined on $\R^p$.  
For instance in our simulation study of Section~\ref{simu:geom}, $E_n$ is the space of point configurations in $[0,1]^2$ with cardinality $n$ and we consider for $f(x)$ the maximal area of the Delaunay cells associated to the configuration $x$, while $\gamma_0$ is an increasing function. For this example, jumps are more likely to happen when there is a large ``available'' empty region in between the elements of $x$. The estimation problem then reduces to a non-parametric estimation in dimension $p$ and we may choose $k_T$ as in \eqref{generalkT} with $d(x,y)=\|f(x)-f(y)\|$. 
Under this setting, it is not hard to prove the following corollary, showing that we recover the standard rate of convergence for kernel estimators of Lipschitz functions in dimension $p$.

\begin{cor}
Assume $\gamma(x)=\gamma_0(f(x))$ where $f: E\to \R^p$ for some $p\geq 1$ and $\gamma_0$ is a Lipschitz function defined on $\R^p$.  If $k_T$ is as in \eqref{generalkT} with $d(x,y)=\|f(x)-f(y)\|$ and $\int_{\R} |t|^p k(|t|) dt<\infty$,  and if $f(Z)$ admits a bounded continuous density with respect to the Lebesgue measure in $\R^p$ when $Z\sim\mu_\infty$, then under   \eqref{existence},
\begin{equation*}
\P(|\hat{\gamma}(x) - \gamma(x)|>\epsilon)\leq  c\left( \frac{1}{T h_T^p} + h_T^2\right).
\end{equation*}
\end{cor}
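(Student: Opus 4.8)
The plan is to deduce the corollary from Proposition~\ref{Maintheo} by computing the asymptotic orders of the variance term $1/(Tv_T(x))$ and of the bias term $w_T(x)$ under the present structural assumption, and then substituting $v_T(x)\asymp h_T^p$ and $w_T(x)=O(h_T)$ into the bound of that proposition. The crucial observation is that, with $k_T$ as in \eqref{generalkT} and $d(x,y)=\|f(x)-f(y)\|$, the weight $k_T(x,z)$ depends on $z$ only through $f(z)$, so that both $v_T(x)$ and $w_T(x)$ reduce to integrals over $\R^p$ against the law of $f(Z)$ with $Z\sim\mu_\infty$. Writing $g$ for its (bounded, continuous) density and $a:=f(x)$, the change of variables $u=a+h_Tt$ gives
\[
v_T(x) = \int_{\R^p} k\!\left(\frac{\|a-u\|}{h_T}\right) g(u)\,du = h_T^p \int_{\R^p} k(\|t\|)\, g(a+h_T t)\,dt.
\]
Before taking limits I would check integrability: passing to polar coordinates, $\int_{\R^p} k(\|t\|)\,dt$ and $\int_{\R^p}\|t\| k(\|t\|)\,dt$ amount (up to the surface constant of the unit sphere) to $\int_0^\infty k(r) r^{p-1}\,dr$ and $\int_0^\infty k(r) r^{p}\,dr$. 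Both are finite since near $0$ the integrands are controlled by the bound $k^*$ and near $+\infty$ they are dominated by $r^p k(r)$, integrable precisely by the assumption $\int_\R |t|^p k(|t|)\,dt<\infty$; this is exactly the role of the moment hypothesis on $k$.

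Next, using boundedness and continuity of $g$ and dominated convergence (dominating function $\|g\|_\infty\, k(\|t\|)$, integrable by the previous step), letting $h_T\to0$ yields $v_T(x)\sim g(a)\,\kappa_0\, h_T^p$ with $\kappa_0:=\int_{\R^p}k(\|t\|)\,dt$. Assuming $g(a)>0$, this gives $v_T(x)\asymp h_T^p$, so $1/(Tv_T(x)) = O(1/(Th_T^p))$ and, provided $Th_T^p\to\infty$, Assumption \eqref{H3} holds. For the bias I would use $\gamma(z)-\gamma(x)=\gamma_0(f(z))-\gamma_0(f(x))$ and the Lipschitz property of $\gamma_0$ with constant $\ell$, obtaining after the same change of variables and division by $v_T(x)$,
\[
|w_T(x)| \le \frac{\ell}{v_T(x)} \int_{\R^p}\|u-a\|\, k\!\left(\frac{\|u-a\|}{h_T}\right) g(u)\,du
= \ell h_T\cdot \frac{\int_{\R^p}\|t\|k(\|t\|)g(a+h_T t)\,dt}{\int_{\R^p}k(\|t\|)g(a+h_T t)\,dt}.
\]
By dominated convergence the ratio converges to the finite constant $\kappa_0^{-1}\int_{\R^p}\|t\|k(\|t\|)\,dt$, so $|w_T(x)| = O(h_T)$; in particular \eqref{H4} holds whenever $h_T\to0$, and $w_T^2(x)=O(h_T^2)$. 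Plugging these two orders into Proposition~\ref{Maintheo} gives the announced bound, with the constant $c$ absorbing $g(a)$, $\ell$ and $\kappa_0$.

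The routine parts are the two changes of variables and the polar-coordinate integrability check. The only genuine care is needed in the dominated-convergence arguments, where one must guarantee the dominating functions $\|g\|_\infty k(\|t\|)$ and $\ell\|g\|_\infty\|t\|k(\|t\|)$ are integrable — which is precisely why $\int_\R|t|^p k(|t|)\,dt<\infty$ is imposed — and in the implicit requirement $g(f(x))>0$, needed to bound $v_T(x)$ from below and thereby keep the variance term of order $1/(Th_T^p)$. These are also the conditions ensuring \eqref{H3} and \eqref{H4}, so that Proposition~\ref{Maintheo} applies.
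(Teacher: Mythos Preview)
Your proposal is correct and follows precisely the approach the paper intends: the corollary is stated immediately after Example~\ref{ex geom} with the remark that ``it is not hard to prove'', and the intended argument is exactly to compute $v_T(x)\asymp h_T^p$ and $w_T(x)=O(h_T)$ via the pushforward density $g$ and a change of variables, then plug into Proposition~\ref{Maintheo}. Your observation that the implicit conditions $g(f(x))>0$ and $Th_T^p\to\infty$ are needed (for the lower bound on $v_T(x)$ and for \eqref{H3} respectively) is accurate and worth noting, as these are not made explicit in the statement.
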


  \subsection{Discrete time observations}\label{sec:discrete}

Assume now that we observe the process at $m+1$ time points $t_0,\dots, t_m$ where $t_0=0$ and $t_m=T$. We denote  $\Delta t_j = t_{j}-t_{j-1}$, for $j=1,\dots,m$, and $\Delta_m = \max_{j=1...m} \Delta t_j$ the maximal discretization step. We thus have $T=\sum_{j=1}^{m} \Delta t_j$ and $T\leq m\Delta_m$. We assume further that $m\Delta_m/T$ is uniformly bounded. The asymptotic properties of this section will hold when both $T\to\infty$ and $\Delta_m\to 0$, implying $m\to\infty$.

To consider a discrete version of the estimator \eqref{est_cont} of $\alpha(x)$, a natural idea is to replace the involved integrals by Riemann sums, leading to the estimator
\begin{equation}
\frac{\sum_{j=0}^{m-1} \Delta N_{t_{j+1}} k_T( x, X_{t_j})}{\sum_{j=0}^{m-1} \Delta t_{j+1} k_T( x, X_{t_j})}\end{equation}
where $\Delta N_{t_j}=N_{t_{j}} - N_{t_{j-1}}$, for $j=1,\dots,m$.
However $\Delta N_{t_j}$ is not necessarily observed because we can miss some jumps in the interval $(t_{j-1}, t_{j}]$ (for instance the birth of an individual can be immediately followed by its death). For this reason we introduce an approximation $D_j$ of  $\Delta N_{t_j}$ which is observable. 
For the asymptotic validity of our estimator, we specifically require that this approximation satisfies:
\begin{enumerate}[label={\bf(H\arabic*)},ref=H\arabic*]
\setcounter{enumi}{3} 
\item For all $j\geq 1$, $D_j= \Delta N_{t_j}$ if $\Delta N_{t_j} \leq 1$ and $D_j \leq \Delta N_{t_j}$ if $\Delta N_{t_j} \geq 2$. \label{HDj}
\end{enumerate}
When $\Delta_m\to 0$, the case $\Delta N_{t_j} \geq 2$ becomes unlikely, so that asymptotically the only important case is $\Delta N_{t_j} \leq 1$, a situation where $D_j = \Delta N_{t_j}$ is a perfect approximation under \eqref{HDj}.
An elementary example is to take $D_j = \1_{n(X_{t_{j-1}}) \neq n(X_{t_{j}})}$. This choice satisfies \eqref{HDj} but some better approximations may be available. For instance, when $E_n$ is the space of point configurations in $\R^d$ with cardinality $n$, assuming that we can track the points between times $t_{j-1}$ and $t_{j}$, 
we can choose for $D_j$ the number of new points in $X_{t_{j}}$  (that is a lower estimation of the number of births), plus the number of points that have disappeared between  $t_{j-1}$ and $t_{j}$ (that is a lower estimation of the number of deaths),  in short $D_j = n(X_{t_{j}}\setminus X_{t_{j-1}})+n(X_{t_{j-1}}\setminus X_{t_{j}}).$

Our estimator of $\alpha(x)$ in the discrete case is then defined as follows: 
\begin{equation}\label{est discrete}\hat{\alpha}_{(d)}(x) = \frac{\sum_{j=0}^{m-1} D_{j+1} k_T( x, X_{t_j})}{\sum_{j=0}^{m-1} \Delta t_{j+1} k_T( x, X_{t_j})}.\end{equation}
Obvious adaptations lead to the definition of $\hat\beta_{(d)}(x)$ and $\hat\delta_{(d)}(x)$, the discrete versions of $\hat\beta(x)$ and $\hat\delta(x)$. The details are provided in the supplementary material.

To get the consistency of these estimators, we assume that the discretization step $\Delta_m$ asymptotically vanishes at the following rate.
\begin{enumerate}[label={\bf(H\arabic*)},ref=H\arabic*]
\setcounter{enumi}{4} 
\item Let $v_T(x)$ be as in \eqref{H3}, \[ \lim_{T\to\infty} \frac{\Delta_m}{v_T^2(x)}\to 0.\] \label{stepto0}
\end{enumerate}
We also need some regularity assumptions on the process $(k_T(x,Y_t))_{t\geq 0}$.
 This is necessary to control the difference between $\hat{T}(x)$ defined in \eqref{defT} and its discretized version $\hat{T}_{(d)}(x) =\sum_{j=0}^{m-1} \Delta t_{j+1} k_T( x, X_{t_j})$ appearing in the denominator of \eqref{est discrete}.

\begin{enumerate}[label={\bf(H\arabic*)},ref=H\arabic*]
\setcounter{enumi}{5} 
\item There exist $\ell_T(x)\geq 0$ and $a>0$ such that for any $s,t$ satisfying $|s-t|<\Delta_m$ and for all $y\in E$, \begin{equation*}
 \E\left[|k_T(x,Y_s)-k_T(x,Y_t)|\big | Y_0=y \right]\leq \ell_T(x) |s-t|^a\quad\text{with}\quad\lim_{T\to\infty} \frac{\Delta_m^{a} \ell_T(x) }{v_T^2(x)}\to 0.\end{equation*}\label{discrete condition}
  \end{enumerate}
  The inequality in \eqref{discrete condition} with $a>1$ implies the continuity of the process $(k_T(x,Y_t))_{t\geq 0}$, in virtue of   the Kolmogorov continuity theorem.  In fact continuous processes that do not satisfy this inequality are the exception, and  a  standard way to prove the continuity of a stochastic process consists precisely in verifying this condition.   The last condition in  \eqref{discrete condition} demands that $\Delta_m$ converges sufficiently fast to  zero to capture this regularity. 
We come back to Examples~\ref{ex BD}-\ref{ex geom} in the supplementary material to show that these assumptions are mild and reduce, when the kernel takes the form \eqref{generalkT}, to assuming that the bandwidth $h_T$ tends to 0 but not too fast and that $\Delta_m$ tends to zero sufficiently fast, the latter rate depending on the regularity exponent $a$. We finally get the following consistency result, the proof of which is given  in the supplementary material.

  \begin{prop}\label{th discret}
  Let  $\gamma$ be either $\gamma=\beta$ or $\gamma=\delta$ or $\gamma=\alpha$. Assume \eqref{existence}-\eqref{discrete condition}, then
\[\hat{\gamma}_{(d)}(x) - \gamma(x)=O_p\left(\frac{1}{T v_T(x)} + w_T^2(x) +  \frac{\Delta_m}{v_T^2(x)} + \frac{\Delta_m^{a} \ell_T(x) }{v_T^2(x)}\right)\]
as $T\to\infty,$ whereby $\hat{\gamma}_{(d)}(x)$ is a consistent estimator of $\gamma(x).$  
  \end{prop}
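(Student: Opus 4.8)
The plan is to compare the discrete estimator with its continuous counterpart $\hat\gamma(x)$ from \eqref{est_cont} and to recycle the bound already furnished by Proposition~\ref{Maintheo}. Writing
\[
\hat\gamma_{(d)}(x)-\gamma(x)=\big(\hat\gamma_{(d)}(x)-\hat\gamma(x)\big)+\big(\hat\gamma(x)-\gamma(x)\big),
\]
the second bracket is controlled by Proposition~\ref{Maintheo}, contributing the terms $\frac{1}{Tv_T(x)}+w_T^2(x)$ to the announced rate. The whole task therefore reduces to showing that the purely numerical discretisation error $\hat\gamma_{(d)}(x)-\hat\gamma(x)$ is $O_p\big(\frac{\Delta_m}{v_T^2(x)}+\frac{\Delta_m^{a}\ell_T(x)}{v_T^2(x)}\big)$.

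To this end I would denote by $A$ and $B=\hat T_{(d)}(x)$ the numerator and denominator of $\hat\gamma_{(d)}(x)$ in \eqref{est discrete}, and by $\tilde A$ and $\tilde B=\hat T(x)$ those of $\hat\gamma(x)$, so that
\[
\hat\gamma_{(d)}(x)-\hat\gamma(x)=\frac{A\tilde B-\tilde A B}{B\tilde B},\qquad |A\tilde B-\tilde A B|\le \tilde B\,|A-\tilde A|+\tilde A\,|B-\tilde B|.
\]
For the denominator I would use the lower bound $B\tilde B\ge c\,(Tv_T(x))^2$, valid with probability tending to one: it follows from the concentration of $\hat T(x)$ around $Tv_T(x)$ established in the proof of Proposition~\ref{Maintheo} under \eqref{existence}--\eqref{H3}, together with the fact that $|B-\tilde B|=o_p(Tv_T(x))$ (since $v_T(x)\le k^*$, Assumptions \eqref{stepto0}--\eqref{discrete condition} force $(\Delta_m+\ell_T(x)\Delta_m^a)/v_T(x)\to0$). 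For the factors $\tilde A,\tilde B$ in the numerator I would use only the elementary bounds $\tilde B\le k^*T$ and $\tilde A\le k^*N_T=O_p(T)$, the latter because $\E N_T\le \alpha^* T$. It is precisely this deliberately crude treatment of the numerator factors, of order $T$ rather than the sharper $Tv_T(x)$, against the squared denominator $(Tv_T(x))^2$, that produces the factor $v_T^2(x)$ in the stated rate.

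It then remains to bound $\E|B-\tilde B|$ and $\E|A-\tilde A|$, after which Markov's inequality yields the corresponding $O_p$ rates. For $B-\tilde B=\sum_j\int_{t_j}^{t_{j+1}}\big(k_T(x,X_{t_j})-k_T(x,X_s)\big)\,ds$ I would split each interval according to whether a jump occurs in $[t_j,t_{j+1}]$: on the no-jump event $X_s=Y_{s-t_j}$ with $Y_0=X_{t_j}$, so \eqref{discrete condition} gives a contribution of order $\ell_T(x)\Delta_m^a$ per unit time, while the jump event has probability $O(\Delta_m)$ and contributes at most $2k^*$, giving $\E|B-\tilde B|\lesssim T(\Delta_m+\ell_T(x)\Delta_m^a)$. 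For $A-\tilde A$ I would insert the intermediate quantity $A'=\sum_j\Delta N_{t_{j+1}}\,k_T(x,X_{t_j})$: the difference $A-A'$ is controlled by \eqref{HDj}, which makes $D_{j+1}-\Delta N_{t_{j+1}}$ vanish unless $\Delta N_{t_{j+1}}\ge2$, an event of probability $O(\Delta_m^2)$ per interval, yielding $O(T\Delta_m)$; the difference $A'-\tilde A=\sum_k\big(k_T(x,X_{t_{j(k)}})-k_T(x,X_{T_k^-})\big)$, where $t_{j(k)}$ is the left endpoint of the interval containing the jump time $T_k$, compares the kernel at that endpoint with its value just before the jump, again of order $\ell_T(x)\Delta_m^a$ per jump by \eqref{discrete condition}, which summed over the $O_p(T)$ jumps gives $O(T\ell_T(x)\Delta_m^a)$. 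Dividing the resulting numerator bound $T^2(\Delta_m+\ell_T(x)\Delta_m^a)$ by $(Tv_T(x))^2$ reproduces exactly the two discretisation terms, and \eqref{stepto0}--\eqref{discrete condition} force them to zero.

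The main obstacle I anticipate is the rigorous bookkeeping of the jump structure inside each sampling interval. One must invoke the Markov property and the waiting-time law \eqref{wait} to decouple the probability of one (or of two or more) jumps in $[t_j,t_{j+1}]$ from the motion $Y$, so that \eqref{discrete condition}, which is stated conditionally on $Y_0=y$ and is hence uniform in the starting configuration, can legitimately be applied on the no-jump event. Handling the compensator of $N_t$ and the discrepancy between the pre-jump values $X_{T_k^-}$ entering $\tilde A$ and the sampled values $X_{t_j}$ entering $A$ is where the genuine care is required; the remaining combination of the pieces is routine.
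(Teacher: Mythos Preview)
Your proposal is correct and follows essentially the same route as the paper. The paper's decomposition $\hat\alpha_{(d)}-\alpha=\frac{A-A'}{\hat T_{(d)}}+\frac{A'-\tilde A}{\hat T_{(d)}}+\tilde A\,\frac{\hat T-\hat T_{(d)}}{\hat T_{(d)}\hat T}+(\hat\alpha-\alpha)$ is algebraically equivalent to yours, the same intermediate quantity $A'=\sum_j\Delta N_{t_{j+1}}k_T(x,X_{t_j})$ is inserted, the same splitting of each sampling interval according to $\Delta N_{t_{j+1}}\in\{0,1,\ge2\}$ is used, and the key probability bound $\P(\Delta N_{t_j}=i)\le(\alpha^*\Delta t_j)^i/i!$ (your ``waiting-time law'' remark) appears in the paper as an explicit lemma.
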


  \subsection{Bandwidth selection by partial likelihood cross-validation}\label{CV}
  
  We assume in this section that $k_T$ takes the form \eqref{generalkT}, in which case we have to choose in practice a value of the bandwidth $h_T$ to implement our estimators, whether for $\hat\gamma(x)$ in the continuous case or for $\hat\gamma_{(d)}(x)$ in the discrete case. We explain in the following how to select $h_T$ by (partial) likelihood cross-validation, a widely used procedure in kernel density estimation of a probability distribution and intensity kernel estimation of a point process, see for instance \cite[Chapter 5]{loader2006}. Remark that the alternative popular plug-in method and least-squares cross-validation method, that are both based on second order properties of the estimator, do not seem adapted to our setting because, as discussed in the supplementary material, $\hat\gamma(x)$ is not necessarily integrable. Furthermore we only have an upper bound for the rate of convergence and this one depends on unknown quantities that appear difficult to estimate.

 We focus for simplicity on the estimation of $\alpha(x)$ but the procedure adapts straightforwardly to the estimation of $\beta(x)$ or $\delta(x)$.
 As verified in the supplementary material, the intensity of $N_t$  is $\alpha(X_{t^-}).$
  By the Girsanov theorem \cite[Chapter 6.2]{bremaud}, the log-likelihood of $(N_t)_{0\leq t\leq T}$ with respect to the unit rate Poisson counting process on $[0,T]$ is therefore
  \[\int_0^T (1-\alpha(X_{s^-})) ds +  \int_0^T \log\alpha(X_{s^-}) dN_s = T- \int_0^T \alpha(X_{s})ds + \sum_{j=1}^{N_T}  \log \alpha(X_{T_j^-}) .\]

Note that this is not the log-likelihood of the process $(X_t)_{0\leq t\leq T}$ but only of $(N_t)_{0\leq t\leq T}$. 
This is the reason why we call our approach a partial likelihood cross-validation.   
For continuous time observations, bandwidth selection then  amounts to choose $h_T$ as
\[\hat h= \underset{h}{\rm argmax} \sum_{j=1}^{N_T} \log \hat\alpha^{(-)}_h (X_{T_j^-}) - \int_0^T   \hat\alpha^{(-)}_h (X_{s})ds\]
where $\hat\alpha^{(-)}_h (X_s)$ is the estimator \eqref{est_cont} of $\alpha(x)$ for $x=X_s$, associated to the choice of bandwidth $h_T=h$, but without using the observation $X_s$. To carry out this removal, we suggest to discard all observations in the time interval $[T_{N_s},T_{N_s+1}]$, which gives
\[\hat\alpha^{(-)}_h (X_s)= \frac{\sum_{i=1,i\neq N_s+1}^{N_T}  k(d( X_s,X_{T_i^-})/h)} {\int_{[0,T]\setminus [T_{N_s},T_{N_s+1}] } k(d( X_s,X_{u})/h) du}.\]
In particular 
\[\hat\alpha^{(-)}_h (X_{T_j^-}) =  \frac{\sum_{i=1,i\neq j}^{N_T}  k(d( X_{T_j^-},X_{T_i^-})/h) }{ \int_{[0,T]\setminus [T_{j-1},T_{j}] } k(d(X_{T_j^-} ,X_{u})/h) du}.\]

For discrete time observations, this cross-validation procedure becomes
\[\hat h_{(d)}= \underset{h}{\rm argmax} \sum_{j=0}^{m-1} D_{j+1}  \log \hat\alpha^{(-)}_{(d),h} (X_{t_j}) - \sum_{j=0}^{m-1} \Delta t_{j+1}  \hat\alpha^{(-)}_{(d),h} (X_{t_j}),\]
using the same notation as in Section~\ref{sec:discrete} and where 
\[ \hat\alpha^{(-)}_{(d),h} (X_{t_j}) = \frac{\sum_{i=0,i\neq j}^{m-1} D_{i+1} k(d( X_{t_j},X_{t_i})/h)}{\sum_{i=0,i\neq j}^{m-1} \Delta t_{i+1} k(d( X_{t_j},X_{t_i})/h)}.\]

 \section{Simulation study}\label{simulations}
 
 \subsection{First situation: a dependence on the cardinality}\label{simu:card}

In order to assess the performances of our intensity estimator, we simulate a birth-death-move process in the square window $W=[0,1]^2$ during the time interval $[0,T]$ with $T=1000$ (the time unit does not matter).  The initial configuration at $t=0$ consists of the realization of a Poisson point process with intensity $100$ in $W$.  
For the jump intensity function $\alpha$, we choose $\alpha(x)=\exp(5(n(x)/100-1))$, i.e. $\alpha(x)$ only depends on the cardinality of $x$, as in the setting of Example~\ref{ex simple}, and this dependence is exponential. We fix a truncation value $n^*=1000$. If $0<n(x)<n^*$, each jump is a birth or a death with equal probability. If $n(x)=0$ the jump is a birth, and it is a death if  $n(x)=n^*.$
Each birth consists of the addition of one point which is drawn uniformly on $[0,1]^2$, and  each death consists of the removal of an existing point uniformly over the points of $x$. Finally, between each jump, each point of $x$ independently evolves according to a planar Brownian motion with standard deviation $2.10^{-3}$. 
Many other dynamics could have been simulated. Our choice for this first example is motivated by the real-data dynamics treated in Section~\ref{sec:data}, that we try to roughly mimic. The next example considered in Section~\ref{simu:geom} is quite different as it involves geometric characteristics of $x.$

Figure~\ref{sim-samples} shows, for one simulated trajectory, the configuration of the process at the initial time $T_0=0$ and after the first jump time (at $t=T_1$). This first jump was a birth and the location of the new point is indicated by a red dot, pointed out by a  red arrow. The locations of the other points at time $T_1$ are indicated by black dots, while their initial locations are recalled in gray, illustrating the motions between $T_0$ and $T_1$.  For this simulation, $N_T=1530$ jumps have been observed in the time interval $[0,T]$. The two last plots of  Figure~\ref{sim-samples} show the evolution of the number of points before each jump time and the time elapsed between each jump time.

 \begin{figure}[ht]
\begin{center}
{\small
\begin{tabular}{cccc}
 \includegraphics[scale=.31]{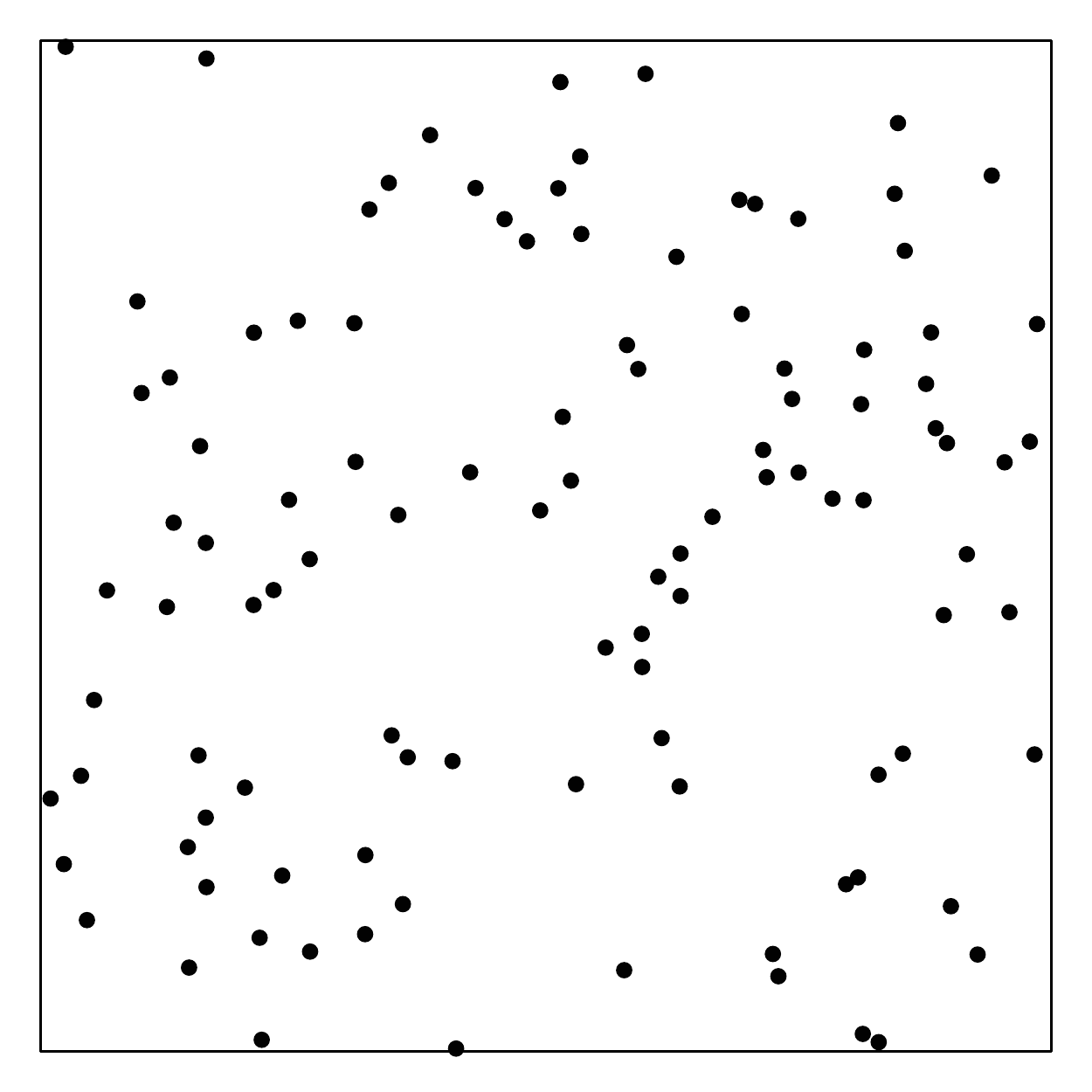} & \includegraphics[scale=.31]{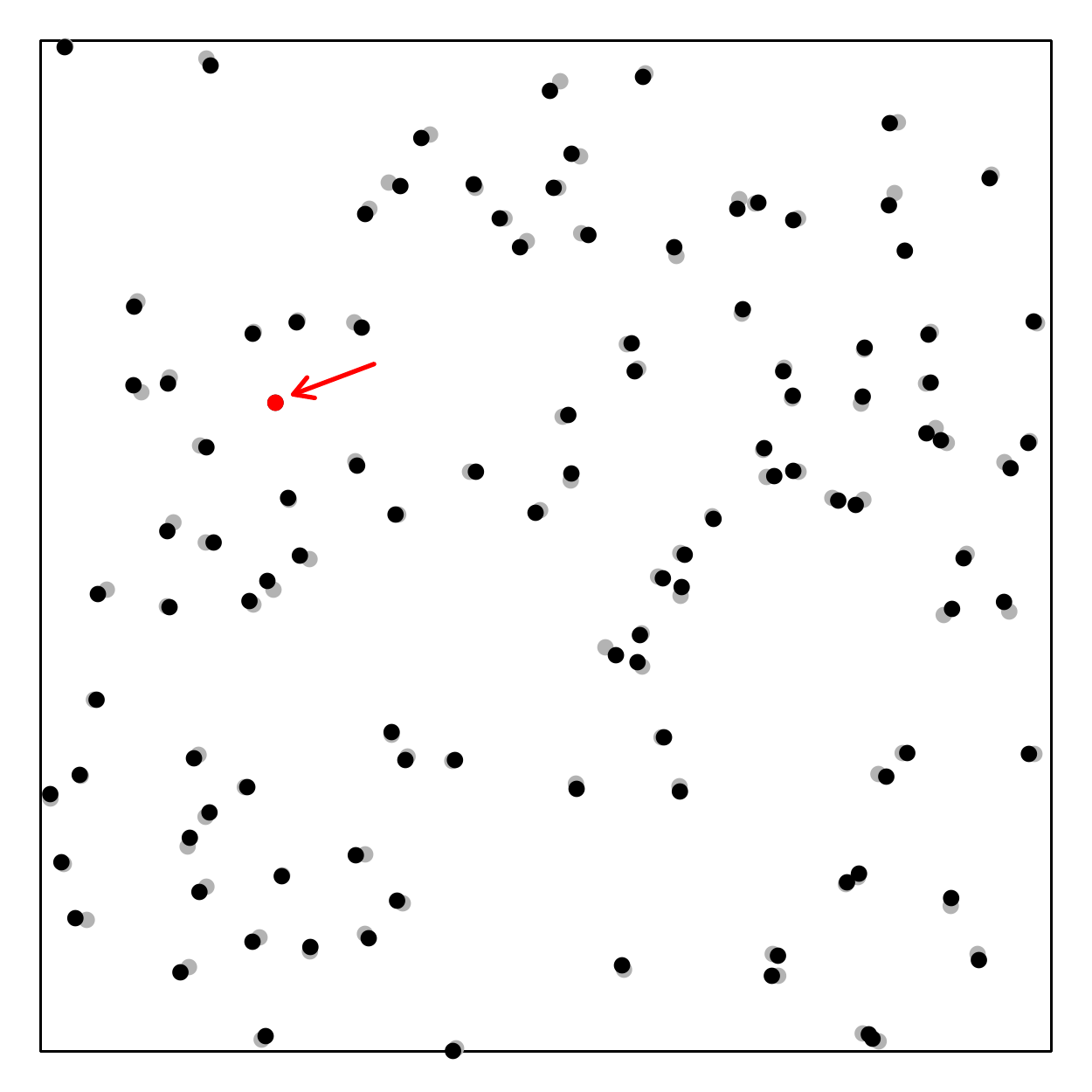} &\includegraphics[scale=.31]{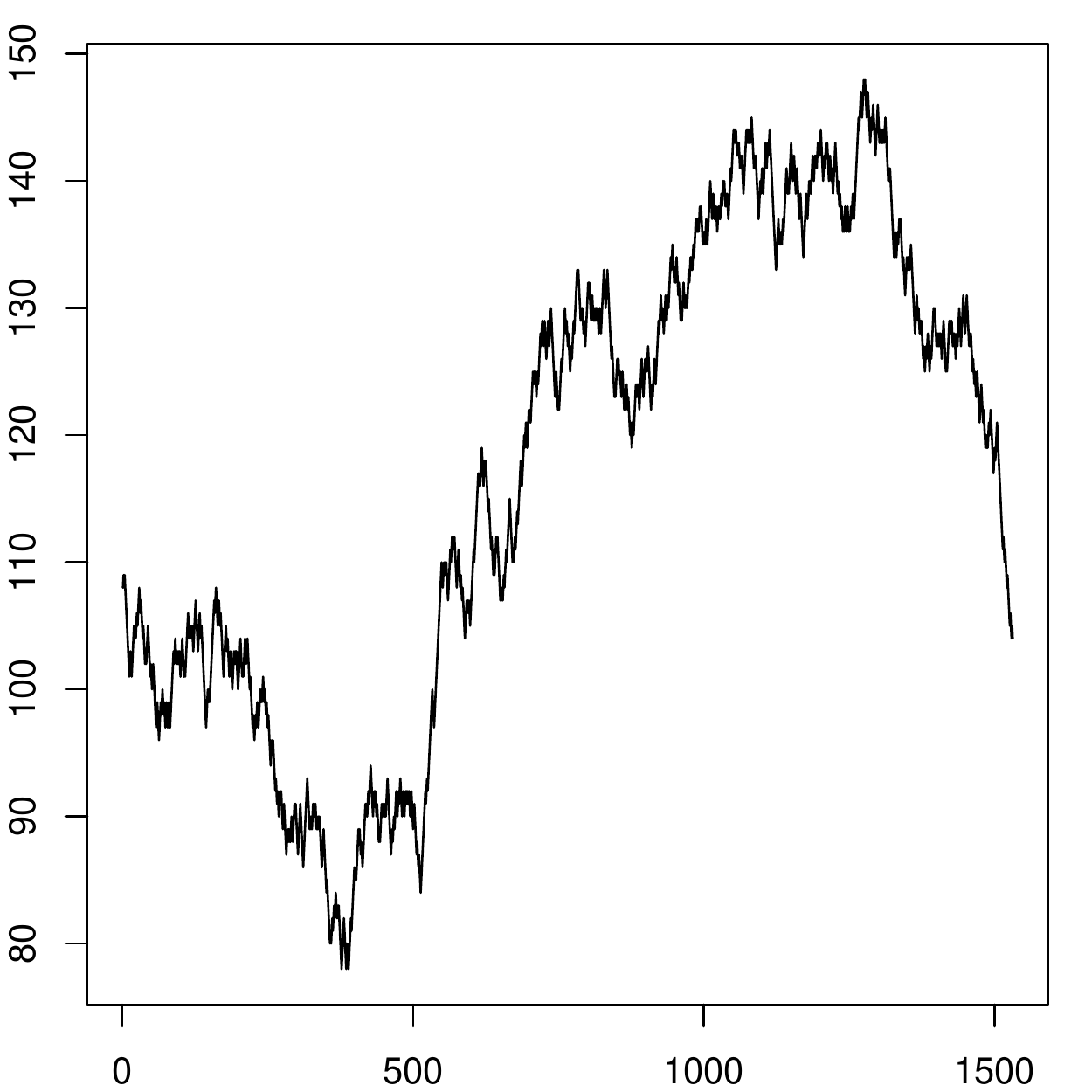} & \includegraphics[scale=.31]{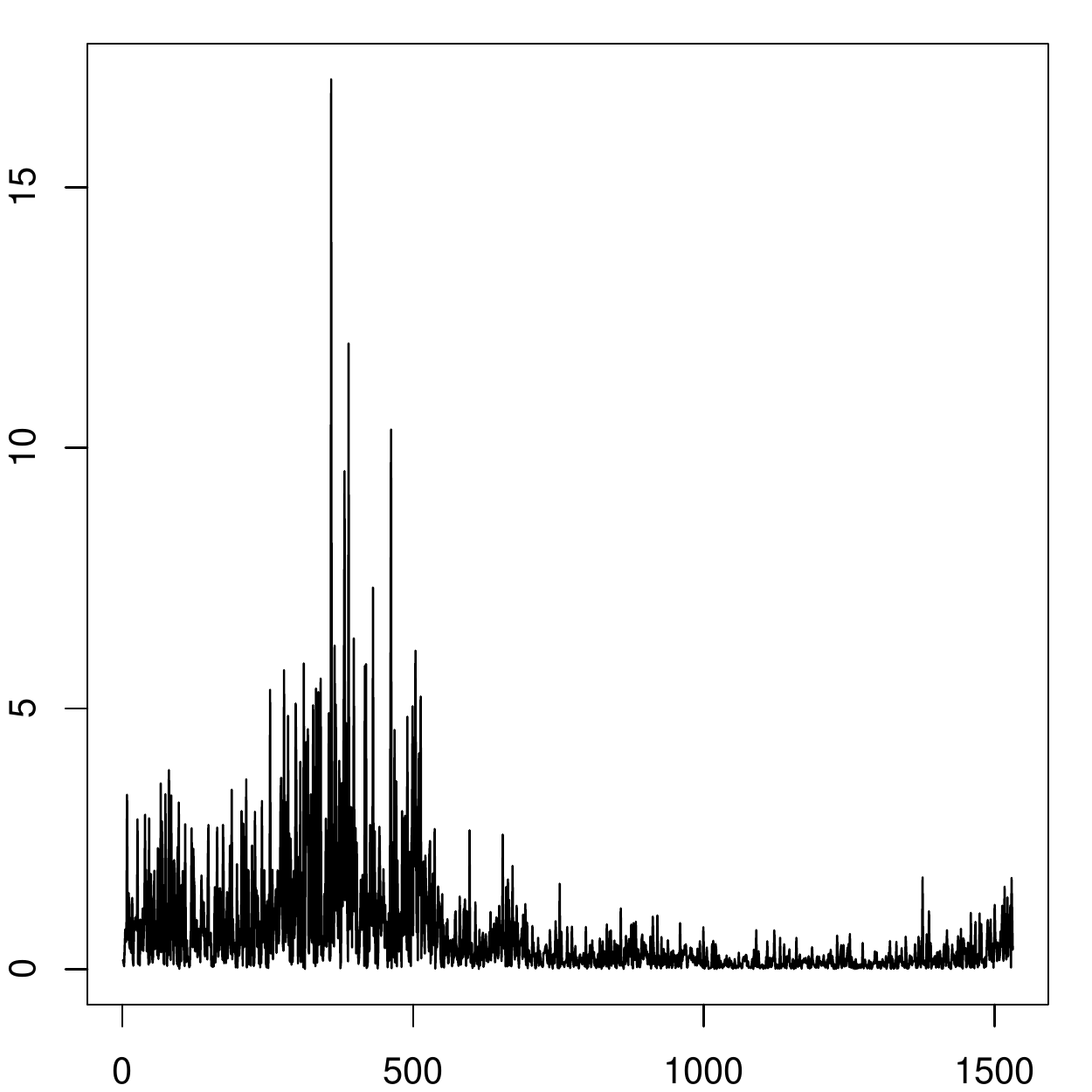}\\
$(a)$ $T_0=0$, $n=108$  & $(b)$ $T_1=0.17$, $n=109$ & $(c)$ & $(d)$
\end{tabular}}
 \caption{{\small  $(a)$ Initial point configuration of the simulation considered in Section~\ref{simu:card}; $(b)$ Configuration after the first jump (i.e. at $t=T_1$) that was a birth. This second plot shows the new born point in red (pointed out by a  red arrow); the other points show as black dots while their initial locations at time $T_0$ are recalled in gray. 
$(c)$ Number of points before each jump time; $(d)$ Time elapsed between each jump time.}}
\label{sim-samples} 
\end{center}
\end{figure}

For this simulation, we consider the estimation of $\alpha(x)$ at each $x=X_{T_i}$, $i=0,\dots,N_T$. This choice for the target point configurations $x$ is a way to analyse the temporal evolution of the intensity, as we intend to do in the real-data application of Section~\ref{sec:data}.
Assuming for the moment that we observe the process continuously in $[0,T]$, we consider the estimator \eqref{est_cont} and the following four different choices for $k_T$.  The first two strategies are fully non-parametric and follow Example~\ref{ex Hausdorff} : the first one depends on the Hausdorff distance $d_H$, while the second one depends on the distance $d_{\kappa}$ where we have chosen $\kappa=\sqrt 2$ to be the diameter of $W$. The two other strategies assume (rightly) that $\alpha(x)$ only depends on the cardinality of $x$. They respectively correspond to the cases (i) and (ii)  of Example~\ref{ex simple}.  Each time needed, we select the bandwidth by likelihood cross-validation as explained in Section~\ref{CV} and the kernel function $k$ in \eqref{generalkT} is just the standard Gaussian density.  The result of these four estimations are showed in Figure~\ref{fig:alphaest}. The first row depicts the evolution of the true value of $\alpha(X_{T_i})$ in black, for $i=0,\dots,N_T$, along with its estimation in red. The second row shows the scatterplot $(n(X_{T_i}),\hat\alpha(X_{T_i}))$ along with the ground-truth curve $\exp(5(n(X_{T_i})/100-1))$.

\begin{figure}[ht]
\begin{center}
{\small
\begin{tabular}{cccc}
 \includegraphics[scale=.3]{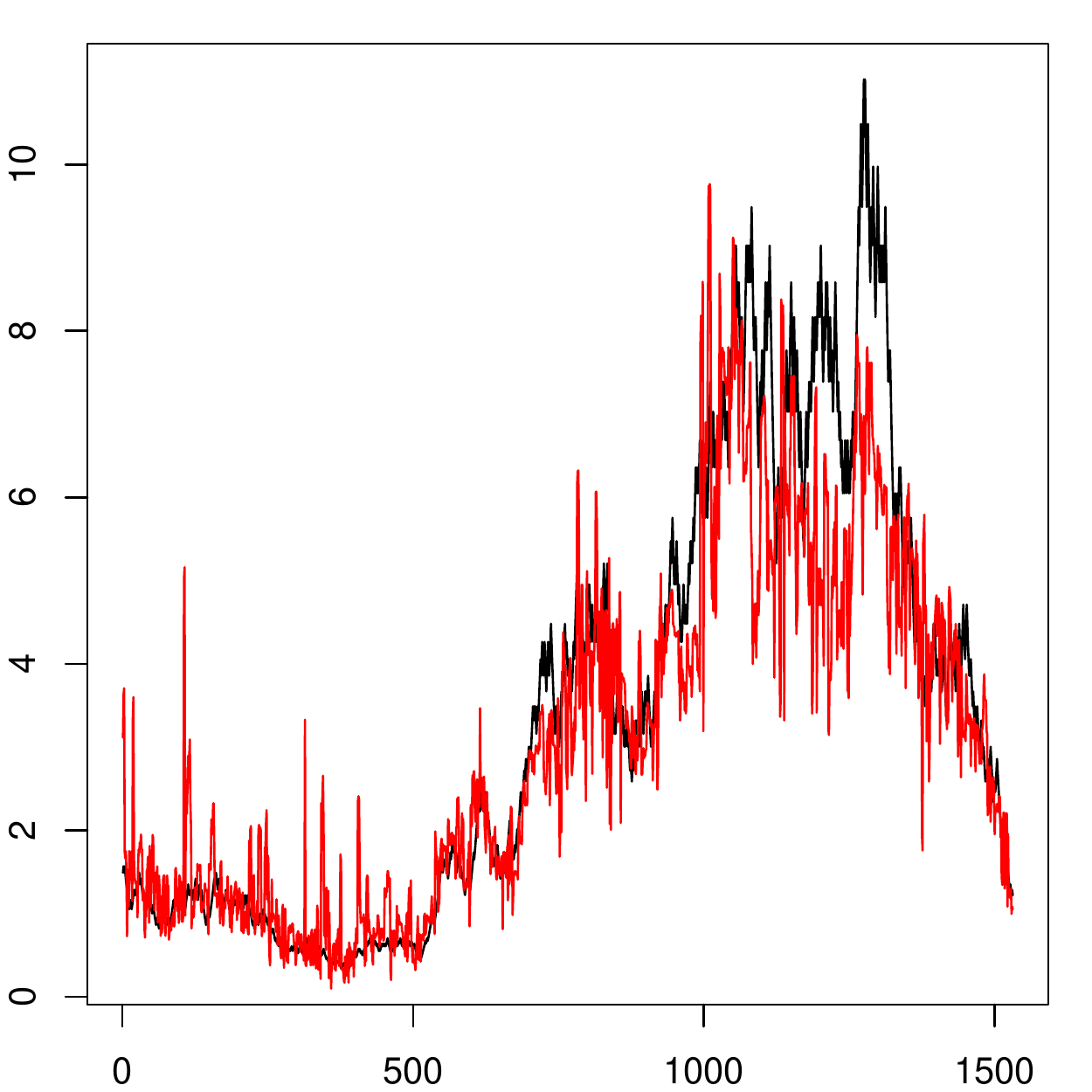} & \includegraphics[scale=.3]{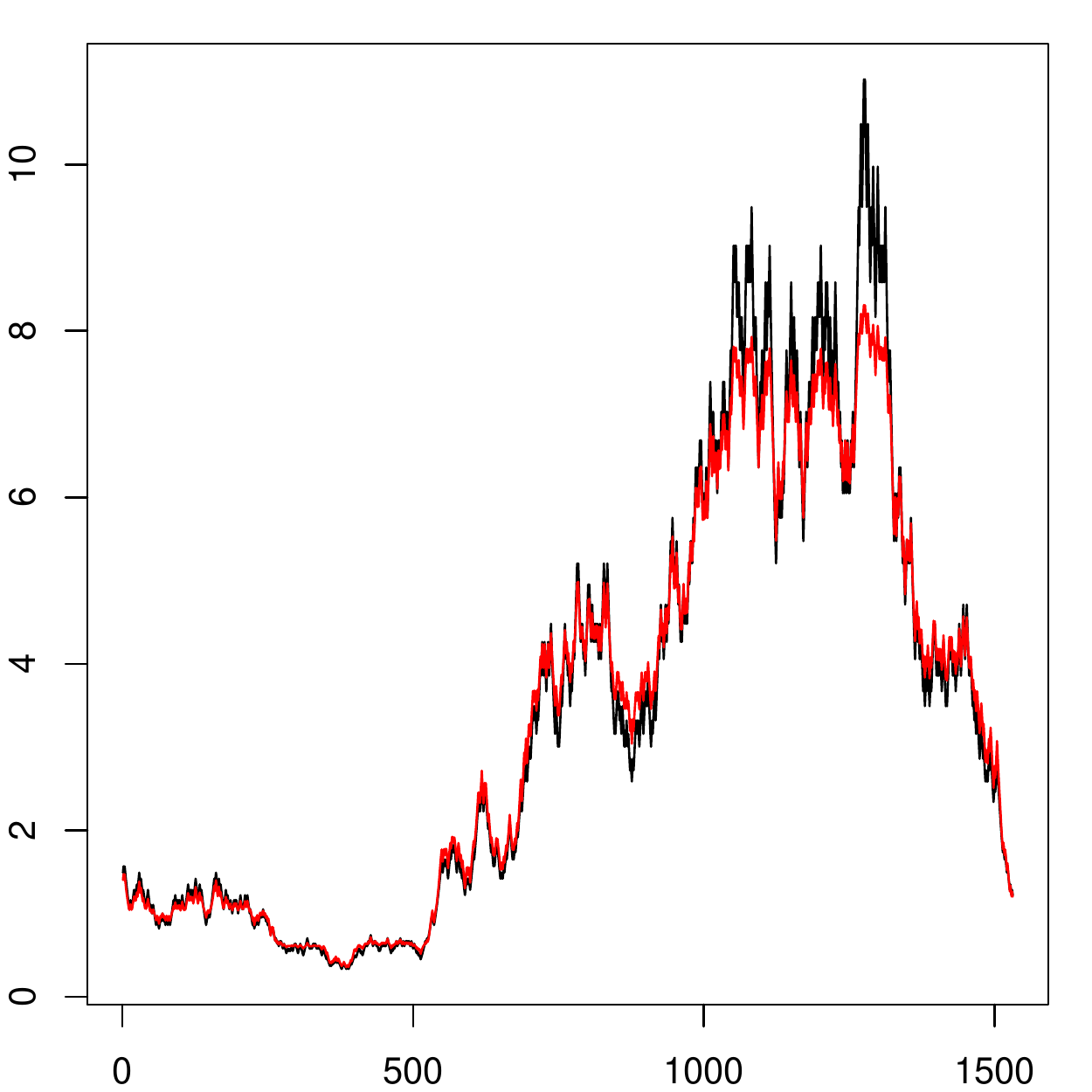} & \includegraphics[scale=.3]{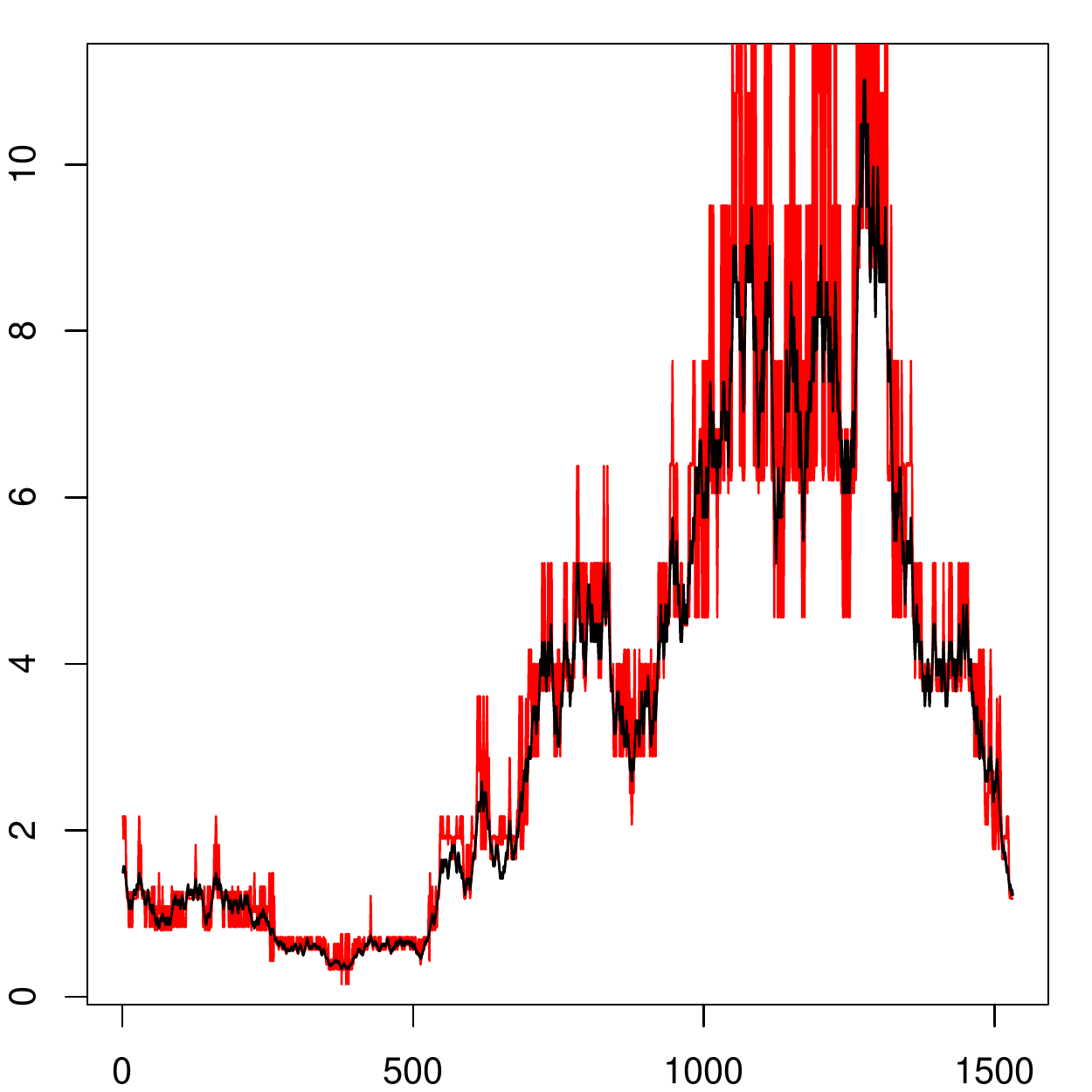} & \includegraphics[scale=.3]{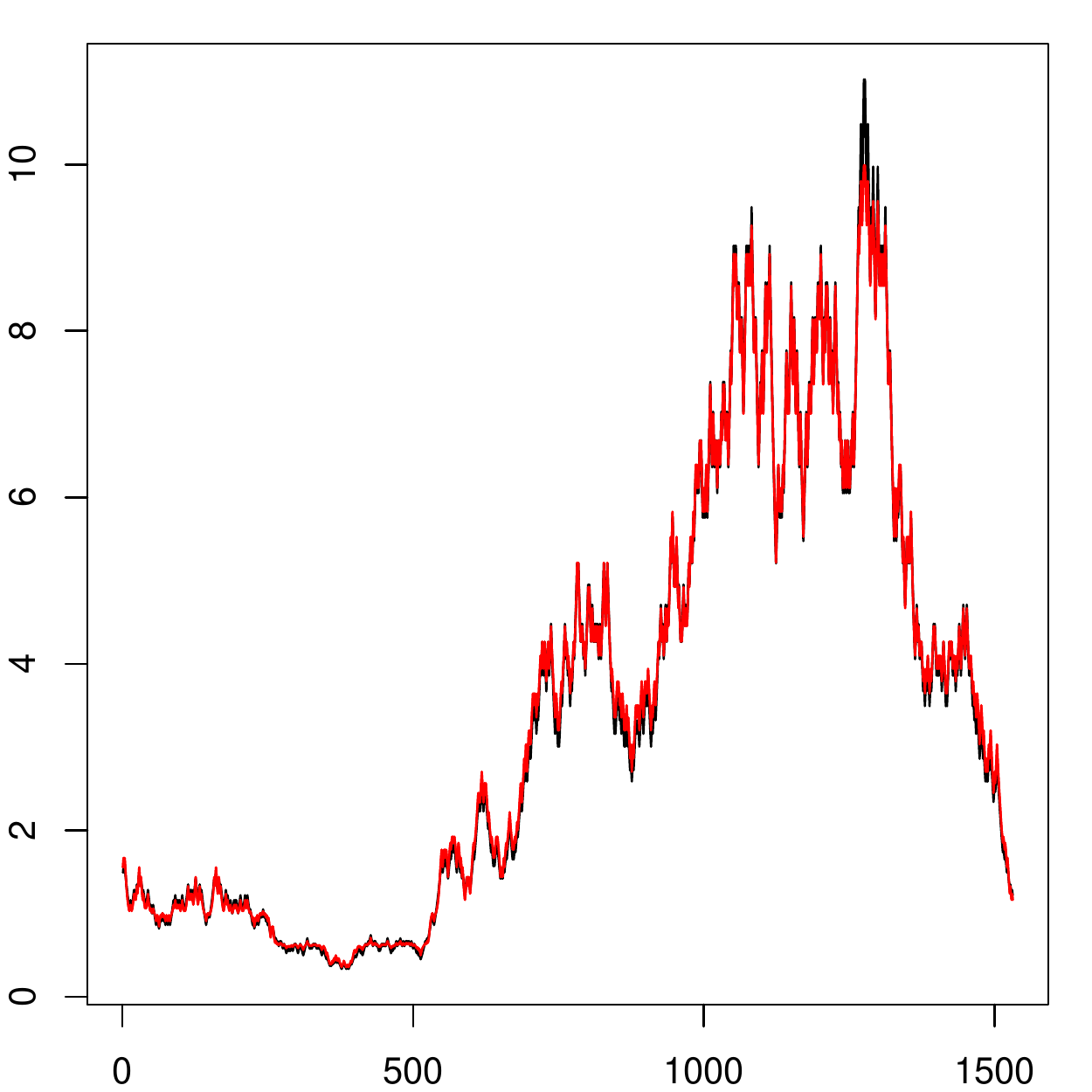} \\ 
 (a) & (b) & (c) & (d) \\
 \includegraphics[scale=.3]{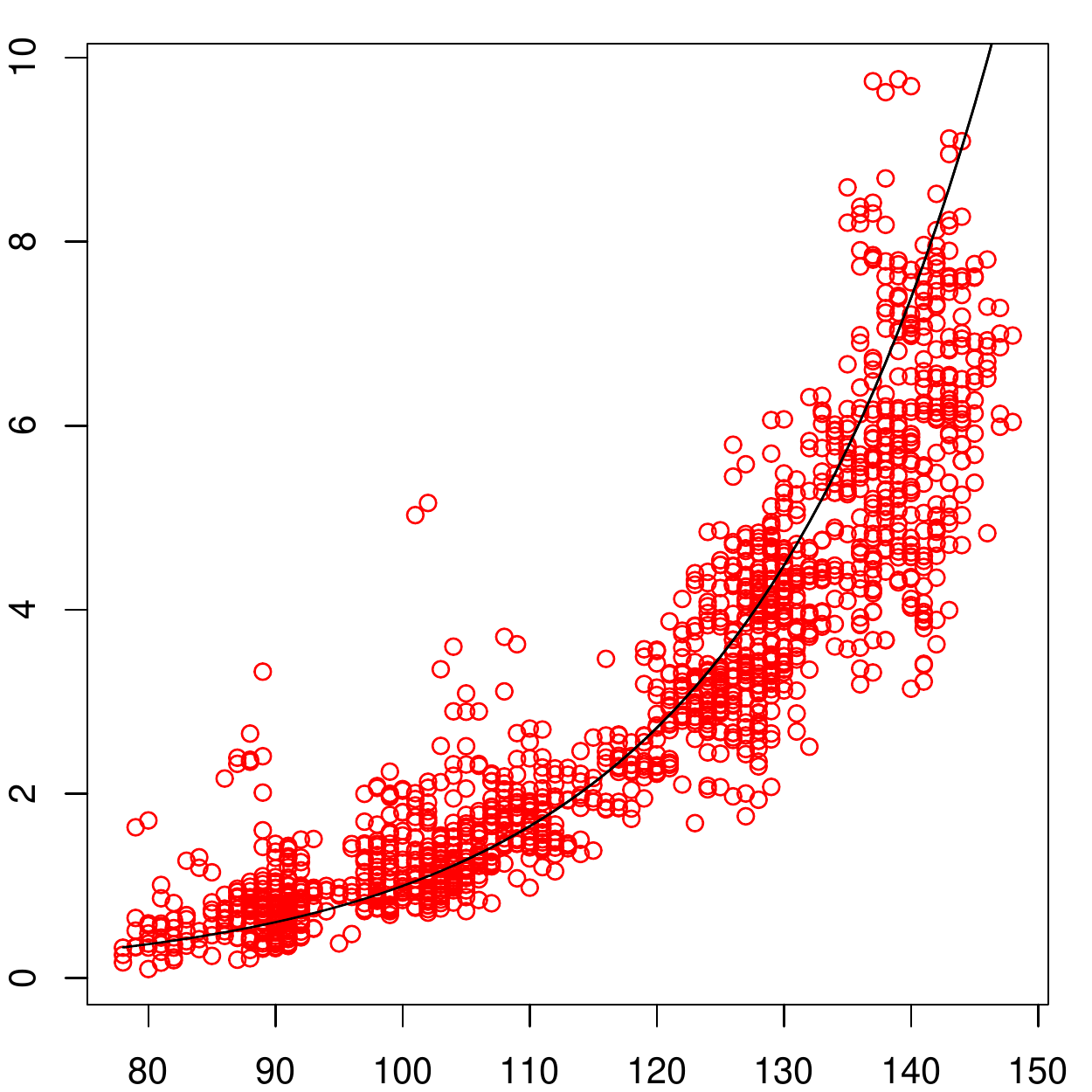} & \includegraphics[scale=.3]{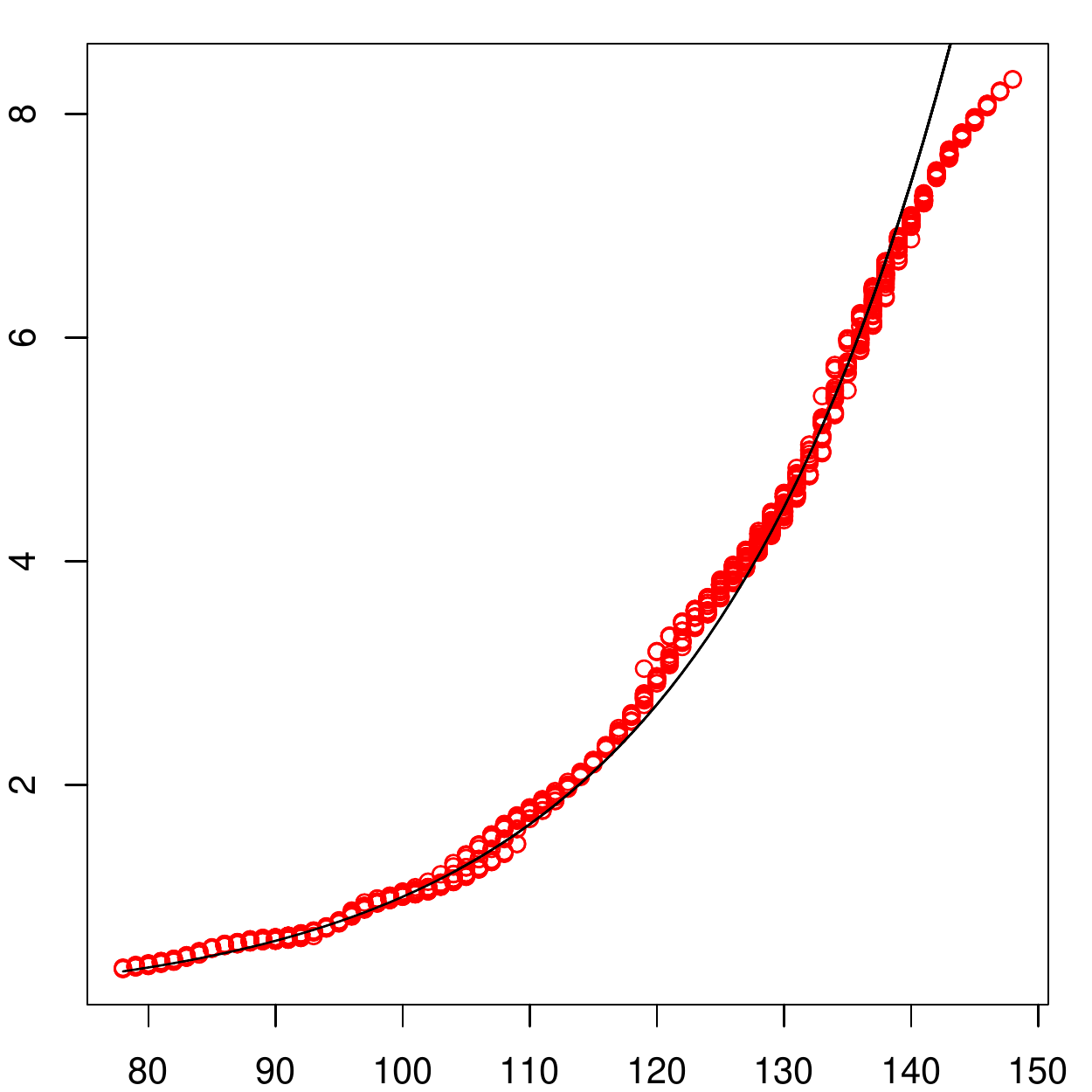} &  \includegraphics[scale=.3]{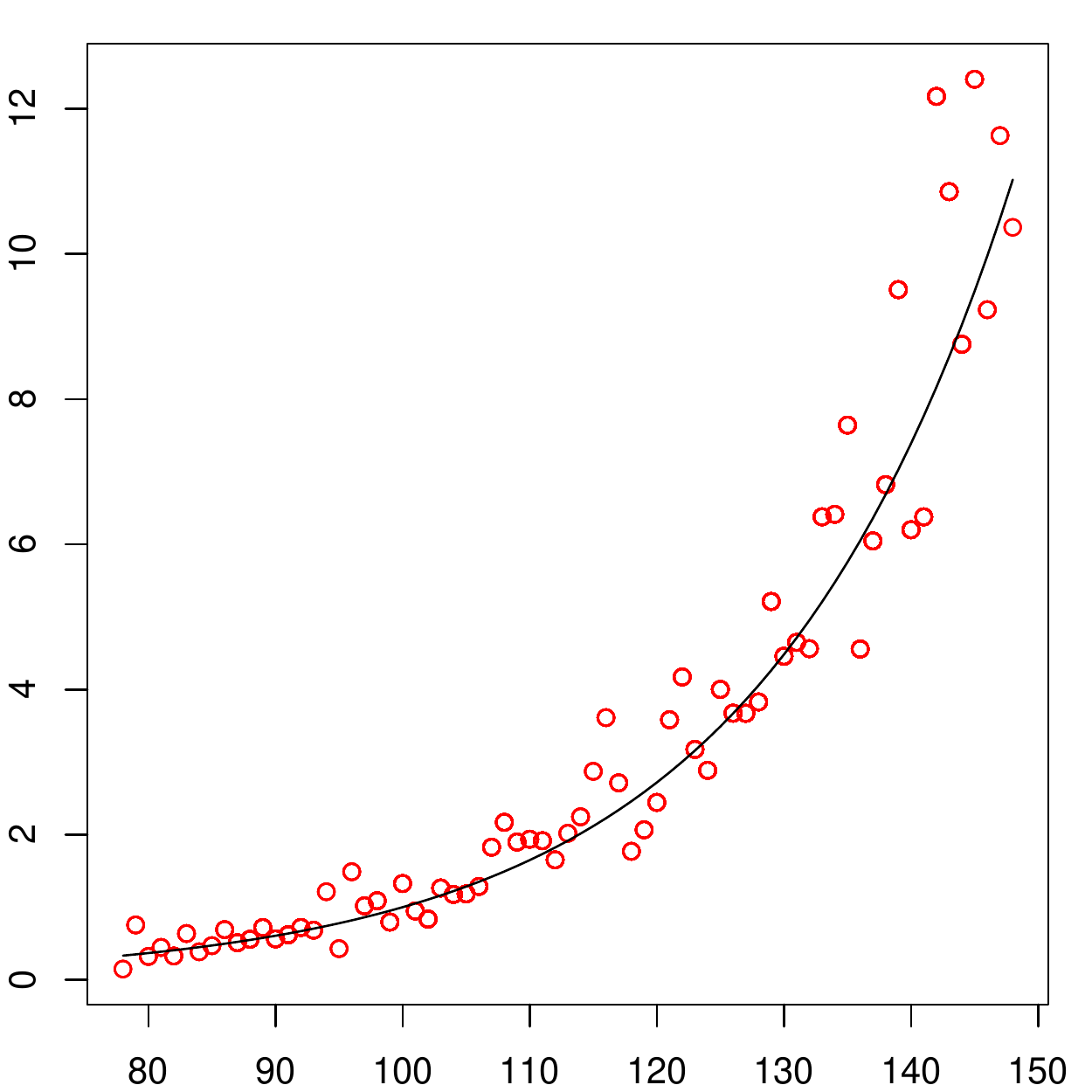} & \includegraphics[scale=.3]{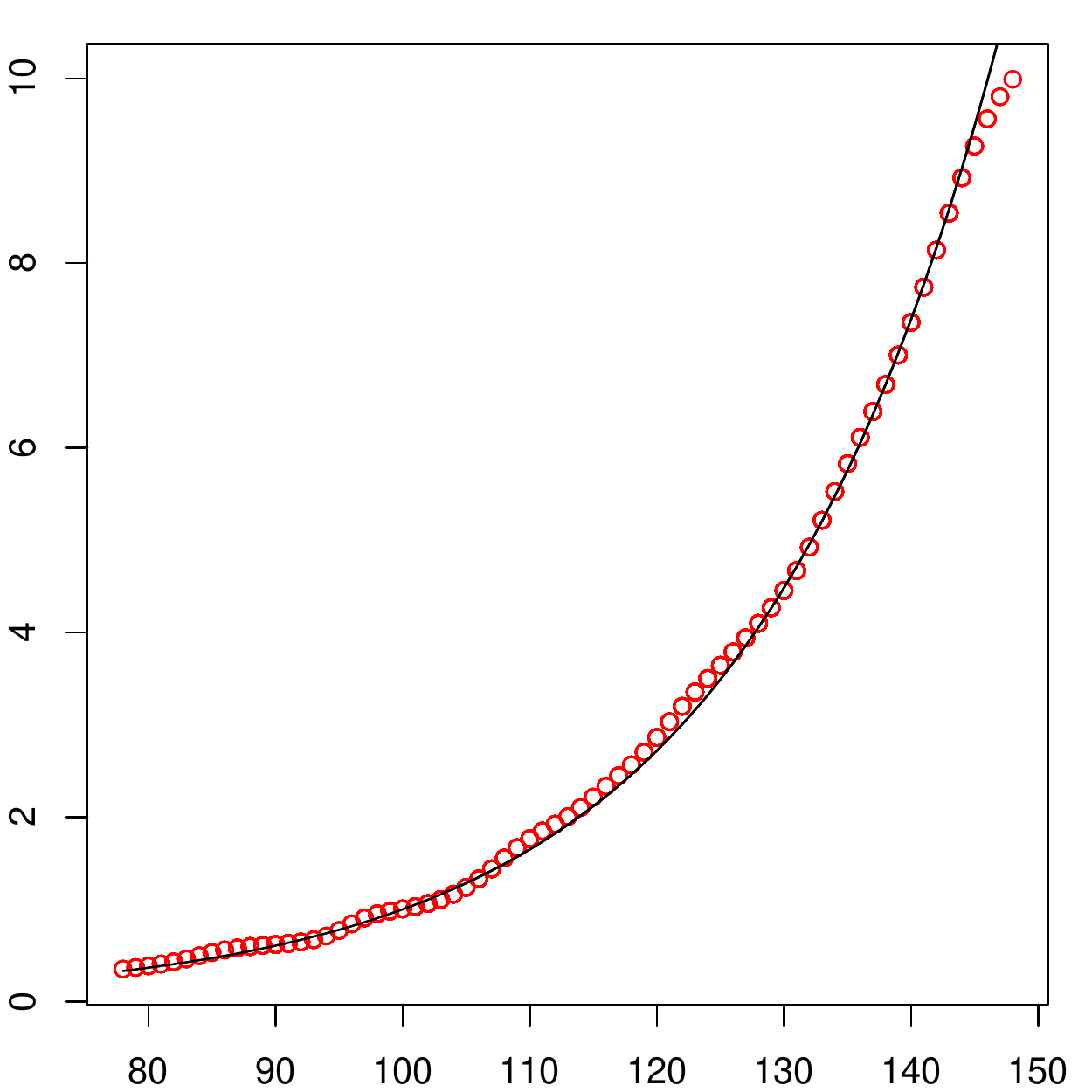} \\ 
 (e) & (f) & (g) & (h) 
\end{tabular}}
 \caption{{\small (a) True value of $\alpha(X_{T_i})$ in black, for $i=0,\dots,N_T$, along with its estimation in red, for the simulation of Section~\ref{simu:card}, using the estimator \eqref{est_cont} with the kernel $k_T$ specified in Example~\ref{ex Hausdorff}, i.e. based on the Hausdorff distance $d_H$; (b) Same as in (a) when the distance in $k_T$ is $d_\kappa$; (c)  Same as in (a) when $k_T$ is as in Example~\ref{ex simple} (i); (d) Same as in (a) when $k_T$ is as in Example~\ref{ex simple} (ii); (e)-(h) Ground-truth curve $\exp(5(n(X_{T_i})/100-1))$, in black, along with the scatterplots $(n(X_{T_i}),\hat\alpha(X_{T_i}))$, in red, for the same estimators as in (a)-(d) respectively.}}
\label{fig:alphaest} 
\end{center}
\end{figure}

From the two left scatterplots of Figure~\ref{fig:alphaest}, we see that the  two non-parametric estimators based on $d_H$ and $d_\kappa$ are able to detect a dependence between $\alpha(x)$ and $n(x)$. Between them, the estimator based on $d_\kappa$ seems by far more accurate. Concerning the two other estimators that assume a dependence in $n(x)$, the second one is much less variable. As indicated in Example~\ref{ex simple}, this is because this estimator exploits the underlying regularity properties of $\alpha(x)$ in $n(x)$, unlike the other one. It is remarkable that the non-parametric estimator based on $d_\kappa$, because it takes advantage of the smoothness of $\alpha(x)$, achieves better performances than the third estimator, although this one assumes the dependence in $n(x)$. All in all, the best results come from the last estimator, defined in  Example~\ref{ex simple}-(ii), that takes advantage of both the dependence in $n(x)$ and the smoothness of $\alpha(x)$.  These conclusions are confirmed by the mean squared errors reported in the first row of Table~\ref{table:mse}, computed from the estimation 
of $\alpha(x_j)$ at 100 point configurations $x_j=X_{T_j}$ for $j$ regularly sequenced from 1 to $N_T=1530$.

\begin{table}
\centering
\begin{tabular}{l|cccc}
  & $d_H$ & $d_{\kappa}$ & Ex. \ref{ex simple} (i) & Ex. \ref{ex simple} (ii)  \\ \hline
Continuous time obs. & 151 (34) &  18  (8) &  93 (21) & 1.8 (1.1) \\
Discrete time,  $m+1=5000$  &   266 (62) &  18  (8) & 141 (44) &  3.0  (1.9) \\
Discrete time, $m+1=1000$ & 226 (56) &  20  (9) &  NA &  4.1  (1.8)\\
Discrete  time, $m+1=100$ & 376 (94) &  36  (16) & NA & 36  (19) \\
Discrete time, $m+1=30$ & 767 (131) & 182  (48) & NA & 128  (37)
\end{tabular}
\caption{{\small Mean square errors of the estimation of $\alpha(x_j)$ at 100 different point configurations $x_j$, along with their standard deviations in parenthesis. The same four estimators as in Figure~\ref{fig:alphaest} are considered. 
First row:  estimation based on continuous time observations in $[0,T]$;  other rows: estimation based on $m+1$ observations regularly spaced in $[0,T]$  implying $N_T/m=1530/m$ jumps in average between each observation. 
 }}
\label{table:mse}
\end{table}

In order to assess the effect of discretisation, we consider $m+1$ observations $X_{t_j}$ taken from the above simulated continuous trajectory, regularly spaced from $t_0=0$ to $t_m=T=1000$. This implies an average of $N_T/m=1530/m$ jumps between two observations. We then apply the estimator \eqref{est discrete} of $\alpha(x)$ at the same 100 point configurations $x_j$ as above and the same four choices of $k_T$, based on the $m+1$ observations. For the approximation $D_j$ of the number of jumps between two observations $X_{t_{j-1}}$ and $X_{t_{j}}$,  we take the number of new points observed in $X_{t_{j}}$ plus the number of points having disappeared from $X_{t_{j-1}}$. The mean square errors of the results, for different values of $m$, are reported in Table~\ref{table:mse}, along with their standard deviations. Note that for some configurations $x_j$, there was no observation with cardinality $n(x_j)$ making impossible the computation of the third estimator, which explains the presence of some NA's in the table. As seen from Table~\ref{table:mse}, the comparison between the four estimators are in line with the continuous case and their performances increase with $m$, as could be expected. For illustration, Figure~\ref{fig:discrete} compares the true values of $\alpha(x_j)$ for $j=1,\dots,100$ with their estimation by the second estimator based on $d_\kappa$ (in blue) and the last estimator (in red), for the same different values of $m$ as in Table~\ref{table:mse}. While the quality of estimation degrades when $m$ decreases, it remains quite decent even for small values of $m$, in particular when $m+1=100$ implying more than 15 jumps in average between each observation.

\begin{figure}[ht]
\begin{center}
{\small
\begin{tabular}{cccc}
 \includegraphics[scale=.3]{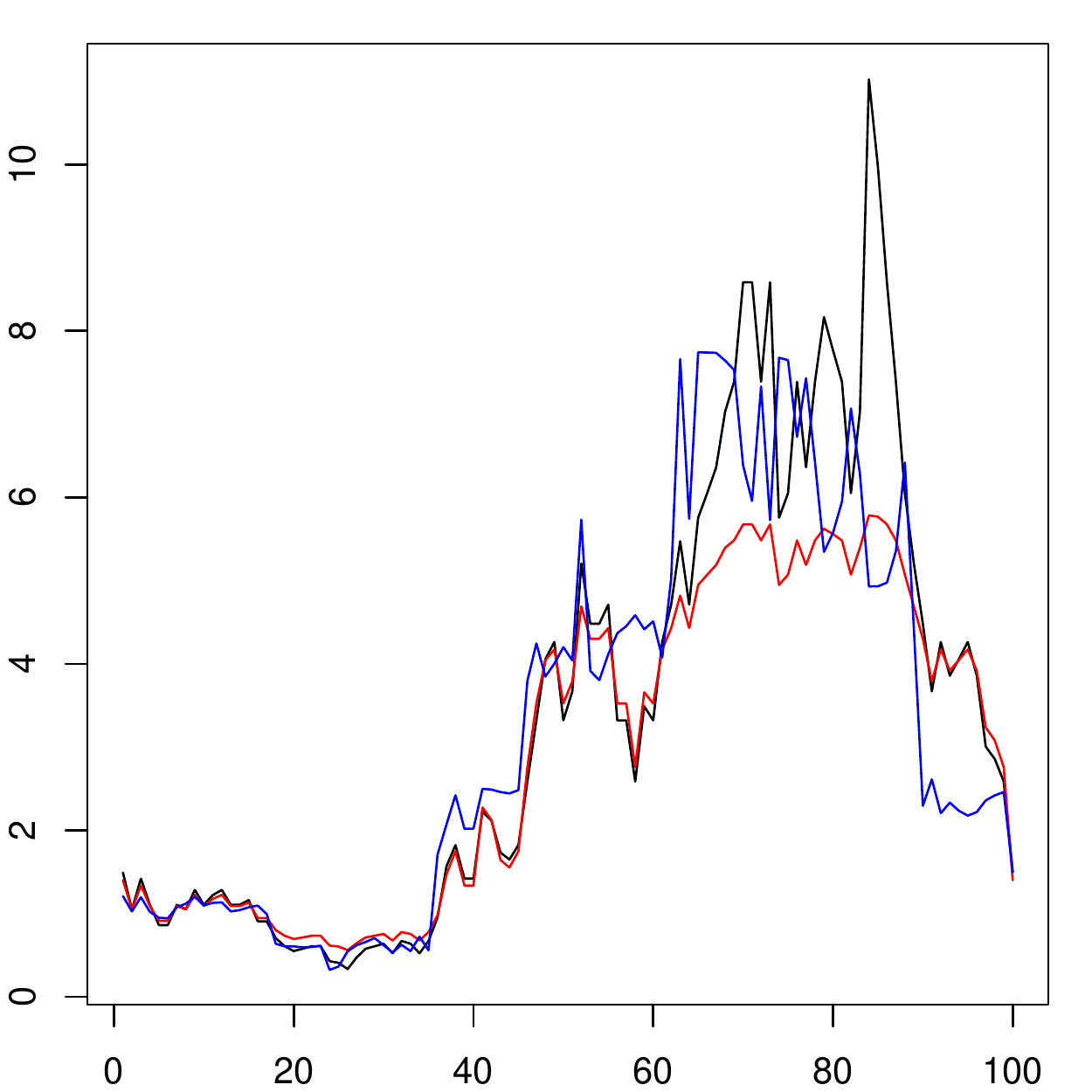} & \includegraphics[scale=.3]{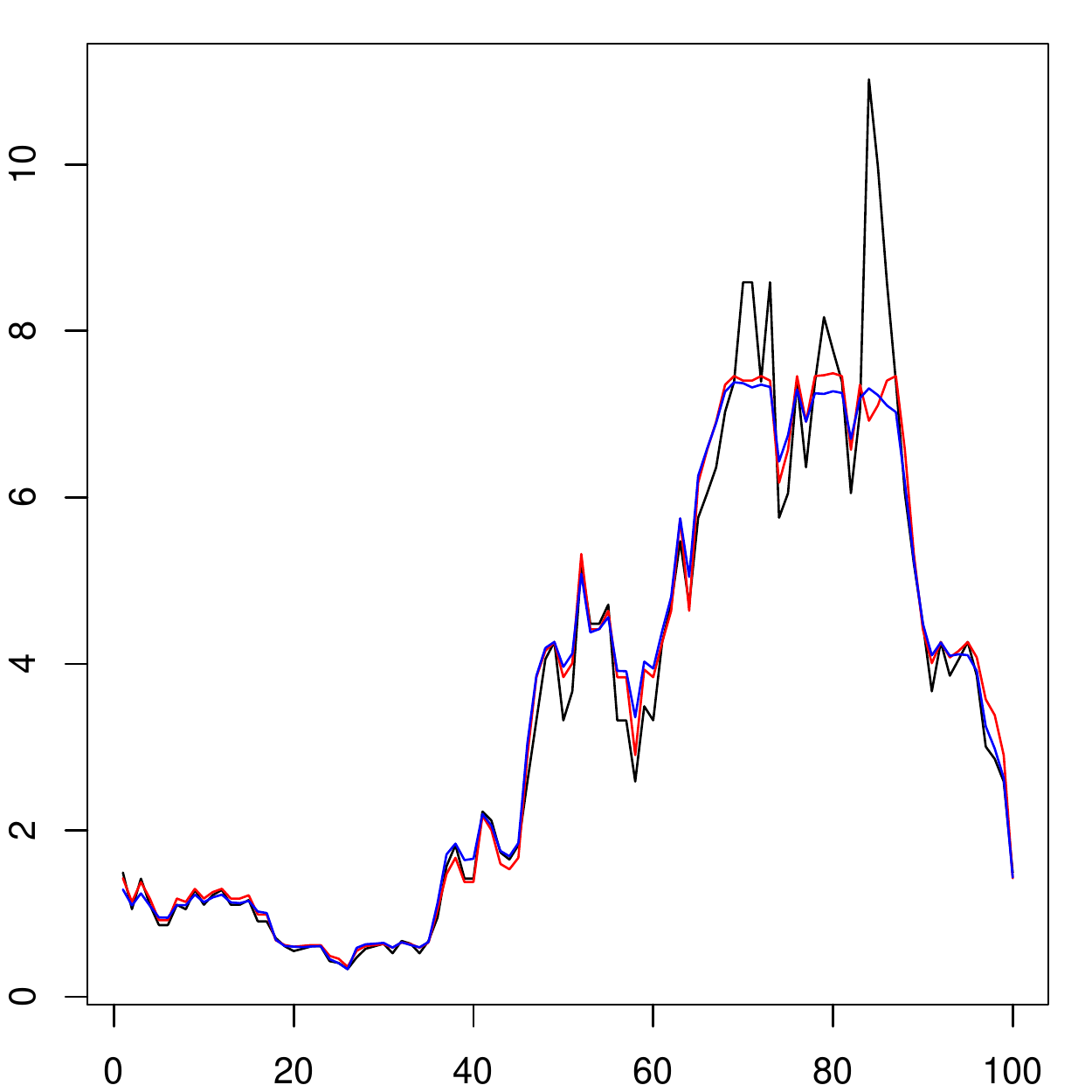} & \includegraphics[scale=.3]{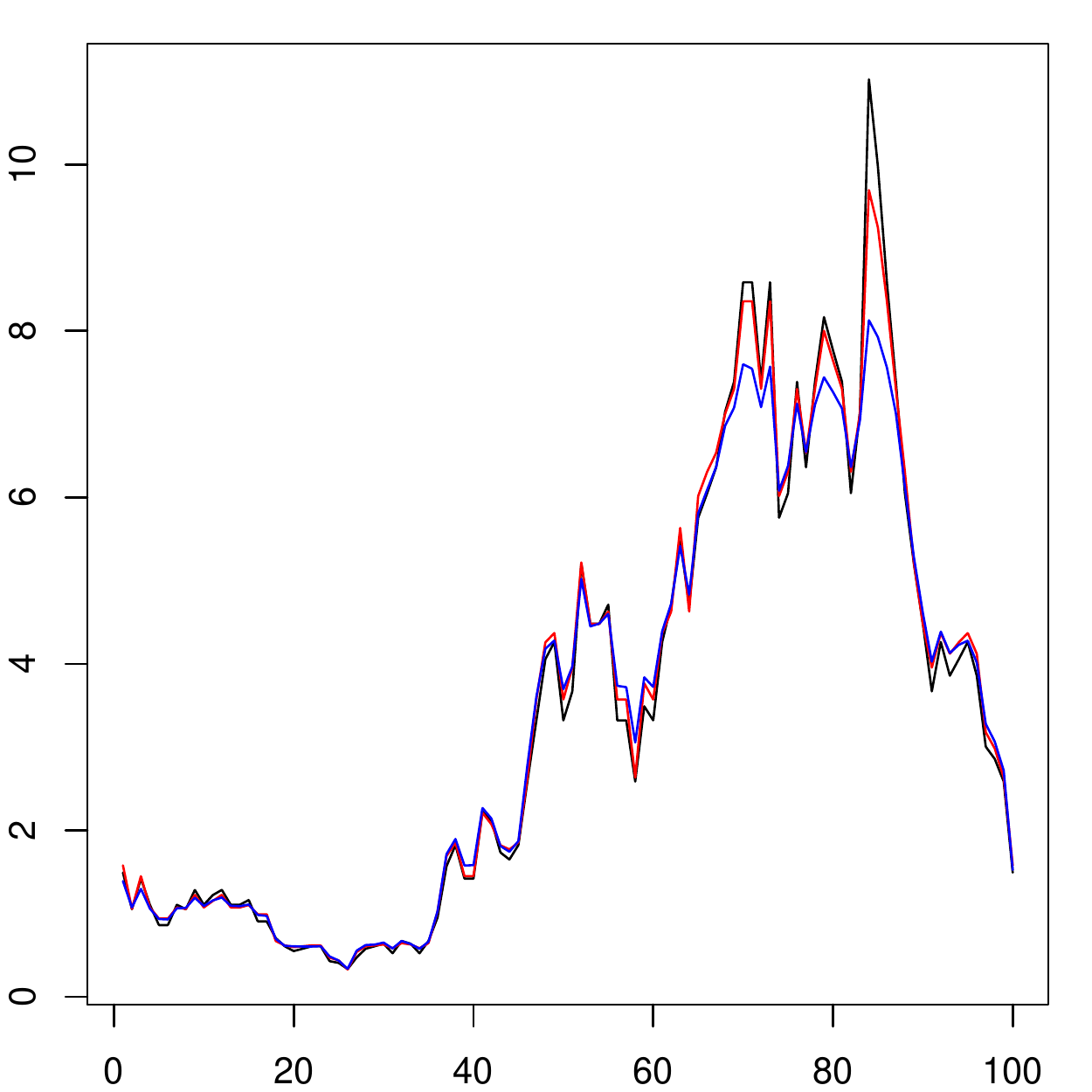} & \includegraphics[scale=.3]{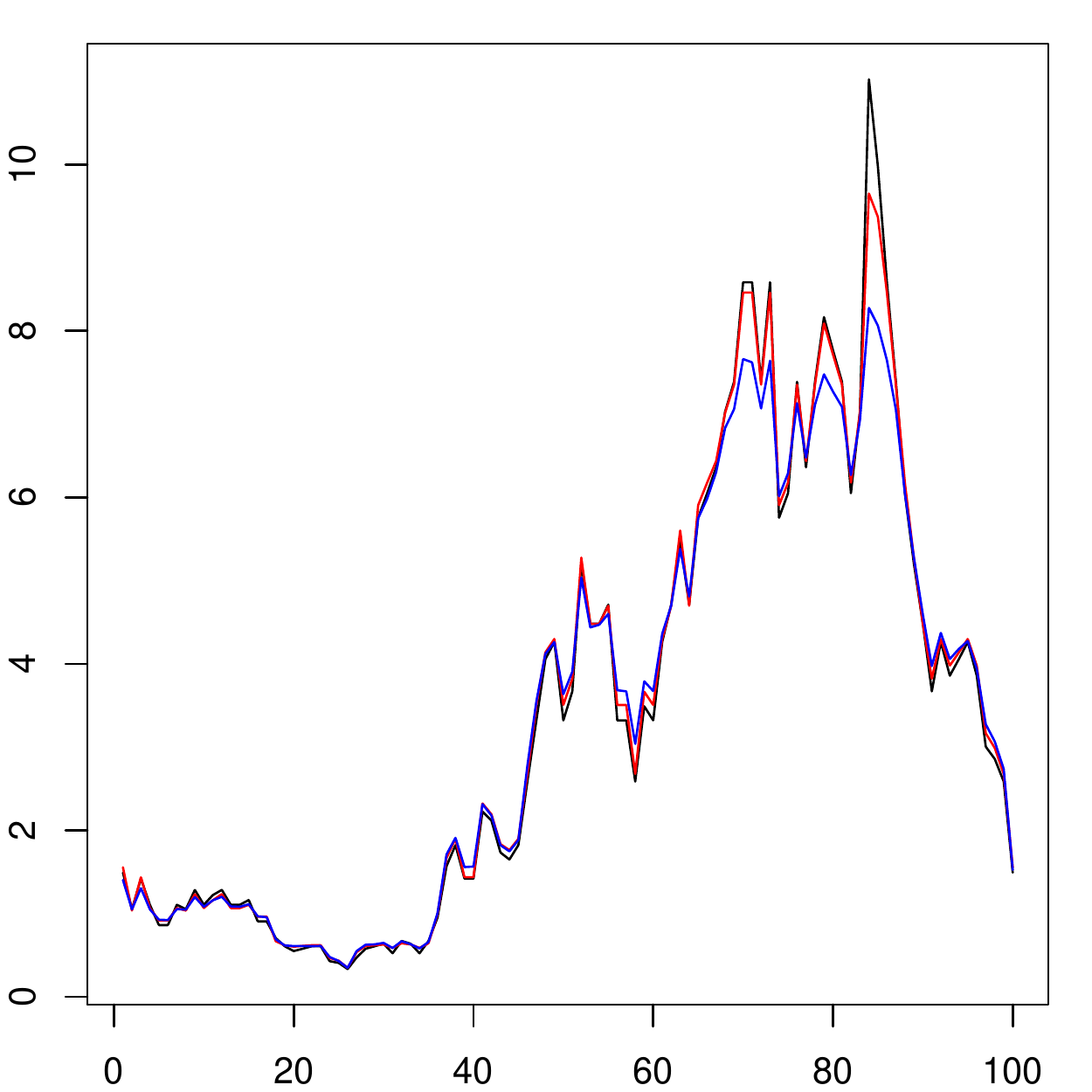} \\ 
(a) $m+1=30$ & (b) $m+1=100$ & (c) $m+1=1000$ & (d) $m+1=5000$\\
 \includegraphics[scale=.3]{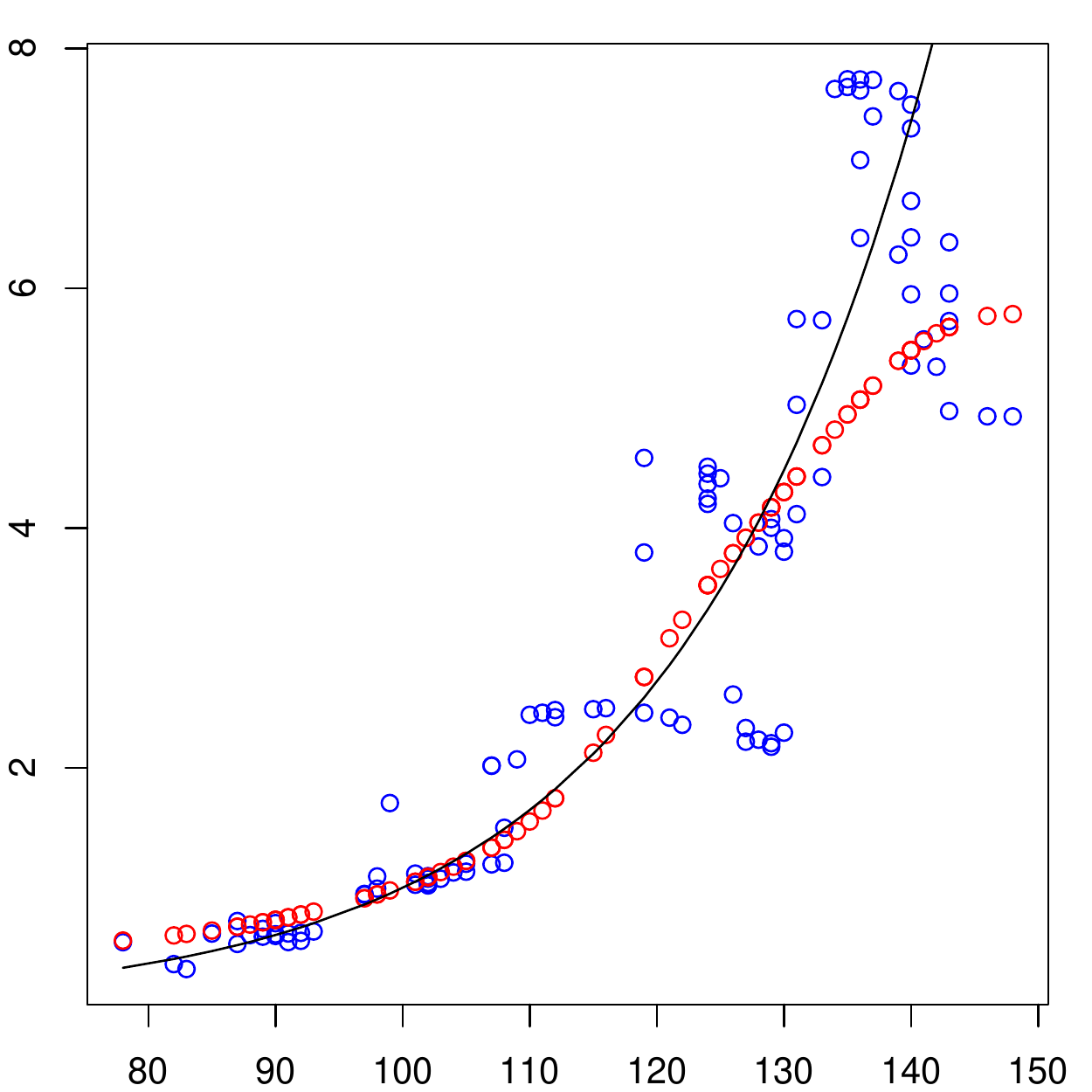} & \includegraphics[scale=.3]{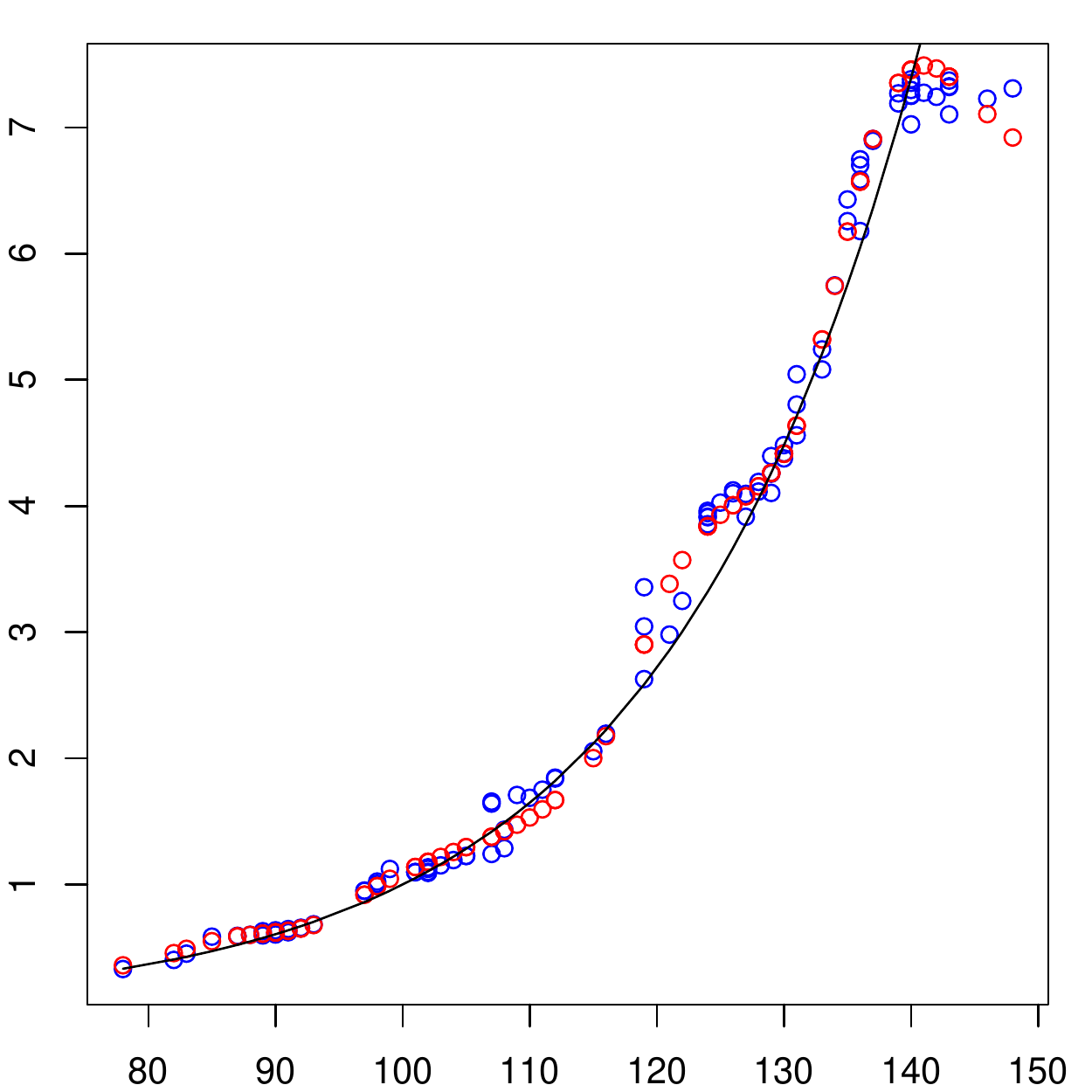} &  \includegraphics[scale=.3]{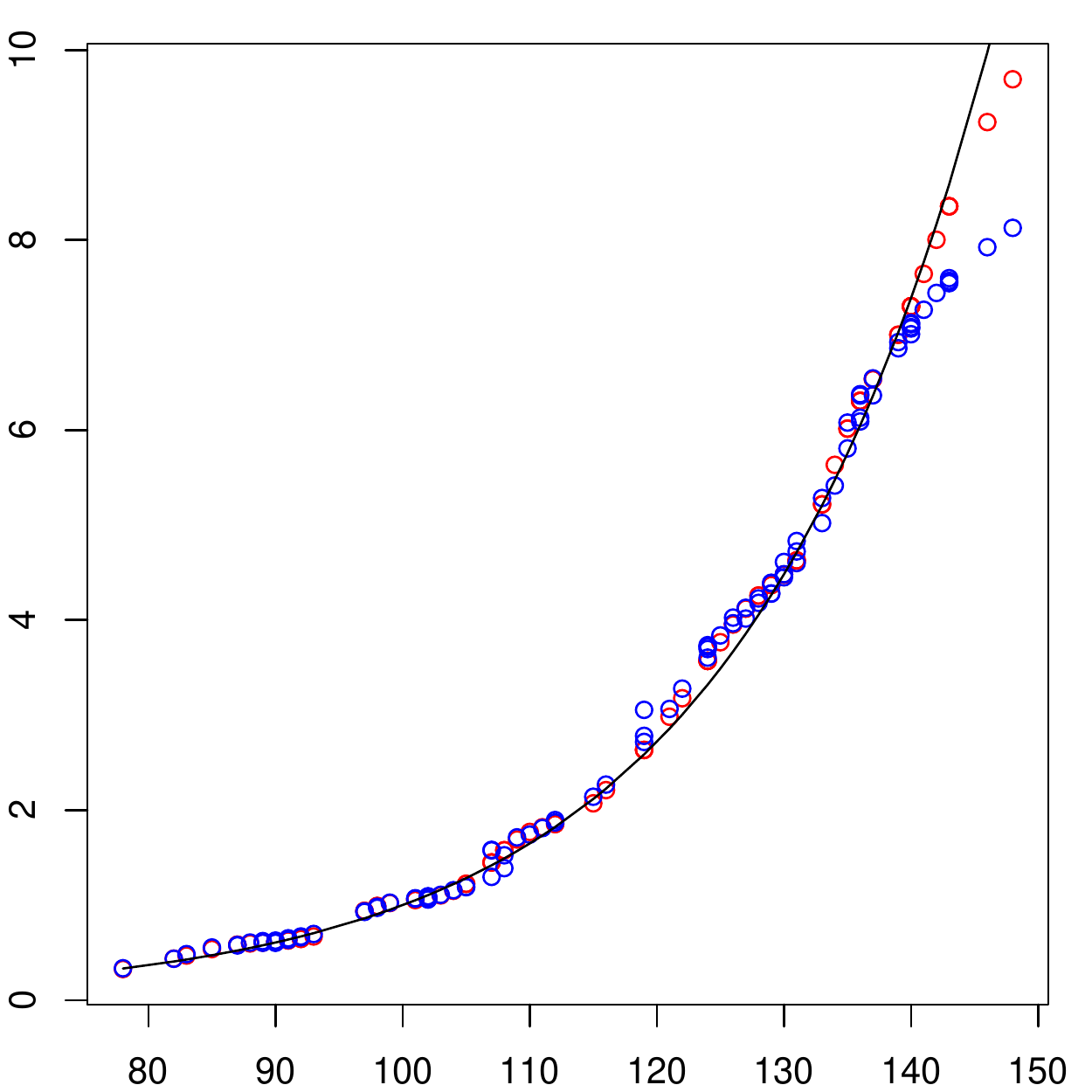} & \includegraphics[scale=.3]{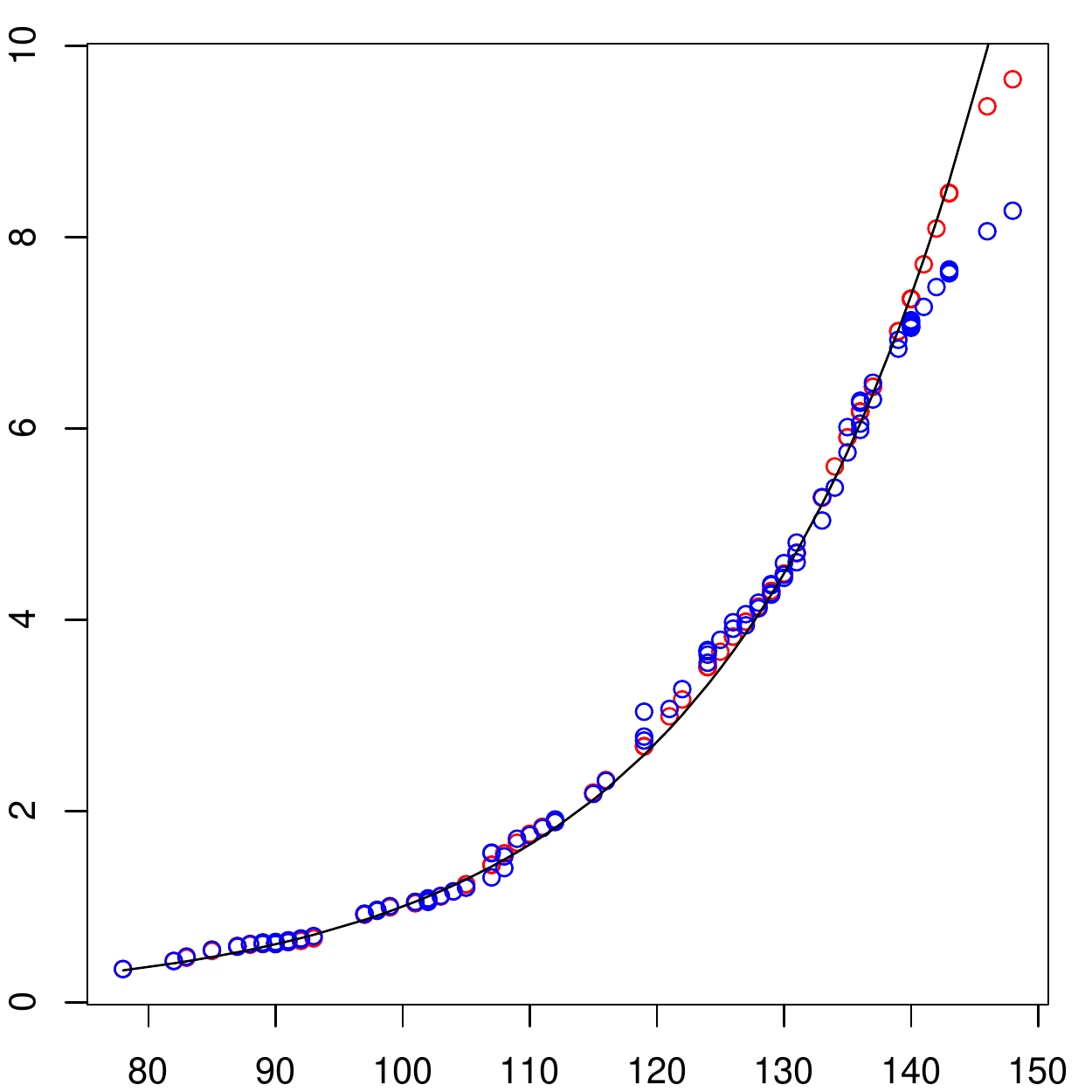} \\ 
\end{tabular}}
\caption{{\small (a) Top: true value of $\alpha(x_j)$ in black, for 100 different $x_j$ extracted from the simulated continuous trajectory of Section~\ref{simu:card}, along with
 the discrete time estimations  $\hat\alpha_{(d)}(x_j)$ based on $m+1=30$ observations,  with the choice of distance $d_\kappa$, in blue, and with the choice of $k_T$ as in Example~\ref{ex simple}-(ii), in red. Bottom:  ground-truth curve $\exp(5(n(x_j)/100-1))$, in black, along with the scatterplots $(n(x_j),\hat\alpha(x_j))$ for the same two estimators; (b)-(d) Same plots as in (a) but with $m+1=100$, $m+1=1000$ and $m+1=5000$ observations, respectively. 
}}\label{fig:discrete} 
\end{center}
\end{figure}

\subsection{Second situation: a dependence on geometric characteristics}\label{simu:geom}

For this example, we simulate a birth-death-move process on $W=[0,1]^2$ 
where the birth intensity function and  the birth transition kernel depend on the Delaunay tessellation associated to the current configuration. Specifically, given a point pattern $x$ in $W$,  the birth transition kernel $K_\beta(x,.)$ is the probability distribution on $W$ which is constant in each cell of the Delaunay tessellation of $x$ and equal to the area of the cell (up to the normalisation). An example of such distribution is given in the leftmost plot of Figure~\ref{fig:areadel}. Note that in order to allow a birth outside the convex hull of $x$, we add the corners of $W$ in the construction of the tessellation. This distribution makes a birth more likely to occur in the large empty spaces in between the points of $x$. Concerning the birth intensity function, we let $\beta(x)=\exp(50maxarea(x))/25)$ where $maxarea(x)$ denotes the maximal area of the cells. This choice  implies that births appear more quickly if there is a large ``available'' empty space. An alternative natural choice would have been to take the mean area of the cells instead of the maximal area.  But  choosing the mean area would imply a strong dependence between $\beta(x)$ and $n(x)$, making this example a bit too close to the previous example treated in Section~\ref{simu:card}. This is because for stationary spatial point processes, the expected area of a typical Delaunay cell is inversely proportional to the intensity of the process (\cite{chiu2013}), making the empirical mean area very close to the inverse number of points. Choosing the maximal area instead allows us to investigate a  different situation, where $\beta(x)$ does not depend on $n(x)$ too much. 
Finally, we let the death intensity function depend on the number of points: $\delta(x)=(n(x)/100)\1_{n(x)\leq 1000}$, while the death transition kernel $K_\delta(x,.)$ is just the uniform distribution over the points of $x$. 
The move process between each jump is a Brownian motion with standard deviation $2.10^{-3}$, independently applied to each point of the configuration.

\begin{figure}[ht]
\begin{center}
{\small
\begin{tabular}{ccc}
 \includegraphics[scale=.37]{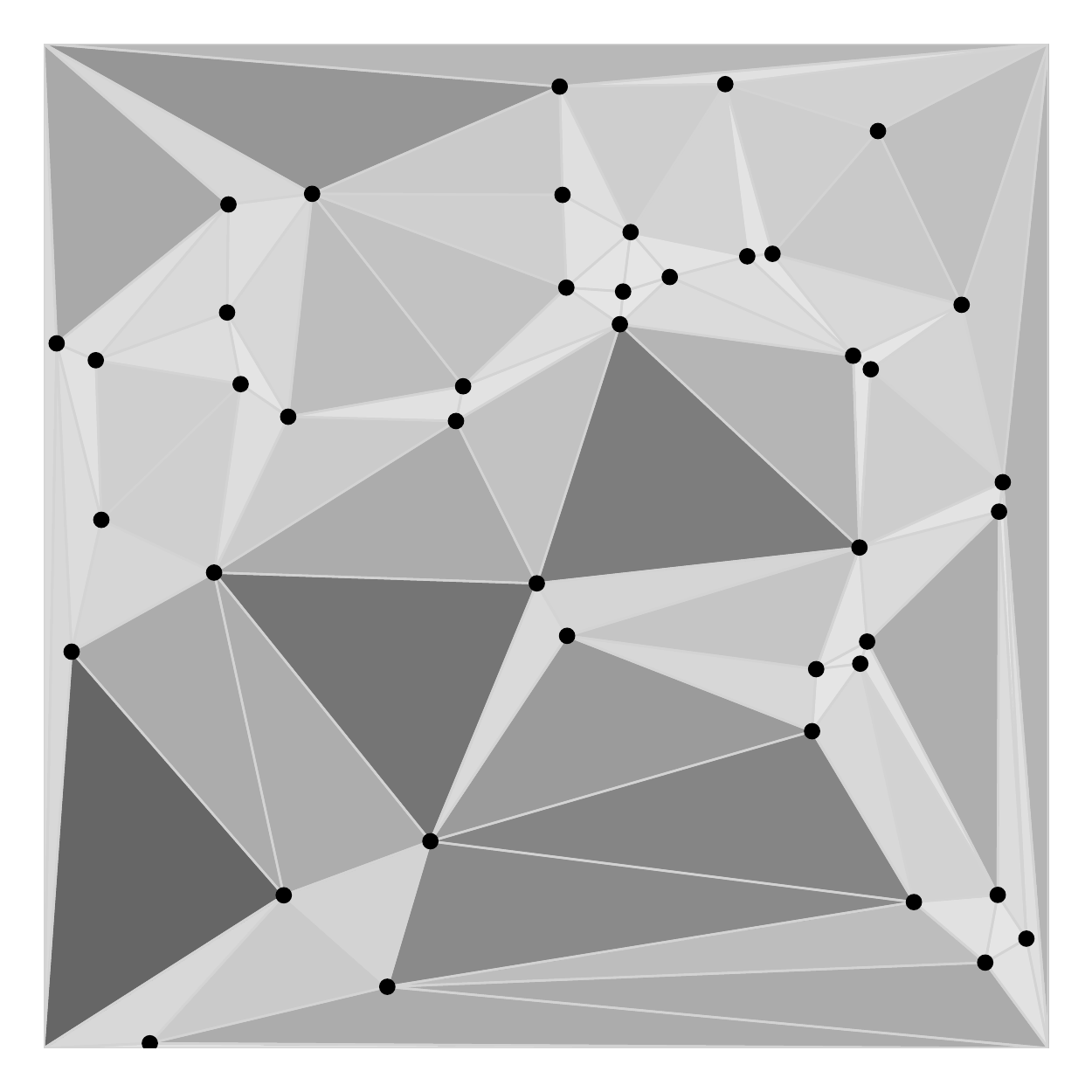} & \includegraphics[scale=.37]{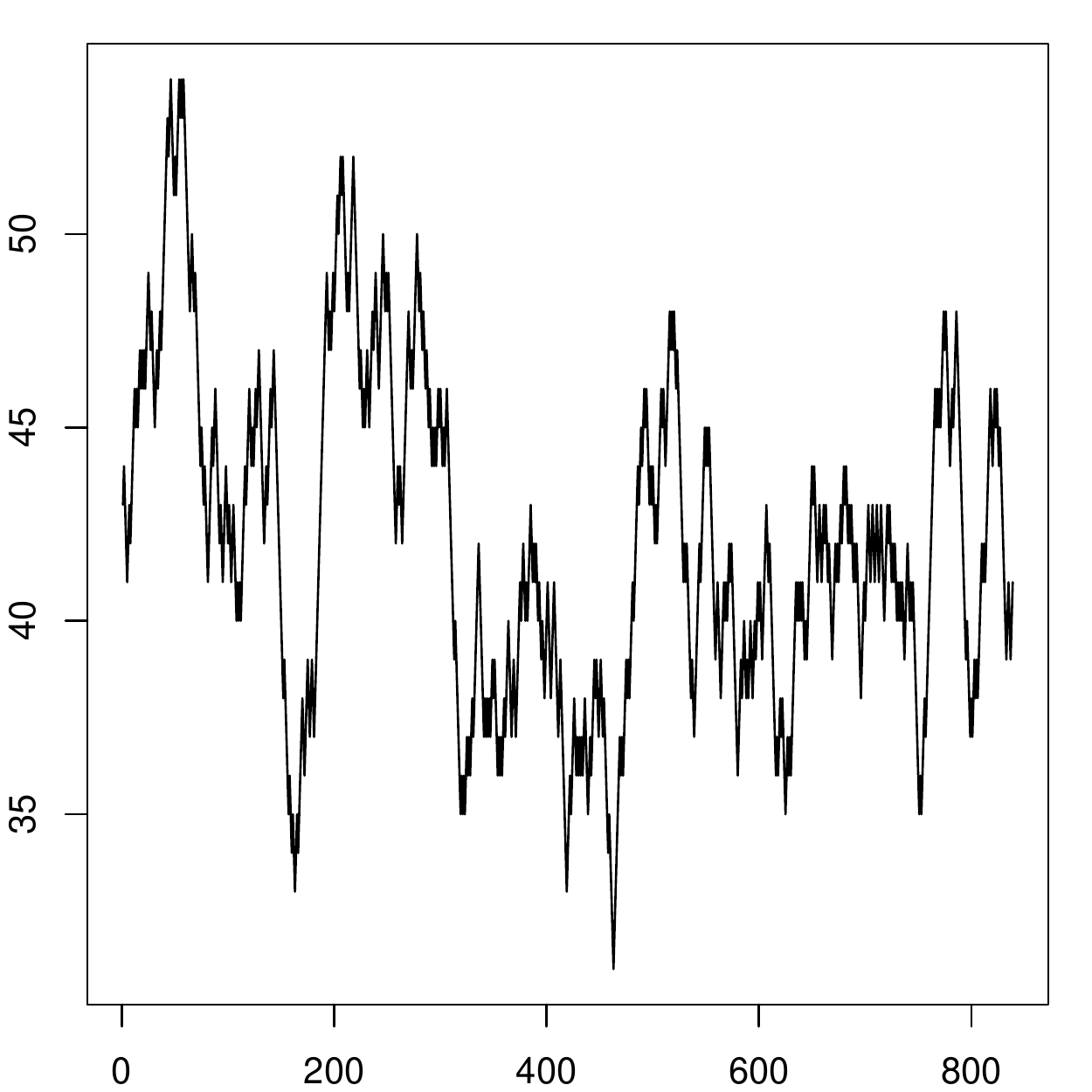} & \includegraphics[scale=.37]{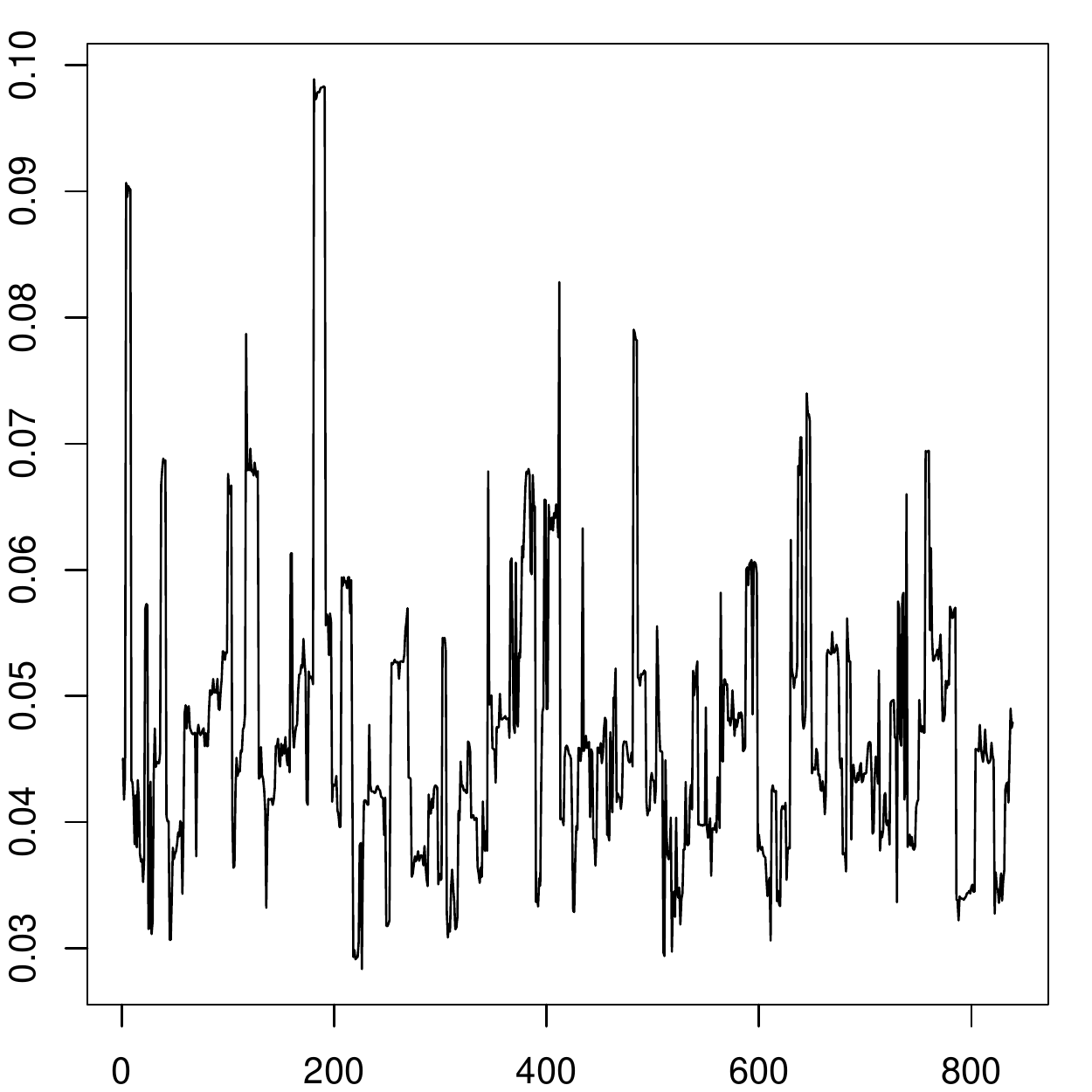}\\
 (a) & (b) & (c) \end{tabular}}
\caption{{\small (a) Initial point configuration at $t=0$ for the simulated trajectory of Section~\ref{simu:geom}, along with its Delaunay tessellation where each cell is coloured proportionally to its area. This coloured map represents the probability distribution  defining the birth transition kernel, while the birth intensity is an increasing function of the maximal area;
(b) Evolution of the number of points before each jump; (c) Evolution of the maximal area of the Delaunay cells before each jump.}}\label{fig:areadel} 
\end{center}
\end{figure}

\begin{figure}[ht]
\begin{center}
{\small
\begin{tabular}{cccc}
 \includegraphics[scale=.4]{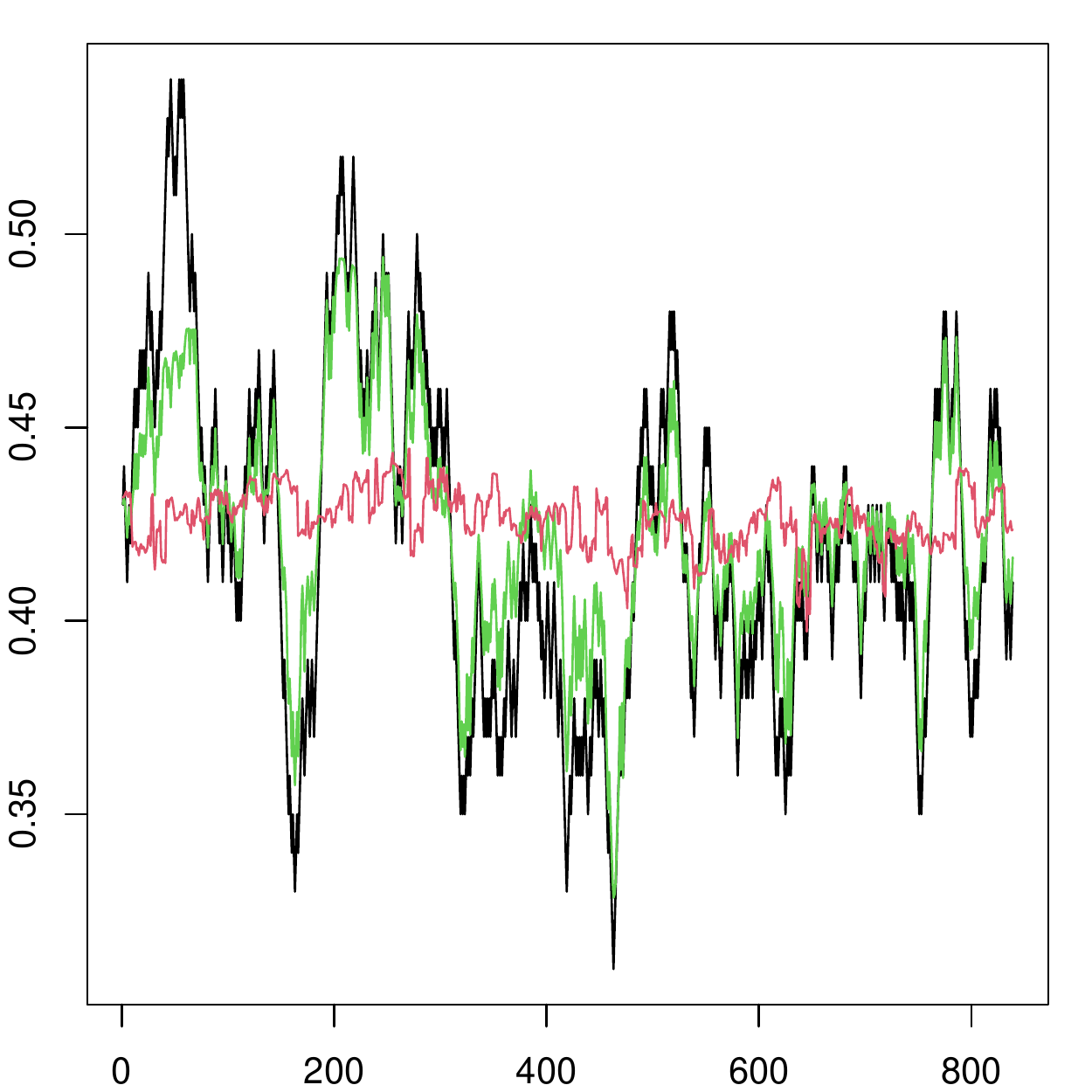} & \includegraphics[scale=.4]{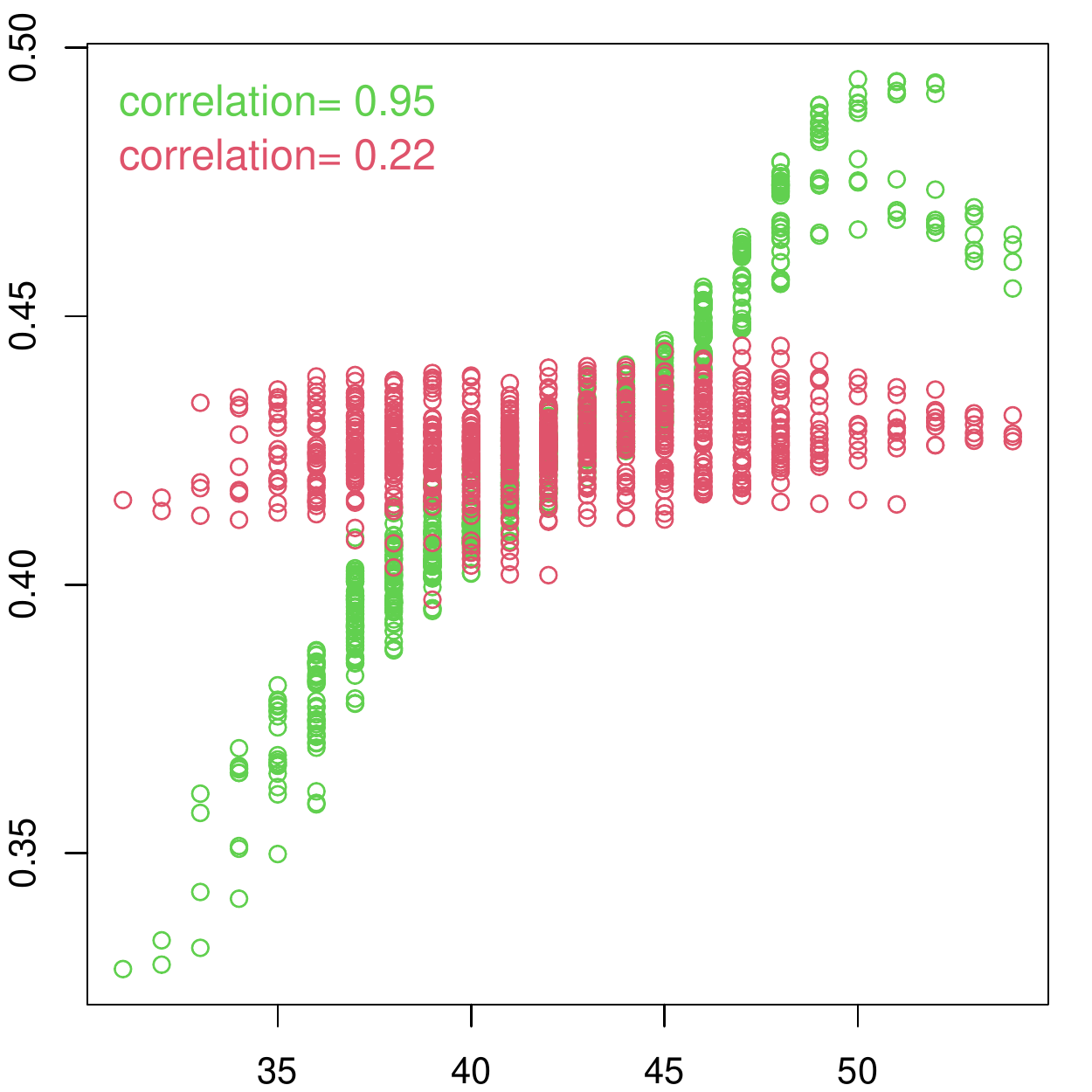} \\ (a) & (b) \\
 \includegraphics[scale=.4]{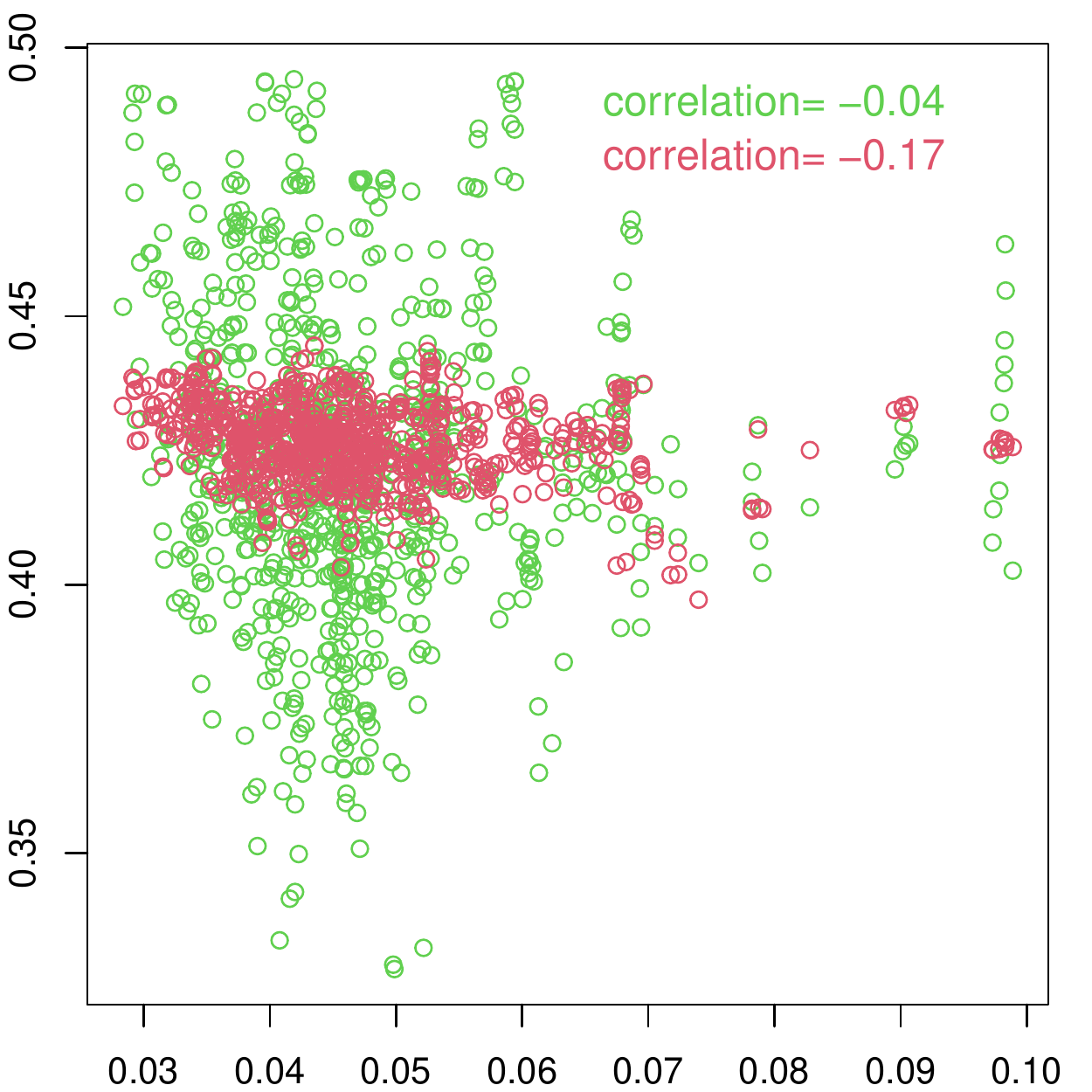} &  \includegraphics[scale=.4]{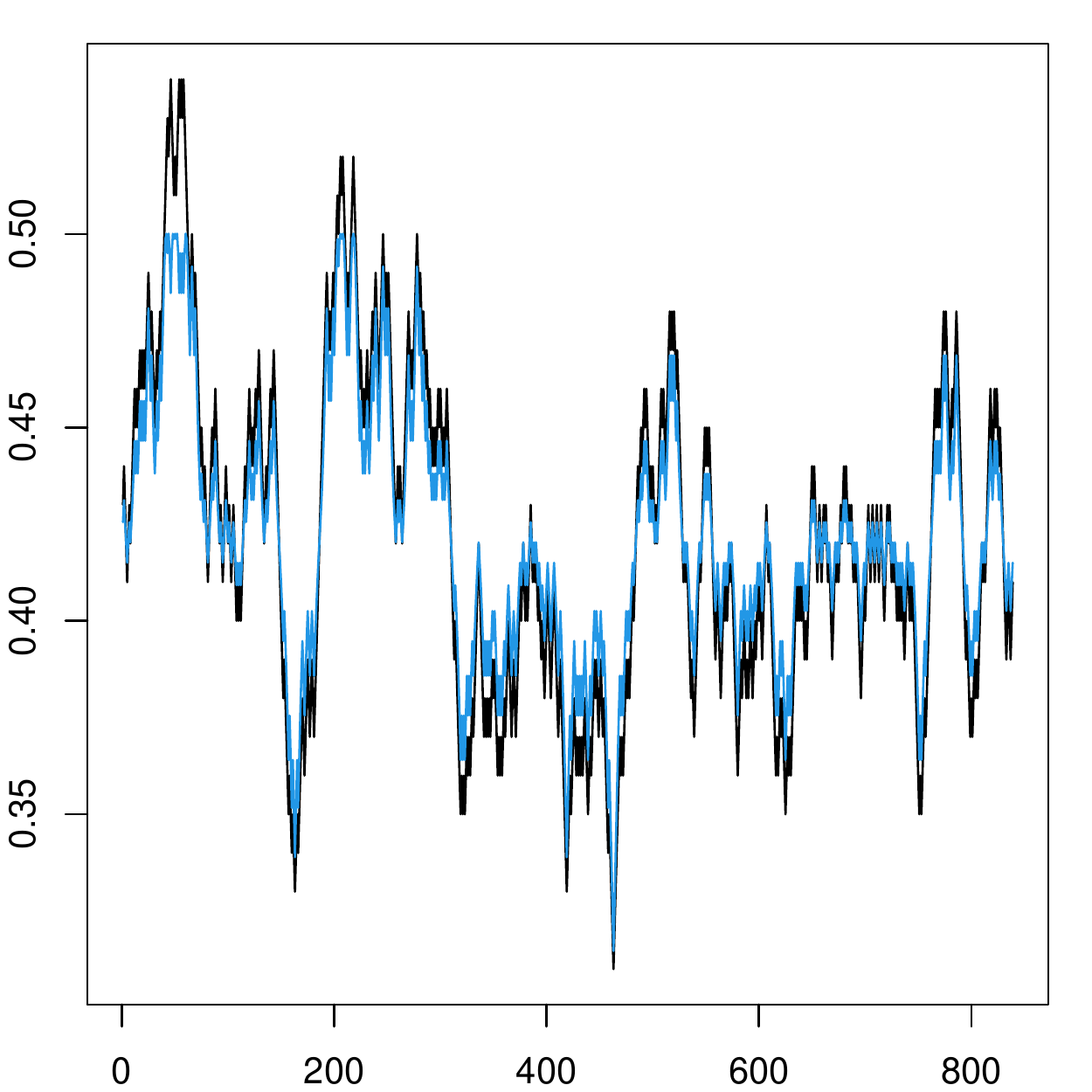}\\
(c) & (d) 
\end{tabular}}
\caption{{\small (a) True value of $\delta(X_{T_i})$ in black, for $i=0,\dots,N_T$ for the simulation of Section~\ref{simu:geom}, along with its non-parametric estimation based on the Hausdorff distance (in red) and based on $d_\kappa$ (in green); (b) Scatterplots $(n(X_{T_i}),\hat\delta(X_{T_i}))$ for these two estimators; (c) Scatterplots $(maxarea(X_{T_i}),\hat\delta(X_{T_i}))$ for these two estimators; (d) True value of  $\delta(X_{T_i})$ in black
along with its estimation based on  $d(x,y)=|n(x)-n(y)|$ in blue.}}\label{fig:deldeath} 
\end{center}

\end{figure}

\begin{figure}[ht]
\begin{center}
{\small
\begin{tabular}{cccc}
 \includegraphics[scale=.4]{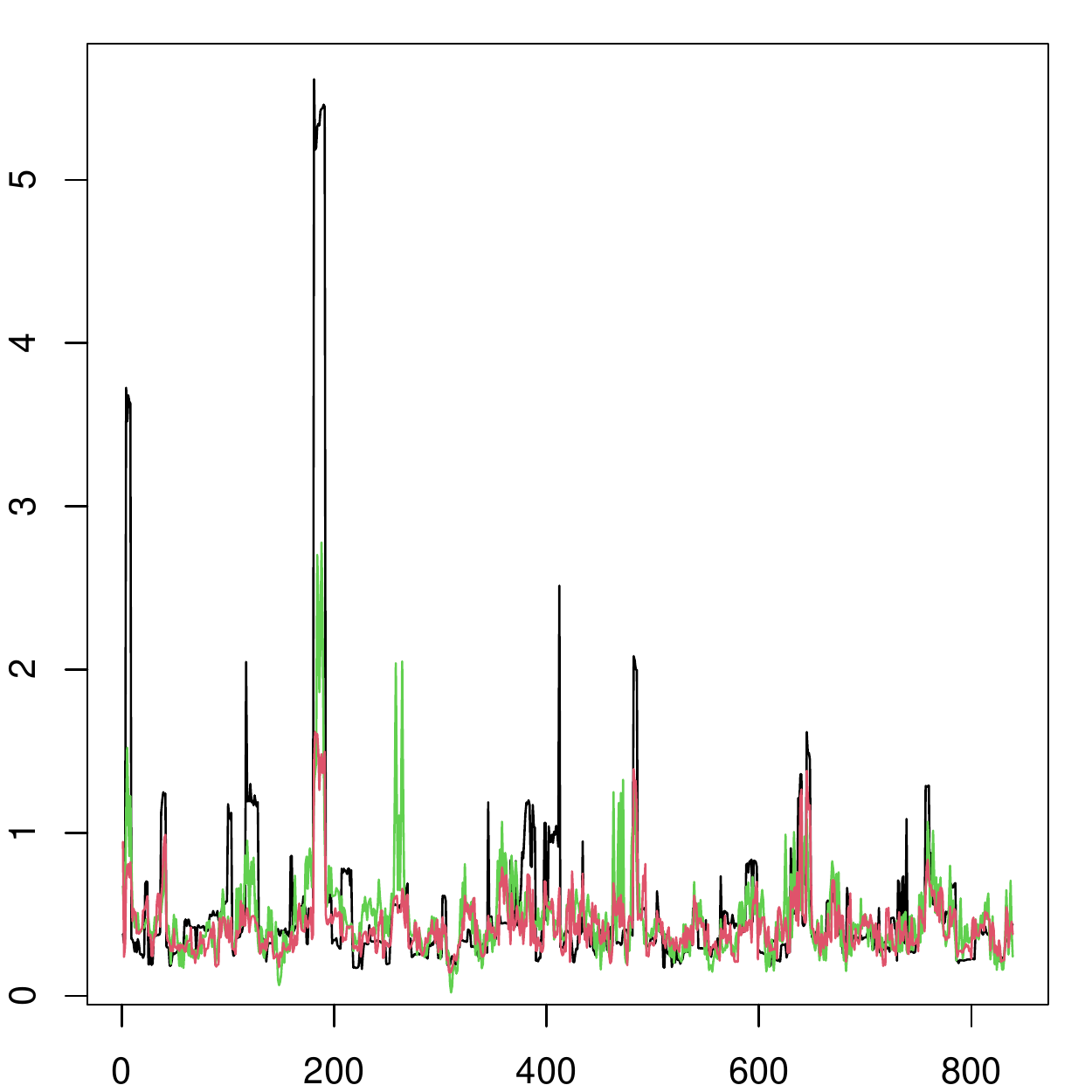} & \includegraphics[scale=.4]{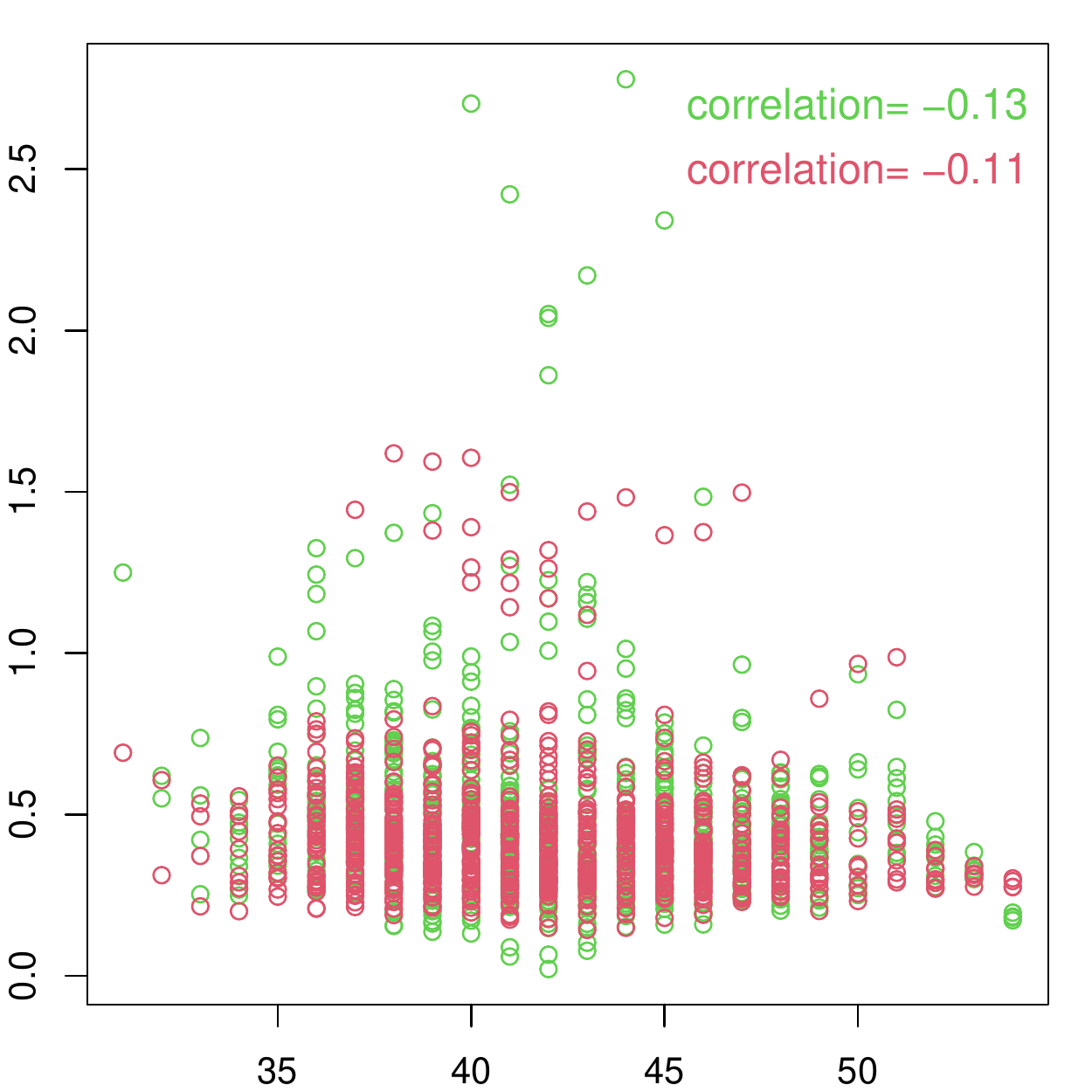} \\ (a) & (b) \\
\includegraphics[scale=.4]{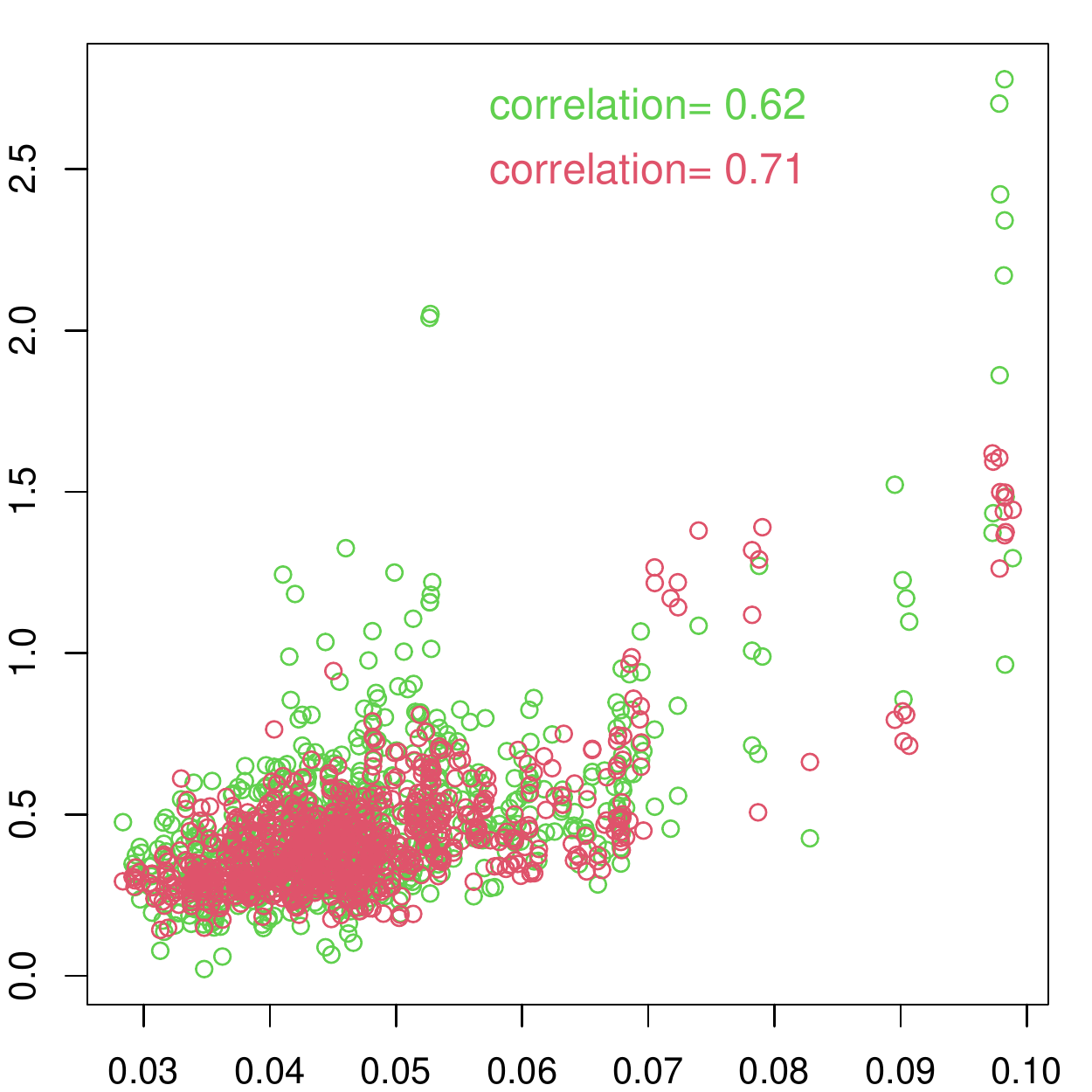} &  \includegraphics[scale=.4]{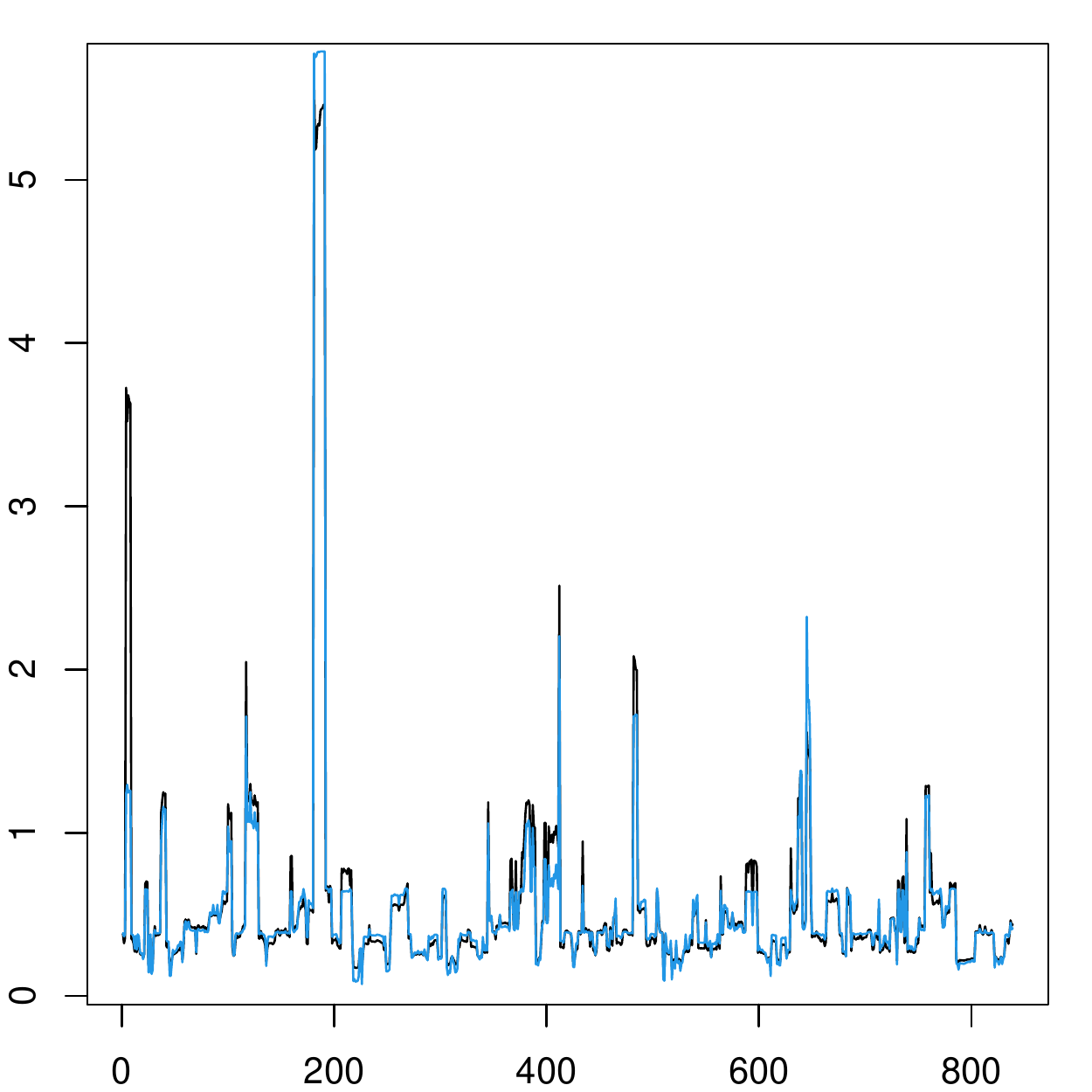}\\
(c) & (d) 
 \end{tabular}}
\caption{{\small Same plots as in Figure~\ref{fig:deldeath} but concerning the estimation of the birth intensity function $\beta(.)$ and where the estimator in the last plot (d) is based on $d(x,y)=|maxarea(x)-maxarea(y)|$.}}\label{fig:delbirth} 
\end{center}
\end{figure}

The initial point pattern at $t=0$ is shown in the first plot of Figure~\ref{fig:areadel} along with its associated Delaunay tessellation. In our simulation 839 jumps have been observed. The evolution of the number of points over the jump times is represented in the middle plot of this figure, while the rightmost plot shows the evolution of the maximal area of the Delaunay cells before each jump. 

The estimation of the death and the birth intensity functions are considered in Figures~\ref{fig:deldeath} and~\ref{fig:delbirth}, respectively. 
We first consider the two purely non-parametric estimators associated to the Hausdorff distance (in red) and to the distance $d_\kappa$ (in green). The results are shown in Figures~\ref{fig:deldeath}-(a) and~\ref{fig:delbirth}-(a), where the ground-truth curves are in black. As a first remark, we can see that for the death intensity function, the estimator based on the Hausdorff distance completely fails  in  this simulation, while the estimator based on $d_\kappa$ works pretty well. For the estimation of the birth intensity function, both estimators behave similarly but none is very accurate. These observations tend to confirm our preference, in a non-parametric approach, for the choice $d_\kappa$ instead of the Hausdorff distance, and they also show the limits of a pure non-parametric approach in such a high-dimensional estimation problem. However, based on these estimations, we can question the dependence between the intensities and some features of the configurations through scatterplots. Figures~\ref{fig:deldeath}-(b) and~\ref{fig:delbirth}-(b) show the scatterplots between the estimations at each jump time and the number of points at these times, while Figures~\ref{fig:deldeath}-(c) and~\ref{fig:delbirth}-(c) correspond to the scatterplots between the estimations and the maximal area of the Delaunay cells. 
These plots seem to indicate a dependence between the death intensity function and the number of points (but not the maximal area of cells), and a dependence between the birth intensity function and the maximal area of cells (but not the number of points).
Finally, if we apply accordingly our estimator with the choice $d(x,y)=|n(x)-n(y)|$ for the death intensity and the choice $d(x,y)=|maxarea(x)-maxarea(y)|$ for the birth intensity, we obtain the estimations on  Figures~\ref{fig:deldeath}-(d)  and~\ref{fig:delbirth}-(d). 
The results are then very good because we have identified the relevant feature defining $\delta(.)$ and $\beta(.)$, reducing the estimation problem to a non-parametric estimation in dimension 1.

\section{Data analysis}\label{sec:data}

Our data consist of a sequence of $m=1199$ frames showing the locations of two types of proteins inside a living cell, namely Langerin and Rab11 proteins, both being involved in exocytosis mechanisms in cells. The total length of the sequence is 171 seconds for a 140 ms time interval between each frame. These images have been acquired by 3D multi-angle TIRF (total internal reflection fluorescence) microscopy technique \citep{Boulanger2014}, and we observe projections along the $z$-axis on a 2D plane close to the plasma membrane of the cell. The raw sequence can be seen online in our  GitHub repository.   
As a result of  this acquisition, we observe  several tens of proteins of each type on each frame following some random motions, while some new proteins appear at some time point and others disappear.  The reason why a protein becomes visible can be simply due to its appearance into the axial resolution of the microscope, or because it becomes fluorescent only at the last step of the exocytosis process due to the pH change close to the plasma membrane. Similarly, a protein disappears from the image when it exits the axial resolution or when it disaggregates after the exocytosis process. Between its appearance and disappearance, the dynamics of a protein depends on its function and its environment. The whole spatio-temporal process at hand is thus composed of multiple fluorescent spots appearing, moving and disappearing over time, all of them possibly in interaction with each other. 
The underlying biological challenge is to be able to decipher this complex spatio-temporal dynamics, and in particular to understand the interaction between the different types of involved proteins, in the present case Langerin and Rab11 proteins \citep{Gidon, Boulanger2014}. 
Existing works either study the trajectories of each individual protein, independently to the other proteins  \citep{Briane19, pecot2018}, or investigate the interaction between different types of proteins frame by frame (which is the co-localization problem), without temporal insight \citep{Costes2004,Bolte2006,Lagache2015,Lavancier18}. As far as we know, the present approach is the first attempt to tackle the joint spatio-temporal dynamics of two types of proteins involved in exocytosis mechanisms.

To analyse the data, we do not consider the raw sequence but the post-processed sequence introduced in \cite{pecot2008patch,pecot2014background},  leading to more valuable data, where the most relevant regions of the cell, corresponding to the locations of the most dynamical proteins, have been enhanced thanks to a specific filtering procedure. The post-processed sequence for each type of protein is available online in our  GitHub repository.  We then apply the U-track algorithm developed in  \cite{jaqaman2008} in order to track over time the locations of proteins. For both types of them, the result is a sequence of $1199$ point patterns that follow a certain birth-death-move dynamics for the reasons explained earlier. Figure~\ref{fig:data} shows the repartition of Langerin (resp. Rab11) proteins in the first (resp. second) row, for a few frames extracted from these two sequences. The two leftmost plots correspond to the two first frames: we observe that a new Langerin protein and two new Rab11 proteins, visible in red, appeared between times $t_0=0$ and $t_1=0.14s$. In the second plot, we also recalled the initial positions of proteins as gray dots. Close inspection reveals that the proteins have slightly moved between the two frames. This motion is more apparent on the full sequence available online.
For the Langerin sequence, we observe   21 to 76 proteins per frame ($36.4$ in average) and $1.26$ jumps in average between each frame, $50.7\%$ of which being deaths and $49.3\%$ being births. For the Rab11 sequence, 
there are 10 to 52 proteins per frame ($22.3$ in average) and $0.85$ jumps in average between each frame, $50.6\%$ being deaths and $49.4\%$ being births. 
Based on these observations, we would like to question several biological hypotheses:
\begin{itemize}
\item[i)] The first one assumes that each protein may appear at any time independently on the configuration and the number of proteins already involved in the exocytosis process. This would imply a constant birth intensity function  over the whole sequence, for both types of proteins. 
\item[ii)] The second hypothesis is that each protein disappears independently of the others after its exocytosis process. Accordingly, the death intensity function at a configuration $x$ should then depend linearly on the cardinality of $x$, that is on the number of currently active proteins.
\item[iii)] The third hypothesis is that Langerin and Rab11 proteins interact during the exocytosis process, which should imply a correlation between their respective intensities. 
\end{itemize}
  
  \begin{figure}[ht]
\begin{center}
{\small
\begin{tabular}{cccc}
 \includegraphics[scale=.3]{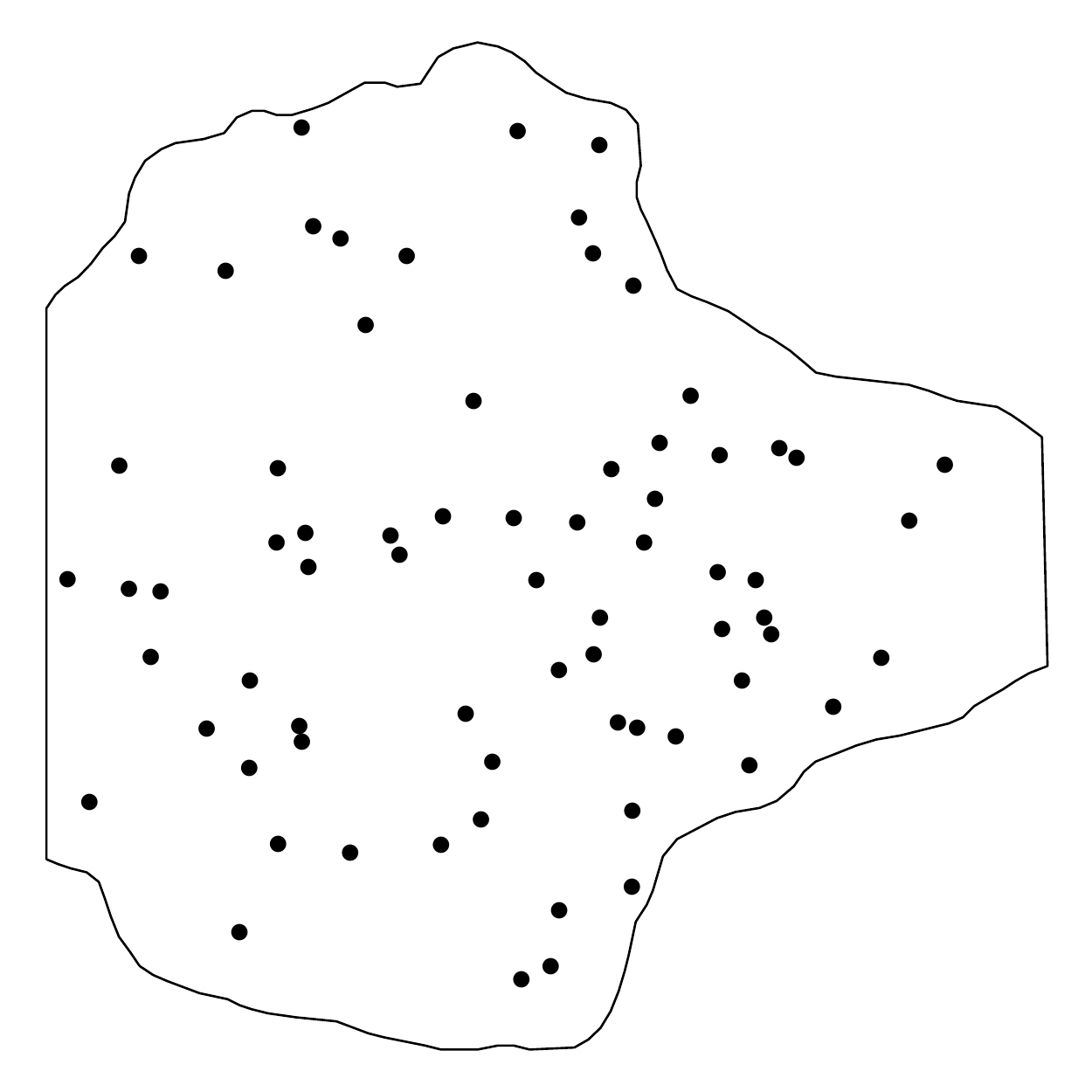} &  \includegraphics[scale=.3]{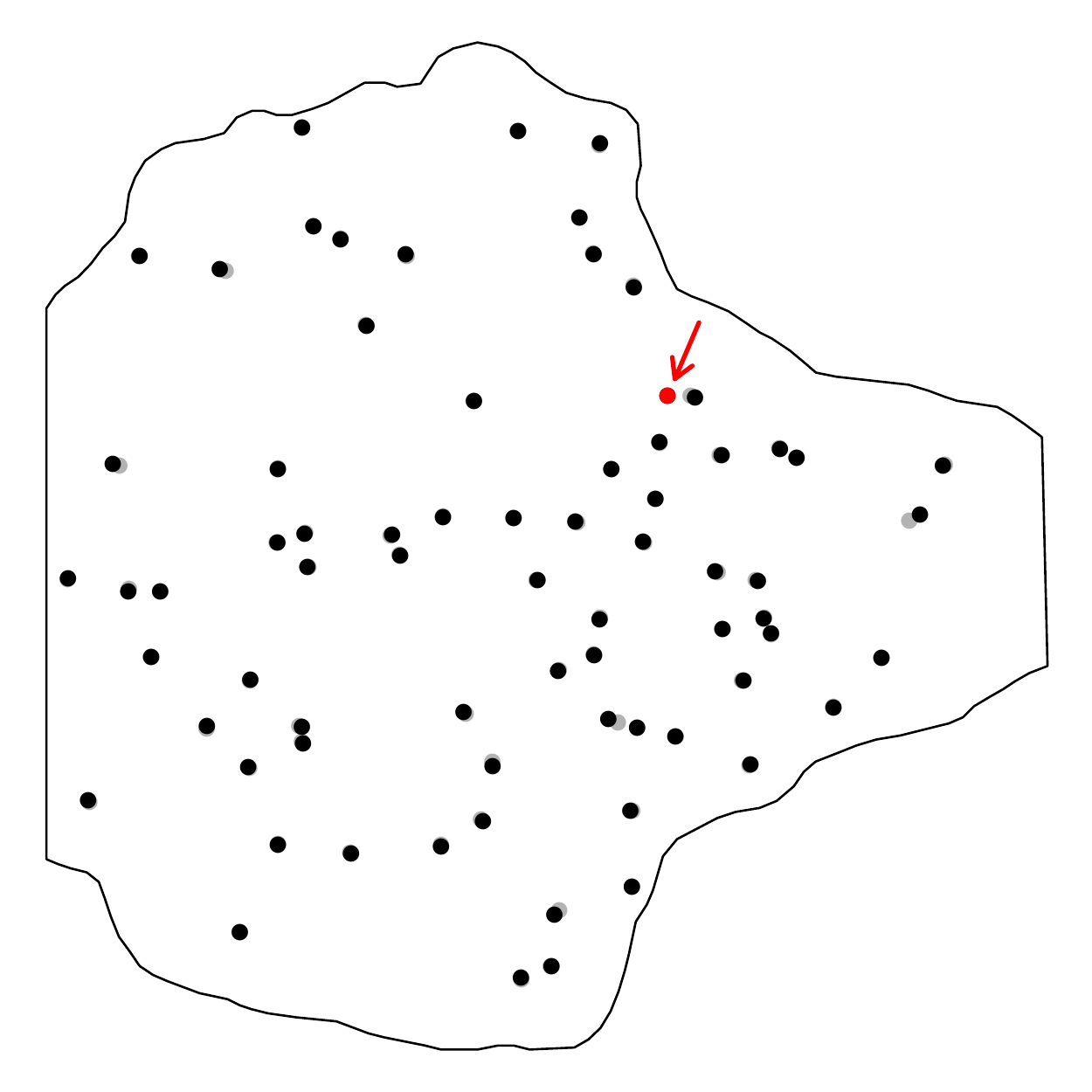}  & \includegraphics[scale=.3]{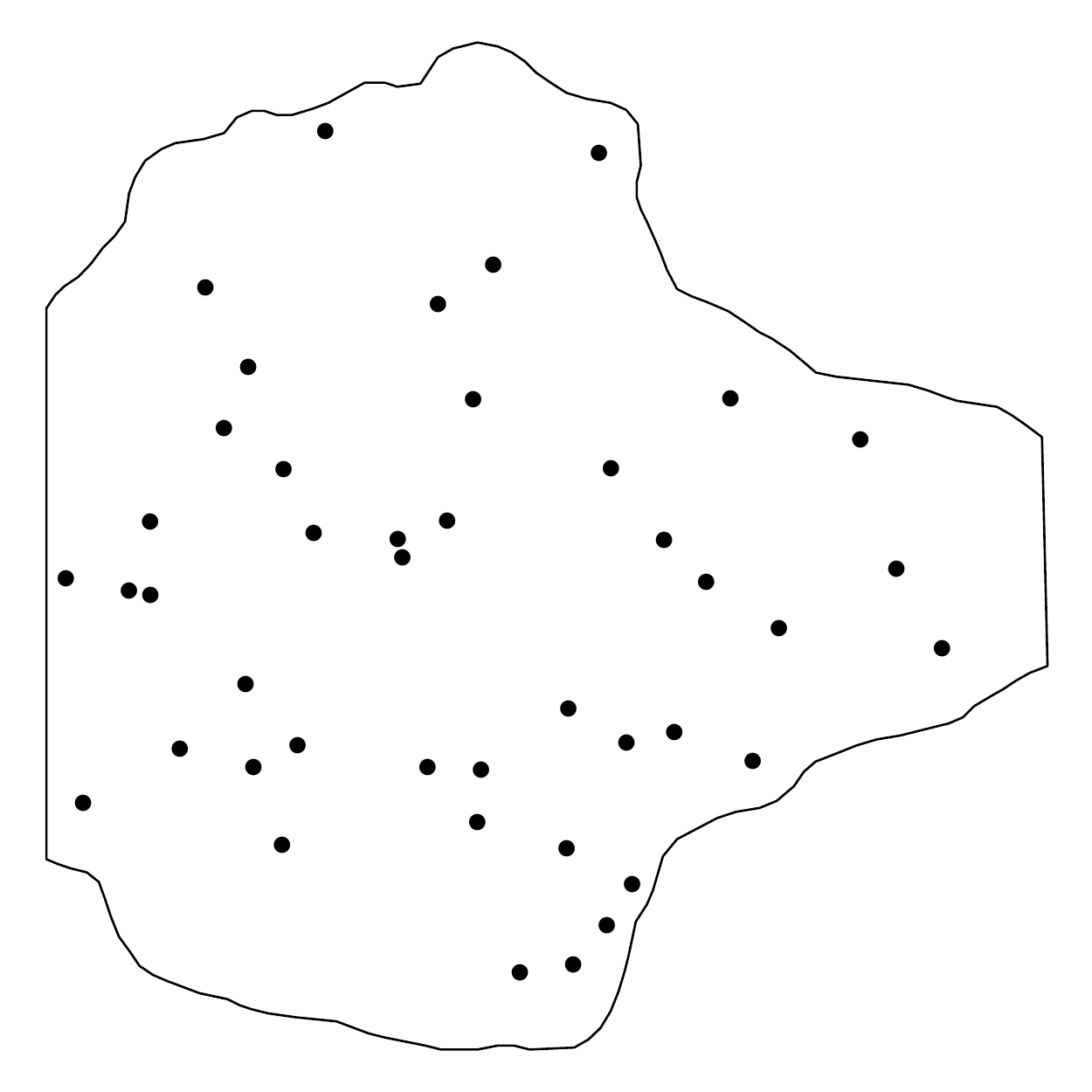} &\includegraphics[scale=.3]{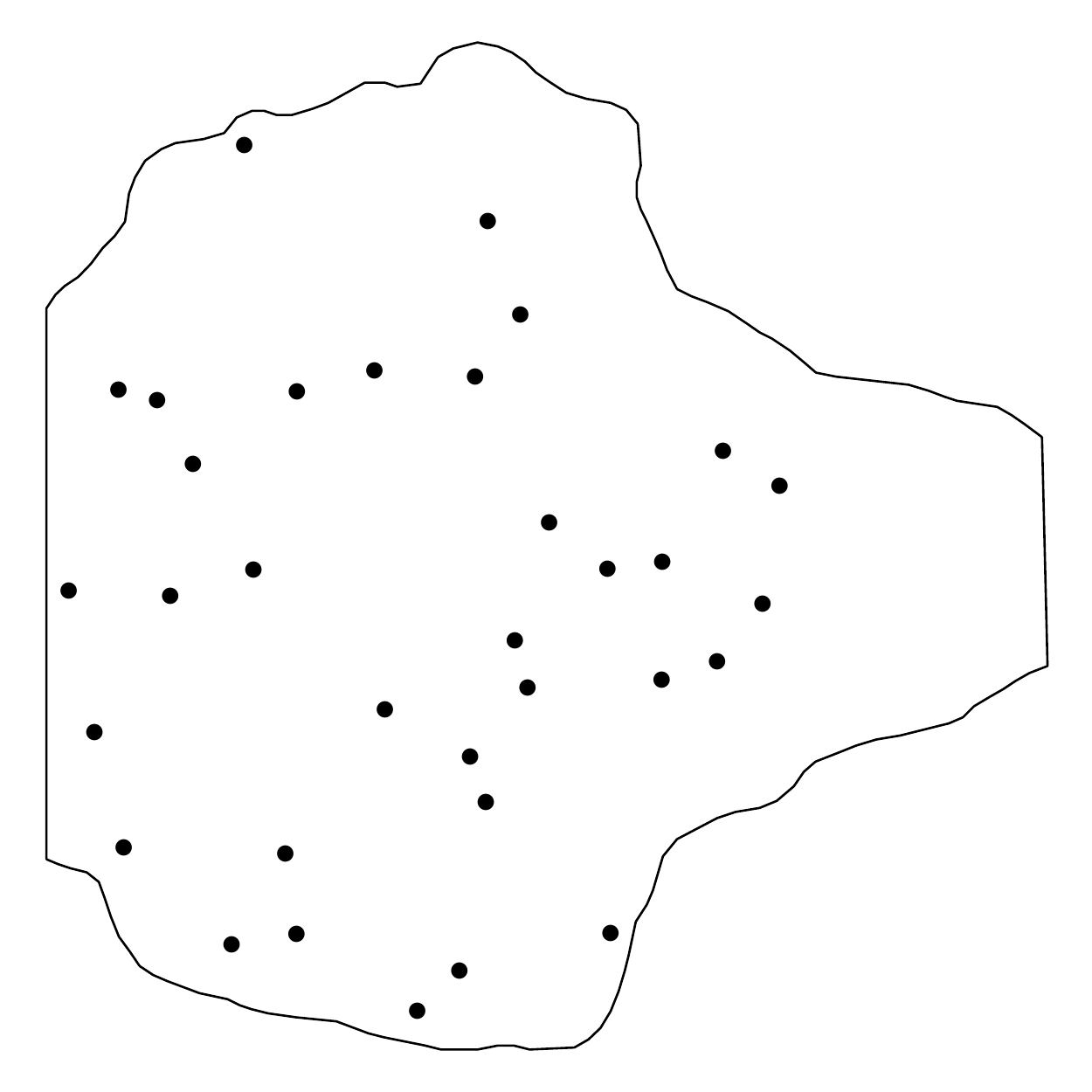}\\
 (a) $t_0=0$ & (b) $t_1=0.14s$ & (c) $t_{100}=14s$ & (d) $t_{1000}=140s$ \\
 \includegraphics[scale=.3]{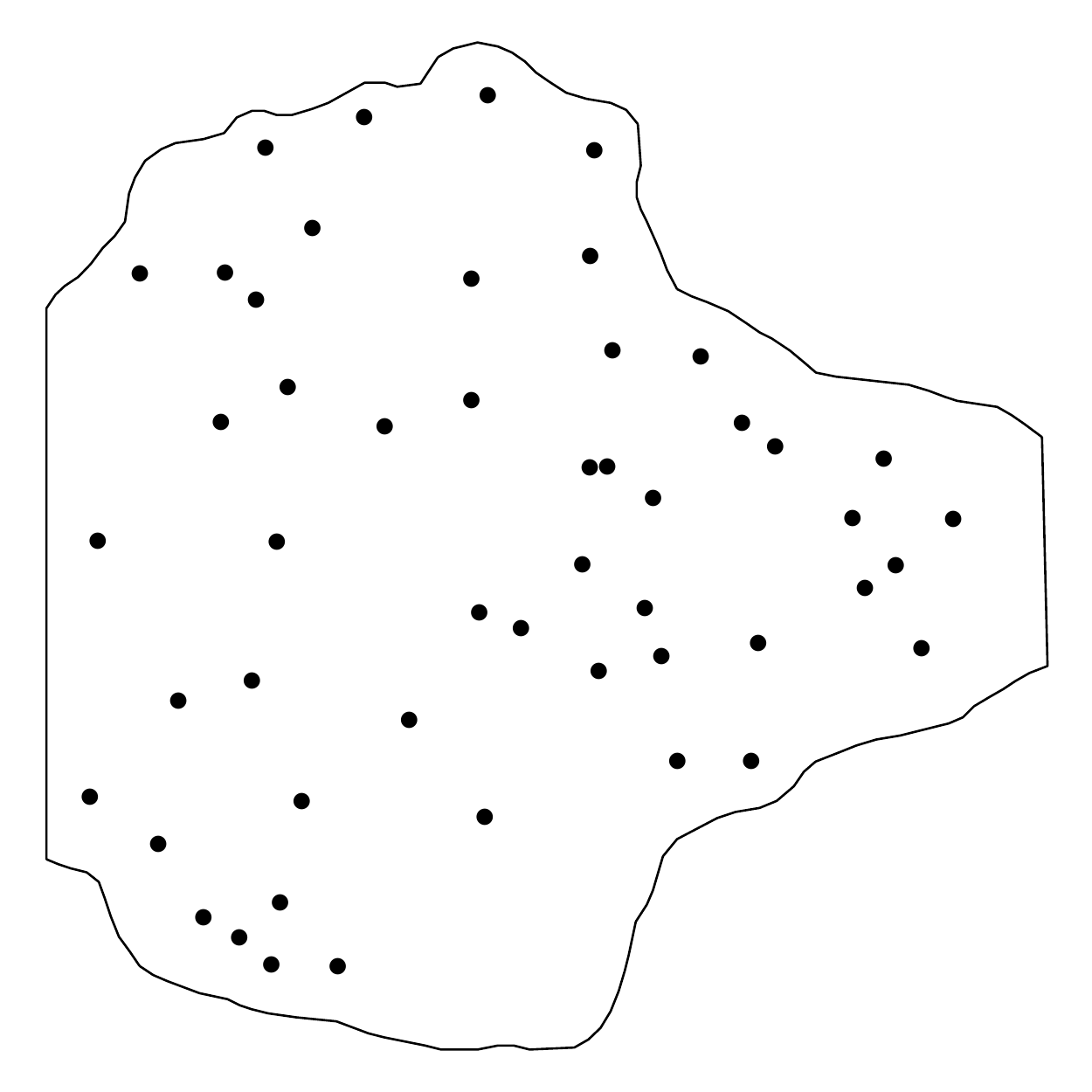} &  \includegraphics[scale=.3]{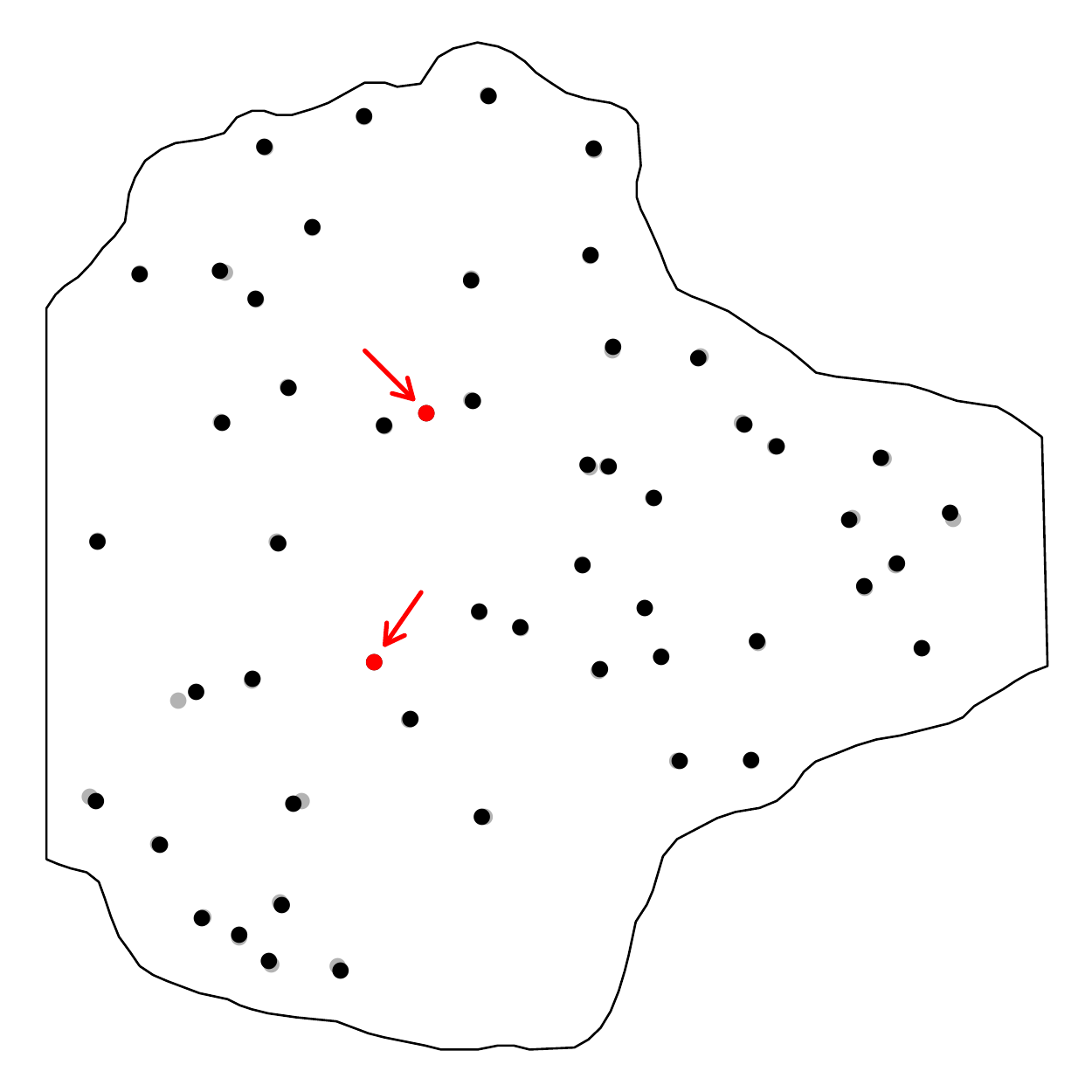} &  \includegraphics[scale=.3]{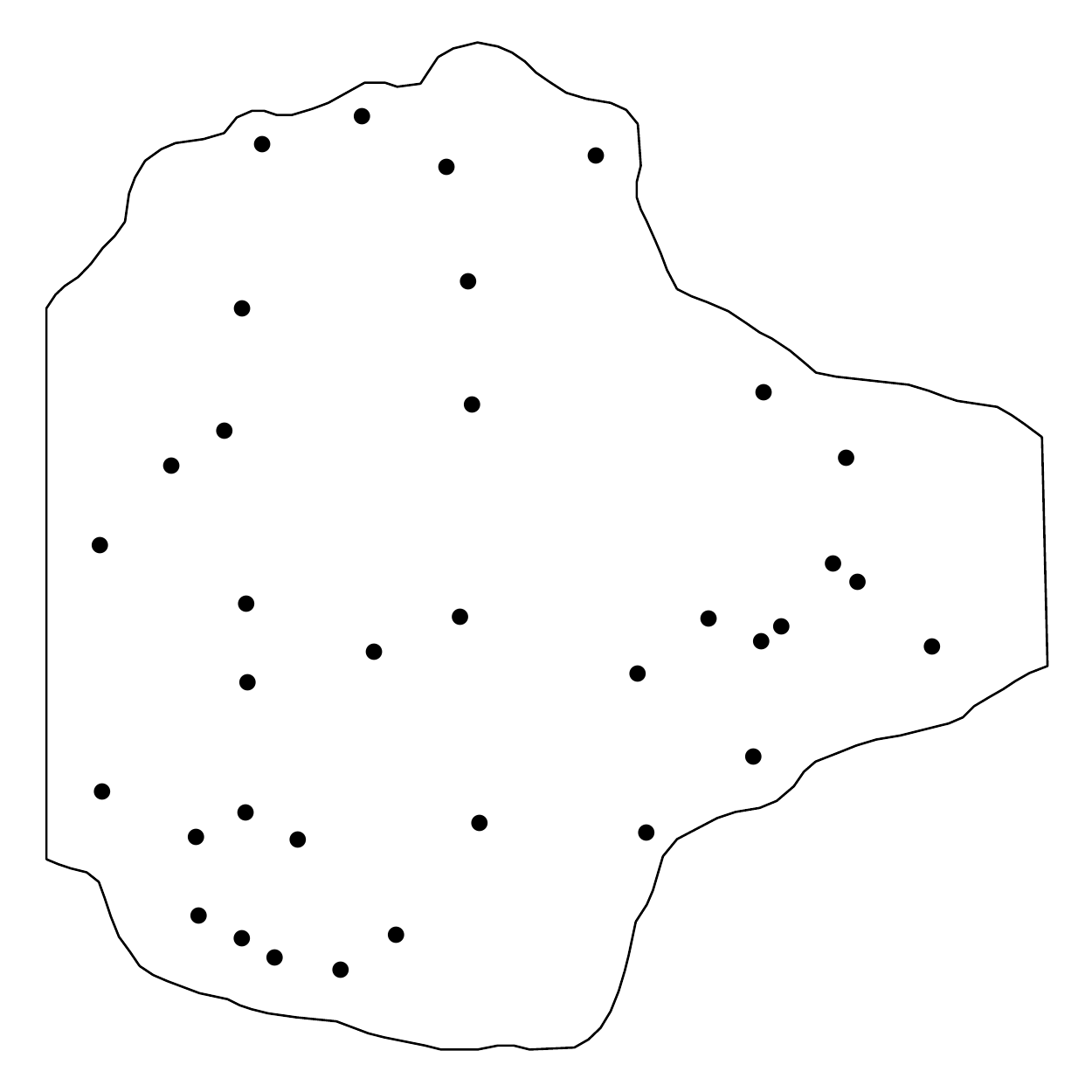} &\includegraphics[scale=.3]{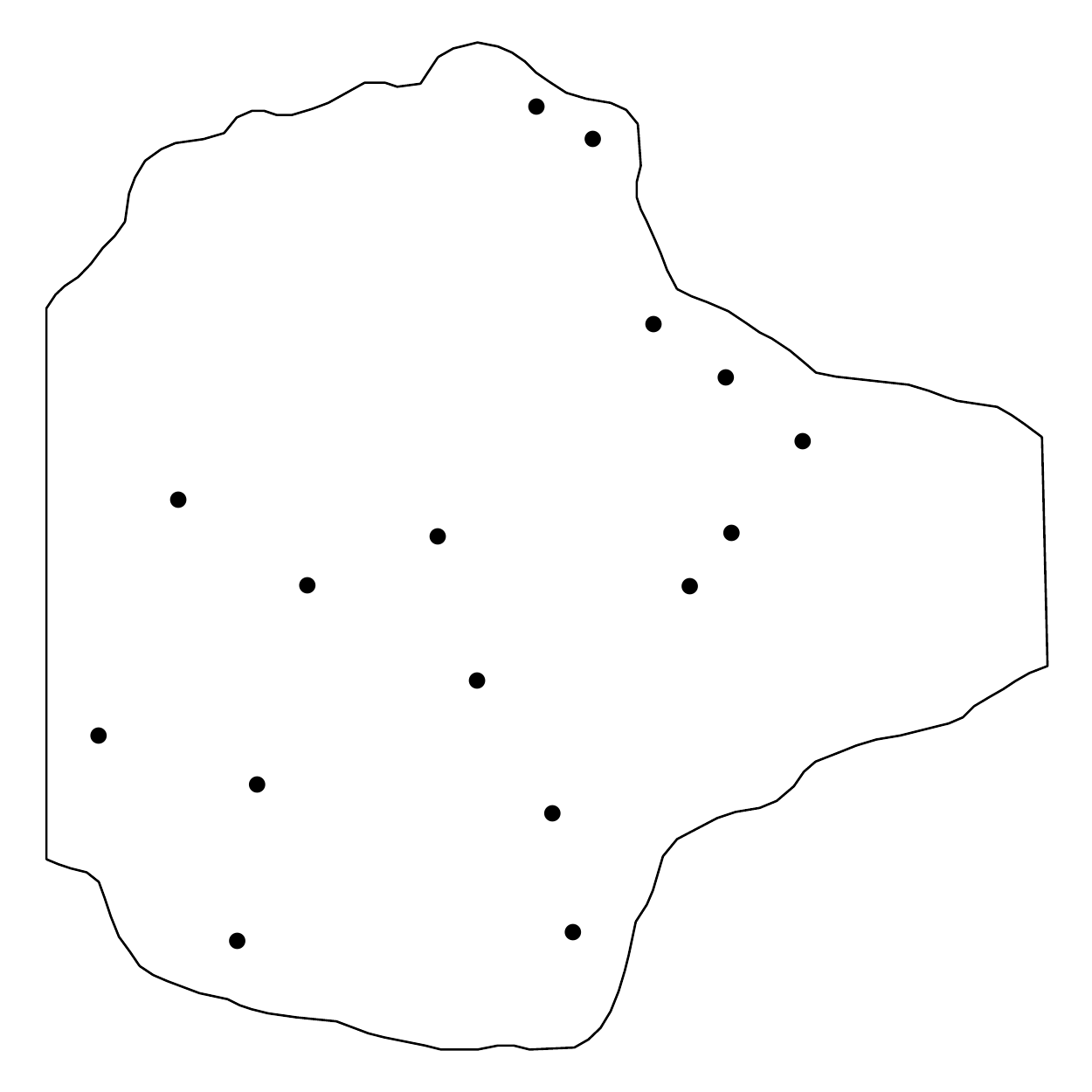}
 \end{tabular}}
\caption{{\small Locations of Langerin proteins (first row) and Rab11 proteins (second row) at different time points $t_j$, corresponding to the $j$-th frame extracted from two sequences of $1199$ images, observed simultaneously in the same living cell in TIRF microscopy. In the second plot (b) corresponding to time $t_1$, the new proteins that appeared during the first time interval are represented as red dots (pointed out by arrows), while the initial positions of the other proteins are recalled in gray.
}}\label{fig:data} 
\end{center}
\end{figure}

We estimate separately the birth intensity function and the death intensity function of both sequences thanks to our estimator \eqref{est discrete}, where $D_j$ is replaced by the observed number of new proteins having appeared (resp. the number of proteins having disappeared) between frames $j-1$ and $j.$
Motivated by the simulation study conducted in the previous section, we consider the non-parametric estimator based on the distance $d_\kappa$, where $\kappa$ is the diameter of the observed cell, and the estimator from Example~\ref{ex simple}-(ii) that assumes the intensities only depend on the cardinality.  

For the birth intensities, both estimators agree on a constant value of $4.45$ births per second for the Langerin sequence and $2.98$ births per second for the Rab11 sequence. A graphical representation of the estimated birth intensities is available in the online supplementary material. These constant values result from the choice of a large value of the bandwidths by cross-validation, and they are in agreement with the first biological hypothesis.

For the death intensities, we represent their estimations in Figure~\ref{fig:death}-(a) and Figure~\ref{fig:death}-(c)  for the Langerin and Rab11 sequences, respectively. For each sequence, both considered estimators provide similar results. Figure~\ref{fig:death}-(b) and Figure~\ref{fig:death}-(d) show the evolution of the estimated death intensities with respect to the number of observed proteins. From these plots we deduce first, that the death intensities seem to depend on the number of proteins and second, that this dependence seems to be linear, up to some value where the death intensities decrease.
To make sure that this decrease  is not due to an estimation artefact, we conducted a short simulation study, reported in the supplementary material, that showed that in presence of a true linear dependence, this kind of slump is unlikely to be observed in the estimation. The observed decrease thus seems to be significant and it means  that when many proteins are active, they tend to spend more time than usual in the exocytosis process. This may be explained by some saturation phenomena, inducing some interactions between the proteins. However, further experiments on new cells must be made to investigate this new hypothesis and be sure that this is not due to experimental conditions.

\begin{figure}[ht]
\begin{center}
{\small
\begin{tabular}{cccc}
 \includegraphics[scale=.3]{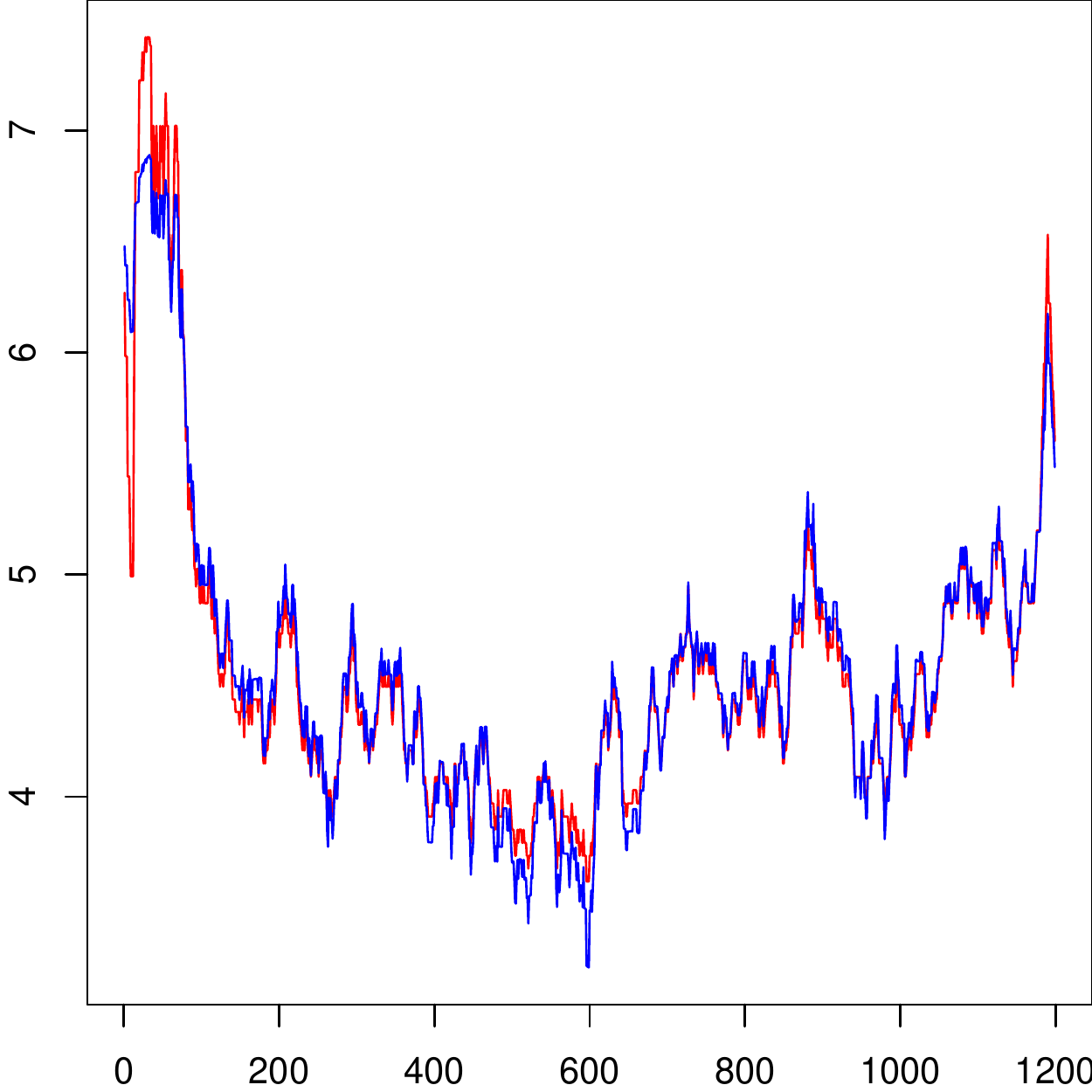} &  \includegraphics[scale=.3]{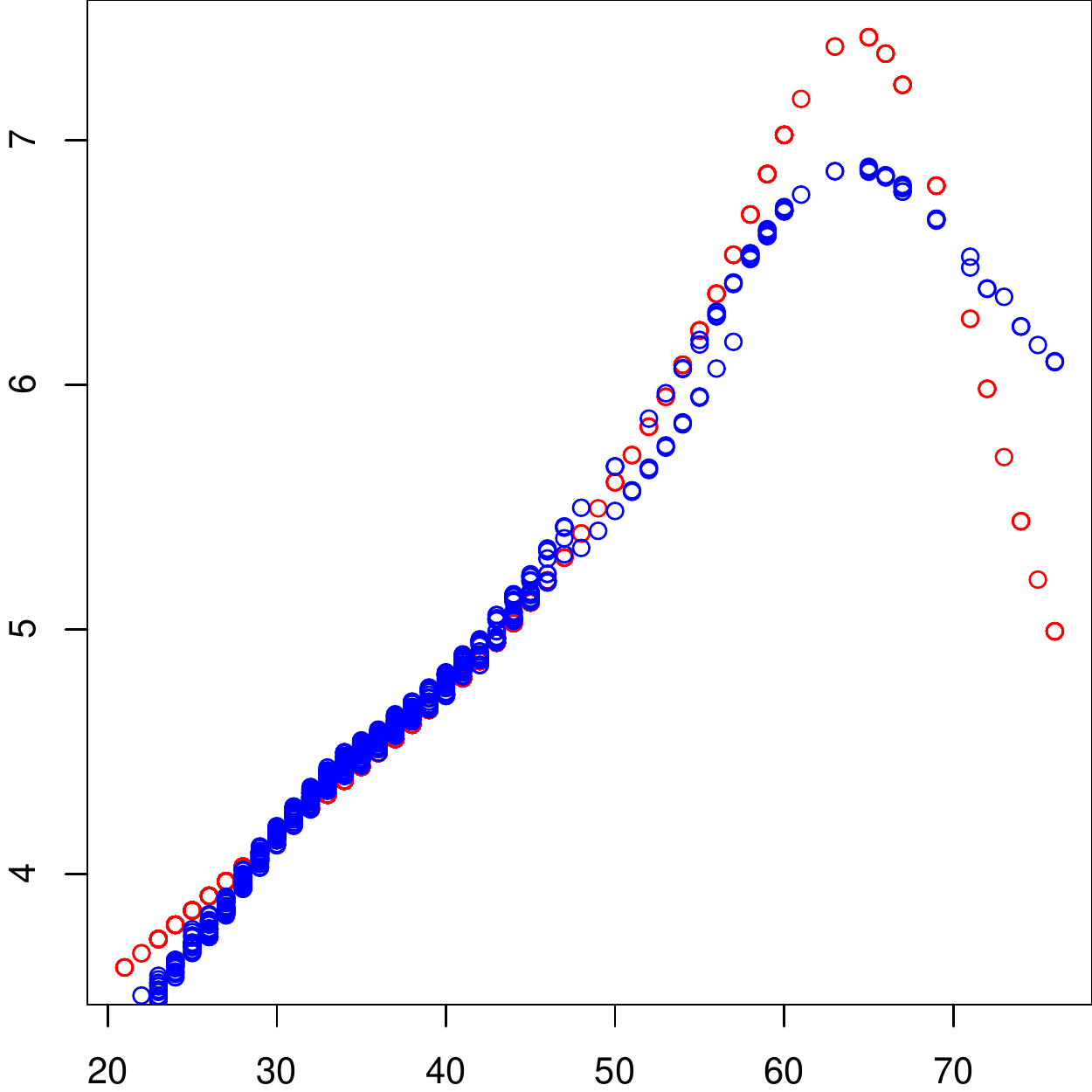} & \includegraphics[scale=.3]{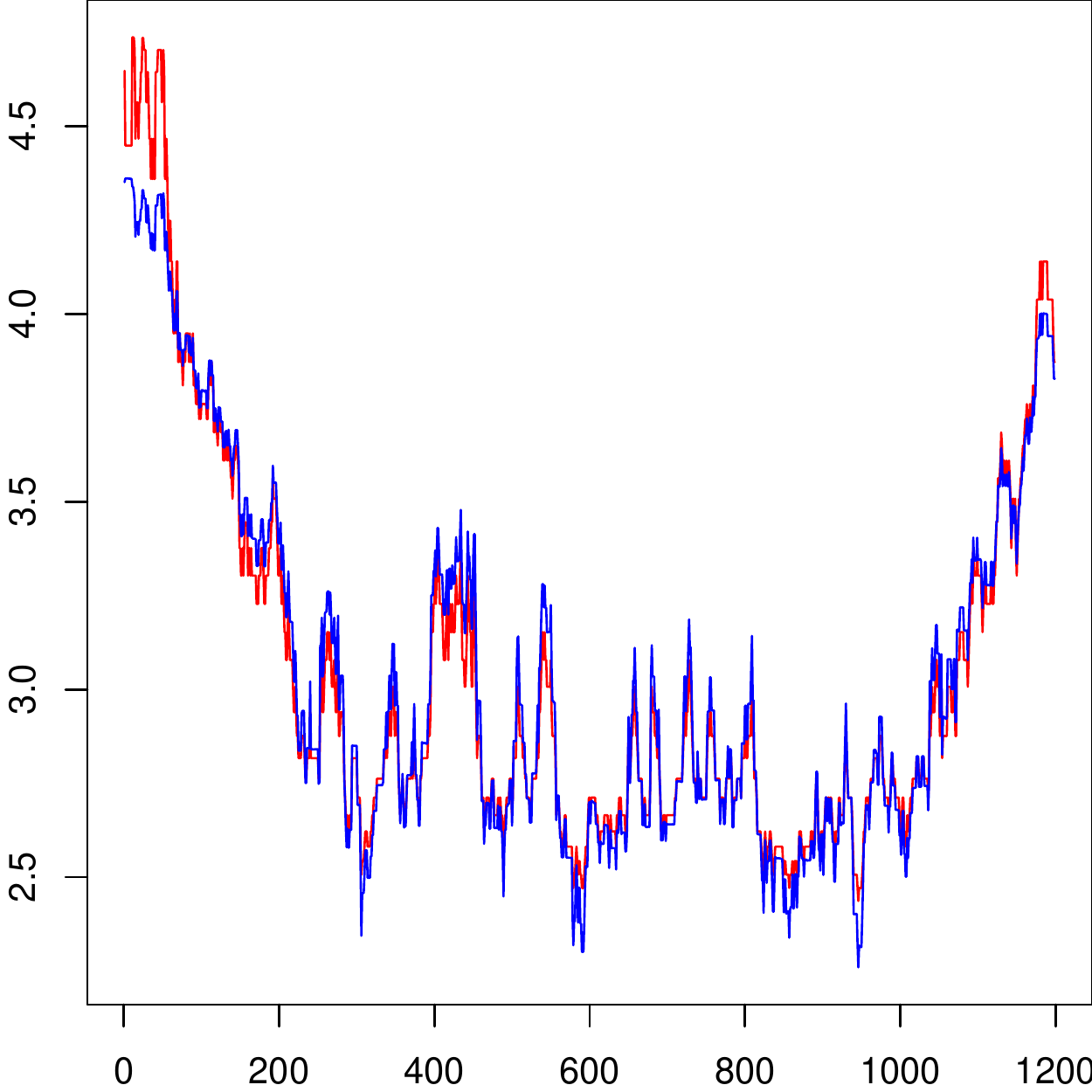} &\includegraphics[scale=.3]{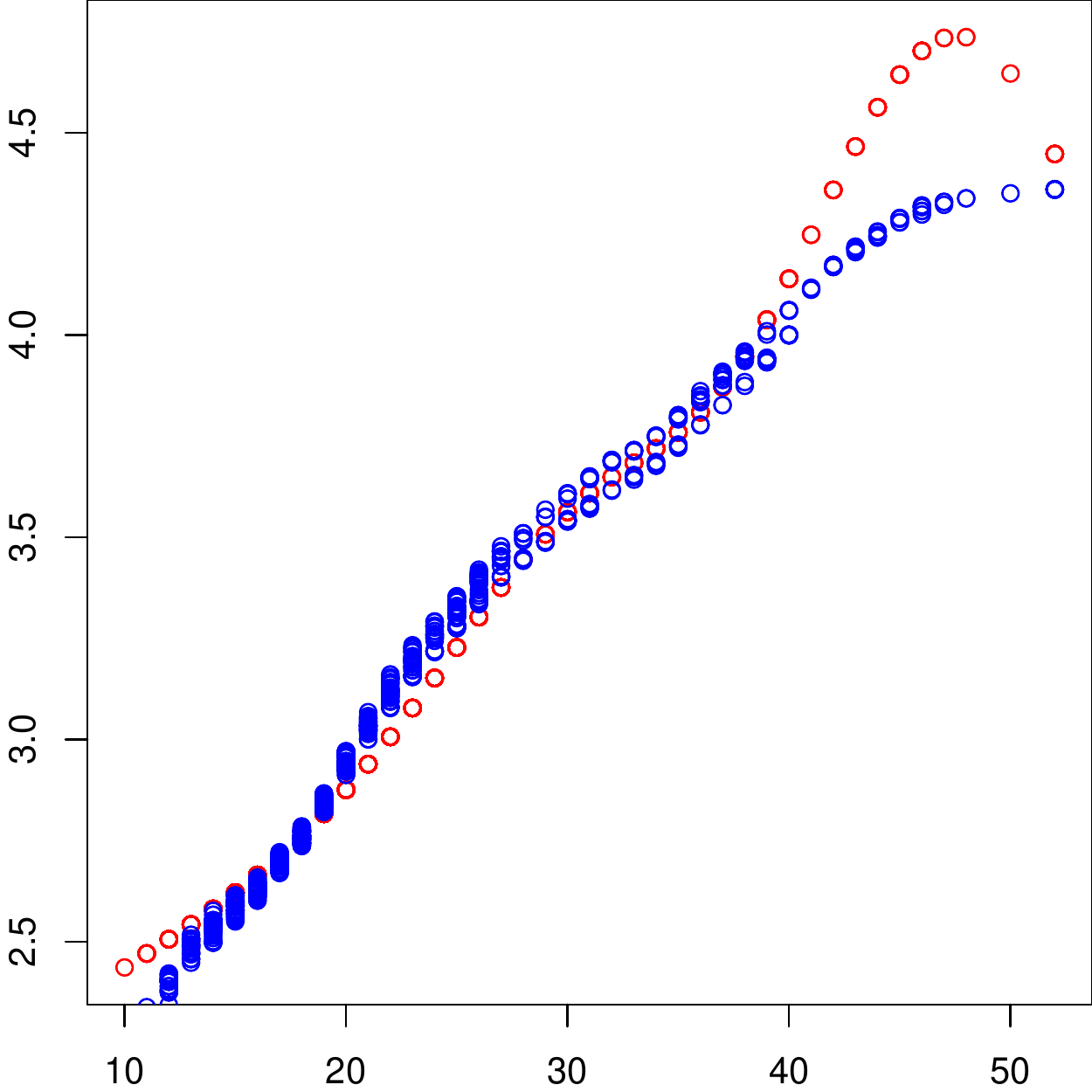}\\
 (a) & (b) & (c) & (d)
 \end{tabular}}
\caption{{\small (a) Estimation of the death intensity $\delta_L(x_j)$  at each configuration $x_j$ of the Langerin sequence, for all frames $j$ from 1 to 1199, by  the discrete time estimator given by  \eqref{est discrete} based on $d_\kappa$ (in blue) and the discrete time estimator given by  \eqref{est discrete}  with the choice of $k_T$ as in Example~\ref{ex simple}-(ii) (in red). (b) Scatterplot of  $(n(x_j),\hat\delta_L(x_{j}))$ for the same two estimators and $j=1,\dots,1199$. (c)-(d) Same plots as (a)-(b) but for the Rab11 sequence.}}\label{fig:death} 
\end{center}
\end{figure}

Finally, we reproduce  in Figure~\ref{fig:correlation} the estimated death intensities for both types of proteins, based on the estimators from Example~\ref{ex simple}-(ii) (that are the red curves in Figure~\ref{fig:death}), along with the estimated cross-correlation function (ccf) between these two estimated death intensities. This last plot represents for each lag $h=-20,\dots,20$ the empirical correlation between the death intensity of the Rab11 sequence at frame $j$ and the death intensity of the Langerin sequence at frame $j+h$, for $j=1,\dots,m-h$. The leftmost plot of  Figure~\ref{fig:correlation} provides evidence that the death intensities of the two types of proteins follow the same trend, which is confirmed by the global high values of the empirical ccf. This observation is consistent with the biological hypothesis that both proteins interact. Interestingly, the ccf is asymmetric, showing higher values for positive lags than for negative lags. This behaviour is confirmed on the ccf obtained from the detrended curves, as illustrated in the online supplementary material. 
 This asymmetry tends to corroborate previous studies  \citep{Gidon, Boulanger2014,Lavancier18}, where it has been concluded that Rab11 seems to be active before Langerin.

Further discussion about this data analysis is provided in the supplementary material.

\begin{figure}[ht]
\begin{center}
{\small
\begin{tabular}{cc}
 \includegraphics[scale=.42]{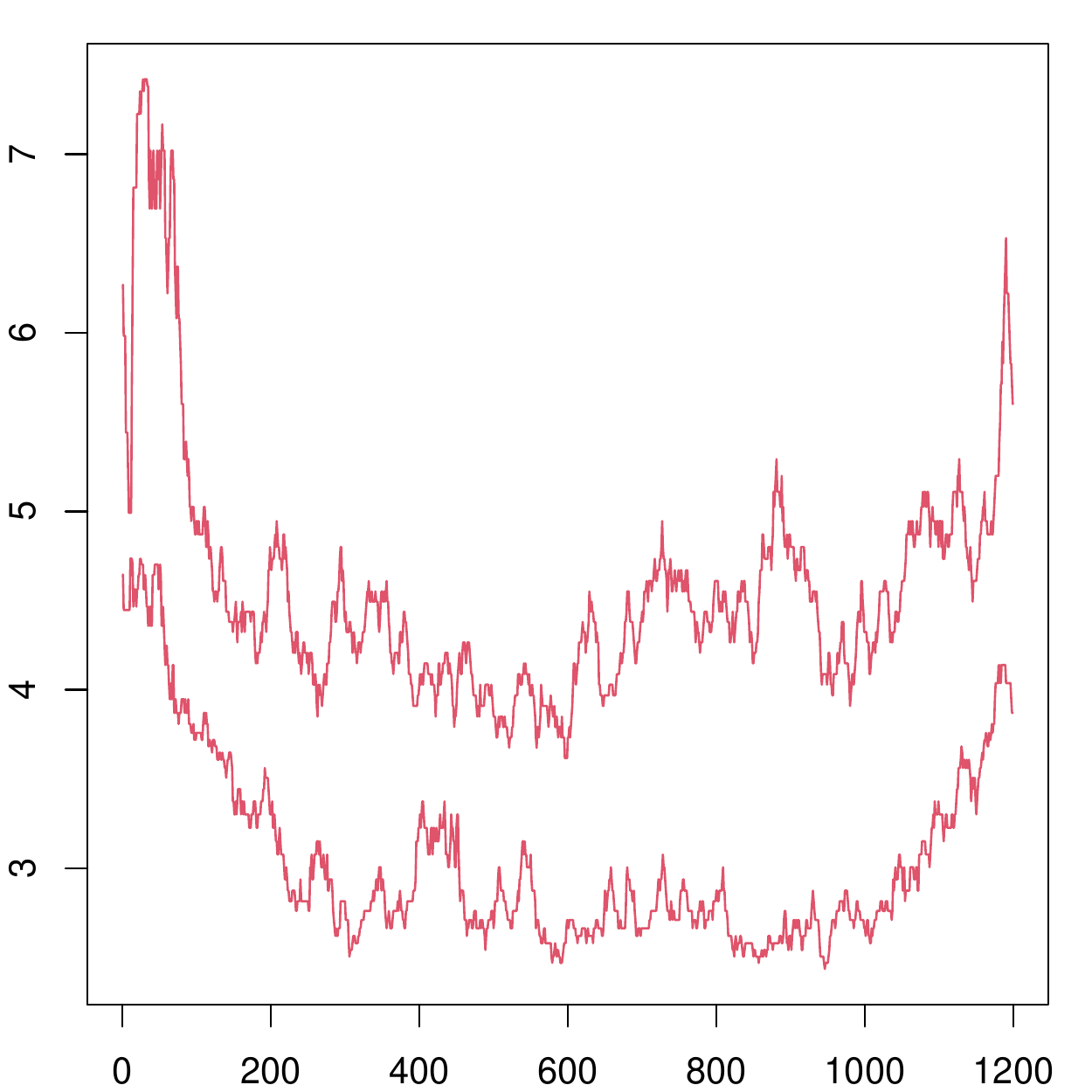} & \includegraphics[scale=.42]{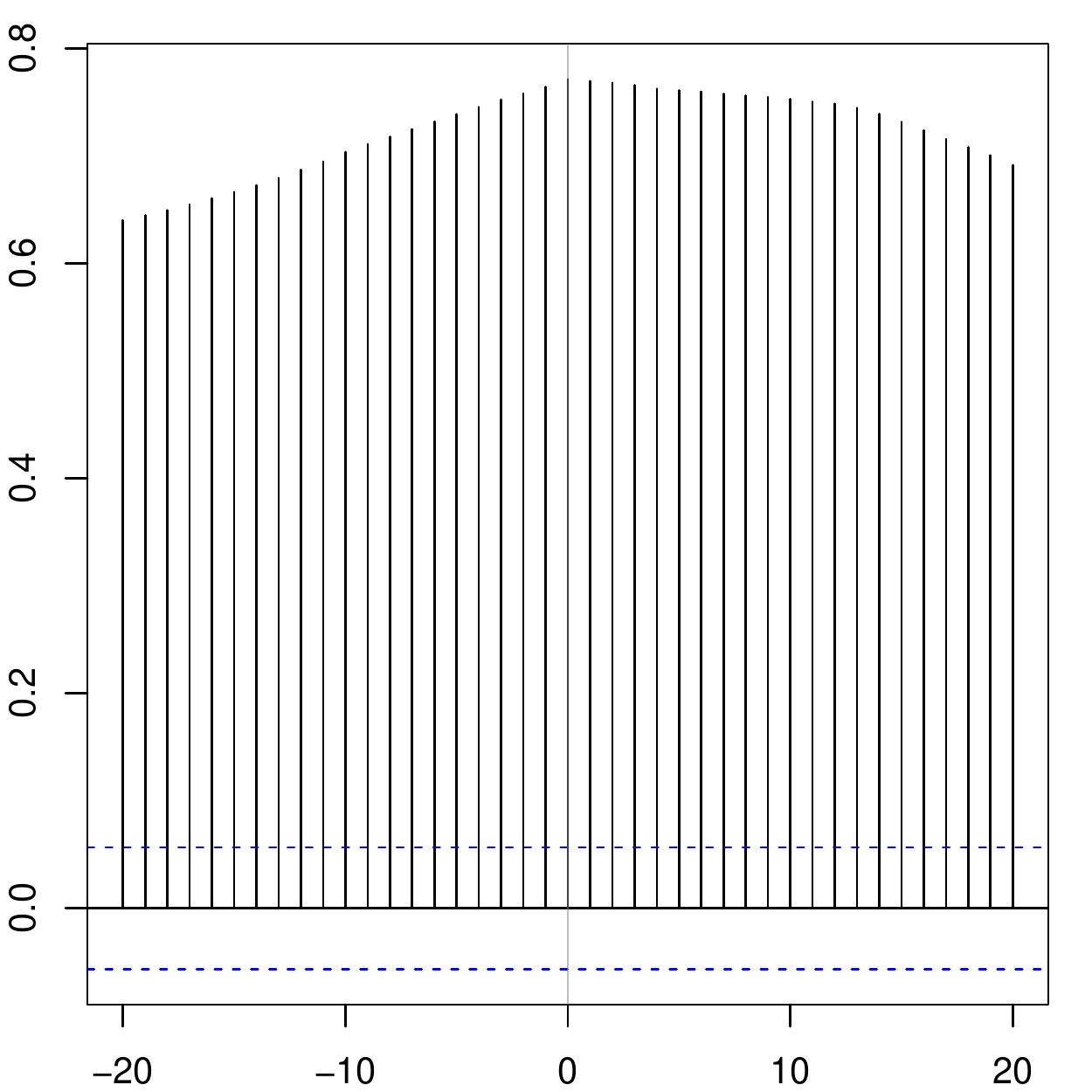}\\ (a) & (b) \end{tabular}}
\caption{{\small (a) Estimation of the death intensity for the Langerin sequence (top curve) and the Rab11 sequence (bottom curve) over the 1199 observed frames. These correspond to the red curves in Figure~\ref{fig:death}; (b) Empirical cross-correlation function between these two estimated death intensities, where the reference is Rab11 and the lag is applied to Langerin.}}\label{fig:correlation} 
\end{center}
\end{figure}

\vspace{-0.5cm}

 \section*{Acknowledgements}
We thank \cite{Boulanger2014} for making the Langerin and Rab11 sequences data analysed in Section~\ref{sec:data} available. We are  grateful to Cesar Augusto Valades-Cruz for assistance in tracking the proteins from these sequences. We also thank Charles Kervrann, Thierry Pécot and  Jean Salamero for helpful discussions and insights on the biological context.
We finally thank the editor and the reviewers for valuable comments that help improving the first version of this article.

 
 \newpage 
 
 \appendix
 
 \renewcommand{\thefigure}{S\arabic{figure}}
\renewcommand{\thealgorithm}{S\arabic{algorithm}}
\renewcommand{\theequation}{S\arabic{equation}}
\renewcommand{\thetheo}{S\arabic{theo}}
\renewcommand{\Box}{\qedsymbol}

\renewcommand{\thesection}{S-\arabic{section}}

{ \centering
 \section*{Supplementary material}
 }
 
This supplementary material describes in Section~\ref{supp-algo} an alternative simulation algorithm of birth-death-move processes, which is more effective that the one presented in the main manuscript. In Section~\ref{supp-estimator}, we study some moment properties of the main estimator and  we discuss the assumptions made in the discrete observations setting. Section~\ref{supp-data}  contains some supplementary information concerning the data analysis carried out in Section~6 of the main manuscript. Section~\ref{appendix:basics} includes  the formal definition of birth-move-processes and some of their theoretical properties. 
The other sections contain the proofs of the theoretical results and two appendices gather some technical lemmas.  All numbering and references in this supplementary material begin with the letter S, the other references referring to the main article.

 \section{Fast algorithm of simulation}\label{supp-algo}

In the main manuscript, Algorithm~\ref{algo_simu} shows how we can simulate a birth-death-move process on the time interval $[0,T]$. After each jump (whether a birth or a death) occurring at time $T_j$, this algorithm requires the simulation of the process $(Y_t)$ on $[0,T-T_j]$ to generate the next jump time $T_{j+1}$.  But as mentioned in the main manuscript, it is very likely that the inter-jump times $\tau_j=T_{j+1}-T_j$ are much smaller than $T-T_j$, making the simulation of $(Y_t)$ on $[0,T-T_j]$ inessential. We may exploit this remark by choosing $\tau_{\max}>0$, a value for which $P(\tau_j>\tau_{\max})$ is thought to be small for any $j$. For instance, if we know the lower bound $\alpha_*$ of $\alpha(x)$, a possible choice is $\tau_{\max}=\alpha_*\log (1/\epsilon)$ which implies $P(\tau_j>\tau_{\max})<\epsilon$ in view of  \eqref{wait}. However the exact value of $P(\tau_j>\tau_{\max})$ has not to be known  and the following Algorithm~\ref{algo_simu_fast} is exact whatever the choice of $\tau_{\max}$ is. 

The justification of Algorithm~\ref{algo_simu_fast} is the following. After each jump at $t=T_j$,  we let $\tau_{j,\max}=\min(\tau_{\max},T-T_j)$ and $p_1$ corresponds to $p_1=P(\tau_{j} >  \tau_{j,\max})$. With high probability, specifically with probability $1-p_1$, $U_1>p_1$ and the simulation of $\tau_j$ given that $\tau_{j} \leq  \tau_{j,\max}$ is the last part of  Algorithm~\ref{algo_simu_fast} which requires the simulation of $Y_t$ on $[0, \tau_{j,\max}]$ only. The first part of the algorithm, i.e. when $U_1\leq p_1$, corresponds to the rare cases when $\tau_{j} >  \tau_{j,\max}$. Conditionally on this event, we then simulate whether $\tau_j > T-T_j$ or not. This requires the simulation of $Y_t$ on $[0, T-T_j]$. Since  $\tau_{j,\max}\leq T-T_j$, 
$$p_2=P(\tau_j > T-T_j | \tau_{j} >  \tau_{j,\max}) = P(\tau_j > T-T_j)/ P(\tau_{j} >  \tau_{j,\max})=\exp\left( - \int_0^{T-T_j} \alpha( Y_u)du \right)/p_1.$$ 
Then, either $U_2\leq p_2$, which means that $\tau_j > T-T_j$, and there is no more jumps until $t=T$, or $U_2>p_2$ and we need to simulate $\tau_j$ given that $\tau_{j,\max}<\tau_j < T-T_j$. The latter distribution is 
\begin{align*}
P\left(\tau_{j}< s | \tau_{j,\max}<\tau_j < T-T_j\right)&=P\left( \tau_{j,\max}<\tau_{j}< s\right)/P(\tau_{j,\max}<\tau_j < T-T_j)\\
&=\frac{\exp\left( - \int_0^{\tau_{j,\max}} \alpha( Y_u)du \right) -\exp\left( - \int_0^{s} \alpha( Y_u)du \right)}{ P(\tau_j < T-T_j|\tau_{j} >  \tau_{j,\max})P(\tau_{j} >  \tau_{j,\max})}\\
&=\frac 1 {(1-p_2)p_1}\left(\exp\left( - \int_0^{\tau_{j,\max}} \alpha( Y_u)du \right) -\exp\left( - \int_0^{s} \alpha( Y_u)du \right)\right).
\end{align*}
Once $\tau_j$ has been simulated, we have $T_{j+1}=T_j+\tau_j$ and it remains to generate the post-jump location $X_{T_{j+1}}$ as detailed in Algorithm~\ref{algo_birth-death}.

\begin{algorithm}[H]
\caption{Fast simulation of a birth-death-move process on the finite interval $[0,T]$}
\label{algo_simu_fast}
\begin{algorithmic}
 \STATE {\bf set}  $t=0$ and $j=0$.
 \WHILE{$t<T$}
 \STATE {\bf set} $\tau_{j,\max}=\min(\tau_{\max},T-T_j)$
 \STATE {\bf generate}  $Y_s$ for $s\in [0, \tau_{j,\max}]$ conditionally on $Y_0=X_{T_j}$
  \STATE {\bf set}  $p_1=\exp\left( - \int_0^{\tau_{j,\max}} \alpha( Y_u)du \right)$ 
    \STATE {\bf generate} $U_1\sim U([0,1])$
  \IF{$U_1\leq p_1$} 
   \STATE {\bf generate}  $Y_s$ for $s\in [\tau_{j,\max},T-T_j]$
   \STATE {\bf set}  $p_2=\exp\left( - \int_0^{T-T_j} \alpha( Y_u)du \right)/p_1$
    \STATE {\bf generate} $U_2\sim U([0,1])$
    \IF{$U_2\leq p_2$} 
     \STATE {\bf set} $X_s=Y_{s-T_j}$ for $s\in [T_j,T]$ and $t\leftarrow T$
     \ELSE
      \STATE {\bf generate} the waiting time $\tau_j$ before the next jump according to the distribution
       \begin{multline*}\forall s\in [\tau_{j,\max}, T-T_j],\quad P\left(\tau_{j}< s | \tau_{j,\max}<\tau_j < T-T_j\right)\\
       =\frac 1 {p_1(1-p_2)} \left(\exp\left( - \int_0^{\tau_{j,\max}} \alpha( Y_u)du \right) -\exp\left( - \int_0^{s} \alpha( Y_u)du \right)\right)\end{multline*}
    \STATE {\bf set} $T_{j+1}=T_j+\tau_j$ and $X_s=Y_{s-T_j}$ for $s\in [T_j,T_{j+1})$
    \STATE {\bf generate} $X_{T_{j+1}}$ by Algorithm~\ref{algo_birth-death})
     \STATE  $t\leftarrow t+\tau_j$ and $j\leftarrow j+1$
\ENDIF

  \ELSE  
   \STATE {\bf generate} the waiting time $\tau_j$ before the next jump according to the distribution
   $$\forall s\in [0, \tau_{j,\max}],\quad P(\tau_{j}< s| \tau_j\leq \tau_{j,\max})=\frac 1 {1-p_1} \left(1-\exp\left( - \int_0^{s} \alpha( Y_u)du \right)\right)$$
   \STATE {\bf set} $T_{j+1}=T_j+\tau_j$ and $X_s=Y_{s-T_j}$ for $s\in [T_j,T_{j+1})$
    \STATE {\bf generate} $X_{T_{j+1}}$ by Algorithm~\ref{algo_birth-death})
    \STATE  $t\leftarrow t+\tau_j$ and $j\leftarrow j+1$
\ENDIF
 \ENDWHILE
 \end{algorithmic}
\end{algorithm}

\begin{algorithm}[h]
\caption{Simulation of the post-jump location at $t=T_{j+1}$ (where $\tau_j=T_{j+1}-T_j$)}
\label{algo_birth-death}
\begin{algorithmic}
     \STATE {\bf generate} $U\sim U([0,1])$
     \IF{$U\leq \beta(Y_{\tau_j})/\alpha(Y_{\tau_j})$}  
     	\STATE {\bf generate}  $X_{T_{j+1}}$ according to the transition kernel $K_\beta(Y_{\tau_j},.)$
     	\ELSE 
	\STATE {\bf generate} $X_{T_{j+1}}$ according to the transition kernel $K_\delta(Y_{\tau_j},.)$
    \ENDIF
  \end{algorithmic}
\end{algorithm}

\section{Supplementary information concerning the estimators}\label{supp-estimator}

\subsection{Moments of the estimator}

As explained in the main manuscript, we may interpret $w_T(x)$ in Proposition~\ref{Maintheo} as a bias term and $1/(Tv_T(x))$ as a variance term of $\hat\gamma(x)$.  Strictly speaking, this interpretation is wrong because the estimator $\hat\gamma(x)$ is not integrable in general. 
This is due to the presence of $\hat T(x)$ in its definition. 
Even if  we know  from Corollary~\ref{moments T} that $\E(\hat T(x))\to\infty$ as $T\to\infty$, $\hat T(x)$ may take infinitely small values for some $x$, in which case $\hat\gamma(x)$ may be arbitrarily large. The following lemma provides a clear example.

\begin{lem}\label{not L1}
As in Example~\ref{ex simple} (i), let $k_T(x,y) = \1_{n(x)=n(y)}$. If  $\P(X_0\in E_{n(x)})\neq 0$,  then $\E(\hat\alpha(x))=\infty$. 
\end{lem}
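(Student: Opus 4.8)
The plan is to exhibit a single event on which $\hat\alpha(x)$ equals $1/T_1$, and then to show that $1/T_1$ is already non-integrable, because near the origin the waiting-time density of $T_1$ stays bounded away from $0$. This captures exactly the mechanism announced before the lemma: $\hat T(x)$ can take arbitrarily small values, making $\hat\alpha(x)$ arbitrarily large.

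First I would record what the estimator becomes for this kernel. Writing $n=n(x)$, the moves preserve the cardinality, so the map $s\mapsto n(X_s)$ is constant on each inter-jump interval. Consequently $\hat T(x)=\int_0^T \1_{\{n(X_s)=n\}}\,ds$ is the total time the cardinality equals $n$, and the numerator $\sum_{j=1}^{N_T}\1_{\{n(X_{T_j^-})=n\}}$ counts the sojourns in $E_n$ that terminate by a jump before $T$. Next, consider the event $W=\{X_0\in E_n\}\cap\{N_T=1\}=\{X_0\in E_n,\ T_1\le T<T_2\}$, on which the process starts in $E_n$ and performs exactly one jump in $[0,T]$. Since a birth or a death changes the cardinality by exactly one, on $W$ we have $n(X_{T_1})=n\pm1\neq n$; combined with $T_2>T$, this forces the cardinality to equal $n$ only on $[0,T_1)$, so $\hat T(x)=T_1$, while the numerator equals exactly $1$ (only the $j=1$ term, with $X_{T_1^-}\in E_n$, contributes). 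Hence $\hat\alpha(x)=1/T_1$ on $W$, giving
\[\E[\hat\alpha(x)]\ \ge\ \E\!\left[\frac{1}{T_1}\,\1_{\{X_0\in E_n\}}\,\1_{\{T_1\le T\}}\,\1_{\{T_2>T\}}\right].\]

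Then I would peel off the factor $\1_{\{T_2>T\}}$ using the Markov property and the lower bound on $\alpha$. Conditionally on $\mathcal F_{T_1}$, the law \eqref{wait} gives $\P(T_2>T\mid\mathcal F_{T_1})=\P(T_2-T_1>T-T_1\mid X_{T_1})\ge e^{-\alpha^*(T-T_1)}\ge e^{-\alpha^* T}$, a strictly positive constant. It then remains to control $\E[\1_{\{X_0\in E_n\}}\,\1_{\{T_1\le T\}}/T_1]$. Conditioning on $X_0\in E_n$, which has probability $p_0:=\P(X_0\in E_{n})>0$ by hypothesis, and on the realised move process $(Y_u)$, the representation \eqref{wait} together with $\alpha_*\le\alpha\le\alpha^*$ shows that $T_1$ has conditional density $\alpha(Y_t)\exp(-\int_0^t\alpha(Y_u)\,du)\ge \alpha_* e^{-\alpha^* t}$, whence
\[\E\!\left[\frac{\1_{\{T_1\le T\}}}{T_1}\,\Big|\,X_0,(Y_u)\right]\ \ge\ \alpha_*\int_0^T\frac{e^{-\alpha^* t}}{t}\,dt\ \ge\ \alpha_* e^{-\alpha^* T}\int_0^T\frac{dt}{t}\ =\ +\infty.\]
Since $p_0>0$, the whole lower bound is infinite, proving $\E(\hat\alpha(x))=\infty$.

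The only genuinely delicate point is forcing $\hat T(x)$ to be \emph{equal} to $T_1$ rather than merely bounded below by it: a lower bound on $\hat T(x)$ would control $1/\hat T(x)$ in the wrong direction and ruin the argument. Restricting to the single-jump event $W$ is precisely what pins the denominator down exactly. The simplest realisation of ``no return to $E_n$'' is to forbid any further jump before $T$, and its conditional probability is bounded below uniformly by $e^{-\alpha^* T}$ thanks to the two-sided bounds on $\alpha$; everything else rests on the elementary fact that $\int_0^T dt/t$ diverges at the origin.
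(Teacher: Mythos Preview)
Your proof is correct and follows essentially the same route as the paper: restrict to the event $\{X_0\in E_{n(x)}\}\cap\{N_T=1\}$, observe that $\hat\alpha(x)=1/T_1$ there, then use the two-sided bounds $\alpha_*\le\alpha\le\alpha^*$ to reduce to the divergence of $\int_0^T t^{-1}\,dt$. The only cosmetic difference is that the paper conditions on $(Y^{(0)},Y^{(1)})$ simultaneously and writes the joint density in one line, whereas you first peel off $\1_{\{T_2>T\}}$ via the Markov property at $T_1$ and then handle $T_1$ separately; both organisations yield the same lower bound $p_0\,\alpha_*\,e^{-\alpha^*T}\int_0^T e^{-\alpha^* t}t^{-1}\,dt=\infty$.
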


\begin{proof}
We have
\begin{align*}
        \E(\hat{\alpha}(x)) &\geq \E(\hat{\alpha}(x)\1_{N_T=1} \1_{X_0 \in E_{n(x)}})\\
        & = \E\left( \frac{k_T(x,X_{T_1^-})}{\int_0^{T_1} k_T(x,Y_s^{(0)})ds + \int_{T_1}^T k_T(x,Y_{s-T_1}^{(1)})ds} \1_{T_1 \leq T}\1_{T_2-T_1 > T - T_1} \1_{X_0 \in E_{n(x)}}\right).
        \end{align*}
 Since by assumption $k_T(x,y) = \1_{n(x)=n(y)}$, we have $k_T(x,X_{T_1^-})\1_{X_0 \in E_{n(x)}}=k_T(x,X_0)\1_{X_0 \in E_{n(x)}}=1$. Similarly for all $s>0$, $k_T(x,Y_s^{(0)})\1_{X_0 \in E_{n(x)}}=1$ and $k_T(x,Y_{s-T_1}^{(1)})\1_{X_0 \in E_{n(x)}}=0$. Therefore 
          \begin{align*}
      \E(\hat{\alpha}(x)) &\geq \E\left(\frac{1}{T_1} \1_{T_1 \leq T}\1_{T_2-T_1 > T - T_1} \1_{X_0 \in \Et_{n(x)}}\right)\\
        &= \E \left(\1_{X_0 \in \Et_{n(x)}} \int_0^T \frac{1}{s} \alpha(Y_s^{(0)}) e^{-\int_0^s \alpha(Y_u^{(0)})du} e^{-\int_0^{T-s}\alpha(Y_{u}^{(1)})du} ds\right)\\
        &\geq \alpha_* e^{-\alpha^*T} \mu_0(E_{n(x)}) \int_0^T \frac{1}{s}  e^{-\alpha^*s}ds= \infty.
     \end{align*}
     \end{proof}

Arguably, a statistician would not trust any estimation of $\gamma(x)$ if $\hat T(x)$ is very small and in our opinion the  result of this lemma does not rule out using $\hat\gamma(x)$ for reasonable  configurations $x$, that are configurations for which a minimum time has been spent by the process in configurations similar to $x$. 
To reflect this idea, let us consider the following modified estimator, for a given small $\xi>0$,  
\[\hat{\gamma}_\xi (x) =\hat{\gamma}(x)\1_{\hat T(x)>\xi}.\]
By Corollary~\ref{moments T}, $\1_{\hat T(x)>\xi}$ converges in probability to $1$, while $\hat{\gamma}(x)$ is consistent under the assumptions of  Proposition~\ref{Maintheo}. Therefore the alternative estimator $\hat{\gamma}_\xi (x)$ is also consistent under the same assumptions,  
but it has the pleasant additional property to be mean-square consistent, as stated in the following proposition whose proof is given in Section~\ref{sec:proofs}.
\begin{prop}\label{theo:MSE}
 Let  $\gamma$ be either $\gamma=\beta$ or $\gamma=\delta$ or $\gamma=\alpha$. Assume \eqref{existence}, \eqref{H3} and \eqref{H4}, then for all $\xi>0$ and all $0<\eta<1/3$,
\[\E\left[ ( \hat{\gamma}_\xi(x)- \gamma(x))^2\right] = O\left(  \frac{1}{(Tv_T(x))^{1/3 -\eta}} + w_T^2(x)\right) \]
as $T\to\infty$, whereby $\hat{\gamma}_\xi(x)$ is mean-square consistent. 
\end{prop}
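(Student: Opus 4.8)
The plan is to reduce the mean-square error to the deviation probability already controlled by Proposition~\ref{Maintheo}, paying a price only on the truncation set. Write $\mathcal{E}=\{\hat T(x)>\xi\}$ and $D=\hat\gamma(x)-\gamma(x)$. Since $\hat\gamma_\xi(x)-\gamma(x)=D\,\mathbf{1}_{\mathcal{E}}-\gamma(x)\mathbf{1}_{\mathcal{E}^c}$ and $\gamma\le\alpha^*$ under \eqref{existence},
\[\E\!\left[(\hat\gamma_\xi(x)-\gamma(x))^2\right]\le 2\,\E\!\left[D^2\mathbf{1}_{\mathcal{E}}\right]+2(\alpha^*)^2\,\P(\hat T(x)\le\xi).\]
First I would dispose of the second term: using Corollary~\ref{moments T} together with the uniform ergodicity granted by \eqref{existence}, one has $\E[\hat T(x)]\sim Tv_T(x)$ and $\mathrm{Var}(\hat T(x))=O(Tv_T(x))$, so Chebyshev gives $\P(\hat T(x)\le\xi)=O(1/(Tv_T(x)))$, which is negligible against the target rate.

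The whole difficulty lies in $\E[D^2\mathbf{1}_{\mathcal{E}}]$, precisely because $\hat\gamma(x)$ is not integrable (Lemma~\ref{not L1}). The truncation is used to produce an almost-sure envelope: on $\mathcal{E}$ one has $1/\hat T(x)\le 1/\xi$, hence $|\hat\gamma(x)|\le\xi^{-1}M$ with $M:=\int_0^T k_T(x,X_{s^-})\,dN_s\le k^*N_T$. The key auxiliary estimate is a moment bound $\E[M^{2p}]=O((Tv_T(x))^{2p})$ of the required order, obtained by decomposing $M$ into its predictable compensator $\int_0^T k_T(x,X_{s^-})\alpha(X_{s^-})\,ds\le\alpha^*\hat T(x)$ plus the compensated jump martingale, controlling the latter by a Burkholder-Davis-Gundy inequality (its bracket $\int_0^T k_T(x,X_{s^-})^2 dN_s\le k^*M$), and feeding in the moments of $\hat T(x)$ from Corollary~\ref{moments T}. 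To keep the bias clean I would center at the smoothed target $\gamma^*(x):=v_T(x)^{-1}\int_{\Et}\gamma(z)k_T(x,z)\mu_\infty(dz)=\gamma(x)+w_T(x)$, so that $D^2\le 2(\hat\gamma(x)-\gamma^*(x))^2+2w_T^2(x)$, whence $\E[D^2\mathbf{1}_{\mathcal{E}}]\le 2\,\E[U]+2w_T^2(x)$ with $U:=(\hat\gamma(x)-\gamma^*(x))^2\mathbf{1}_{\mathcal{E}}$; the term $w_T^2(x)$ then passes straight into the final bound, while the proof of Proposition~\ref{Maintheo} supplies the purely ``variance'' tail estimate $\P(|\hat\gamma(x)-\gamma^*(x)|>\epsilon)\le C\epsilon^{-2}/(Tv_T(x))$.

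It remains to bound $\E[U]$, and here I would use the layer-cake formula $\E[U]=\int_0^\infty\P(U>t)\,dt$, splitting the range of $t$ at thresholds chosen as powers of $V:=Tv_T(x)$. For small $t$ bound $\P(U>t)\le 1$; for moderate $t$ use the variance tail $\P(U>t)\le\P(|\hat\gamma(x)-\gamma^*(x)|>\sqrt t)\le C/(tV)$; and for large $t$ use Markov against the envelope, $\P(U>t)\le\P(\xi^{-1}M+\alpha^*>\sqrt t)=O(V^{2p})/t^{p}$. Balancing the three contributions by an appropriate choice of thresholds and of the moment order $p=p(\eta)$ yields $\E[U]=O(V^{-(1/3-\eta)})$ for every $\eta\in(0,1/3)$; combined with the centered bias $w_T^2(x)$ and the truncation probability this gives the announced rate $O((Tv_T(x))^{-(1/3-\eta)}+w_T^2(x))$, and mean-square consistency follows from \eqref{H3} and \eqref{H4}.

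The main obstacle is exactly this bridging step. Since $\hat\gamma(x)$ has no finite moments, the only global control available on $\mathcal{E}$ is through the envelope $\xi^{-1}M$, whose typical size is of order $V$, far larger than the $O(V^{-1/2})$ typical size of the centered error $\hat\gamma(x)-\gamma^*(x)$. Reconciling these two very different scales is what forces the deviation-plus-moment splitting, and it is what degrades the rate from the heuristic variance rate $1/V$ of Proposition~\ref{Maintheo} to the weaker $V^{-(1/3-\eta)}$; centering at $\gamma^*$ before splitting is what lets the bias appear as a clean $w_T^2(x)$ rather than being inflated by the moment and threshold factors, and one must check throughout that the truncation level $\xi$ enters only through harmless multiplicative constants.
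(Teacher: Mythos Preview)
Your overall strategy---splitting off the truncation event, centering at $\gamma^*(x)=\gamma(x)+w_T(x)$, and attacking $\E[U]$ by the layer-cake formula---is reasonable, and the reductions in the first two paragraphs are correct. But the bridging step you flag at the end does not close with the tools you describe.

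The tail bound you extract from Proposition~\ref{Maintheo}, namely $\P(|\hat\gamma(x)-\gamma^*(x)|>\epsilon)\le C\epsilon^{-2}/V$ with $V=Tv_T(x)$, is only valid for bounded~$\epsilon$. In the decomposition used in that proof, one of the terms is $\P(\hat T(x)<\E\hat T(x)/2)\le C/V$, which is independent of~$\epsilon$; hence for $\epsilon>1$ the bound saturates at $C/V$, and in the layer cake this means $\P(U>t)\le C/V$ (not $C/(tV)$) once $t>1$. On the other side, your envelope $M=\int_0^T k_T\,dN_s$ has typical size of order~$V$ (indeed $\E[M]\asymp\E[\hat T]\sim V$), so even granting $\E[M^{2p}]=O(V^{2p})$, the Markov tail $\P(U>t)\le CV^{2p}/t^p$ is non-trivial only for $t\gg V^2$. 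This leaves the range $t\in[1,V^2]$ covered only by the flat bound $C/V$, whose integral over that range is of order~$V$: the three contributions cannot be balanced to anything that tends to zero. (As an aside, obtaining $\E[M^{2p}]=O(V^{2p})$ via BDG already requires $\E[\hat T^{2p}]=O(V^{2p})$, which is beyond the first two moments supplied by Corollary~\ref{moments T}.)

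The paper closes this gap by a different route: instead of a layer cake on $(\hat\gamma-\gamma^*)^2$, it expands $M_T^2=\tilde M_T+M_T(k^2)+I_T(k^2)$ and bounds $\E\big[\mathbf 1_{\hat T>\xi}\,M_T^2/\hat T^2\big]$ term by term. The martingale pieces $M_T(k^2)$ and $\tilde M_T$ are controlled, on events where the bracket $I_T(k^2)$ is restricted, by Bennett-type exponential concentration inequalities for compensated jump integrals (Theorems~1 and~2 of \cite{leguevel}). It is these sub-exponential tails---not Chebyshev or BDG moments---that make the relevant integrals converge; the exponent $1/3-\eta$ then emerges from a trade-off between two threshold parameters $\theta,\rho$ in that argument, with the optimum at $\theta=2/3$, rather than from a choice of moment order~$p$.
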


\subsection{Discrete time observations}

In the main manuscript we provide the formula for the estimator of the total intensity function $\hat\alpha_{(d)}(x)$, in case of discrete time observations, under the assumption~\ref{HDj}. Regarding the birth intensity function, its estimator  is similarly defined in this case as 
\begin{equation*}
\hat{\beta}_{(d)}(x) = \frac{\sum_{j=0}^{m-1} D_{j+1}^\beta k_T( x, X_{t_j})}{\sum_{j=0}^{m-1} \Delta t_{j+1} k_T( x, X_{t_j})},\end{equation*}
where $D_{j+1}^\beta$ is an approximation of the number of births $\Delta N^\beta_{t_j}$ between $t_j$ and $t_{j+1}$. Similarly as for  $D_j$ in $\hat\alpha_{(d)}(x)$, this approximation must satisfy:
\begin{itemize}[leftmargin=12mm]
\item[$\bf(H4^\beta)$] For all $j\geq 1$, $D_j^\beta= \Delta N^\beta_{t_j}$ if $\Delta N^\beta_{t_j} \leq 1$ and $D^\beta_j \leq \Delta N^\beta_{t_j}$ if $\Delta N^\beta_{t_j} \geq 2$. \end{itemize}
Likewise, the estimator of the death intensity function for discrete time observations is 
\begin{equation*}
\hat{\delta}_{(d)}(x) = \frac{\sum_{j=0}^{m-1} D_{j+1}^\delta k_T( x, X_{t_j})}{\sum_{j=0}^{m-1} \Delta t_{j+1} k_T( x, X_{t_j})},\end{equation*}
where $D_{j+1}^\delta$ is an approximation of the number of deaths $\Delta N^\delta_{t_j}$ between $t_j$ and $t_{j+1}$ and satisfies:
\begin{enumerate}[leftmargin=12mm]
\item[$\bf(H4^\delta)$] For all $j\geq 1$, $D_j^\delta= \Delta N^\delta_{t_j}$ if $\Delta N^\delta_{t_j} \leq 1$ and $D^\delta_j \leq \Delta N^\delta_{t_j}$ if $\Delta N^\delta_{t_j} \geq 2$. \end{enumerate}

We now discuss the assumptions of Proposition~\ref{th discret} concerning the consistency of $\hat\alpha_{(d)}(x)$, $\hat\beta_{(d)}(x)$ and $\hat\delta_{(d)}(x)$, for the same examples as in the continuous time observations setting.

\bigskip

\noindent{\it Example \ref{ex BD} (continued)}: 
If $(X_t)_{t\geq 0}$ is a pure spatial birth-death process, then \eqref{discrete condition} is satisfied with $\ell_T(x)=0$.

\bigskip

\noindent{\it Example \ref{ex kernel} (continued)}: 
Assume  $k_T$ takes the general form \eqref{generalkT} and that $u\mapsto k(u)$ is Lipschitz with constant $c_k$. Then the inequality in  \eqref{discrete condition} holds true whenever $\E(d(Y_s,Y_t)|Y_0=y)\leq c_Y |s-t|^a$, in which case $\ell_T(x)=c_kc_Y/h_T$. If $k$ is moreover assumed to be supported on $[-1,1]$ and to satisfy $k(u)\geq k_* \1_{|u|<c}$ for some $k_*>0$ and $0<c<1$,  the same interpretation of \eqref{H3} and \eqref{H4} as for the choice $k(u)=\1_{|u|<1}$  remains valid for a Lipschitz function $\gamma$. We obtain in this case that \eqref{H3}-\eqref{H4} and \eqref{stepto0}-\eqref{discrete condition} hold true whenever $h_T\to 0$, $T\mu_\infty(B(x,ch_T))\to\infty$, $\Delta_m=o(\mu_\infty^2(B(x,ch_T))$ and $\Delta_m^a=o(h_T\mu_\infty^2(B(x,ch_T))$. In other words $h_T$ must tend to 0 but not too fast, and $\Delta_m$ must tend to zero sufficiently fast, the latter rate depending on the regularity exponent $a$.

\bigskip

\noindent{\it Example \ref{ex Hausdorff} (continued)}: 
If each $E_n$ is the space of point configurations in $\R^d$ with cardinality $n$, $(Y_t)_{t\geq 0}$ is a  multivariate process formed of $n(Y_0)$ components, each of them being a stochastic process on $\R^d$. 
 In this case, for usual pseudo-distances $d$, $d(Y_s,Y_t)$ can be controlled by the Euclidean distance $\|Y_s-Y_t\|$. This is true when $d$ is the Hausdorff distance or the optimal matching distance $d_\kappa$. The condition $\E(d(Y_s,Y_t)|Y_0=y)\leq c_Y |s-t|^a$ mentioned in the previous example when $k_T$ takes the form \eqref{generalkT} is then implied by $\E(\|Y_s-Y_t\||Y_0=y)\leq c_Y |s-t|^a$. The latter condition holds true for most continuous processes, including Brownian motions, fractional Brownian motions, the Ornstein–Uhlenbeck process, and more generally any solution to a stochastic differential equation with Lipschitz coefficients \cite[Chapter 2]{kunita}.

\bigskip

  \noindent{\it Example \ref{ex simple}  (continued)}:  Assume that  $\gamma(x)=\gamma_0(n(x))$, then
\begin{itemize}
\item[(i)]  if we choose $k_T(x,y)=\1_{n(x)=n(y)}$ to recover the standard non-parametric likelihood estimator of \cite{wolff1965} and \cite{reynolds1973}, then we can take $\ell_T(x)=0$ in \eqref{discrete condition}  so that \eqref{H3}-\eqref{H4} and \eqref{stepto0}-\eqref{discrete condition} are satisfied  whenever $\mu_\infty(E_{n(x)})\neq 0$ and $\Delta_m\to 0$.
   
\item[(ii)] if we choose $k_T$ as in \eqref{generalkT} with $d(x,y)=|n(x)-n(y)|$, then $k_T(x,Y_s)-k_T(x,Y_t)=0$ and   \eqref{discrete condition} is also satisfied with $\ell_T(x)=0$.
   \end{itemize}

 \noindent{\it Example \ref{ex geom}  (continued)}:  Assume more generally that  $\gamma(x)=\gamma_0(f(x))$ for some function $f: E\to \R^p$. Following the discussion above for Example \ref{ex kernel}, if we choose $k_T$ as in \eqref{generalkT} with $d(x,y)=\|f(x)-f(y)\|$, the key condition  is $\E(\|f(Y_s)-f(Y_t)\||Y_0=y)\leq c_Y |s-t|^a$. If $E_n$ is the space of point configurations in $\R^d$ with cardinality $n$, this condition holds true if for instance $f$ is Lipschitz and $(Y_t)_{t\geq 0}$ is any of the processes listed in Example~\ref{ex Hausdorff}.

\section{Supplementary information concerning the data analysis}\label{supp-data}

As a complement to the data analysis carried out in Section~\ref{sec:data} of the main manuscript, Figure~\ref{fig:birth} shows the estimation of the birth intensity for the Langerin sequence (on the left) and the Rab11 sequence (on the right). The are two estimations: the first one is the non-parametric method based on the distance $d_\kappa$ (in blue), where $\kappa$ is the diameter of the observed cell, and the second one in red uses the estimator from Example~\ref{ex simple}-(ii) that assumes the intensities only depend on the cardinality.  The scales of the plots have been chosen to be similar to the scales of the death intensities estimations displayed in Figure~\ref{fig:death} in the main manuscript. Based on these results, we conclude that the birth intensity is constant ($4.45$ births per second) for the Langerin sequence. It seems also constant for the Rab11 sequence if we refer to the second estimator ($2.98$ births per second). The first estimator (in blue) for the Rab11 sequence shows more variability, but we decide to consider these fluctuations as noise and to trust the second estimator to conclude to a constant birth intensity. Providing a formal statistical test to decide whether the intensity is constant or not is beyond the scope of this contribution, but it clearly constitutes an interesting perspective for future investigations.  

\begin{figure}[ht]
\begin{center}
{\small
\begin{tabular}{cc}
 \includegraphics[scale=.4]{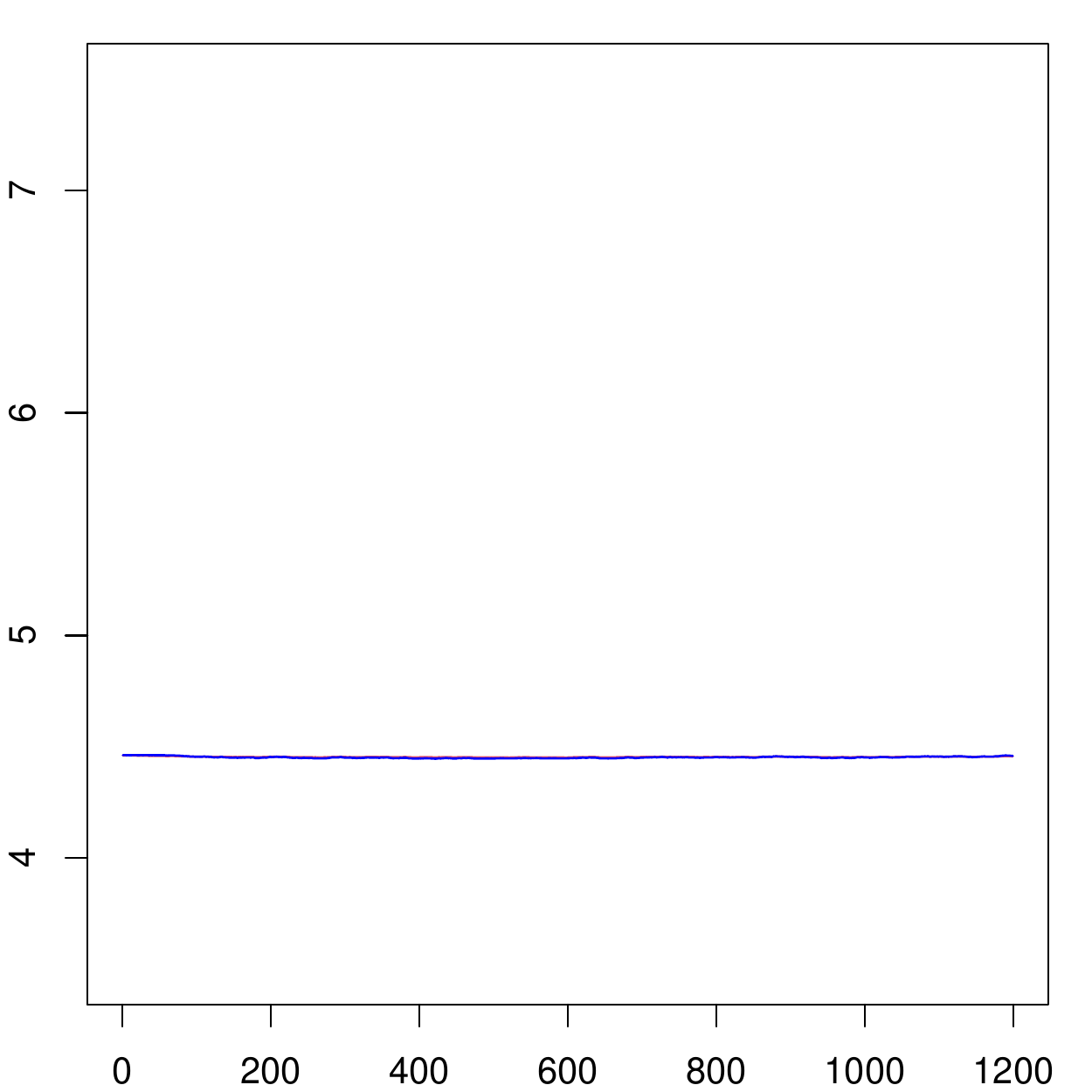} & \includegraphics[scale=.4]{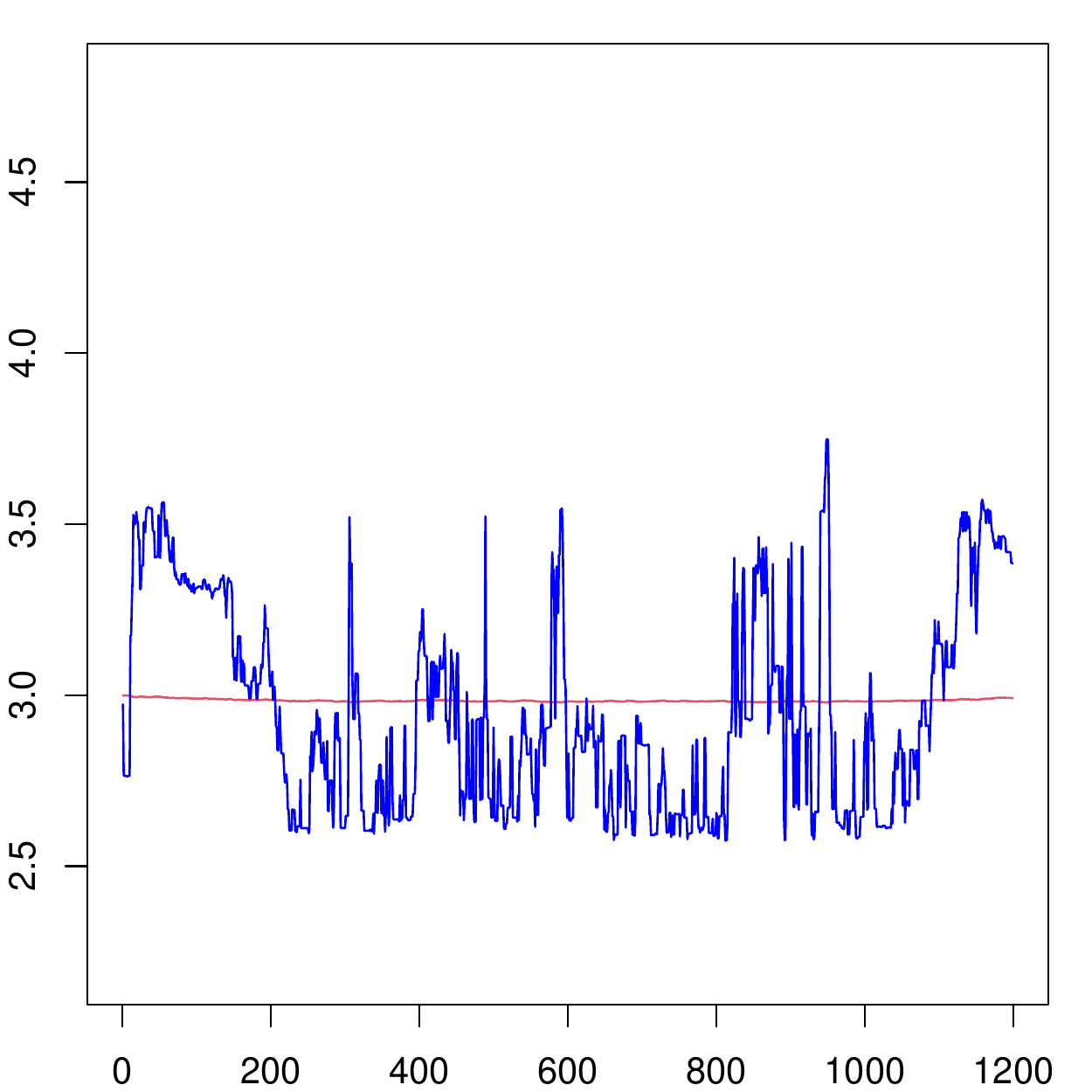} \\
 (a) & (b) 
 \end{tabular}}
\caption{{\small (a) Estimation of the birth intensity at each configuration $x_j$ of the Langerin sequence, for all frames $j$ from 1 to 1199, by  the discrete time estimator given by  \eqref{est discrete} based on $d_\kappa$ (in blue) and the discrete time estimator given by  \eqref{est discrete}  with the choice of $k_T$ as in Example~\ref{ex simple}-(ii) (in red). Both curves are almost constant and overlaid. (b) Same plot as (a) but for the Rab11 sequence. For both plots the $y$-scale is similar as the one used for the corresponding death intensity estimates, see Figure~\ref{fig:death} in the main manuscript.}}\label{fig:birth} 
\end{center}
\end{figure}

Concerning the death intensity functions, we have observed in Figure~\ref{fig:death} of the main manuscript that they depend on the cardinality of the proteins' configurations. Of course this does not rule out a dependence on other characteristics of the configurations. In an attempt to figure out whether geometrical characteristics  might influence the death intensities, we have represented in Figure~\ref{fig_geometric} the estimated death intensities of Langerin (top row) and Rab11 (bottom row) with respect to the maximal nearest neighbour distance of each configuration (left), to the maximal area of the Delaunay cells of each configuration (middle), and as a recall to the number of proteins (right). The considered estimator is the non-parametric one using the distance $d_\kappa$. The two first characteristics are geometric quantities indicating the maximal available empty space in each configuration. The plots in  Figure~\ref{fig_geometric} do not show a clear dependence with these quantities. The slight negative dependence that we might observe in the top middle plot is to our opinion spurious and due to the expected dependence between the maximal Delaunay area and the number of points (the less points, the more empty space).

\begin{figure}[H]
\begin{center}
\begin{tabular}{ccc}
 \includegraphics[scale=.35]{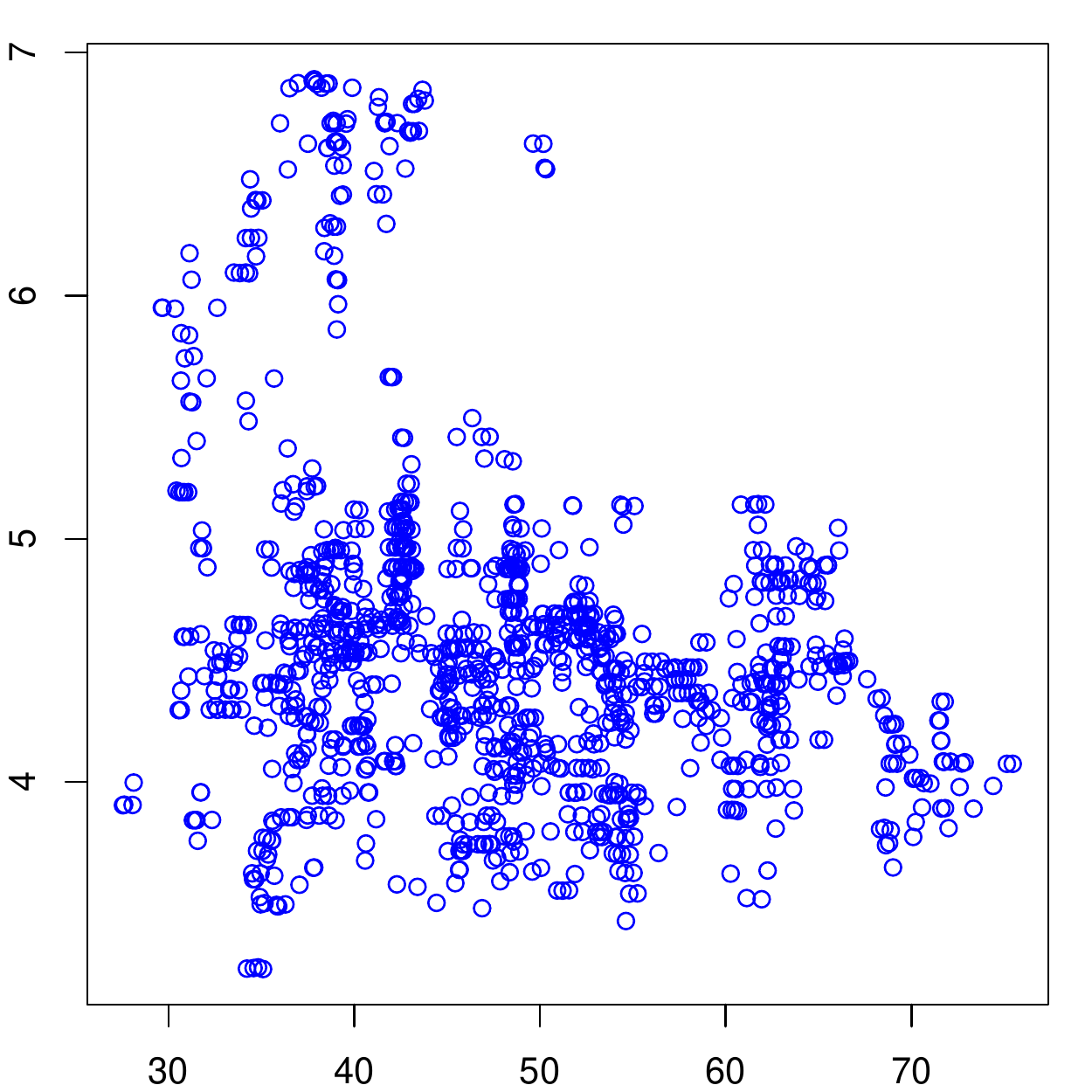} &
 \includegraphics[scale=.35]{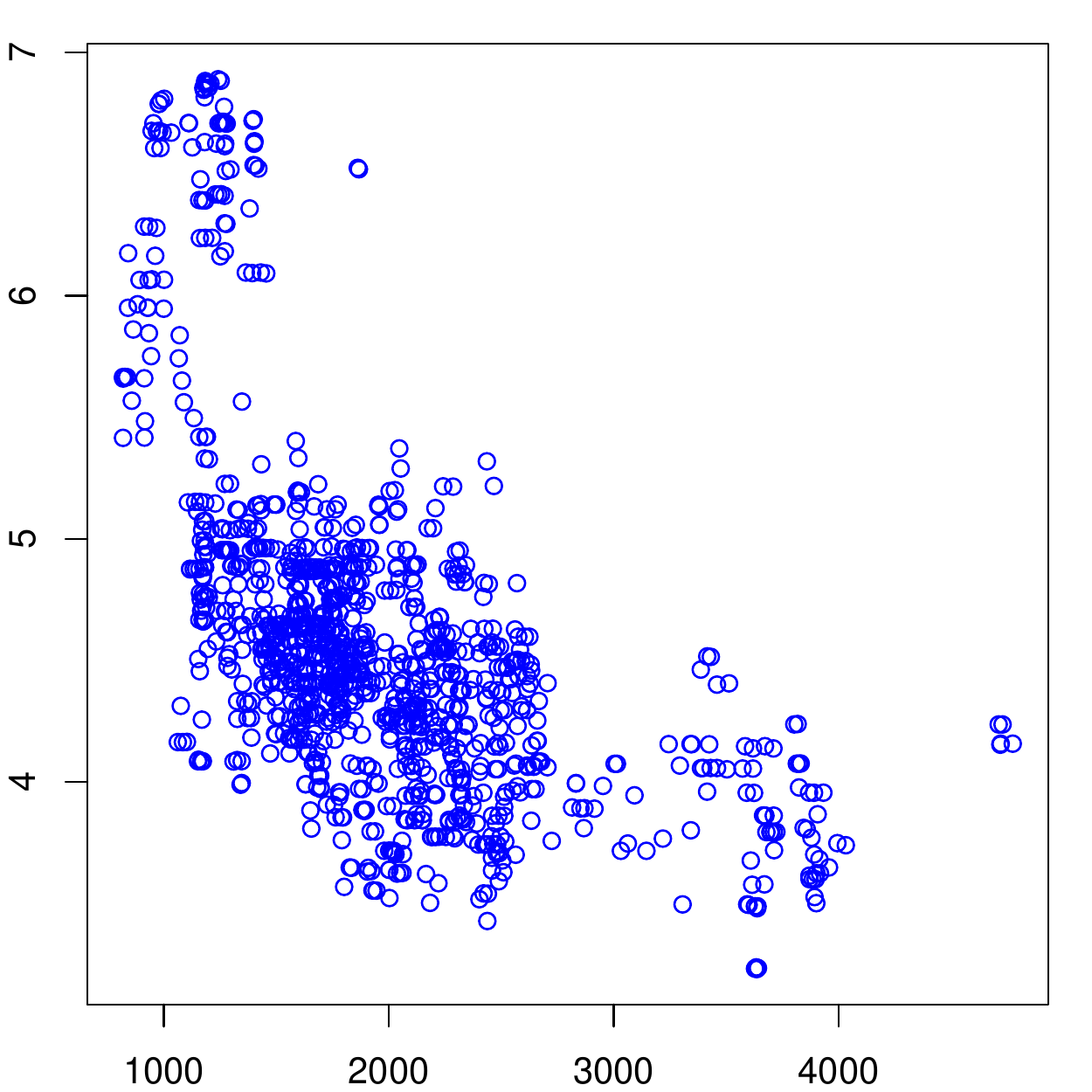} &
 \includegraphics[scale=.35]{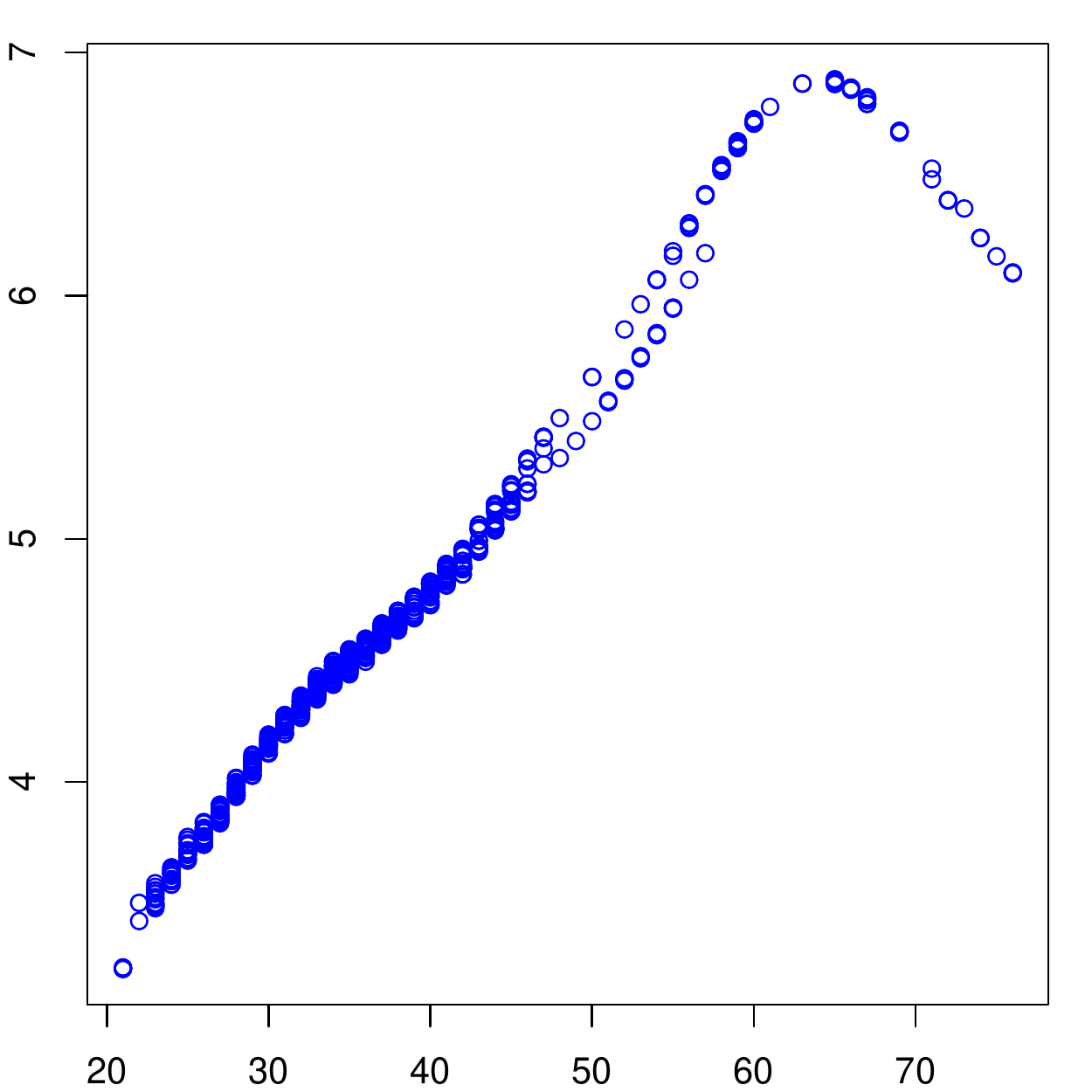}\\
\includegraphics[scale=.35]{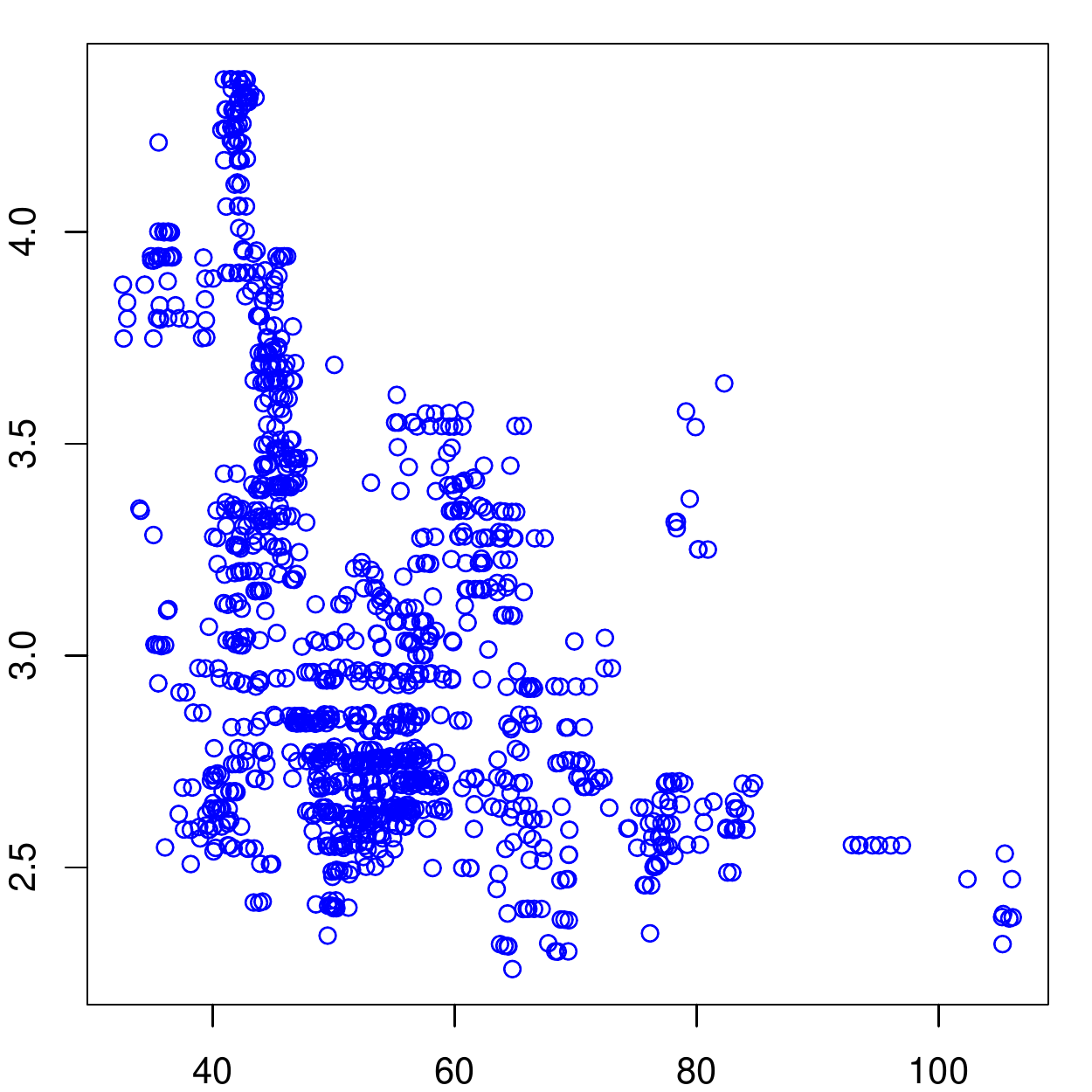} &
 \includegraphics[scale=.35]{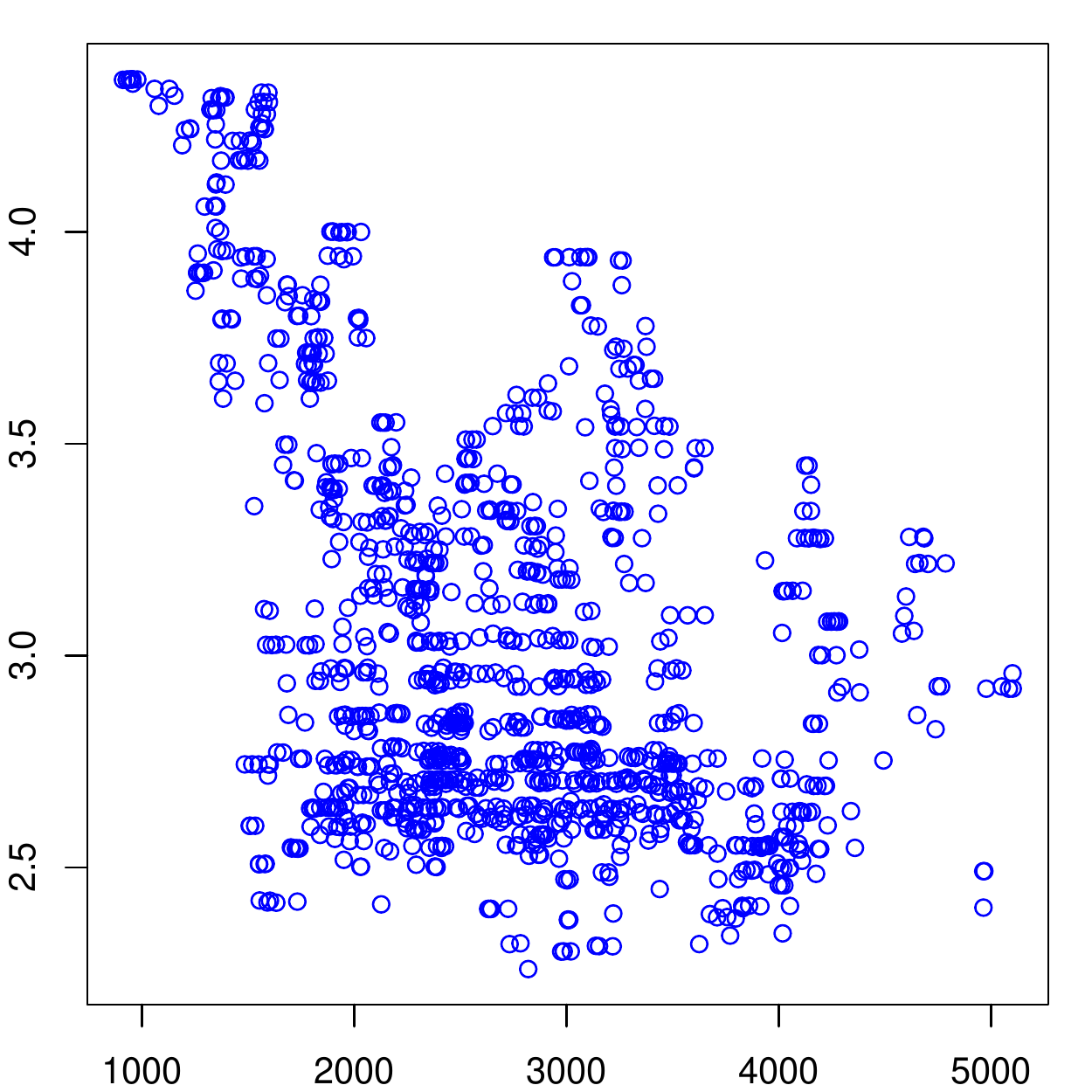} &
 \includegraphics[scale=.35]{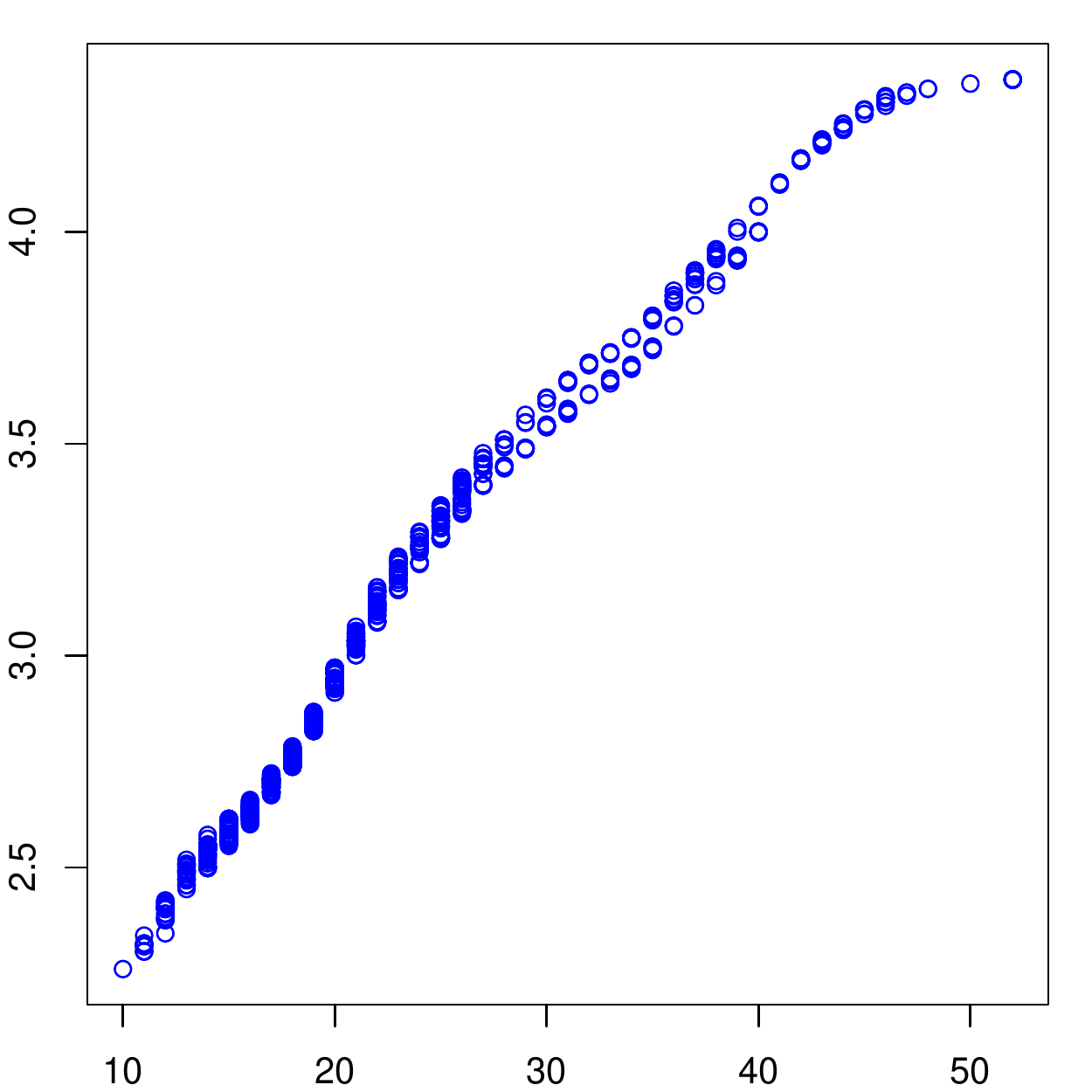}
 \end{tabular}
 \end{center}
\caption{Estimated death intensities of Langerin (top row) and Rab11 (bottom row) with respect to the maximal nearest neighbour distance of each configuration (left), to the maximal area of the Delaunay cells of each configuration (middle), and  to the number of proteins (right).}
\label{fig_geometric}
\end{figure}

We moreover verify in Figure~\ref{simu-slump} that the decrease of the death intensity for the highest  numbers of proteins in the Langerin sequence is not due to an estimation artefact. To this end, we simulate 100 processes with the same characteristics as the sequence of Langerin proteins. Specifically, the time of simulation is 171 seconds, the birth rate is constant and equals to 4.45 births per second and we assume that the death rate is linear and follows the same trend as for the Langerin sequence (represented as a black line in  Figure~\ref{simu-slump}). The starting  configuration of the process consists of approximately 60 points uniformly distributed. We considered for these 100 simulations the estimator based on the distance $d(x,y)=|n(x)-n(y)|$ for the same choice of bandwidth than that used for the Langerin sequence. The scatterplots of these 100 estimators with respect to the number of points are shown as gray lines in Figure~\ref{simu-slump}. The observed scatterplot from the Langerin sequence is shown in red, proving that the observed slump is unlikely to happen if the true death intensity was linear in the number of proteins. This leads us to conclude that the death intensity is subject to a significant decrease when the number of proteins is high.

\begin{figure}[ht]
\begin{center}
{\small
\begin{tabular}{c}
 \includegraphics[scale=.5]{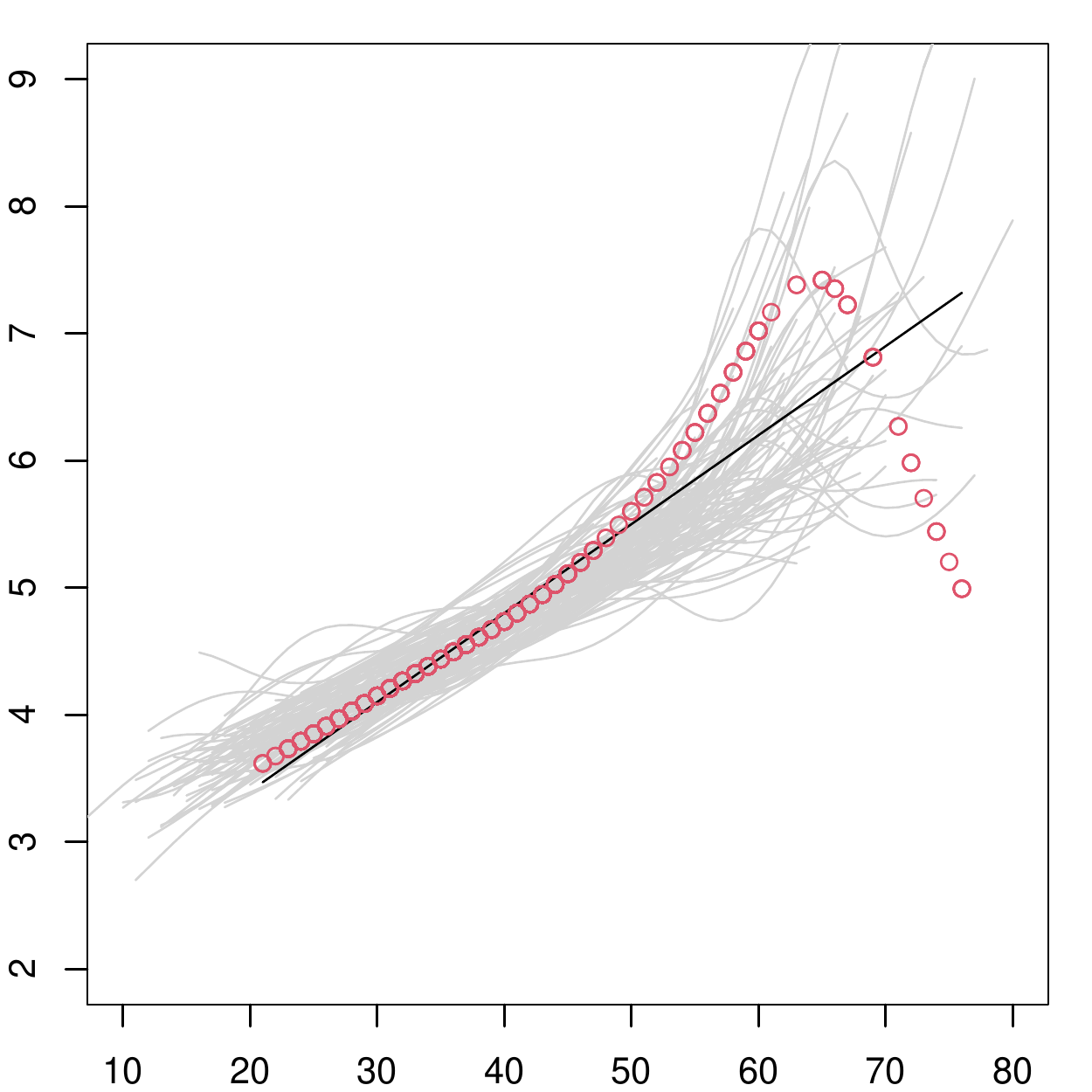}  
 \end{tabular}}
\caption{{\small In gray: scatterplots between the estimation of the death intensity and the number of points for 100 simulations where the true death intensity is the black line. In red: scatterplot between the estimation of the death intensity of the Langerin sequence and the number of proteins.}}\label{simu-slump} 
\end{center}
\end{figure}

Finally, Figure~\ref{fig:death trends}  completes Figure~\ref{fig:death} of the main manuscript. It shows the ccf of the detrented death intensities between Langerin and Rab11, where the lag is applied to Langerin. The estimated death intensities are represented as red curves in Figure~\ref{fig:death trends} while their trend is shown in black. These trends are the result of a loess smoothing with a smoothing parameter (span) equals to $0.1,0.2,0.4$ and $0.8$ from left to right, respectively. Whatever the degree of smoothing, the ccf is clearly asymmetric, with more significative correlations for the positive lags. This confirms the observation made in the main manuscript, namely that Rab11 seems to be active before Langerin.

\begin{figure}[ht]
\begin{center}
{\small
\begin{tabular}{cccc}
 \includegraphics[scale=.3]{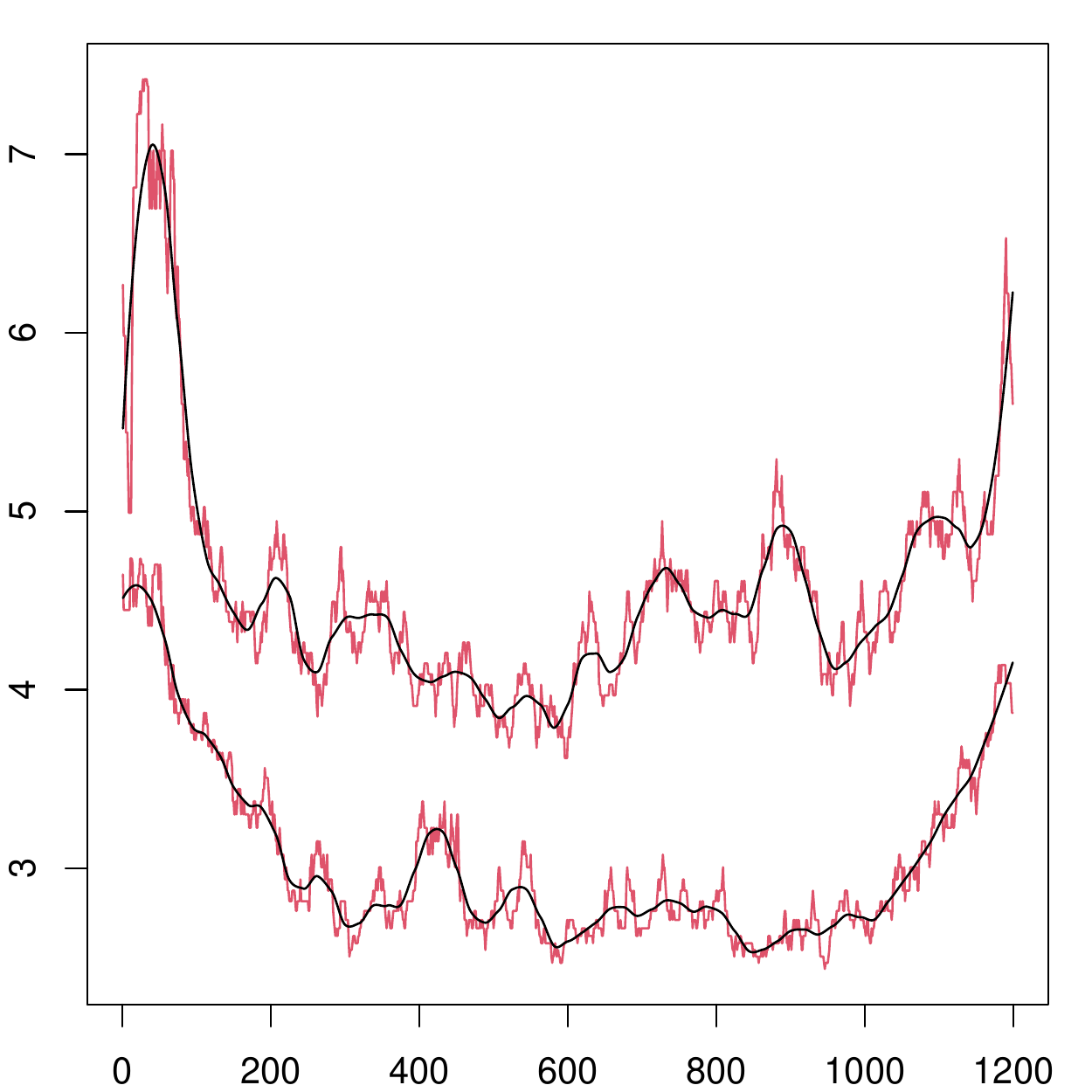} &  \includegraphics[scale=.3]{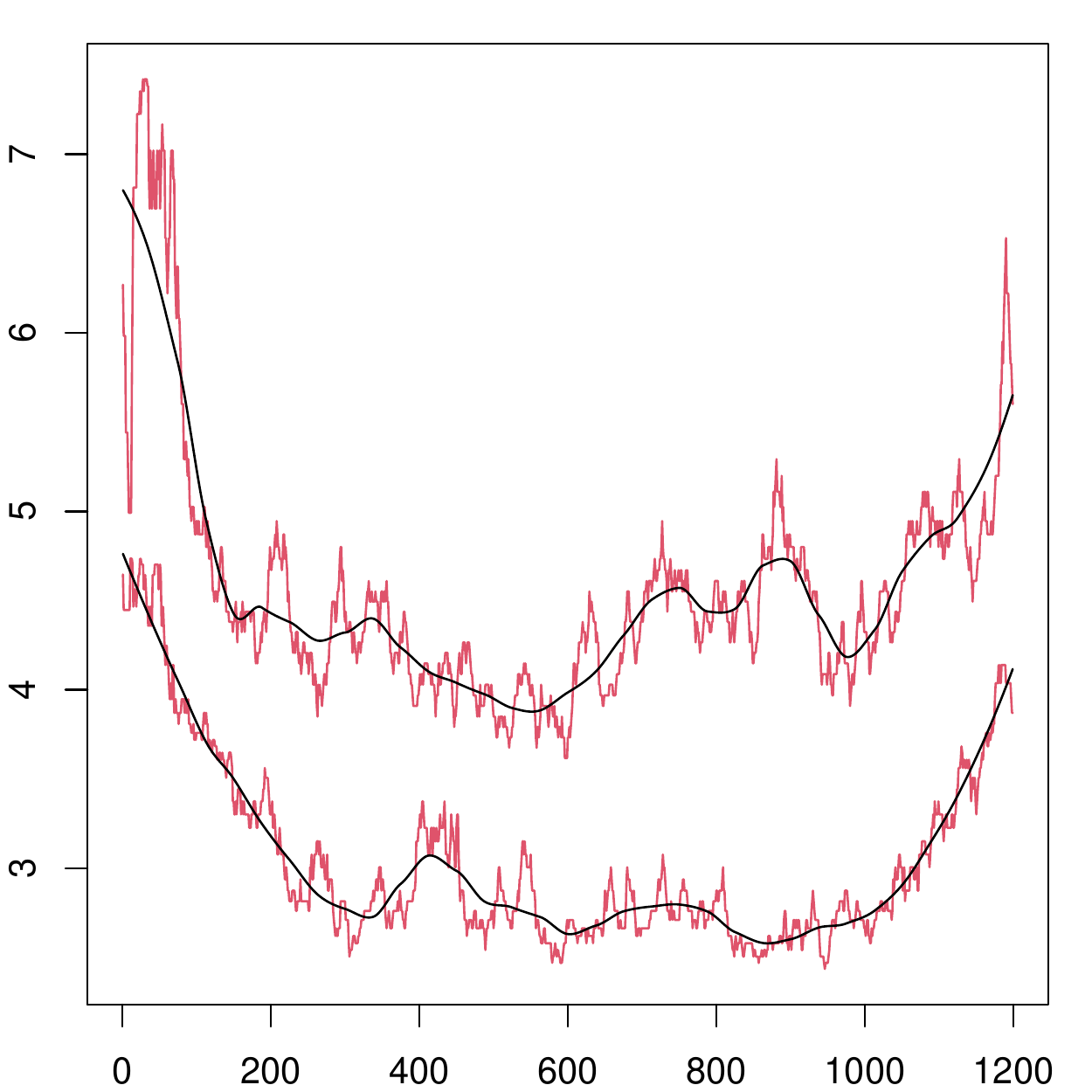}   &  \includegraphics[scale=.3]{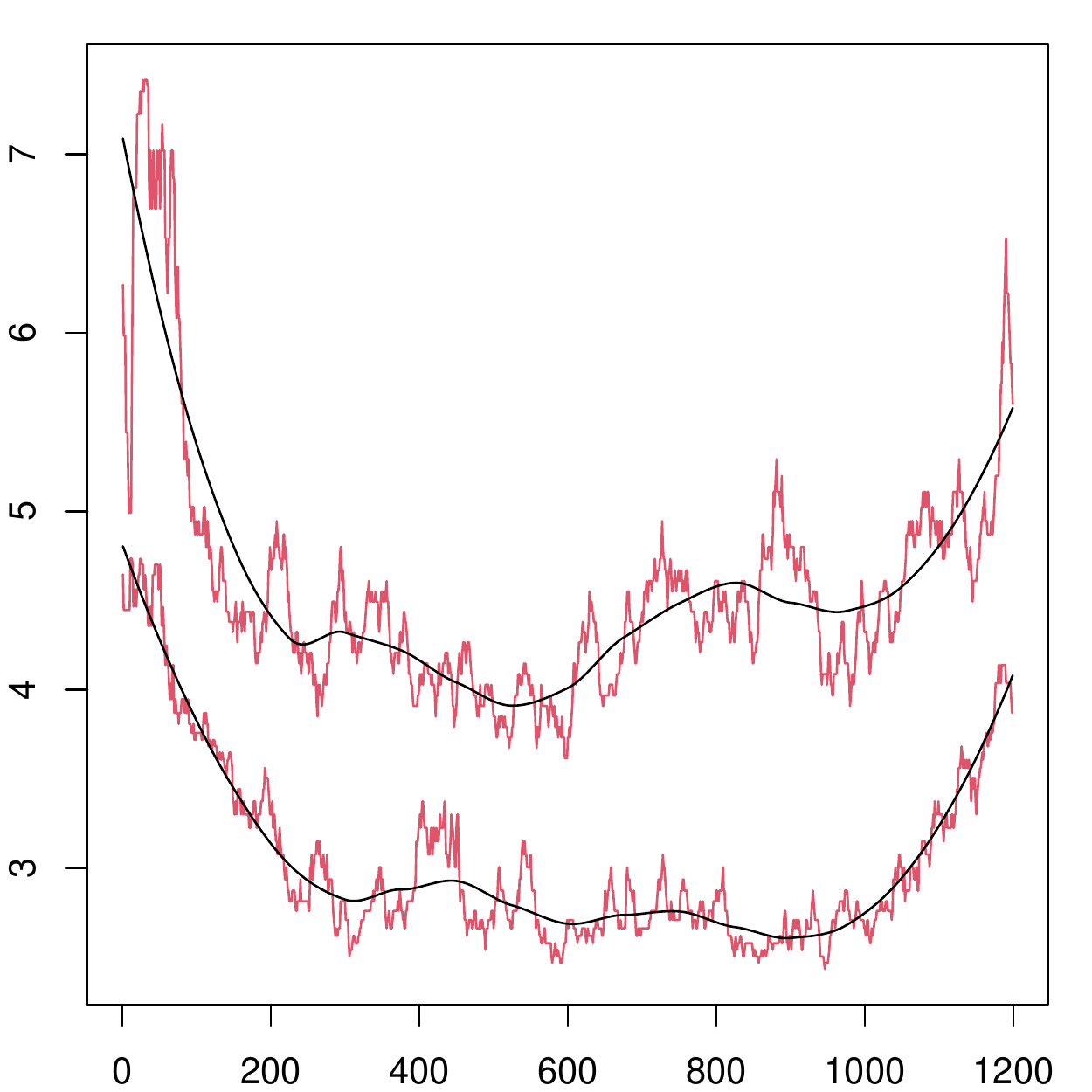} &  \includegraphics[scale=.3]{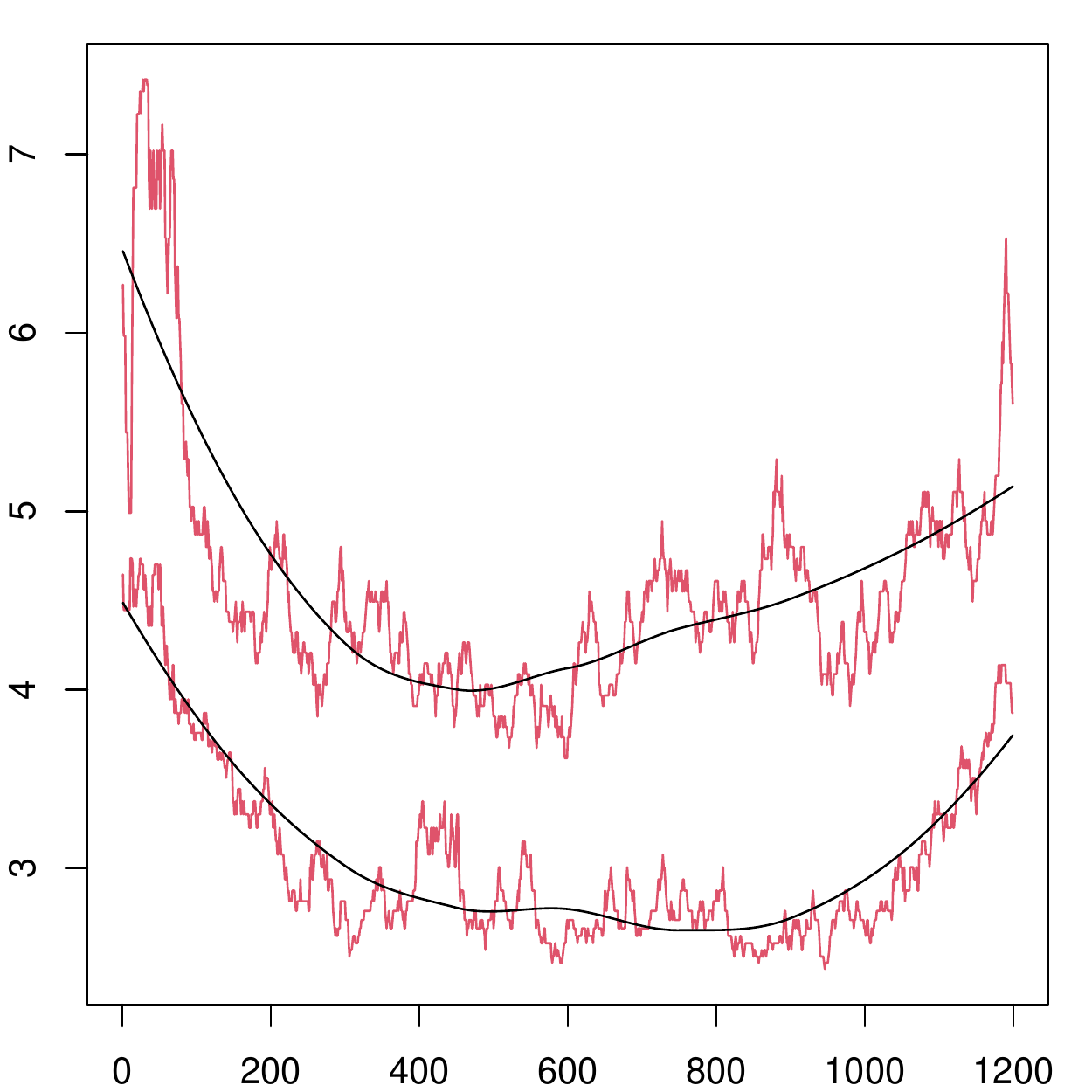}\\
 $span=0.1$ &  $span=0.2$ &  $span=0.4$ &  $span=0.8$ \\ 
  \includegraphics[scale=.3]{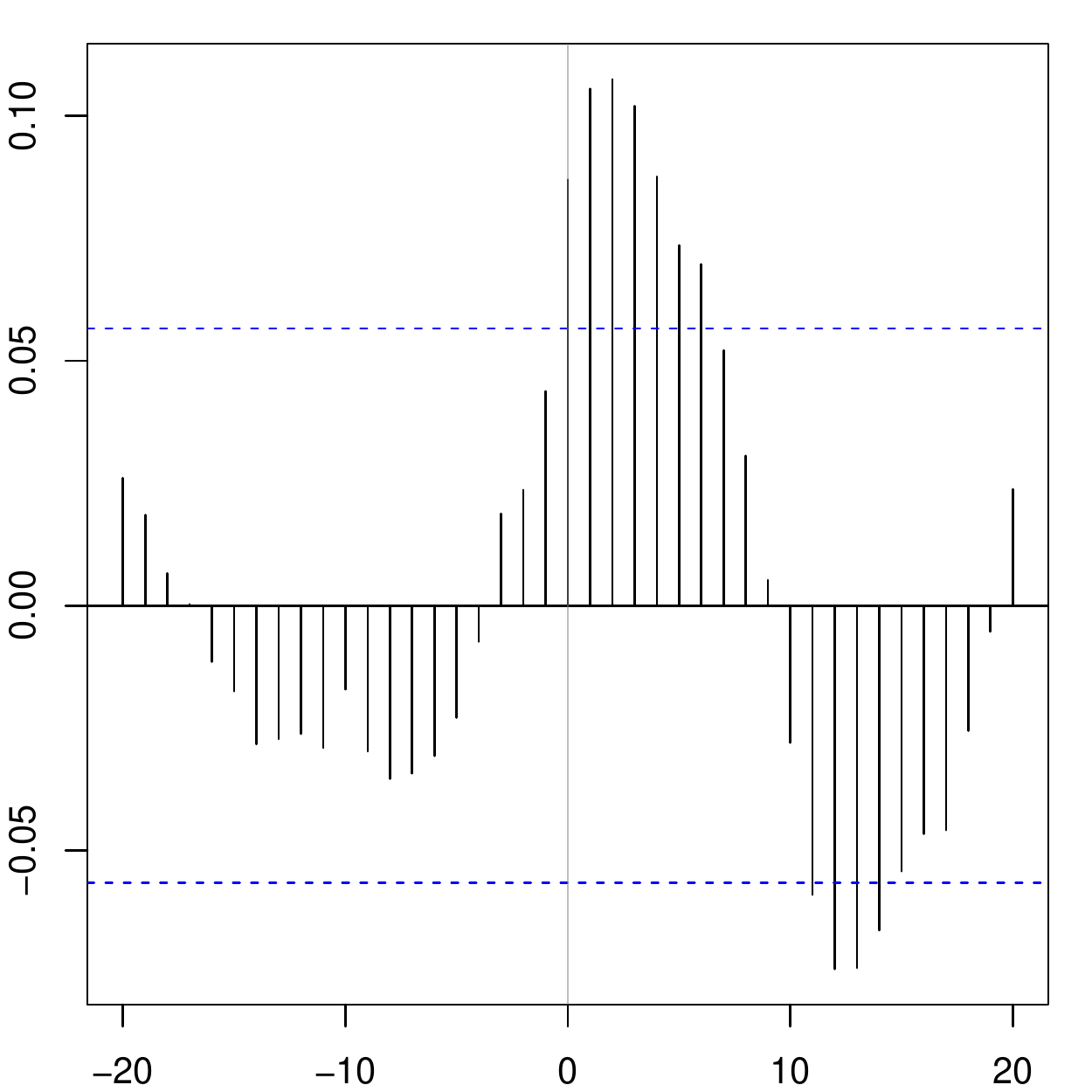}  & 
    \includegraphics[scale=.3]{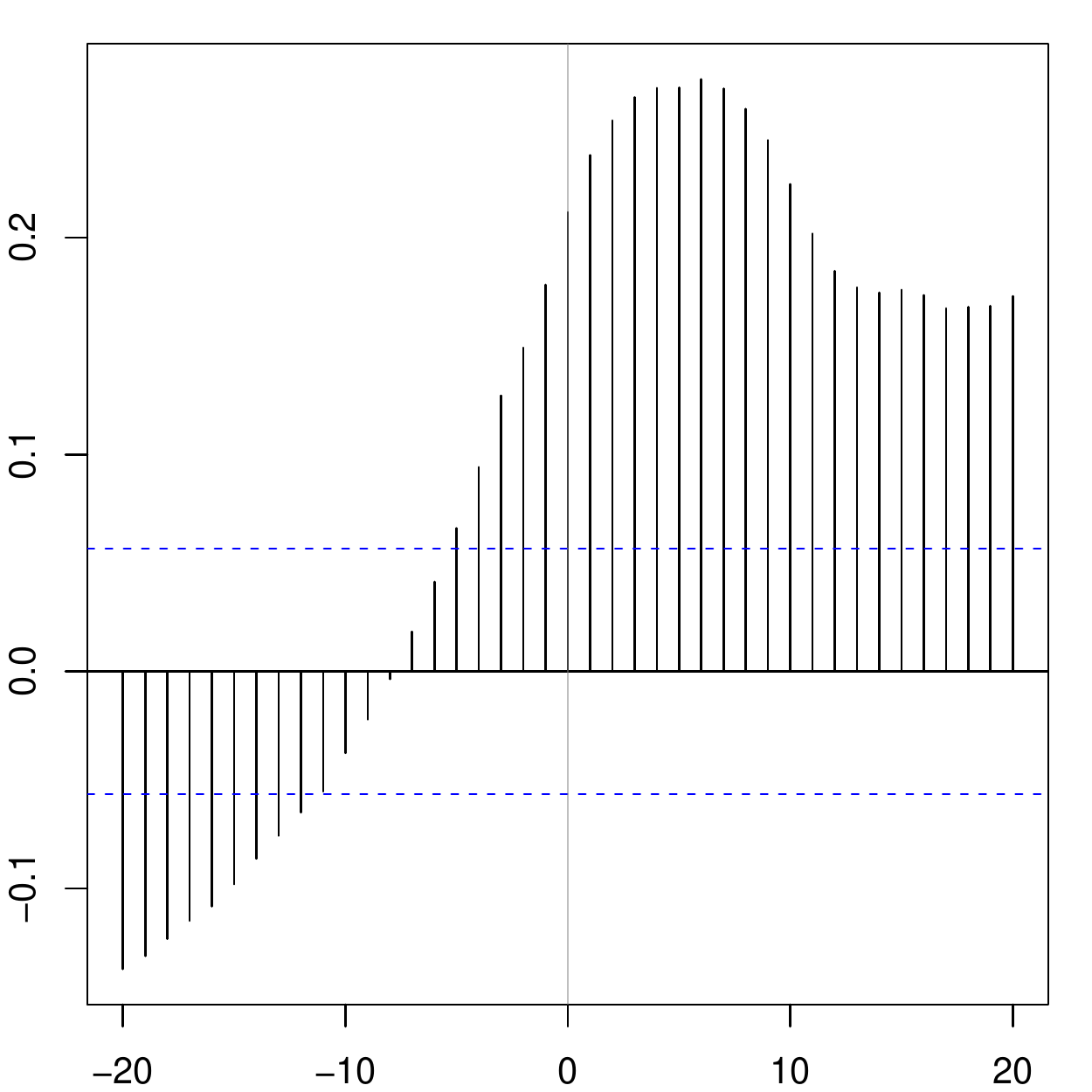} & 
      \includegraphics[scale=.3]{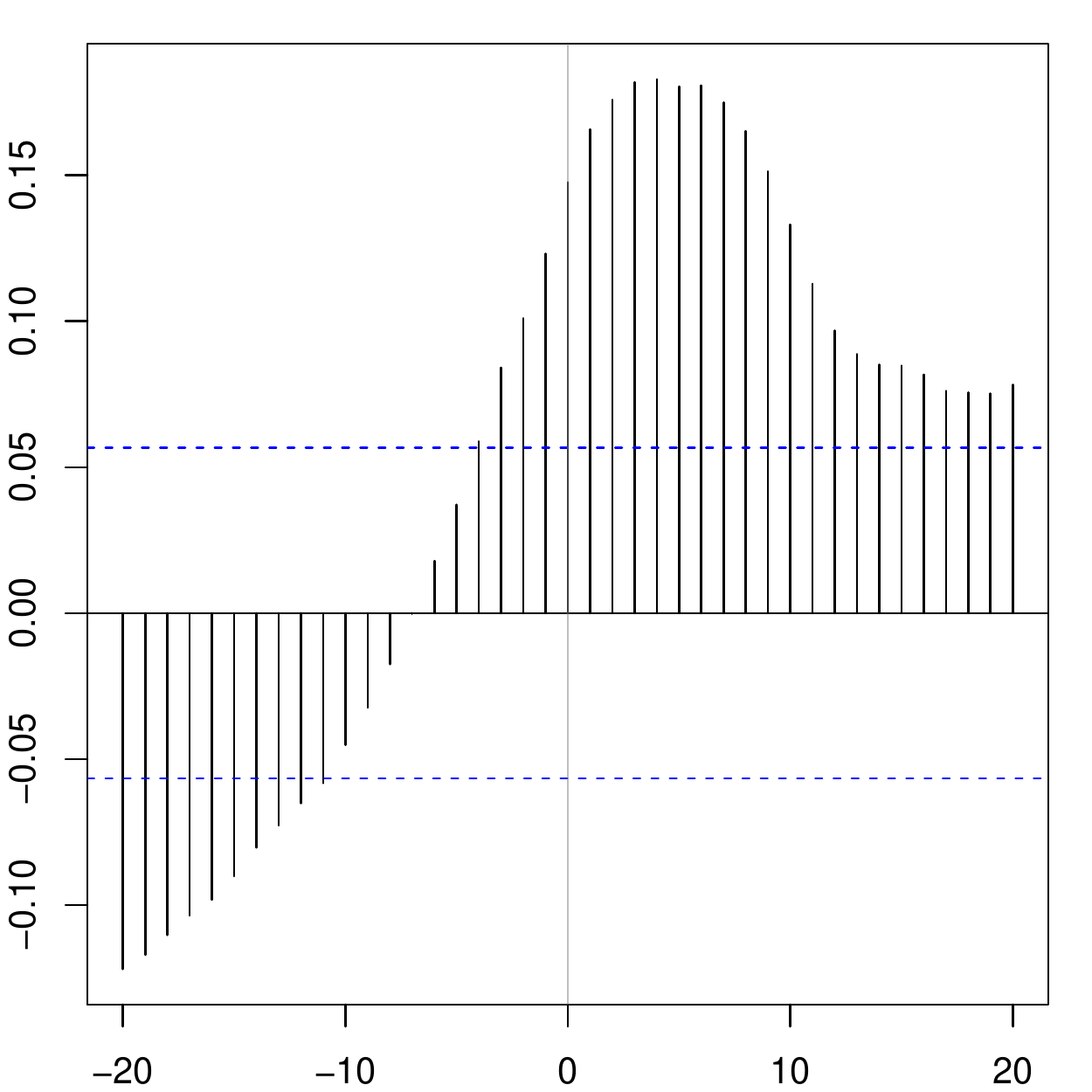} & 
        \includegraphics[scale=.3]{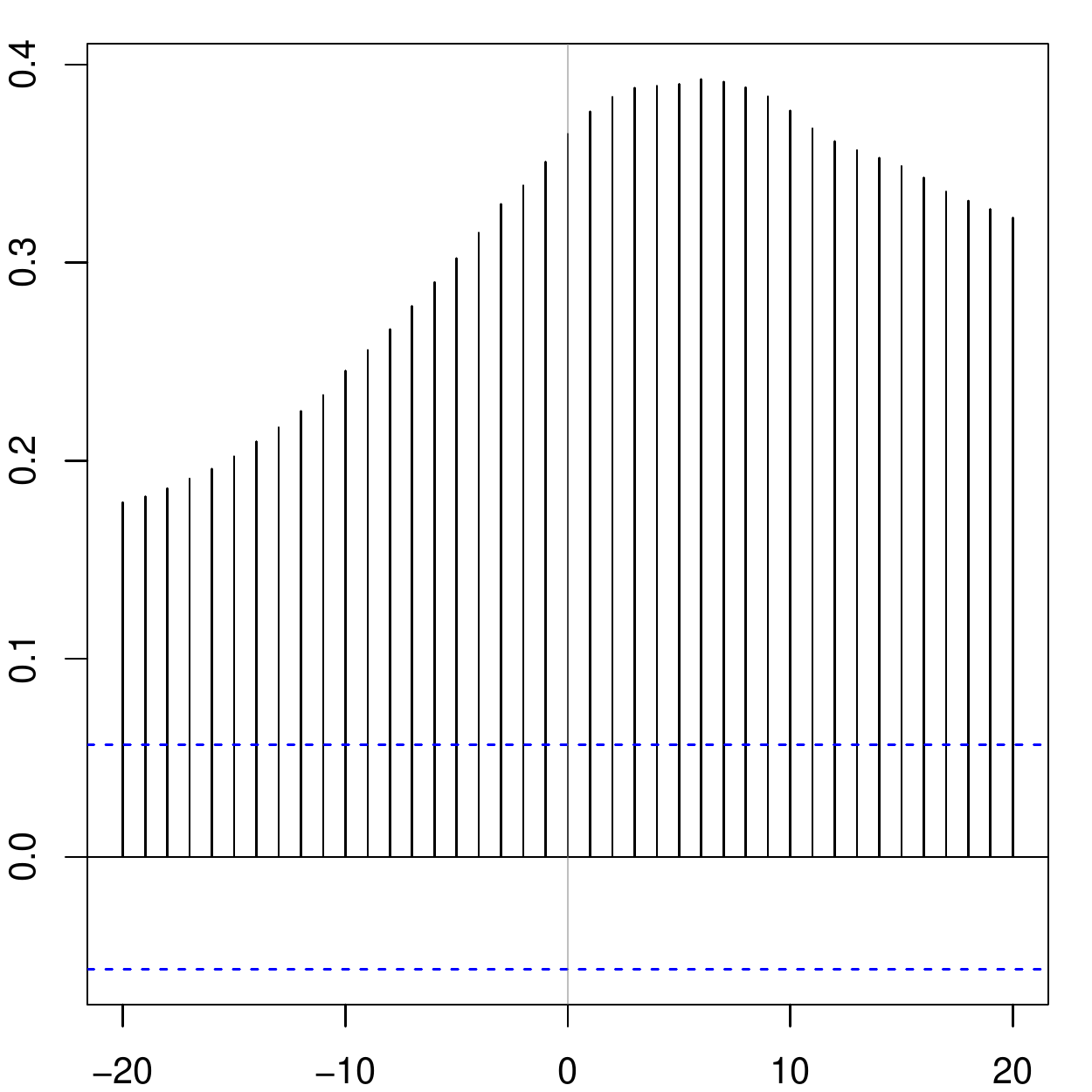} 
 \end{tabular}}
\caption{{\small First row: estimation of the death intensity for the Langerin sequence (top red curve) and for the Rab11 sequence (bottom red curve), along with their estimated trends (in black) obtained by a loess smoothing with parameter $span$ where, from left to right, $span=0.1,0.2,0.4,0.8$. Second row:  empirical ccf for the detrended curves (that are the difference between the red and the black curves of the first row).}}\label{fig:death trends} 
\end{center}
\end{figure}

The study of this dataset is far to be exhaustive. We have examined the intensities independently for each type of proteins. A natural extension would be to consider a joint intensity and/or marginal intensities that may depend on the current configurations of both types of proteins. This approach calls for the definition of multivariate birth-death-move processes, which certainly constitutes an exciting perspective of research. It would be also interesting to construct formal testing procedures in order to decide for instance whether the intensity function is constant (as we conclude it for the birth intensities in our applied study), or whether it depends on some features like the cardinality of the configurations (as for the death intensities in our application). 
Beyond the estimation of the intensities, the understanding of the whole spatio-temporal dynamics involves the estimation of the birth and death transition kernels, and the understanding of the inter-jumps dynamics.  These aspects will be the subject of future works.

 \section{Formal definition of  birth-death-move processes and some theoretical properties}\label{appendix:basics}

Birth-death-move processes are a special case of what we call a jump-move process. Informally, the dynamics of a jump-move process consists of a continuous stochastic motion in a space $E$ during a random time, followed by a jump in $E$, followed by a new continuous motion during a new random time, then a new jump, and so on. This dynamics shares close similarities with Markov processes obtained by ``piecing out'' strong Markov processes, as introduced in a very abstract way in \cite{ikeda1968}. We present in Section~\ref{appendix:BDM def} below a mechanistic construction which seems to us more accessible to understand and where the motions between jumps have not to be strong Markov but only Markov processes. 
We then introduce birth-death-move processes in Section~\ref{appendix:bdm} as a particular case of jump-move processes, and we derive in Section~\ref{appendix:bdm theory} important theoretical properties that are needed for the statistical study conducted in the main manuscript. The proofs  are provided in Section~\ref{sec:proofs}. 

\subsection{Spatial jump-move processes}\label{appendix:BDM def}

Let $E$ be a Polish space equipped with the Borel $\sigma$-algebra $\mathcal{E}$. Let further  $(\Omega,\F)$ be a measurable space and $(\P_x)_{x \in E}$ a family of probability measures on $(\Omega,\F)$. 

In order to define the jump-move process $(X_t)_{t\geq 0}$, we first introduce the process that will drive the inter-jumps motions. Let $( (Y_t)_{t\geq 0}, (\P_x)_{x \in E})$ be a continuous time-homogeneous Markov process on $E$ with respect to its natural filtration $(\F_t^{Y})_{t \geq 0}$. We denote its transition kernel on $E\times \mathcal E$ by $(Q_t^Y)_{t \geq 0}$, i.e. for any $x\in E$ and any $A\in \mathcal E,$
\[Q_t^{Y}(x,A)= \P_x(Y_t \in A).\]
We then let $(Y_t^{(j)})_{t\geq 0}$, $j\geq 0$, be a sequence of processes on $E$ identically distributed as $(Y_t)_{t\geq 0}$. In our construction below, $(Y_t^{(j)})_{t\geq 0}$ will support the motion  of $(X_t)_{t\geq 0}$ after the $j^{\textrm{th}}$ jump.

Moreover, to deal with the jumps of $(X_t)_{t\geq 0}$, we introduce the jump intensity function $\alpha : E \rightarrow \R^+,$ which is assumed to be continuous on $E$, as well as a probability transition kernel for the jumps denoted by $K : E \times \mathcal{E} \rightarrow [0,1].$
  We assume that $\alpha$ is bounded from below and above, i.e. there exist $\alpha_*>0$ and $\alpha^*<\infty$ such that for every $x \in E$,  $\alpha_* \leq \alpha(x) \leq \alpha^*$.

Given an initial distribution $\mu_0$ on $E$ and an initial configuration $X_0 \sim \mu_0$, the jump-move process  $(X_t)_{t\geq 0}$ is constructed as follows:
 \begin{itemize}

 \item[{1)}] Given $X_0 = z_0$, generate $(Y^{(0)}_t)_{t\geq 0}$ conditional on $Y^{(0)}_0=z_0$ according to the kernel $(Q_{t}^Y(z_0,.))_{t\geq 0}$. Then,
 
  \begin{itemize}
\item Given $X_0 = z_0$ and $(Y^{(0)}_t)_{t\geq 0}$, generate the first inter-jump time $\tau_1$ according to the cumulative distribution function
 \[F_1(t)  = 1-\exp\left( - \int_0^t \alpha( Y^{(0)}_u)du \right).\]
 
  \item Given $X_0 = z_0$, $(Y^{(0)}_t)_{t\geq 0}$ and $\tau_1$, generate the first post-jump configuration  $z_1$ according to the transition kernel $K \left( Y_{\tau_1}^{(0)}, \cdotp \right).$
 
 \item Set $T_1=\tau_1$, $X_t = Y_t^{(0)}$  for $t \in [0,T_1)$ and $X_{T_1}=z_1$.
 \end{itemize}
   \end{itemize}

And iteratively, for $j\geq 1$,
 \begin{itemize}
\item[{j{\small +1})}]  Given $X_{T_j} = z_j,$ generate $(Y^{(j)}_t)_{t\geq 0}$ conditional on $Y^{(j)}_0=z_j$ according to  $(Q_{t}^Y(z_j,.))_{t\geq 0}$. Then 
  \begin{itemize}
\item  Given $X_{T_j} = z_j$ and $(Y^{(j)}_t)_{t\geq 0}$, generate $\tau_{j+1}$ according to the cumulative distribution function 
   \begin{equation}\label{cdf tau}
   F_{j+1}(t)  =1- \exp\left( - \int_0^t \alpha( Y^{(j)}_u)du \right).\end{equation}
 
 \item  Given $X_{T_j} = z_j$,   $(Y^{(j)}_t)_{t\geq 0}$ and  $\tau_{j+1}$, generate $z_{j+1}$ according to $K \left( Y_{\tau_{j+1}}^{(j)}, \cdotp \right)$.
 
 \item Set $T_{j+1}=T_j + \tau_{j+1}$,  $X_t = Y_{t-T_j}^{(j)}$ for $t \in [T_j,T_{j+1})$ and $X_{T_{j+1}}=z_{j+1}$.
 \end{itemize}
 \end{itemize}

In this construction  $(T_j)_{j\geq 1}$ is the sequence of jump times of the process $(X_t)_{t\geq 0}$ and we set $T_0=0$. The number of jumps  before $t$ is denoted by $(N_t)_{t\geq0}$, that is $N_t=Card\{j\geq 1 : T_j \leq t\}$.

Let  $\F_t^{0} =  \sigma(X_u, u\leq t )$, $t>0$, be the natural filtration of the process $(X_t)_{t \geq 0}$. 
To avoid any measurability issues, we make this filtration complete \citep[Section 20.1]{Bass} and denote it by $\F_t$, i.e. $\F_t= \sigma(\F_t^{0} \cup \mathcal{N})$ where $\mathcal{N}$ is the collection of sets that are $\P_x$-null for every $x \in E$. For the same reason, we also complete the natural filtration $(\F_t^{Y})_{t \geq 0}$ of $(Y_t)_{t\geq 0}$. We recall that $(Y_t)_{t\geq 0}$ remains a time-homogeneous Markov process with respect to this completed filtration \citep[Proposition 20.2]{Bass}. We verify in the supplementary material that $((X_t)_{t \geq 0}, (\P_x)_{x \in E})$ is a Markov process with respect to $(\F_t)_{t>0}$. Note that the specific form \eqref{cdf tau} of the distribution of the waiting times is necessary to get a memoryless property. This is in line with the construction of piecewise-deterministic Markov processes \citep{davis1984}.

\begin{theo}\label{Markov}
 Let $(X_t)_{t \geq 0}$  be a spatial jump-move process.
 Then $\left( (X_t)_{t\geq 0}, (\P_x)_{x \in E}\right)$ is a time-homogeneous Markov process with respect to $(\F_t)_{t \geq 0}.$
\end{theo}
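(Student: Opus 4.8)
The plan is to establish the simple Markov property in the form $\E_x[g(X_{t+s}) \mid \F_t] = (P_s g)(X_t)$ for every bounded measurable $g : E \to \R$, every $x \in E$ and all $s,t \geq 0$, where $P_s g(y) := \E_y[g(X_s)]$; time-homogeneity is then automatic, since the update mechanism ($Y^{(j)}\stackrel{d}{=}Y$, the intensity $\alpha$ and the kernel $K$) is identical after each jump and does not depend on calendar time. Because $\alpha \leq \alpha^*$, the number of jumps $N_u$ is stochastically dominated by a Poisson process of rate $\alpha^*$, so $N_u<\infty$ a.s. for every $u$; this lets me partition $\Omega$ according to $\{N_t=n\}$, $n\geq 0$, and argue on each piece.

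First I would fix $n$ and work on $\{N_t=n\}$, where $T_n\le t<T_{n+1}$, the age since the last jump is $a:=t-T_n$, and $X_t=Y^{(n)}_a=:x$. Note that $T_n$, hence $a$, is $\F_t$-measurable, so conditionally on $\F_t$ restricted to this event, $a$ is a constant; this is what will allow me to invoke the \emph{simple} Markov property of $Y$ (only available at deterministic times) at the instant $a$. The crux is a lack-of-memory computation: from \eqref{cdf tau}, conditionally on the whole trajectory $(Y^{(n)}_u)_{u\ge 0}$,
\[
\P\!\left(\tau_{n+1}>a+r \mid \tau_{n+1}>a,\,(Y^{(n)}_u)_{u\ge 0}\right)=\exp\!\left(-\int_0^{r}\alpha(Y^{(n)}_{a+u})\,du\right).
\]
Two facts must then be combined. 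On the one hand, the survival probability up to age $a$ equals $\exp(-\int_0^a\alpha(Y^{(n)}_u)\,du)$, which depends only on the path of $Y^{(n)}$ \emph{before} $a$; hence conditioning on the survival event $\{\tau_{n+1}>a\}$ does not bias the post-$a$ trajectory beyond fixing $Y^{(n)}_a=x$. On the other hand, by the Markov property of $Y$ at the deterministic time $a$, given $Y^{(n)}_a=x$ the shifted process $(Y^{(n)}_{a+u})_{u\ge 0}$ has the law of a fresh copy of $Y$ started at $x$ and is independent of $(Y^{(n)}_u)_{u\le a}$. Putting these together, the residual lifetime $\tau_{n+1}-a$ together with the continued trajectory is distributed, given $X_t=x$, exactly as $(\tau_1,(Y^{(0)}_u)_{u\ge 0})$ for a process issued afresh from $x$, and is conditionally independent of $\F_t$ given $X_t$.

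Once the current segment has been restarted at $X_t$, I would conclude by concatenation: the post-jump location at $T_{n+1}$ is drawn from $K(Y^{(n)}_{\tau_{n+1}},\cdot)$ and all subsequent data $\big((Y^{(n+k)}_u)_u,\tau_{n+1+k}\big)_{k\ge 1}$ are produced by the same i.i.d. mechanism, independently of $\F_t$. Hence the whole future path $(X_{t+s})_{s\ge 0}$ is a measurable functional of $X_t$ and of randomness independent of $\F_t$, with the same law as $(X_s)_{s\ge 0}$ issued from $X_t$; this identifies $\E_x[g(X_{t+s})\mid\F_t]$ with $(P_s g)(X_t)$ on $\{N_t=n\}$, and summing the measurable partition over $n\ge 0$ gives the identity on all of $\Omega$. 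To make the ``same law'' step quantitative for a fixed horizon $s$, I would induct on the number of jumps falling in $(t,t+s]$ (a.s. finite by the Poisson domination above): the base case is the no-jump event handled by the memoryless computation, and the inductive step applies the first-jump decomposition followed by the restart at the jump time.

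The main obstacle I expect is precisely the rigorous conditioning of the second paragraph: disentangling, inside $\F_t$, the pre-$a$ information (which carries both the survival event and the value $X_t$) from the post-$a$ trajectory, while having only the \emph{simple} Markov property of $Y$ at our disposal. The delicate point is to justify that conditioning on survival up to the random age $a$ does not distort the law of the continuation beyond fixing the current position, and that the auxiliary uniform randomizations used to generate the $\tau_j$ and the post-jump locations remain independent of $\F_t$ after this conditioning. The completion of the filtration, $\F_t=\sigma(\F_t^{0}\cup\mathcal{N})$, causes no trouble, since augmenting by null sets leaves conditional expectations unchanged.
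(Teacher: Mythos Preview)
Your approach is correct and essentially the same as the paper's: partition according to which inter-jump interval contains the conditioning time, exploit the memoryless form of the residual waiting time coming from \eqref{cdf tau}, apply the simple Markov property of $Y$ at the (conditionally deterministic) age, and then restart. Two minor differences in packaging are worth noting. First, instead of inducting on the number of jumps in $(t,t+s]$, the paper short-circuits the induction by observing that, by construction, $\E_x[\1_{X_t\in A}\mid \F_{T_j}^{0}]$ is a function $h(X_{T_j},t-T_j,A)$ of the post-jump state and elapsed time only; this folds all future jumps into a single black box. Second, the ``main obstacle'' you flag---rigorously disentangling the survival event and the pre-age path from the continuation---is handled not by an abstract independence argument but by verifying the claimed conditional expectation against the generators $\prod_{i=1}^n f_i(X_{s_i})$ of $\F_s^{0}$: one splits the product at the last jump before $s$, conditions first on $\F_{T_{j-1}}$ and $Y^{(j-1)}$ (where the age $s-T_{j-1}$ becomes deterministic and the memoryless computation applies), and then uses the Markov property of $Y^{(j-1)}$ at that deterministic time.
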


\subsection{Birth-death-move processes}\label{appendix:bdm}

A birth-death-move process is a jump-move process in $E = \bigcup_{n=0}^{+\infty} E_n$, associated with the $\sigma$-field  $\mathcal{E} = \sigma\left( \bigcup_{n=0}^{+\infty} \mathcal{E}_n\right),$ where $(E_n)_{n\geq 0}$  is a sequence of disjoint Polish spaces, each equipped with the Borel $\sigma$-algebra $\mathcal{E}_n.$ We assume that $E_0$ consists of a single element, written $\varnothing$ for short, i.e. $E_0=\{\varnothing\}$. 
 In addition, the move process $(Y_t)_{t\geq 0}$ is assumed to satisfy for all $x\in E$ and all $n\geq 0$, 
  \[\P_x(Y_t \in E_n, \ \forall t \geq 0) = \1_{x \in E_n}.\] 
 In other words,  $(Y_t)_{t\geq 0}$ may be defined separately in each  space $E_n$. 
 Concerning the  intensity of jumps, it writes  $\alpha=\beta+\delta$, where  $\beta : E \rightarrow \R^+$  is the birth intensity function and $\delta : E \rightarrow \R^+$ is the death intensity function, both assumed to be continuous on $E$.  We prevent a death in $E_0$ by assuming that $\delta(\varnothing)=0$. 
 Similarly,  the probability transition kernel depends on a kernel for the births $K_{\beta} : E \times \mathcal{E} \rightarrow [0,1]$ and a kernel for the deaths $K_{\delta} : E \times \mathcal{E} \rightarrow [0,1].$ They satisfy,  
for all $x \in E$,
\[K_{\beta}(x,\Et_{n+1}) = \1_{x \in \Et_n} \quad \textrm{ if } n\geq 0\]
and
\[K_{\delta}(x,\Et_{n-1}) = \1_{x \in \Et_n} \quad \textrm{ if } n\geq 1.\]
The transition kernel for the jumps of $(X_t)_{t\geq 0}$ is then, for any $x\in E$ and any $A\in\mathcal E$, 
\begin{equation}\label{defK}
K(x,A) = \frac{\beta(x)}{\alpha(x)} K_{\beta}(x,A) + \frac{\delta(x)}{\alpha(x)} K_{\delta}(x,A).\end{equation}
Note that the intensity functions $\beta$, $\delta$, $\alpha$ and the transition kernels $K_\beta$, $K_\delta$, $K$ play exactly the same role here as for a pure spatial birth-death process as introduced in \cite{preston}, which is the particular case of a birth-death-move process with no move ($Y_t=Y_0$ for all $t\geq 0$).

 \subsection{Some basic properties of birth-death-move processes}\label{appendix:bdm theory}

We verify in the next proposition that under Assumption~\eqref{existence}, a birth-death-move process $(X_t)_{t \geq 0}$ converges to a stationary distribution, uniformly over the initial configurations.

 \begin{prop}\label{InvMeas}
Under \eqref{existence},  $(X_t)_{t \geq 0}$ admits a unique stationary distribution $\mu_\infty$ and  
 there exist $a>0$ and $c>0$ such that for any measurable bounded function $g$ and any $t>0$,
\begin{equation}\label{H2 sature}
\sup_{y\in\Et} \left | \int_\Et g(z) Q_t(y,dz) - \int_\Et g(z)\mu_{\infty}(dz)\right |\leq a  e^{-ct}\|g\|_\infty.
\end{equation}
\end{prop}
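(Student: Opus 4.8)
The plan is to reduce the statement to a classical uniform (Doeblin-type) ergodicity result by exhibiting the empty configuration $\varnothing$ as an accessible atom that is reached, from every starting point, with a probability bounded away from $0$ within a fixed time. First I would record two preliminaries that follow from the standing assumptions. Since $\alpha\le\alpha^*<\infty$, the number of jumps in any bounded interval is stochastically dominated by a Poisson process of rate $\alpha^*$, so the process is non-explosive and $Q_t$ is a genuine Markov transition kernel for every $t$. Moreover, under \eqref{existence} we have $\beta(x)=0$ whenever $n(x)\ge n^*$, so no birth can raise the cardinality past $n^*$; hence $n(X_t)\le n^*$ for all $t$, and one may work on the redefined state space $E=\bigcup_{n=0}^{n^*}E_n$.

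The heart of the proof is the minorization: there exist $t_0>0$ and $\epsilon>0$ such that $Q_{t_0}(x,\{\varnothing\})\ge\epsilon$ for every $x\in E$. To establish it I would force a run of $n(x)$ rapid deaths down to $\varnothing$ followed by a quiet spell. Set $\delta_*=\min_{1\le n\le n^*}\delta_n>0$ and $q=\delta_*/\alpha^*\in(0,1]$. For any $y$ with $n(y)\ge1$, the next interjump time $\tau$ satisfies $\P_y(\tau\le s)\ge 1-e^{-\alpha_* s}$ because $\alpha\ge\alpha_*$, while conditionally on the whole motion the jump is a death with probability $\delta(Y_\tau)/\alpha(Y_\tau)\ge q$ (the cardinality being preserved between jumps); pulling this factor out yields
\[
\P_y(\text{next jump is a death and }\tau\le s)=\E_y\!\left[\1_{\{\tau\le s\}}\,\frac{\delta(Y_\tau)}{\alpha(Y_\tau)}\right]\ge q\bigl(1-e^{-\alpha_* s}\bigr).
\]
Fixing $t_0>0$ and writing $s_0=t_0/(2n^*)$, I chain this bound $n(x)\le n^*$ times through the regenerative structure at the jump times (each $T_j$ being a stopping time after which the process restarts from $X_{T_j}$): with probability at least $\epsilon_1:=[q(1-e^{-\alpha_* s_0})]^{n^*}>0$ the first $n(x)$ jumps are all deaths with interjump times $\le s_0$. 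On this event $T_{n(x)}\le t_0/2$ and $X_{T_{n(x)}}=\varnothing$; since at $\varnothing$ there is no motion and only a birth at rate $\alpha(\varnothing)\le\alpha^*$ can occur, the probability of no further jump on $[T_{n(x)},t_0]$ is at least $e^{-\alpha^* t_0}$. Combining, $Q_{t_0}(x,\{\varnothing\})\ge\epsilon_1 e^{-\alpha^* t_0}=:\epsilon$, uniformly in $x$; it is exactly the uniform bound $n(x)\le n^*$ from \eqref{existence} that keeps $\epsilon_1$ away from $0$.

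From this minorization the conclusion follows by the standard Dobrushin-coefficient argument. The shared mass $\epsilon$ on $\{\varnothing\}$ gives $\|Q_{t_0}(x,\cdot)-Q_{t_0}(x',\cdot)\|_{\mathrm{TV}}\le 2(1-\epsilon)$ for all $x,x'$, so $\mu\mapsto\mu Q_{t_0}$ is a contraction by the factor $1-\epsilon$ on the complete metric space of probability measures under total variation; Banach's fixed point theorem yields a unique invariant probability $\mu_\infty$ for $Q_{t_0}$, and the semigroup identity $Q_{t_0}Q_t=Q_tQ_{t_0}$ together with uniqueness upgrades it to an invariant measure for the whole semigroup $(Q_t)_{t\ge0}$. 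Iterating the contraction gives $\|Q_{kt_0}(y,\cdot)-\mu_\infty\|_{\mathrm{TV}}\le 2(1-\epsilon)^k$; writing an arbitrary $t$ as $kt_0+r$ with $0\le r<t_0$ and using the invariance of $\mu_\infty$ and the semigroup property to write $Q_t(y,\cdot)-\mu_\infty=(\delta_yQ_r-\mu_\infty)Q_{kt_0}$, this becomes $\|Q_t(y,\cdot)-\mu_\infty\|_{\mathrm{TV}}\le a\,e^{-ct}$ with $c=-\log(1-\epsilon)/t_0>0$ and $a=2/(1-\epsilon)$. Finally, for bounded measurable $g$ one has $|\int g\,dQ_t(y,\cdot)-\int g\,d\mu_\infty|\le\|g\|_\infty\,\|Q_t(y,\cdot)-\mu_\infty\|_{\mathrm{TV}}$, and taking the supremum over $y$ gives exactly \eqref{H2 sature}.

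The only genuinely delicate step is the uniform minorization: one must verify that the descent to $\varnothing$ by successive deaths can be chained cleanly across the stopping times $T_j$ (which rests on the regeneration built into the definition of the process and on the Markov property of Theorem~\ref{Markov}), and that every constant entering $\epsilon$ remains uniform in $x$, which is precisely where the finiteness of $n^*$ supplied by \eqref{existence} is indispensable. The subsequent contraction and fixed-point steps are routine.
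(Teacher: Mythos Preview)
Your proof is correct and proceeds by a different, somewhat more streamlined, route than the paper's. The paper argues via an explicit coupling: it runs two independent copies $X^{(1)},X^{(2)}$ with arbitrary initial laws, defines $\tau=\inf\{t>0:(X_t^{(1)},X_t^{(2)})=(\varnothing,\varnothing)\}$, and proves (i) an exponential tail bound $\P(\tau>t)\le ae^{-ct}$ uniformly in the initial laws (Lemma~\ref{Lemmaexpboundtau}), and (ii) that the two marginals agree after $\tau$ (Lemma~\ref{Lemmanumero2}); the coupling inequality then gives the total-variation bound for indicators, which is extended to general bounded $g$ by a step-function approximation. You instead establish directly the Doeblin minorization $Q_{t_0}(x,\{\varnothing\})\ge\epsilon$ for all $x$ and invoke the Dobrushin contraction. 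The core probabilistic estimate is the same in both arguments---forcing a run of at most $n^*$ successive deaths down to $\varnothing$ within a fixed time, each step succeeding with probability at least $(\delta_*/\alpha^*)(1-e^{-\alpha_* s_0})$---and in both cases it is precisely the finiteness of $n^*$ from \eqref{existence} that makes the bound uniform in $x$. Your approach avoids the auxiliary analysis of the joint process and gives the bound for general $g$ immediately via the total-variation norm, whereas the paper's step-function approximation costs a halving of the exponential rate; on the other hand, the paper's coupling lemmas are formulated so as to be reusable elsewhere in the appendix.
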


For a pure spatial birth-death process, this proposition is a consequence of \cite{preston} and \cite{moller1989}. 
In these references, geometric ergodicity is also proven under a less restrictive setting than \eqref{existence}, a generalisation that is not straightforward to establish for a general birth-death-move process and that will be addressed in a separate contribution. Moreover, the uniformity in \eqref{H2 sature} only holds under \eqref{existence}, even for pure spatial birth-death processes, and this is a crucial property needed to establish \eqref{variance} in the next corollary. In the following (and all along the article) we omit the dependence on the initial distribution $\mu_0$ and simply write $\E$ and $\var$ instead of $\E_{\mu_0}$ and $\var_{\mu_0}.$

\begin{cor}\label{lemVar}
Under \eqref{existence}, for any measurable bounded non-negative function $g$ and any $t\geq 0$,
   \begin{equation}\label{H2 expectation}
 \left | \E \left(\frac 1t\int_0^t g(X_s)ds\right)  -  \int_E g(z)\mu_{\infty}(dz)\right |\leq \frac{a \|g\|_\infty}{ct} \end{equation}
where $a$ and $c$ are the same positive constants as in \eqref{H2 sature}. Moreover, 
   \begin{equation}\label{variance}
   \var\left( \int_{0}^t g(X_s) ds\right) \leq c_0 \|g\|_\infty\E\left(\int_{0}^t  g(X_s) ds \right)\end{equation}
where $c_0$ is some positive constant independent of $t$ and $g$.
\end{cor}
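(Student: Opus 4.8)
The plan is to derive both inequalities directly from the uniform geometric ergodicity bound \eqref{H2 sature} of Proposition~\ref{InvMeas}. Throughout, write $\bar g=\int_E g\,d\mu_\infty$ and $(Q_r g)(y)=\int_E g(z)Q_r(y,dz)=\E_y(g(X_r))$. First I would establish \eqref{H2 expectation}. Integrating \eqref{H2 sature} against the initial law $\mu_0$ gives, for every $s\ge 0$, $|\E(g(X_s))-\bar g|=|\int_E((Q_s g)(y)-\bar g)\,\mu_0(dy)|\le a e^{-cs}\|g\|_\infty$. By Fubini (licit since $g$ is bounded), $\E(\frac1t\int_0^t g(X_s)ds)-\bar g=\frac1t\int_0^t(\E(g(X_s))-\bar g)\,ds$, so taking absolute values and using the pointwise-in-$s$ bound yields $|\E(\frac1t\int_0^t g(X_s)ds)-\bar g|\le \frac1t\int_0^t a e^{-cs}\|g\|_\infty\,ds\le \frac{a\|g\|_\infty}{ct}$, which is exactly \eqref{H2 expectation}.

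For the variance bound \eqref{variance}, set $G=\int_0^t g(X_s)ds$ and symmetrise the double integral of the centred product to get $\var(G)=2\int_0^t\!\int_0^s \mathrm{Cov}(g(X_u),g(X_s))\,du\,ds$. The crux is to bound the covariance for $u\le s$. By the Markov property (Theorem~\ref{Markov}) and time-homogeneity, $\E(g(X_s)\mid\F_u)=(Q_{s-u}g)(X_u)$, whence $\mathrm{Cov}(g(X_u),g(X_s))=\E\big(g(X_u)[(Q_{s-u}g)(X_u)-\bar g]\big)-\E(g(X_u))\,(\E(g(X_s))-\bar g)$. Here I would invoke \eqref{H2 sature} twice: pointwise, $|(Q_{s-u}g)(X_u)-\bar g|\le a e^{-c(s-u)}\|g\|_\infty$, and, as above, $|\E(g(X_s))-\bar g|\le a e^{-cs}\|g\|_\infty$. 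Since $g\ge 0$, the factor $g(X_u)$ can be extracted from the first term and both contributions are then controlled by $\E(g(X_u))$; using $e^{-cs}\le e^{-c(s-u)}$ gives $|\mathrm{Cov}(g(X_u),g(X_s))|\le 2a\|g\|_\infty\, e^{-c(s-u)}\,\E(g(X_u))$.

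It remains to integrate. Substituting this bound and exchanging the order of integration, $\var(G)\le 4a\|g\|_\infty\int_0^t\E(g(X_u))\big(\int_u^t e^{-c(s-u)}ds\big)\,du$, and since $\int_u^t e^{-c(s-u)}ds\le 1/c$ uniformly in $u$ and $t$, one obtains $\var(G)\le \frac{4a}{c}\|g\|_\infty\int_0^t\E(g(X_u))\,du=\frac{4a}{c}\|g\|_\infty\,\E(G)$, that is \eqref{variance} with $c_0=4a/c$. The main obstacle I anticipate is obtaining a bound linear in $\E(G)$ rather than in $\E(G^2)$ or in $t^2\|g\|_\infty^2$; this hinges on two features that must be used jointly — the nonnegativity of $g$, which lets $g(X_u)$ be pulled out as a multiplicative factor, and the \emph{uniformity} over the starting point in \eqref{H2 sature}, which supplies the pointwise control of $(Q_{s-u}g)(X_u)-\bar g$ needed to retain only one power of $g$. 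The exponential decay in $s-u$ is then what renders the time integral finite uniformly in $t$, yielding a constant $c_0$ independent of $t$ and $g$.
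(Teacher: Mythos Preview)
Your proof is correct and follows essentially the same approach as the paper: both parts rest on the Markov property, the uniform-in-$y$ geometric ergodicity bound \eqref{H2 sature}, and the nonnegativity of $g$ to retain only one factor of $g$ in the covariance estimate. The only cosmetic difference is in the algebraic centering for the variance: the paper writes the covariance as $\E[(g(X_s)-E_s)(Q_{v-s}g(X_s)-E_v)]$, splits $E_v$ via $\bar g$, bounds the two resulting $v$-integrals uniformly by $a\|g\|_\infty/c$, and then uses $|g(y)-E_s|\le g(y)+E_s$; you instead keep $g(X_u)$ uncentered and split the second factor around $\bar g$ directly, obtaining the explicit constant $c_0=4a/c$.
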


For $g(X_s)=k_T(x,X_s)$ and $t=T$, we immediately deduce the following result. 
\begin{cor}\label{moments T}
Under  \eqref{H3}, $\hat T(x)$ defined in \eqref{defT} verifies, as  $T\to \infty$, 
\begin{equation*}
\E(\hat T(x))\sim Tv_T(x) \quad \text{and} \quad \var\left(\hat{T}(x)\right)\leq c\,\E(\hat{T}(x)),
\end{equation*}
where $c$ is a positive constant.
\end{cor}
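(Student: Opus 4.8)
The plan is to obtain both assertions as immediate specialisations of Corollary~\ref{lemVar}, applied with the bounded non-negative function $g(\cdot) = k_T(x,\cdot)$ and the time horizon $t = T$. The only ingredients beyond that corollary are the uniform bound $k^* = \sup_{x,y \in E}\sup_{T \geq 0} |k_T(x,y)| < \infty$ on the kernels, which ensures $\|k_T(x,\cdot)\|_\infty \leq k^*$ for every $T$, and Assumption \eqref{H3}.

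For the variance bound, I would simply insert $g = k_T(x,\cdot)$ and $t = T$ into \eqref{variance}. Since $\int_0^T k_T(x,X_s)\,ds = \hat T(x)$ by \eqref{defT}, this gives at once $\var(\hat T(x)) \leq c_0 \|k_T(x,\cdot)\|_\infty\, \E(\hat T(x)) \leq c_0 k^*\, \E(\hat T(x))$, so the claim holds with $c = c_0 k^*$, a constant independent of $T$ precisely because $k^*$ is a uniform bound. For the expectation, the same substitution into \eqref{H2 expectation} yields $|T^{-1}\E(\hat T(x)) - v_T(x)| \leq a k^*/(cT)$, and multiplying by $T$ gives the fixed $O(1)$ remainder $|\E(\hat T(x)) - T v_T(x)| \leq a k^*/c$.

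To upgrade this last estimate to the asymptotic equivalence $\E(\hat T(x)) \sim T v_T(x)$, I would divide through by $T v_T(x)$ and bound the correction by $(a k^*/c)/(T v_T(x))$; this tends to $0$ exactly because Assumption \eqref{H3} forces $T v_T(x) \to \infty$, so the ratio $\E(\hat T(x))/(T v_T(x))$ converges to $1$. There is essentially no obstacle here, the statement being a corollary; the only point requiring care is that the $O(1)$ control on $\E(\hat T(x)) - T v_T(x)$ becomes informative only after normalisation by $T v_T(x)$, which is where \eqref{H3} enters to guarantee the ratio converges to $1$ rather than merely staying bounded.
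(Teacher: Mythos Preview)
Your proposal is correct and matches the paper's approach exactly: the paper simply states that the result follows from Corollary~\ref{lemVar} with $g(X_s)=k_T(x,X_s)$ and $t=T$, which is precisely the substitution you carry out, and your use of \eqref{H3} to turn the $O(1)$ bound on $\E(\hat T(x))-Tv_T(x)$ into the asymptotic equivalence is the only additional detail needed.
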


Finally, we clarify the martingale properties of the counting processes $N_t$ that are used in the proofs of consistency and exploited in the likelihood cross-validation of Section~\ref{CV}.  We set $\F_{t^+} = \bigcap_{s >t} \F_s.$
\begin{prop}\label{lemMart}
A left-continuous version of the intensity  of $N_t$ with respect to $\F_{t^+}$ is $\alpha(X_{t^-})$. Moreover, for any measurable bounded function  $g$, the process $(M_t)_{t\geq 0}$ defined by 
   \[M_t=\int_0^t g(X_{s^-}) [ dN_s - \alpha( X_{s})ds]\]
is a martingale with respect to $\F_{t^+}$ and  for all $t\geq 0$
   \begin{equation}\label{moments M}
   \E(M_t^2)=\E\left(\int_0^t g^2(X_s)\alpha(X_s)ds\right).
   \end{equation}
   The same statements hold true by replacing $N_t$ by the number of births (of deaths, respectively) and $\alpha(x)$ by $\beta(x)$ (by $\delta(x)$, respectively).
  \end{prop}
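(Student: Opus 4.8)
The plan is to first identify the $\F_{t^+}$-stochastic intensity of the counting process $N_t$ as $\alpha(X_{t^-})$, and then to obtain the martingale property and the second-moment identity as consequences of the standard stochastic calculus for point processes developed in \cite{bremaud}.

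For the intensity, I would exploit the explicit construction of Section~\ref{appendix:BDM def}. On each inter-jump interval $[T_j,T_{j+1})$ we have $X_t=Y^{(j)}_{t-T_j}$, and by \eqref{cdf tau} the conditional survival function of the next jump given $X_{T_j}$ and the driving motion $(Y^{(j)}_t)_{t\geq 0}$ is $\exp(-\int_0^{\cdot}\alpha(Y^{(j)}_u)du)$, whose hazard rate at time $t$ is $\alpha(X_{t^-})$. Rather than manipulate this hazard heuristically, I would verify the defining property of the intensity, namely that $N_t-\int_0^t\alpha(X_s)ds$ is an $\F_{t^+}$-martingale. By the Markov property (Theorem~\ref{Markov}) and time-homogeneity, it suffices to prove the first-moment (Dynkin-type) identity $\E_x\bigl(\int_0^t H(X_{s^-})dN_s\bigr)=\E_x\bigl(\int_0^t H(X_s)\alpha(X_s)ds\bigr)$ for every $x\in E$, every $t\geq 0$ and every bounded measurable $H$. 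I would establish this by conditioning on $T_1$ and on $(Y^{(0)}_t)_{t\geq0}$, using that the density of $T_1$ is $\alpha(Y^{(0)}_t)\exp(-\int_0^t\alpha(Y^{(0)}_u)du)$, applying Fubini, and then iterating over successive jumps via the regenerative structure of the construction after $T_1$; the bound $\alpha\leq\alpha^*$ guarantees $\E_x(N_t)<\infty$ and legitimises all interchanges.

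Once $\alpha(X_{t^-})$ is identified as the intensity, the integration theorem for point processes gives immediately that $M_t=\int_0^t g(X_{s^-})[dN_s-\alpha(X_s)ds]$ is a martingale: the integrand $g(X_{s^-})$ is left-continuous and adapted, hence predictable, and bounded, while $\E\int_0^t|g(X_s)|\alpha(X_s)ds\leq\|g\|_\infty\alpha^* t<\infty$. For the isometry, I would use that the predictable quadratic variation of the compensated counting process is $\int_0^t\alpha(X_s)ds$, so that $\langle M\rangle_t=\int_0^t g^2(X_{s^-})\alpha(X_s)ds$; taking expectations and noting that $X_{s^-}=X_s$ for Lebesgue-almost every $s$ (they differ only at the countably many jump times) yields \eqref{moments M}.

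For the birth and death counting processes, I would describe each jump as carrying an independent mark, equal to a birth with probability $\beta(X_{T_j^-})/\alpha(X_{T_j^-})$ and a death otherwise. Viewing $N^\beta$ as the $\beta/\alpha$-thinning of $N$, its intensity is $\alpha(X_{t^-})\cdot\beta(X_{t^-})/\alpha(X_{t^-})=\beta(X_{t^-})$, and likewise $N^\delta$ has intensity $\delta(X_{t^-})$; the martingale and second-moment statements then follow verbatim. The main obstacle I anticipate is the rigorous identification of the intensity with respect to the completed right-continuous filtration $\F_{t^+}$: the construction generates $\tau_{j+1}$ from the \emph{entire} future trajectory of $Y^{(j)}$, so one cannot read off the intensity directly, and it is precisely to circumvent this that I would prove the first-moment identity against the coarser filtration generated by $(X_t)$ alone and invoke the Markov property to upgrade it to the full martingale property.
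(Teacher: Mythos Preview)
Your approach is sound and differs from the paper's. The paper identifies the intensity by computing $\lim_{h\to 0^+}h^{-1}\E(N^\gamma_{t+h}-N^\gamma_t\mid\F_t)$ directly: it splits according to whether zero, one, or at least two jumps fall in $(t,t+h]$, controls the multi-jump term by a Poisson comparison (Lemma~\ref{lemNumber}), and shows via the explicit density of $T_1$ and the continuity of $\gamma$ and $(Y^{(0)}_s)$ that the one-jump contribution tends to $\gamma(y)$. You instead target the integrated identity $\E_x\int_0^t H(X_{s^-})\,dN_s=\E_x\int_0^t H(X_s)\alpha(X_s)\,ds$ by conditioning on $(Y^{(0)},T_1)$ and iterating over the regenerative structure. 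Your route is shorter and closer to the Watanabe characterisation; the paper's is more computational but fully self-contained. For births and deaths the paper repeats the explicit computation with the kernel restricted to $E_{n(y)\pm1}$; your position-dependent thinning argument is quicker and legitimate because the mark at $T_j$ has $\F_{T_j^-}$-conditional probability $\beta(X_{T_j^-})/\alpha(X_{T_j^-})$.

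One step needs correction. You write that you ``invoke the Markov property to upgrade'' to the $\F_{t^+}$-martingale property. The Markov property together with your Dynkin identity (for $H\equiv 1$) yields only an $\F_t$-martingale. The passage to $\F_{t^+}$ is a separate fact about right-continuous martingales, not a consequence of Markov: for $A\in\F_{t^+}\subset\F_{t+\epsilon}$ and $s>t+\epsilon$ one has $\E(M_s\1_A)=\E(M_{t+\epsilon}\1_A)\to\E(M_t\1_A)$ by right-continuity and the domination $|M_{t+\epsilon}|\leq N_{t+\eta}+\alpha^*(t+\eta)\in L^1$. The paper isolates exactly this step as Lemma~\ref{LemmaMartAugmented}. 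Since this is the obstacle you yourself flagged, you should make the right-continuity argument explicit rather than attribute it to the Markov property.
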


\section{Proofs of Propositions~\ref{Maintheo} and~\ref{th discret} of the main manuscript}\label{proofs prop13}

\subsection{Proof of Proposition~\ref{Maintheo}}
The proof is similar for $\hat\alpha(x)$, $\hat\beta(x)$ and $\hat\delta(x)$. We focus on $\hat\alpha(x)$ and we consider the decomposition
\[\hat{\alpha}(x) - \alpha(x) =  \frac{M_T}{\hat T(x)} + R_T,\]
where 
\[M_T = \int_0^T  k_T(x,X_{s^-}) [ dN_s - \alpha(X_{s})ds] \quad \text{and}\quad R_T = \frac{1}{\hat T(x)} \int_0^T( \alpha(X_s) - \alpha(x) ) k_T(x,X_s ) ds.\]
For all $\epsilon>0$,
\begin{align}\label{decomp alpha}
\P(|\hat{\alpha}(x) - \alpha(x)|>\epsilon)&\leq \P\left(\hat T(x)<\frac {\E(\hat T(x))} 2 \right) +\P\left( |\hat{\alpha}(x) - \alpha(x)|>\epsilon,\hat T(x)\geq \frac {\E(\hat T(x))} 2\right) \nonumber\\
&\leq \P\left(\hat T(x)<\frac {\E(\hat T(x))} 2 \right) + \P\left( \frac{|M_T|}{\hat T(x)} >\frac \epsilon 2, \hat T(x)\geq\frac {\E(\hat T(x))} 2 \right)+   \P\left(|R_T| >\frac \epsilon 2\right)\nonumber\\
&\leq \P\left(|\hat T(x)-\E(\hat T(x))|>\frac {\E(\hat T(x))} 2 \right) + \P\left( |M_T|>\frac{\epsilon \E(\hat T(x))}4  \right)+   \P\left(|R_T| >\frac \epsilon 2\right)\nonumber\\
&\leq \frac{4\var(\hat T(x))}{\E(\hat T(x))^2} + \frac{16 \E(M_T^2)}{\epsilon^2\E(\hat T(x))^2} +  \frac{4 \E(R_T^2)}{\epsilon^2},
\end{align}
using the Markov inequality in the last line.  By \eqref{moments M} applied with $g(X_s)=k_T(x,X_s)$ and $t=T$, and since $k_T$ is uniformly bounded, we get that $\E\left(M_T^2\right)\leq c_0 \E(\hat T(x))$.  
By Corollary~\ref{moments T}, we deduce that
\begin{equation*}
\P(|\hat{\alpha}(x) - \alpha(x)|>\epsilon)\leq  \frac{c_0}{Tv_T(x)} + \frac{4 \E(R_T^2)}{\epsilon^2}.
\end{equation*}
 The proof is then complete once we show that
\begin{equation}\label{R2}
E\left(R_T^2\right) \leq c_0\left( \frac{1}{T v_T(x)} + w_T^2(x)\right).
\end{equation}
To this end, notice that almost surely $|R_T| \leq 2 \alpha^*$ because $\alpha\leq\alpha^*$, so
\begin{align*}
&E\left(R_T^2\right) \leq 4 (\alpha^*)^2 \P\left( \hat{T}(x) \leq \frac{\E(\hat{T}(x))}{2}\right) + \frac{4}{\E(\hat{T}(x))^2 } \E\left[\left(\int_0^T ( \alpha(X_s) - \alpha(x)) k_T( d( x,X_s)) ds\right)^2 \right].
\end{align*}
Denoting $I_T= \int_0^T \alpha(X_s) k_T(x,X_s )ds$ and using the Chebyshev inequality, we obtain
\begin{align*}
 E\left(R_T^2\right)& \leq \frac{c_0}{\E(\hat{T}(x))^2} \left(\var\left(\hat{T}(x)\right) + \E\left[\left(I_T - \alpha(x) \hat{T}(x)\right)^2\right]\right)\\ 
& \leq \frac{c_0}{\E(\hat{T}(x))^2} \left(\var\left(\hat{T}(x)\right) + 3 \var\left(I_T\right) + 3 \var\left(\alpha(x) \hat{T}(x)\right) + 3\left( \E(I_T) - \alpha(x) \E(\hat{T}(x))\right)^2\right).
 \end{align*}
By \eqref{H2 expectation}, we know that 
$$\left | \E(I_T) - T\int_\Et \alpha(z) k_T(x,z)\mu_\infty(dz)\right|\leq c_0 \alpha^* k^*$$
and 
$$\left | \alpha(x)\E(\hat T(x)) - T\alpha(x)\int_\Et  k_T(x,z)\mu_\infty(dz)\right|\leq c_0 \alpha^* k^*$$
so, using the notation from \eqref{H3}-\eqref{H4}, 
 \begin{align*}
 \left | \E(I_T) - \alpha(x) \E(\hat{T}(x))\right | & \leq 2c_0 \alpha^* k^* + T \left| \int_{\Et}  (\alpha(z) - \alpha(x)) k_T( x,z) \mu_{\infty} (dz) \right|= c_1 + Tv_T(x) |w_T(x)|
\end{align*}
where $c_1=2c_0 \alpha^* k^*$. 
Moreover, from \eqref{variance}, $\var\left(I_T\right) \leq c_0 \E(I_T) \leq c_0 \alpha^* \E(\hat{T}(x))$ and by Corollary~\ref{moments T}, we obtain
\begin{align*}
 E\left(R_T^2\right)& \leq  \frac{c_0}{(Tv_T(x))^2}\left( Tv_T(x) + (c_1 + Tv_T(x) |w_T(x)|)^2\right)\leq   \frac{2c_0}{(Tv_T(x))^2}\left( Tv_T(x) + c_1^2+ T^2v_T^2(x)w_T^2(x)\right)
 \end{align*}
 which implies \eqref{R2} under \eqref{H3}.

\subsection{Proof of Proposition~\ref{th discret}}

We detail the proof for $\gamma=\alpha$, the other cases being similar. Recall that $\hat{T}_{(d)}(x) =\sum_{j=0}^{m-1} \Delta t_{j+1} k_T( x, X_{t_j})$. To save space, we write $\hat{T}_{(d)}$ for $\hat{T}_{(d)}(x)$, $\hat T$ for $\hat T(x)$, $\ell_T$ for $\ell_T(x)$ and $v_T$ for $v_T(x)$. We have
  \begin{align*}
 |\hat{\alpha}_{(d)}(x) - \alpha(x)|\leq &\frac{1}{\hat{T}_{(d)}} \sum_{j=0}^{m-1} k_T( x, X_{t_j})|D_{j+1} - \Delta N_{t_{j+1}}|
  +\frac{1}{\hat{T}_{(d)}} \left|\sum_{j=0}^{m-1}  k_T( x, X_{t_j}) \Delta N_{t_{j+1}} - \int_0^T  k_T( x, X_{s^-})dN_s\right|\\
 &+ \frac{\int_0^T k_T(x,X_{s^-})dN_s}{\hat{T}_{(d)} \hat{T}}|\hat{T}-\hat{T}_{(d)}|+ |\hat{\alpha}(x) - \alpha(x)|\\
 \leq& \frac{k^*}{\hat{T}_{(d)}}  \sum_{j=0}^{m-1} \Delta N_{t_{j+1}} \1_{\Delta N_{t_{j+1}} \geq 2} + \frac{1}{\hat{T}_{(d)}}  \sum_{j=0}^{m-1} | k_T( x, X_{t_j}) \Delta N_{t_{j+1}} - \int_{t_{j}}^{t_{j+1}} k_T( x, X_{s^-})dN_s|\\
 &+ \frac{k^* N_T}{\hat{T}_{(d)} \hat{T}}|\hat{T}-\hat{T}_{(d)}| + |\hat{\alpha}(x) - \alpha(x)|.
 \end{align*}
Concerning the second term in this sum we have 
\[\left|  k_T( x,X_{t_j}) \Delta N_{t_{j+1}} - \int_{t_j}^{t_{j+1}} k_T(x,X_{s^-}) dN_s\right|\1_{\Delta N_{t_{j+1}}=0}=0,\]
  \begin{equation*}
  \left|  k_T( x,X_{t_j}) \Delta N_{t_{j+1}} - \int_{t_j}^{t_{j+1}} k_T(x,X_{s^-}) dN_s\right|\1_{\Delta N_{t_{j+1}}=1}= \left|  k_T( x, X_{t_j}) - k_T(x,X_{T_{N_{t_{j+1}}}^-}) \right| \1_{\Delta N_{t_{j+1}}=1}
  \end{equation*}
and 
\[\left| k_T( x,X_{t_j}) \Delta N_{t_{j+1}}- \int_{t_j}^{t_{j+1}} k_T(x,X_{s^-}) dN_s\right |\1_{\Delta N_{t_{j+1}}\geq 2} \leq 2 k^* \Delta N_{t_{j+1}} \1_{\Delta N_{t_{j+1}}\geq 2},\]
so that 
  \begin{align*}
  &\sum_{j=0}^{m-1} | k_T( x, X_{t_j}) \Delta N_{t_{j+1}} - \int_{t_{j}}^{t_{j+1}} k_T( x, X_{s^-})dN_s|\\
 & \leq\sum_{j=0}^{m-1}  \left|  k_T( x, X_{t_j}) - k_T(x,X_{T_{N_{t_{j+1}}}^-}) \right| \1_{\Delta N_{t_{j+1}}=1}  +2 k^* \sum_{j=0}^{m-1} \Delta N_{t_{j+1}} \1_{\Delta N_{t_{j+1}} \geq 2}.
   \end{align*}
 On the other hand, 
    \begin{align}\label{EspDiffT}
  |\hat{T}-\hat{T}_{(d)}|& \leq \sum_{j=0}^{m-1}   \int_{t_j}^{t_{j+1}}  \left|k_T( x,X_{s}) -  k_T( x,X_{t_j})\right| ds \nonumber\\
  &\leq \sum_{j=0}^{m-1}   \int_{t_j}^{t_{j+1}}  \left|k_T( x,X_{s}) -  k_T( x,X_{t_j})\right| ds \1_{\Delta N_{t_{j+1}} = 0} + 2k^*   \Delta_m \sum_{j=0}^{m-1}  \1_{\Delta N_{t_{j+1}} \geq 1} \nonumber\\
  & \leq \sum_{j=0}^{m-1}   \int_{t_j}^{t_{j+1}}  \left|k_T( x,X_{s}) -  k_T( x,X_{t_j})\right| ds \1_{\Delta N_{t_{j+1}} = 0} + 2k^*   \Delta_m N_T.
      \end{align}  
  We thus obtain 
  \begin{align}\label{main-discrete}
 |\hat{\alpha}_{(d)}(x) - &\alpha(x)|\nonumber\\
 & \leq  \frac{3k^*}{\hat{T}_{(d)}}  \sum_{j=0}^{m-1} \Delta N_{t_{j+1}} \1_{\Delta N_{t_{j+1}} \geq 2} +  \frac{1}{\hat{T}_{(d)}} \sum_{j=0}^{m-1} \left|  k_T( x, X_{t_j}) - k_T(x,X_{T_{N_{t_{j+1}}}^-}) \right| \1_{\Delta N_{t_{j+1}}=1} \nonumber \\
 &+ \frac{k^* N_T}{\hat{T}_{(d)} \hat{T}}\sum_{j=0}^{m-1}   \int_{t_j}^{t_{j+1}}  \left|k_T( x,X_{s}) -  k_T( x,X_{t_j})\right| ds \1_{\Delta N_{t_{j+1}} = 0} + \frac{2(k^* N_T)^2  \Delta_m}{\hat{T}_{(d)} \hat{T}}  + |\hat{\alpha}(x) - \alpha(x)|.
   \end{align}
   The last term has been treated in Proposition~\ref{Maintheo}. Let us consider the other terms. First note that $\eqref{EspDiffT}$ entails with \eqref{discrete condition}
   \begin{align*}
  \E[|\hat{T}-\hat{T}_{(d)}|]\leq l_T \Delta_m^a T + 2k^*\Delta_m \E(N_T).   
   \end{align*}
 By Proposition~\ref{lemMart} applied with $g=1$, we get that $\E(N_T)=\E \int_0^T \alpha(X_s) ds\leq \alpha^* T$ and then
     \begin{align}\label{decomposDiscret3}
   \P(\hat T_{(d)}<Tv_T/4)& \leq \P(|\hat{T}-\hat{T}_{(d)}|>Tv_T/4) + \P(\hat T<Tv_T/2) \nonumber\\
   &\leq \frac{4 \E[|\hat{T}-\hat{T}_{(d)}|]}{Tv_T} + \frac{c_0}{Tv_T} \nonumber\\
      &\leq  \frac{c_0 l_T \Delta_m^a}{v_T} + \frac{c_0 \Delta_m}{v_T} + \frac{c_0}{Tv_T}. 
    \end{align}

 Since $v_T$ is bounded, we deduce from \eqref{H3}, \eqref{stepto0} and \eqref{discrete condition} that $ \P(\hat T_{(d)}<Tv_T/4)$ tends to 0 and for the rest of the proof we place ourselves on the event $\{\hat T_{(d)}\geq Tv_T/4\}$. 
We shall use the following lemma.
 \begin{lem}\label{lemcount}
  For any $j=1,\dots,m$ and any $i\in\N$, $\P( \Delta N_{t_j}=i)\leq (\alpha^* \Delta t_j)^i/(i!)$.
 \end{lem}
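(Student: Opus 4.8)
The plan is to reduce the equality event to a tail event and then to show that the number of jumps in an interval of length $\Delta t_j$ is stochastically dominated by a Poisson variable of parameter $\alpha^*\Delta t_j$. Since $\{\Delta N_{t_j}=i\}\subseteq\{\Delta N_{t_j}\geq i\}$, it suffices to prove that
\[
\P(\Delta N_{t_j}\geq i)\leq \frac{(\alpha^*\Delta t_j)^i}{i!}.
\]
Writing $T_{N_{t_{j-1}}+1},T_{N_{t_{j-1}}+2},\dots$ for the successive jump times strictly after $t_{j-1}$, the event $\{\Delta N_{t_j}\geq i\}$ coincides with $\{T_{N_{t_{j-1}}+i}\leq t_j\}$, i.e. the $i$-th jump after $t_{j-1}$ occurs before the end of the interval. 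The key fact I would use is that, conditionally on the past and on the realised driving motion, the waiting time to the next jump admits a density bounded by $\alpha^*$: from the construction \eqref{wait} its conditional survival function is $\exp(-\int_0^t\alpha(Y_u)\,du)$, whose derivative in absolute value is $\alpha(Y_t)\exp(-\int_0^t\alpha(Y_u)\,du)\leq\alpha^*$. The residual time to the first jump after $t_{j-1}$ is handled the same way after conditioning on the time already elapsed since the previous jump, since dividing survival functions removes the elapsed portion and leaves a conditional density still bounded by $\alpha^*$.

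I would then proceed by induction on $i$. Let $\phi_i(h)$ denote the supremum, over all admissible conditioning of the past (including the elapsed time at the start of the interval), of the conditional probability of seeing at least $i$ jumps in an interval of length $h$. The case $i=0$ is trivial, and $\phi_1(h)\leq\alpha^* h$ is immediate from the density bound above. For the inductive step I would condition on the time $s\in[0,h]$ of the first jump inside the interval; by the time-homogeneous Markov property (Theorem~\ref{Markov}) the process restarts at that jump, so the conditional probability of at least $i-1$ further jumps in the remaining interval of length $h-s$ is bounded by $\phi_{i-1}(h-s)\leq(\alpha^*(h-s))^{i-1}/(i-1)!$. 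Integrating against the first-jump (sub)density, which is bounded by $\alpha^*$, yields
\[
\phi_i(h)\leq \int_0^h \alpha^*\,\frac{(\alpha^*(h-s))^{i-1}}{(i-1)!}\,ds
=\frac{(\alpha^*)^i}{(i-1)!}\,\frac{h^i}{i}=\frac{(\alpha^* h)^i}{i!}.
\]
Taking $h=\Delta t_j$ and using $\P(\Delta N_{t_j}\geq i)\leq\phi_i(\Delta t_j)$ completes the argument.

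The main obstacle I anticipate is purely a matter of rigour in the conditioning rather than of substance: one must justify that the density of each successive jump time, conditionally on the entire past together with the future of the driving processes $(Y^{(k)})$, is genuinely bounded by $\alpha^*$ uniformly, and that the Markov restart at the first jump is legitimate so that the induction hypothesis applies to the residual interval. The delicate point is the very first gap, the residual time from $t_{j-1}$ to the next jump, for which the elapsed time must be integrated out; the density bound is nonetheless preserved because the hazard equals $\alpha(\cdot)\leq\alpha^*$ regardless of elapsed time. An equivalent and perhaps conceptually cleaner route would be to realise $N$ by thinning a homogeneous Poisson process of rate $\alpha^*$ (keeping a point at $\tilde T_k$ with probability $\alpha(X_{\tilde T_k^-})/\alpha^*$), which makes $N_t\leq \tilde N_t$ pathwise and hence $\Delta N_{t_j}\leq_{\mathrm{st}}\mathrm{Poisson}(\alpha^*\Delta t_j)$; this shifts the burden to checking that the thinned process has the law prescribed by \eqref{wait}, which is why I favour the direct inductive computation above.
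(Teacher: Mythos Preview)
Your argument is correct and follows essentially the same route as the paper's: both bound the conditional density of each successive waiting time by $\alpha^*$ and accumulate the simplex volume $h^i/i!$. The paper differs only in presentation. It first applies the Markov property at the deterministic time $t_{j-1}$, so that $\P(\Delta N_{t_j}=i)=\E\big[\P_{X_{t_{j-1}}}(N_{\Delta t_j}=i)\big]$, reducing the problem to bounding $\sup_x\P_x(N_t=i)$ and thereby eliminating your residual-time concern at the outset. It then bounds the equality event directly (rather than the tail) by writing the full iterated integral for $\P_x(T_1<\dots<T_i<t<T_{i+1})$ after conditioning on all the driving motions $(Y^{(j)})$; your induction is simply the recursive evaluation of that same integral.
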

 
 \begin{proof}
 By the Markov property, $\P( \Delta N_{t_j}=i) = \E[\P(\Delta N_{t_j}=i | X_{t_j})] = \E[\P( N_{\Delta t_j}=i | X_0 = X_{t_j} )]$. 
The lemma is proved once we verify that 
\[\forall t \geq 0, \forall i \in \N, \quad \sup_{x \in E} \P(N_t =i | X_0=x ) \leq \frac{(\alpha^* t)^i}{i!}.\]
This inequality is obvious if $i=0$. Let $i \geq 1$,
 \[\P(N_t =i | X_0=x ) = \E\left[ \P\left( T_1 < T_2 < ...< T_i < t < T_{i+1} \Big | X_0=x, X_{T_j}, (Y^{(j)}_t)_{t\geq 0},\forall j\geq 0\right) \Big |X_0=x\right].\]
 Let $\tau_j=T_j-T_{j-1}$. Conditional on  $X_{T_j}$ and $(Y^{(j)}_t)_{t\geq 0}$, the random variable $\tau_j$ has density \[t\mapsto \alpha(Y^{(j)}_t)e^{-\int_0^t \alpha(Y^{(j)}_u)du},\] whereby
 \begin{align*}
  &\P\left( T_1 < T_2 < ...< T_i < t < T_{i+1} \Big | X_0=x, X_{T_j}, (Y^{(j)}_t)_{t\geq 0},\forall j\geq 0\right) \\
  &= \P \left( \tau_1<t, \tau_2 < t-\tau_1, \dots , \tau_i < t-\sum_{j=1}^{i-1} \tau_j, \tau_{i+1} > t- \sum_{j=1}^{i} \tau_j  \Big | X_0=x, X_{T_j}, (Y^{(j)}_t)_{t\geq 0},\forall j\geq 0\right )\\
  &= \int_0^t  \alpha(Y^{(0)}_{s_1}) e^{-\int_0^{s_1}  \alpha(Y^{(0)}_{u})du} ds_1 \int_0^{t-s_1} \alpha(Y^{(1)}_{s_2}) e^{-\int_0^{s_2}  \alpha(Y^{(1)}_{u})du}  ds_2 \\
 &\qquad \qquad \cdots \int_0^{t-\sum_{j=1}^{i-1} s_j}  \alpha(Y^{(i-1)}_{s_i}) e^{-\int_0^{s_i}  \alpha(Y^{(i-1)}_{u})du} ds_i \, e^{-\int_0^{t-\sum_{j=1}^{i}s_j} \alpha(Y^{(i)}_{u})du  } \\
 &\leq (\alpha^*)^i \int_0^t ds_1 \int_0^{t-s_1} ds_2  ... \int_0^{t-\sum_{j=1}^{i-1} s_j} ds_k
 = (\alpha^*)^i \int_{\R^i} \1_{t_1 < t_2 < ... < t_i < t} dt_1\dots dt_i
 = \frac{(\alpha^* t)^i}{i!}. 
 \end{align*}
\end{proof}

Let us control the first term in \eqref{main-discrete}. For any $\epsilon>0$, using the previous lemma,
   \begin{align*} 
   \P\left( \frac{3k^*}{\hat{T}_{(d)}}  \sum_{j=0}^{m-1} \Delta N_{t_{j+1}} \1_{\Delta N_{t_{j+1}} \geq 2} >\epsilon, \hat T_{(d)}\geq \frac{Tv_T} 4\right )
  &\leq    \P\left( \sum_{j=0}^{m-1} \Delta N_{t_{j+1}} \1_{\Delta N_{t_{j+1}} \geq 2} >\frac{\epsilon T v_T}{12 k^*}\right)\\
  &\leq \frac{12 k^*}{\epsilon Tv_T} \sum_{j=0}^{m-1} \E(\Delta N_{t_{j+1}} \1_{\Delta N_{t_{j+1}} \geq 2})\\
  &\leq \frac{12 k^*}{\epsilon Tv_T} \sum_{j=0}^{m-1} \sum_{i\geq 2} i \P(\Delta N_{t_{j+1}}=i) \\
  &\leq \frac{12 k^*}{\epsilon Tv_T} m \sum_{i\geq 2} i \frac{(\alpha^* \Delta_m)^i}{i!}=  \frac{12 k^* m \alpha^*\Delta_m}{\epsilon Tv_T}(e^{\alpha^*\Delta_m}-1).
  \end{align*}
 This last expression is lower than $m\Delta_m^2/(T v_T)$, up to a positive constant, because $\Delta_m\to 0$ as a consequence of  \eqref{stepto0}. Since by assumption  $m\Delta_m/T$ is uniformly bounded  
  we deduce that 
 \begin{equation}\label{decomposDiscret1}
\P\left( \frac{3k^*}{\hat{T}_{(d)}}  \sum_{j=0}^{m-1} \Delta N_{t_{j+1}} \1_{\Delta N_{t_{j+1}} \geq 2} >\epsilon \right)\leq c_0\frac{\Delta_m}{v_T}.
\end{equation}

Concerning now the second term in \eqref{main-discrete},
 \begin{align*} 
   &\P\left(\frac{1}{\hat{T}_{(d)}} \sum_{j=0}^{m-1}  |  k_T( x, X_{t_j}) - k_T(x,X_{T_{N_{t_{j+1}}}^-}) | \1_{\Delta N_{t_{j+1}}=1} >\epsilon, \hat T_{(d)}\geq \frac{Tv_T} 4\right )\\
  &\leq   \P\left(\frac{4}{Tv_T} \sum_{j=0}^{m-1}  |  k_T( x, X_{t_j}) - k_T(x,X_{T_{N_{t_{j+1}}}^-}) | \1_{\Delta N_{t_{j+1}}=1} >\epsilon \right )\\
  &\leq \frac{ 4 \sum_{j=0}^{m-1} \E[|  k_T( x, X_{t_j}) - k_T(x,X_{T_{N_{t_{j+1}}}^-}) | \1_{\Delta N_{t_{j+1}}=1}]}{T v_T \varepsilon}.
   \end{align*}
 Let us give an upper bound for this expectation. Using the Markov property, we compute
   \begin{align*}
    \E[|  k_T( x, X_{t_j}) - k_T(x,X_{T_{N_{t_{j+1}}}^-}) | \1_{\Delta N_{t_{j+1}}=1}] &\leq \E[ \E_{X_{t_j}} [|  k_T( x, X_0) - k_T(x,X_{T_{1}^-}) | \1_{T_1 < \Delta t_{j+1}} \1_{T_2 >\Delta t_{j+1}} ]].
   \end{align*}
Moreover for every $y \in E,$
\begin{align*}
 \E_{y} [|  k_T( x, X_0) - k_T(x,X_{T_{1}^-}) |  & \1_{T_1 < \Delta t_{j+1}}   \1_{T_2 >\Delta t_{j+1}} ] \\
 &\leq  \E_{y} [|  k_T( x, Y^{(0)}_0) - k_T(x,Y^{(0)}_{T_{1}}) | \1_{T_1 < \Delta t_{j+1}}  ]\\
 &= \E_{y}[\E_{y} [|  k_T( x, Y^{(0)}_0) - k_T(x,Y^{(0)}_{T_{1}}) | \1_{T_1 < \Delta t_{j+1}}   | Y^{(0)}]]\\
 &= \E_{y}[ \int_0^{\Delta t_{j+1}} \alpha( Y^{(0)}_s) e^{-\int_0^s \alpha( Y^{(0)}_u) du} |  k_T( x, Y^{(0)}_0) - k_T(x,Y^{(0)}_{s}) | ds ] \\
 &\leq \int_0^{\Delta t_{j+1}} \alpha^* \E_{y}[ |  k_T( x, Y^{(0)}_0) - k_T(x,Y^{(0)}_{s}) |] ds  \\
 &\leq \alpha^* l_T \Delta_m^{a+1},
\end{align*}
where we have used \eqref{discrete condition} in the last line. We therefore obtain 
\begin{equation}\label{decomposDiscret2}
 \P\left(\frac{1}{\hat{T}_{(d)}} \sum_{j=0}^{m-1}  |  k_T( x, X_{t_j}) - k_T(x,X_{T_{N_{t_{j+1}}}^-}) | \1_{\Delta N_{t_{j+1}}=1} >\epsilon \right ) \leq c_0 \frac{l_T \Delta_m^a}{v_T}.
\end{equation}

Let us now control the third term in \eqref{main-discrete}. From Proposition~\ref{lemMart} with $g=1$, we deduce that $\var(N_T) = \E \int_0^T \alpha(X_s)ds \leq \alpha^* T$, which along with $\E(N_T)\leq \alpha^* T$ gives $\P( N_T > 2 \alpha^*T) \leq c_0/T$. This result combined with $\P( \hat{T} <Tv_T/2) \leq c_0/Tv_T$ and \eqref{decomposDiscret3} entails

\begin{equation}\label{decomposDiscret4}
 \P(\frac{N_T}{\hat{T}_{(d)} \hat{T}} > \frac{16\alpha^*}{Tv_T^2} ) \leq  \frac{c_0 l_T \Delta_m^a}{v_T} + \frac{c_0 \Delta_m}{v_T} + \frac{c_0}{Tv_T} .
\end{equation}
From this inequality and  \eqref{discrete condition}, we deduce that 
  \begin{align}
   &\P(\frac{k^* N_T}{\hat{T}_{(d)} \hat{T}}\sum_{j=0}^{m-1}   \int_{t_j}^{t_{j+1}}  \left|k_T( x,X_{s}) -  k_T( x,X_{t_j})\right| ds \1_{\Delta N_{t_{j+1}} = 0} >\varepsilon ) \nonumber\\
   &\leq  \frac{c_0 l_T \Delta_m^a}{v_T} + \frac{c_0 \Delta_m}{v_T} + \frac{c_0}{Tv_T} + \P(\sum_{j=0}^{m-1}   \int_{t_j}^{t_{j+1}}  \left|k_T( x,X_{s}) -  k_T( x,X_{t_j})\right| ds \1_{\Delta N_{t_{j+1}} = 0} > \frac{\varepsilon T v_T^2}{16 \alpha^*} )\nonumber\\
   &\leq \frac{c_0 l_T \Delta_m^a}{v_T^2} + \frac{c_0 \Delta_m}{v_T} + \frac{c_0}{Tv_T} \label{decomposDiscret5}.
  \end{align}

 Finally, for the remaining term in \eqref{main-discrete}, we have
 \begin{align} 
   \P&\left( \frac{2(k^* N_T)^2\Delta_m}{\hat{T}_{(d)} \hat{T}} >\epsilon, \hat T_{(d)}\geq \frac{Tv_T} 4\right )\nonumber\\
  & \leq   \P\left( \frac{2(k^* N_T)^2\Delta_m}{ \hat{T}} >\frac{\epsilon Tv_T} 4, \hat T\geq \frac{Tv_T} 2\right ) + \P\left(\hat T< \frac{Tv_T} 2\right )\nonumber\\
  &\leq  \P\left( 2(k^* N_T)^2\Delta_m>\frac{\epsilon (Tv_T)^2} 8\right)  +\frac{c_0}{Tv_T}\nonumber\\
  &\leq \frac{16 (k^*)^2 \Delta_m}{\epsilon (Tv_T)^2} \E(N_T^2)  +\frac{c_0}{Tv_T}\nonumber\\
  &\leq  \frac{c_0\Delta_m}{v_T^2}+\frac{c_0}{Tv_T}\label{decomposDiscret6}
  \end{align}
where we have used the fact that $ \E(N_T^2)<c_0 T^2$ (that can be proven as a consequence of Proposition~\ref{lemMart}).  The proof is then complete by gathering \eqref{decomposDiscret1}, \eqref{decomposDiscret2}, \eqref{decomposDiscret5}, \eqref{decomposDiscret6}.

\section{Proofs concerning the supplementary material}\label{sec:proofs}

\subsection{Proof of Proposition~\ref{theo:MSE}}
We detail the proof for $\gamma=\alpha$, the other cases being similar. Using the same decomposition and the same notation as in the proof of Proposition~\ref{Maintheo}, we get for $T$ large enough
\begin{align}\label{firstineq}
\E((\hat{\alpha}_\xi (x) - \alpha(x))^2) &\leq 3\, \E\left(\frac{\1_{\hat T(x)>\xi}}{\hat T(x)^2} M_T^2(k)\right) + 3\,\E(R_T^2) + 3 (\alpha^*)^2  \P\left(\hat T(x) \leq \xi\right) \nonumber \\ 
 &\leq3\, \E\left(\frac{\1_{\hat T(x)>\xi}}{\hat T(x)^2} M_T^2(k)\right) + c_0\left( \frac{1}{T v_T(x)} + w_T^2(x)\right),
 \end{align}
where we  write $M_T(k)$ for $M_T$ to highlight the dependence in $k$. With the latter convention, let us introduce $\tilde{M}_T = M_T^2(k) - \int_0^T k_T^2(x,X_{s^-}) dN_s$ and set $I_T(k^2)= \int_0^T k_T^2(x,X_s) \alpha(X_s)ds$. We have
 \begin{align}\label{A123}
  \, \E\left(\frac{\1_{\hat T(x)>\xi}}{\hat T(x)^2} M_T^2(k)\right) &= \, \E\left(\frac{\1_{\hat T(x)>\xi}}{\hat T(x)^2} \tilde{M}_T\right) + \, \E\left(\frac{\1_{\hat T(x)>\xi}}{\hat T(x)^2} M_T(k^2)\right) + \, \E\left(\frac{\1_{\hat T(x)>\xi}}{\hat T(x)^2}  I_T(k^2)\right) \nonumber\\
 &=: A_1 + A_2 + A_3.
 \end{align}
 For the third term $A_3$, since $\alpha(x)\leq \alpha^*$,
\begin{equation*}
 A_3 \leq k^* \alpha^*\E\left(\frac{\1_{\hat T(x)>\xi}}{\hat T(x)}   \right) \leq \frac{k^* \alpha^*}{\E[\hat T(x)]} + k^* \alpha^* \E\left( \left|\frac{1}{\hat T(x)} - \frac{1}{\E[ \hat T(x)]} \right| \1_{\hat T(x)>\xi}\right) \leq \frac{k^* \alpha^*}{\E[ \hat T(x)]} +\frac{k^*\alpha^*}{\xi} \frac{\sqrt{\var(\hat T(x))}}{\E[ \hat T(x)]}.
\end{equation*}
So for $T$ large enough, by Corollary~\ref{moments T}, 
\begin{equation}\label{ineg3}
 A_3 \leq \frac{c_0}{\sqrt{Tv_T(x)}}.
\end{equation}
For the second term $A_2$,  we have
  \begin{align*}A_2=& \E\left[ \frac{M_T(k^2)}{\hat{T}(x)^2} \1_{\hat{T}(x) >\xi} \1_{I_T(k^2)>Tv_T(x)} + \1_{\hat{T}(x) >\xi} \1_{I_T(k^2) \leq Tv_T(x)} M_T(k^2)\left(\frac{1}{\hat{T}(x)} - \frac{1}{\E[ \hat{T}(x)]} + \frac{1}{\E[ \hat{T}(x)]} \right)^2\right]\\
 \leq &\E\left[ \frac{|M_T(k^2)|}{\hat{T}(x)^2} \1_{\hat{T}(x) >\xi} \1_{I_T(k^2)>Tv_T(x)}\right] + 2\E\left[ \frac{|M_T(k^2)|}{\E[ \hat{T}(x)]^2} \frac{(\hat{T}(x) - \E[\hat{T}(x) ] )^2}{\hat{T}(x)^2}\1_{\hat{T}(x) >\xi} \1_{I_T(k^2) \leq Tv_T(x)}\right]\\
  &+ 2\E\left[ \frac{| M_T(k^2)|}{\E[ \hat{T}(x)]^2} \1_{\hat{T}(x) >\xi} \1_{I_T(k^2) \leq Tv_T(x)}\right]\\
  =:&A_{2,1} + A_{2,2}+A_{2,3}.
 \end{align*}
 Let us control each term. First note that by Jensen inequality and Proposition~\ref{lemMart}, 
 \[\E[ |M_T(k^2)|]^2\leq \E[ |M_T(k^2)|^2] = \E\left( \int_0^T k_T^4( x, X_s)\alpha(X_s)ds\right)\leq c_0 \E[ \hat{T}(x)] \leq c_0 Tv_T(x).\]
Therefore, since $I_T(k^2) \leq k^*\alpha^* \hat{T}(x)$,
 \begin{align}
 A_{2,1} &=\E\left[\frac{|M_T(k^2)|}{I_T(k^2)^2} \frac{I_T(k^2)^2}{\hat{T}(x)^2} \1_{\hat{T}(x) >\xi} \1_{I_T(k^2)>Tv_T(x)}\right] \leq \frac{(k^*\alpha^*)^2}{(Tv_T(x)^2} \E[ |M_T(k^2)|]\leq \frac{c_0}{( Tv_T(x))^{\frac{3}{2}}} \label{ineg5}
\end{align}
and for $T$ large enough,
 \begin{align}
  A_{2,3} &\leq 2\frac{\E[ |M_T(k^2)|]}{\E[ \hat{T}(x)]^2} \leq \frac{c_0}{( Tv_T(x))^{\frac{3}{2}}}. \label{ineg4}
 \end{align}
 For the term $A_{2,2}$, let $\theta \in (\frac{1}{2},1)$ and consider the decomposition
\begin{align}
 A_{2,2} =& \frac{2}{\E[ \hat{T}(x)]^2}\E\left[|M_T(k^2)| \frac{(\hat{T}(x) - \E[\hat{T}(x) ] )^2}{\hat{T}(x)^2}\1_{\hat{T}(x) >\xi} \1_{I_T(k^2) \leq Tv_T(x)}\1_{|M_T(k^2)| \leq (Tv_T(x) )^{\theta} \hat{T}(x)^2}\right] \nonumber \\
 &+ \frac{2}{\E[ \hat{T}(x)]^2} \E\left[|M_T(k^2)| \frac{(\hat{T}(x) - \E[\hat{T}(x) ] )^2}{\hat{T}(x)^2}\1_{\hat{T}(x) >\xi} \1_{I_T(k^2) \leq Tv_T(x)}\1_{|M_T(k^2)| > (Tv_T(x) )^{\theta} \hat{T}(x)^2}\right] \nonumber \\
 \leq & \frac{2(Tv_T(x) )^{\theta} \var(\hat{T}(x))}{\E[ \hat{T}(x)]^2} + c_0\E[|M_T(k^2)|\1_{I_T(k^2) \leq Tv_T(x)}\1_{|M_T(k^2)| > (Tv_T(x) )^{\theta} \xi^2}] \nonumber\\
\leq & c_0\left( ( Tv_T(x))^{\theta-1} + \E[|M_T(k^2)|\1_{I_T(k^2) \leq Tv_T(x)}\1_{|M_T(k^2)| > (Tv_T(x) )^{\theta} \xi^2}] \right). \label{ineg6}
 \end{align}
Letting $Z=M_T(k^2)\1_{I_T(k^2) \leq Tv_T(x)}$, 
\begin{multline}
 \E[|M_T(k^2)|\1_{I_T(k^2) \leq Tv_T(x)}\1_{|M_T(k^2)| > (Tv_T(x) )^{\theta} \xi^2}] 
  = \E[ |Z| \1_{|Z| > (Tv_T(x) )^{\theta} \xi^2}]\\
  = \int_{( Tv_T(x))^{\theta}\xi^2}^{+\infty} \P(|Z|\geq u) du + ( Tv_T(x))^{\theta} \xi^2 \P( |Z| > ( Tv_T(x))^{\theta}\xi^2). \label{ineg7}
\end{multline}
 Since $Z= \int_0^T k_T^2( x,X_{s^-})\1_{I_T(k^2) \leq Tv_T(x)} [ dN_s - \alpha(X_s)ds],$ Theorem 1 of \cite{leguevel} with $H_s =   k_T^2( x,X_{s^-})\1_{I_T(k^2) \leq Tv_T(x)},$ 
 $\|H\|_{\infty} = (k^*)^2$ and $\|H\|_2^2 = (k^*)^2 Tv_T(x)$ leads for $u \geq 0$ to
  \begin{equation}\label{inegexp1}
  \P(| Z| \geq u ) \leq 2\exp\left( - \frac{Tv_T(x)}{(k^*)^2} I\left(\frac{u}{Tv_T(x)} \right)\right)
 \end{equation}
where $I(u) = (1+u)\log(1+u)-u.$ Using the fact that $I(u) \sim_0 u^2/2$, (\ref{inegexp1}) leads to
\begin{align}
( Tv_T(x))^{\theta} \xi^2 \P( |Z| \geq ( Tv_T(x))^{\theta}\xi^2) &\leq 2(Tv_T(x) )^{\theta}\xi^2 \exp \left( - \frac{Tv_T(x)}{(k^*)^2} I\left(\frac{\xi^2}{(Tv_T(x))^{1-\theta}} \right)\right) \leq \frac{c_0}{Tv_T(x)}.\label{ineg9}
\end{align} 
Moreover, thanks again to (\ref{inegexp1}) we have
 \begin{align*}
  \int_{( Tv_T(x))^{\theta}\xi^2}^{+\infty} \P(| Z| \geq u )du &\leq 2\int_{( Tv_T(x))^{\theta}\xi^2}^{+\infty} \exp\left( - \frac{Tv_T(x)}{(k^*)^2} I\left(\frac{u}{Tv_T(x)} \right)\right) du
 \end{align*}
and by the inequality $I(u) \geq (u^2/3)\1_{0\leq u<1} + (u/3)\1_{u \geq 1}$ we get
 \begin{align}
   \int_{( Tv_T(x))^{\theta}\xi^2}^{+\infty}  \P(| Z| \geq u )du  &\leq 2  \int_{( Tv_T(x))^{\theta}\xi^2}^{Tv_T(x)} \exp \left( - \frac{u^2}{3(k^*)^2 Tv_T(x)}\right)du + 2 \int_{Tv_T(x)}^{+\infty} \exp\left(-\frac{u}{3(k^*)^2}\right) du \nonumber \\
   &\leq c_0\left( \sqrt{T v_T(x)} \int_{( Tv_T(x))^{2\theta-1}\xi^4}^{Tv_T(x)} e^{-\frac{\omega}{3(k^*)^2}} \frac{d\omega}{\sqrt{\omega}} + e^{-\frac{Tv_T(x)}{3(k^*)^2}}\right) \nonumber\\
   &\leq c_0\left((T v_T(x))^{1-\theta} e^{-\frac{\xi^4(Tv_T(x))^{2\theta-1}}{3(k^*)^2}} + e^{-\frac{Tv_T(x)}{3(k^*)^2}}\right)
   \leq \frac{c_0}{Tv_T(x)}. \label{ineg10}
 \end{align}
Gathering $(\ref{ineg6})$,  $(\ref{ineg7})$,  $(\ref{ineg9})$ and  $(\ref{ineg10}),$ we obtain $A_{2,2} \leq c_0(Tv_T(x))^{\theta-1}$, which combined with   $(\ref{ineg5})$ and $(\ref{ineg4})$  provides
\begin{equation}\label{ineg11}
|A_2| \leq \frac{c_0}{(Tv_T(x))^{1-\theta}}.
\end{equation}

It remains to control  the term $A_1$ in \eqref{A123}. Using exactly the same decomposition and arguments than for $A_2$, along with  the inequality $\E[|\tilde{M}_T|] \leq c_0 \E[\hat{T}(x)]$, we obtain for $\theta \in (0,1)$ and $\rho \in (\frac{1}{2},1)$ such that $\theta/2 <\rho <\theta$ 
  \begin{equation}\label{ineg12}
  \E[|A_1] \leq \frac{c_0}{(Tv_T(x))^{2 \rho -1}} +  \frac{c_0}{(Tv_T(x))^{1-\theta}} +c_0 \E\left[|\tilde{M}_T \1_{I_T(k^2) \leq (Tv_T(x))^{\rho}}\1_{|\tilde{M}|>(Tv_T(x))^{\theta} \xi^2} \right].
 \end{equation}
To control the last term in \eqref{ineg12}, we can use \eqref{ineg7} where now $Z=\tilde{M}_T \1_{I_T(k^2) \leq (Tv_T(x))^{\rho}}$. 
Theorem 2 of \cite{leguevel} with $H_s =   k_T( x,X_{s^-})\1_{I_T(k^2) \leq (Tv_T(x))^{\rho}},$ 
 $\|H\|_{\infty} = k^*$ and $\|H\|_2^2 = (Tv_T(x))^{\rho}$ leads for $u \geq 0$ to
  \begin{equation}\label{inegexp2}
  \P(|Z| \geq u ) \leq 6\exp\left( - \frac{(Tv_T(x))^{\rho}}{(k^*)^2} I\left(\frac{k^*}{(Tv_T(x))^{\rho}}\sqrt{\frac{u}{2}} \right)\right),
 \end{equation}
which gives for $u=( Tv_T(x))^{\theta}\xi^2$, since $I(u) \sim_0 \frac{u^2}{2}$ and $\theta/2<\rho<\theta$,
\begin{align}
( Tv_T(x))^{\theta} \xi^2 \P\left( |Z| \geq ( Tv_T(x))^{\theta}\xi^2\right)  \leq \frac{c_0}{Tv_T(x)}.\label{ineg15}
\end{align} 
Moreover, using again  \eqref{inegexp2} and the same arguments as to get \eqref{ineg10}, we obtain 
 \begin{align*}
  \int_{( Tv_T(x))^{\theta}\xi^2}^{+\infty} \P(|Z| \geq u )du &\leq 6\int_{( Tv_T(x))^{\theta}\xi^2}^{+\infty} \exp\left( - \frac{(Tv_T(x))^{\rho}}{(k^*)^2} I\left(\frac{k^*}{(Tv_T(x))^{\rho}}\sqrt{\frac{u}{2}} \right)\right) du\leq \frac{c_0}{Tv_T(x)}. 
 \end{align*}
Coming back to \eqref{ineg12}, we deduce that 
\begin{equation}\label{ineg17}
|A_1| \leq \frac{c_0}{(Tv_T(x))^{2 \rho-1}} + \frac{c_0}{(Tv_T(x))^{1-\theta}}.
\end{equation}
 The best rate is clearly achieve for the highest values of $\rho$, or writing $\rho=\theta-\eta/2$ with  $0<\eta<2\theta-1$, for the smallest values of $\eta$. The best trade-off in $\theta$ between $2\rho-1=2 \theta-1-\eta$ and $1-\theta$ is  obtained for $\theta=2/3$. This result combined with \eqref{ineg3} and \eqref{ineg11} in \eqref{firstineq} concludes the proof.

\subsection{Proof of Theorem \ref{Markov}}

Our aim is to prove that $\left( (X_t)_{t\geq 0}, (\P_x)_{x \in E}\right)$ is a time-homogeneous Markov process with respect to $(\F_t)_{t \geq 0}.$ We know from \cite[Proposition 20.2]{Bass} that it is enough to show that it is a time-homogeneous Markov process with respect to $(\F_t^{0})_{t \geq 0}.$ We therefore want to prove that for  $0 \leq s \leq t,$ $A \in \mathcal{E} $  and $x \in E,$ 
\begin{equation*}\label{markov1}
\P_x(X_{t} \in A |\F^{0}_s ) = \P_{X_s}(X_{t-s} \in A), \quad \P_x \textrm{ a.s.} 
\end{equation*}

Let $T^{(s)}$ be the next jump after $s$, i.e. $T^{(s)} = \inf_{T_j >s} \{T_j\}$. We use the decomposition 
\begin{equation}\label{decompmarkov}\P_x(X_t \in A |\F^{0}_s ) = \E_x[\1_{X_t \in A} \1_{T^{(s)} \leq t} |\F^{0}_s ]+ \E_x[\1_{X_t \in A} \1_{T^{(s)} >t} |\F^{0}_s ].\end{equation}

We begin by controlling  the first term. Since the event $\{T_j>s\}=\{N_s\leq j-1\}$ is $\F^{0}_s$-measurable,
\begin{align*}
 \E_x[\1_{X_t \in A} \1_{T^{(s)} \leq t} |\F^{0}_s ] &= \sum_{j\geq 1} \E_x[\1_{X_t \in A} \1_{T_{j} \leq t} |\F^{0}_s ]\1_{T_{j-1}  \leq s < T_j}\\
&=\sum_{j\geq 1} \E_x[ \E_x[\1_{X_t \in A} |\F^{0}_{T_j}] \1_{T_{j} \leq t} |\F^{0}_s ]\1_{T_{j-1}  \leq s < T_j}.
\end{align*}

By construction of the process $(X_t)_{t \geq 0}$, given $\F^{0}_{T_j},$ the process $(X_t)_{t \geq T_j}$ has the same distribution as the process $(X_{t-T_j})_{t \geq T_j}$ given $X_0=X_{T_j},$ so for $t \geq T_j,$  $\E_x[\1_{X_t \in A} |\F^{0}_{T_j}]$ is a function of $X_{T_j}$, $t-T_j$ and $A$. We thus may write $\E_x[\1_{X_t \in A}|\F^{0}_{T_j}] = h(X_{T_j},t-T_j,A)$ where $h$ is a borelian function, so that
\begin{equation}\label{markov2} \E_x[\1_{X_t \in A} \1_{T^{(s)} \leq t} |\F^{0}_{s} ] = \sum_{j\geq 1} \E_x[ h(X_{T_j},t-T_j,A) \1_{T_{j} \leq t} |\F^{0}_s]\1_{T_{j-1}  \leq s <T_j}.\end{equation}

We have
\begin{align}\label{equation h}
\E_x[ h(X_{T_j},t-T_j,A)& \1_{T_{j} \leq t} |\F^{0}_s]\1_{T_{j-1}  \leq s <T_j}\nonumber\\
&= \E_x[ \E_x[ h(X_{T_j},t-T_j,A) \1_{T_{j} \leq t} |\F^{0}_s,(Y_u^{(j-1)} )_{u\geq 0},\tau_j] |\F^{0}_s ]\1_{T_{j-1}  \leq s <T_j}\nonumber\\
&= \E_x[ \int_{\Et} h(z,t-T_j,A) K(Y_{\tau_{j}}^{(j-1)},dz) \1_{ T_{j} \leq t} |\F^{0}_s ]\1_{T_{j-1}  \leq s <T_j}\nonumber\\
&= \E_x[  g( Y_{\tau_j}^{(j-1)}, t-T_j , A) \1_{T_{j} \leq t} |\F^{0}_s]\1_{T_{j-1}  \leq s <T_j},
\end{align}
where $g(y, t , A) = \int_{\Et} h(z,t,A) K(y,dz).$

In the sequel, for a $\sigma$-field $\mathcal{G}$ and an event $B$ satisfying $\P_x(B) >0,$ $\E_x[X | \mathcal{G},B]$ stands for the conditional expectation of $X$ given $\mathcal{G},$ under the conditional probability measure $\P_{x|B}(.) = \P_x(B \cap . )/\P_x(B)$, that is $ \E_x[X | \mathcal{G},B] = \E_{x|B}[X | \mathcal{G}]$. Specifically, $Z= \E_{x|B}[X | \mathcal{G}]$ if and only if $Z$ is $\mathcal{G}$-measurable, integrable and for any $\mathcal{G}$-measurable bounded random variable $U$, $\E_x(XU\1_B)=\E_x(ZU\1_B)$. In particular, for any events $B$ and $C$,
\begin{equation}\label{conditionne2}
 \P_x(B \cap C | \mathcal{G} ) = \P_x(C | \mathcal{G},B) \P_x( B|\mathcal{G}),
\end{equation}
and if $B$ is $\mathcal{G}$-measurable, this gives $\P_x(B \cap C | \mathcal{G} ) = \P_x(C | \mathcal{G},B)\1_B$.
Accordingly, for $v \geq 0$, since  $\{T_j>s\}$ belongs to $\F^{0}_s$, we get

\begin{align}
\P_x(\tau_j >v | \F^{0}_s,\F_{T_{j-1}},Y^{(j-1)}) \1_{T_{j-1}  \leq s <T_j}
 &= \P_x\left(\tau_j >v|\F_{T_{j-1}},Y^{(j-1)}, T_{j-1}  \leq s < T_j \right) \1_{T_{j-1}  \leq s <T_j}.\label{conditionne3}
\end{align}

This probability can be computed using \eqref{conditionne2} 
 and \eqref{cdf tau}, so that \eqref{conditionne3} simplifies into
 \begin{align}\label{resTimeTauj}
 &\P_x(\tau_j >v | \F^{0}_s,\F_{T_{j-1}},Y^{(j-1)}) \1_{T_{j-1}  \leq s <T_j} \nonumber \\& = \left(\1_{v \leq s-T_{j-1}} + \1_{v>s-T_{j-1}}  e^{-\int_{s-T_{j-1}}^{v}\alpha(Y_u^{(j-1)}) du}\right)\1_{T_{j-1}  \leq s <T_j}.
\end{align}
By continuity of $(Y_u^{(j-1)})_{u \geq 0}$ and $\alpha$, this proves that \[B \mapsto \E_x[\1_{\tau_j \in B}|\F^{0}_s,\F_{T_{j-1}},Y^{(j-1)}] \1_{T_{j-1}  \leq s <T_j}\] defines a measure with density $v \mapsto \alpha(Y_v^{(j-1)}) e^{-\int_{s-T_{j-1}}^{v}\alpha(Y_u^{(j-1)}) du}\1_{v>s-T_{j-1}}\1_{T_{j-1}  \leq s <T_j}.$
Therefore,
\begin{align*}
 &\E_x[  g( Y_{\tau_j}^{(j-1)},  t-T_j  , A) \1_{T_{j} \leq t} |\F^{0}_s,\F_{T_{j-1}},Y^{(j-1)}]\1_{T_{j-1}  \leq s <T_j}\\
 &= \int_{s-T_{j-1}}^{t-T_{j-1}} g( Y_{v}^{(j-1)}, t-T_{j-1} - v , A) \alpha(Y_v^{(j-1)}) e^{-\int_{s-T_{j-1}}^{v}\alpha(Y_u^{(j-1)}) du} dv\1_{T_{j-1}  \leq s <T_j}\\
 &=  \int_{s}^{t} g( Y_{v-T_{j-1}}^{(j-1)}, t - v, A) \alpha(Y_{v-T_{j-1}}^{(j-1)}) e^{-\int_{s-T_{j-1}}^{v-T_{j-1}}\alpha(Y_u^{(j-1)}) du} dv\1_{T_{j-1}  \leq s <T_j}.
\end{align*}
Coming back to \eqref{equation h}, this gives
\begin{align}
& \E_x[ h(X_{T_j},t-T_j,A) \1_{T_{j} \leq t} |\F^{0}_s ]\1_{T_{j-1}  \leq s <T_j}\nonumber\\
&= \E_x[\int_{s}^{t} g( Y_{v-T_{j-1}}^{(j-1)}, t-v , A) \alpha(Y_{v-T_{j-1}}^{(j-1)}) e^{-\int_{s-T_{j-1}}^{v-T_{j-1}}\alpha(Y_u^{(j-1)}) du} dv | \F^{0}_s]\1_{T_{j-1}  \leq s <T_j}.\label{intermed1}
\end{align}
Let us now prove  that the latter conditional expectation is equal to $H(X_s) \1_{T_{j-1}\leq s < T_j}$, $\P_x$-almost surely, where
 \[H(z) = \E_{z}[ \int_{0}^{t-s} g( Y_{v}, t-v-s , A) \alpha(Y_v) e^{-\int_{0}^{v}\alpha(Y_{u}) du} dv ].\]
 Denoting $Z = \int_{s}^{t} g( Y_{v-T_{j-1}}^{(j-1)}, t-v , A) \alpha(Y_{v-T_{j-1}}^{(j-1)}) e^{-\int_{s-T_{j-1}}^{v-T_{j-1}}\alpha(Y_u^{(j-1)}) du} dv$, this amounts to prove that
\begin{equation}\label{monotone} \E_x[ Z \1_{T_{j-1}  \leq s <T_j} | \F^{0}_s] = \E_x[H(X_s) \1_{T_{j-1}\leq s < T_j} | \F^{0}_s]\end{equation}
because $H(X_s) \1_{T_{j-1}\leq s < T_j}$ is $\F^{0}_s$-mesurable. Since $\F^{0}_s$ is generated by functions of the form 
$\prod_{i=1}^n f_i(X_{s_i})$ for $n\geq 0$, $0\leq s_1< s_2 < ... < s_n \leq s$ and measurable functions $f_1,\dots,f_n$, it is sufficient to prove that
\begin{align*}
 \E_x[ Z  \1_{T_{j-1}\leq s < T_j} \prod_{i=1}^n f_i(X_{s_i})]& =  \E_x[ H(X_s) \1_{T_{j-1}\leq s < T_j} \prod_{i=1}^{n} f_i(X_{s_i}) ].
\end{align*}
Let $B_{n_0}^j=\{s_{n_0}< T_{j-1} \leq s_{n_0+1}\}$, for $n_0=0,\dots,n$ with the convention $s_0=0$ and $s_{n+1}=s$. We have
\begin{align}
 \E_x[ Z\1_{T_{j-1}\leq s < T_j} \prod_{i=1}^n f_i(X_{s_i})]& = \sum_{n_0=0}^n \E_x[ Z\1_{T_{j-1}\leq s < T_j} \1_{B_{n_0}^j}\prod_{i=1}^{n_0} f_i(X_{s_i}) \prod_{i=n_0+1}^{n} f_i( Y^{(j-1)}_{s_i-T_{j-1}}) ]\nonumber\\
 &= \sum_{n_0=0}^n \E_x[\prod_{i=1}^{n_0} f_i(X_{s_i})  \1_{T_{j-1}\leq s}  \1_{B_{n_0}^j}  \E_x[Z\1_{T_j >s}\prod_{i=n_0+1}^{n} f_i( Y^{(j-1)}_{s_i-T_{j-1}}) |\F_{T_{j-1}} ]]\label{espcondition}.
\end{align}
Conditioning by $Y^{(j-1)},$ we obtain
\[\E_x[Z\1_{T_j >s}\prod_{i=n_0+1}^{n} f_i( Y^{(j-1)}_{s_i-T_{j-1}}) |\F_{T_{j-1}},Y^{(j-1)} ] = Z e^{-\int_{0}^{s-T_{j-1}} \alpha( Y^{(j-1)}_u)du}\prod_{i=n_0+1}^{n} f_i( Y^{(j-1)}_{s_i-T_{j-1}})\]
and 
\[ \E_x[Z\1_{T_j >s}\prod_{i=n_0+1}^{n} f_i( Y^{(j-1)}_{s_i-T_{j-1}}) |\F_{T_{j-1}} ] = \E_x[Z e^{-\int_{0}^{s-T_{j-1}} \alpha( Y^{(j-1)}_u)du}\prod_{i=n_0+1}^{n} f_i( Y^{(j-1)}_{s_i-T_{j-1}}) | \F_{T_{j-1}}  ].\]
Let us write 
\begin{equation}\label{eqU}
U = e^{-\int_{0}^{s-T_{j-1}} \alpha( Y^{(j-1)}_u)du}\prod_{i=n_0+1}^{n} f_i( Y^{(j-1)}_{s_i-T_{j-1}}) = \E_x[ \1_{T_j >s} \prod_{i=n_0+1}^{n} f_i( Y^{(j-1)}_{s_i-T_{j-1}}) | \F_{T_{j-1}}, Y^{(j-1)} ] .
\end{equation}
Conditional on $\F_{T_{j-1}} ,$ the distribution of $Y^{(j-1)}$ is independent of $T_{j-1}$, so using the Markov property of $Y^{(j-1)},$ we then obtain
\begin{align*}
 \E_x[Z\1_{T_j >s} & \prod_{i=n_0+1}^{n} f_i( Y^{(j-1)}_{s_i-T_{j-1}}) |\F_{T_{j-1}} ]\\
&= \E_x[Z U  |\F_{T_{j-1}} ]\\
&= \E_x[ \E_{Y^{(j-1)}_{s-T_{j-1}}}\left( \int_{s}^{t} g( Y_{v-s}^{(j-1)}, t-v , A) \alpha(Y_{v-s}^{(j-1)}) e^{-\int_{s-T_{j-1}}^{v-T_{j-1}}\alpha(Y_{u-s+T_{j-1}}^{(j-1)}) du} dv \right) U |\F_{T_{j-1}}]\\
&=\E_x[ \E_{Y^{(j-1)}_{s-T_{j-1}}}\left(\int_{0}^{t-s} g( Y_{v}^{(j-1)}, t-v-s , A) \alpha(Y_v^{(j-1)}) e^{-\int_{0}^{v}\alpha(Y_{u}^{(j-1)}) du} dv \right) U |\F_{T_{j-1}}]\\
&= \E_x[ H( Y^{(j-1)}_{s-T_{j-1}}) U |\F_{T_{j-1}}]\\
&= \E_x[ H( Y^{(j-1)}_{s-T_{j-1}})  \1_{T_j >s}\prod_{i=n_0+1}^{n} f_i( Y^{(j-1)}_{s_i-T_{j-1}}) |\F_{T_{j-1}}],
\end{align*}
where we have used the fact that the distribution of $Y^{(j)}$ does not depend on $j$ and used \eqref{eqU} back. 
We finally obtain from \eqref{espcondition}
\begin{align*}
 \E_x[ Z \1_{T_{j-1}\leq s < T_j} &\prod_{i=1}^n f_i(X_{s_i})] \\
 &= \sum_{n_0=0}^n \E_x[\prod_{i=1}^{n_0} f_i(X_{s_i})  \1_{T_{j-1}\leq s}  \1_{B_{n_0}^j} \E_x[ H( Y^{(j-1)}_{s-T_{j-1}})  \1_{T_j >s}\prod_{i=n_0+1}^{n} f_i( Y^{(j-1)}_{s_i-T_{j-1}}) |\F_{T_{j-1}}]]\\
 &= \E_x[ H(X_s) \1_{T_{j-1}\leq s < T_j} \prod_{i=1}^{n} f_i(X_{s_i}) ],
 \end{align*}
proving \eqref{monotone}.
We then obtain with \eqref{intermed1}
\[ \E_x[ h(X_{T_j},t-T_j,A) \1_{T_{j} \leq t} |\F^{0}_s ]\1_{T_{j-1}  \leq s <T_j} = H(X_s) \1_{T_{j-1}\leq s < T_j}\quad \P_x\textrm{-a.s.},\]
and  from \eqref{markov2},
\begin{align}\label{firsttermdecompMarkov}
\E[\1_{X_t \in A} \1_{T^{(s)} \leq t} |\F_s^0 ] = H(X_s) =  \E_{X_s}[ \int_{0}^{t-s} g( Y_{v}, t-v-s , A) \alpha(Y_v) e^{-\int_{0}^{v}\alpha(Y_{u}) du} dv ] \quad \P_x\textrm{-a.s.}
\end{align}

For the second term in \eqref{decompmarkov},
\begin{align*}
 \E_x[\1_{X_t \in A} \1_{T^{(s)} >t} |\F_s^0 ] &= \sum_{j\geq 1} \E_x[\1_{Y_{t-T_{j-1}}^{(j-1)} \in A} \1_{T^{(s)} >t}|\F_s^0] \1_{T_{j-1}  \leq s <T_j}\\
  &= \sum_{j\geq 1} \E_x[\P_x(T_j >t| \F^{0}_s,\F_{T_{j-1}},Y^{(j-1)}) \1_{Y_{t-T_{j-1}}^{(j-1)} \in A} |\F_s^0]\1_{T_{j-1}  \leq s <T_j}\\
  &= \sum_{j\geq 1} \E_x[e^{-\int_{s}^{t} \alpha(Y_{v-T_{j-1}}^{(j-1)}) dv} \1_{Y_{t-T_{j-1}}^{(j-1)} \in A} |\F_s^0]\1_{T_{j-1}  \leq s <T_j}
   \end{align*}
with \eqref{resTimeTauj}. Using the same arguments as for the first term in  \eqref{decompmarkov}, we obtain
  \begin{align}
 \E_x[\1_{X_t \in A} \1_{T^{(s)} >t} |\F_s^0 ] 
  &= \sum_{j\geq 1} \E_{X_s}[e^{-\int_{0}^{t-s} \alpha(Y_{v}) dv} \1_{Y_{t-s} \in A}] \1_{T_{j-1}  \leq s <T_j} \quad \P_x\textrm{-a.s.}\nonumber\\
  &=\E_{X_s}[e^{-\int_{0}^{t-s} \alpha(Y_{v}) dv} \1_{Y_{t-s} \in A}] \quad \P_x\textrm{-a.s.}\label{secondtermdecompMarkov}
 \end{align}
 The two expressions \eqref{firsttermdecompMarkov} and \eqref{secondtermdecompMarkov} then imply
 \[ \P_x( X_t \in A | \F_s^0) = \E_{X_s}[ \int_{0}^{t-s} g( Y_{v}, t-v-s , A) \alpha(Y_v) e^{-\int_{0}^{v}\alpha(Y_{u}) du} dv ] + \E_{X_s}[e^{-\int_{0}^{t-s} \alpha(Y_{v}) dv} \1_{Y_{t-s} \in A}]\quad \P_x\textrm{-a.s.}\]
For $s=0,$ this leads to
 \[ \P_x( X_t \in A | \F_0^0) = \E_{Y^{(0)}_0}[ \int_{0}^{t} g( Y_{v}, t-v , A) \alpha(Y_v) e^{-\int_{0}^{v}\alpha(Y_{u}) du} dv ] + \E_{Y^{(0)}_0}[e^{-\int_{0}^{t} \alpha(Y_{v}) dv} \1_{Y_{t} \in A}]\quad \P_x\textrm{-a.s.}\]
 Since $((Y_t^{(0)})_{t \geq 0}, (\P_x)_{x \in E})$ is a Markov process, $\P_x(Y_0^{(0)}=x)=1$, so 
 \[ \P_x( X_t \in A ) =  \E_{x}[ \int_{0}^{t} g( Y_{v}, t-v , A) \alpha(Y_v) e^{-\int_{0}^{v}\alpha(Y_{u}) du} dv ] + \E_{x}[e^{-\int_{0}^{t} \alpha(Y_{v}) dv} \1_{Y_{t} \in A}]\]
 and 
 \[ \P_x(X_t \in A |\F_{s}^0) = \P_{X_s}(X_{t-s}\in A ) \quad \P_x\textrm{-a.s.},\]
which concludes the proof that $((X_t)_{t \geq 0},(\P_x)_{x \in E})$ is a time-homogeneous Markov process with respect to $(\F_t)_{t\geq 0}.$

\subsection{Proof of Proposition~\ref{InvMeas}}

To prove that $(X_t)_{t\geq 0}$ admits an invariant measure, we can view the process as a classical regenerative process with regeneration times $\{t, X_t=\varnothing\}$, see for instance \cite[Chapter 10]{thorisson2000} for a definition. Under \eqref{existence}, it is not difficult to verify that the expected time between two regenerations is finite, which implies the existence of an invariant measure \cite[Chapter 10, Theorem 3.1]{thorisson2000}.

It remains to establish the uniform geometric ergodicity \eqref{H2 sature}. The proof uses a standard coupling argument as carried out for pure spatial birth-death processes in  \cite[Theorem~A]{lotwick1981} and \cite[Theorem 3.1 and Corollary 3.1]{moller1989}. Let $(X_t^{(1)})_{t\geq 0}$ and $(X_t^{(2)})_{t\geq 0}$ be two birth-death-move processes with the same transition kernel $Q_t$ as $(X_t)_{t\geq 0}$ and with respective initial distribution $\phi_1$ and $\phi_2$ on $\Et$. Consider the stopping time 
\[\tau = \inf \{ t >0 : (X_t^{(1)},X_t^{(2)}) = (\varnothing,\varnothing) \}.\]
We verify in Lemma~\ref{Lemmaexpboundtau} in Appendix~\ref{sec:appendixB} that $\tau<\infty$ $\P_{\phi_1\times \phi_2}$-almost surely and we get from Lemma~\ref{Lemmanumero2} that for any  $A \in \mathcal E$ and any $t \geq 0,$
 \[\P_{\phi_1 \times \phi_2}(X_t^{(1)} \in A, \tau \leq t) = \P_{\phi_1 \times \phi_2}(X_t^{(2)} \in A, \tau \leq t).\]
Therefore by the coupling argument, 
 \begin{equation}\label{couplingkey}\left|\int_\Et Q_t(x,A)\phi_1(dx) -\int_\Et Q_t(x,A)\phi_2(dx)\right|\leq 2P_{\phi_1\times\phi_2}(\tau>t).\end{equation}
By Lemma~\ref{Lemmaexpboundtau}, we further get
\[\P_{\phi_1\times\phi_2}(\tau>t)\leq a e^{-ct}\]
for some $a>0$ and $c>0$ that do not depend on $\phi_1$ and $\phi_2$. This inequality for the choice $\phi_1(.)=\1_{y\in .}$ and $\phi_2=\mu_\infty$ implies \eqref{H2 sature} when $g=\1_A$, for some $A\in\mathcal E$.
To extend it to any measurable bounded function $g$, first consider the case where $g$ takes its values in $[0,1]$. Then $g$ can be approximated by the step function $g_n(z)=2^{-n}\lfloor 2^n g(z)\rfloor$, where $\lfloor . \rfloor$ denotes the integer part function, so that $\|g-g_n\|_\infty\leq 2^{-n}$. Let $A_{j,n}=\{j2^{-n}\leq g(z)<(j+1)2^{-n}\}$.  Using \eqref{H2 sature} for indicator functions, we deduce that for any $y\in E$ and any $n$
\begin{align*}
\left | \int_\Et g_n(z) Q_t(y,dz) - \int_\Et g_n(z)\mu_{\infty}(dz)\right | =2^{-n}\sum_{j=0}^{2^n} j \left| Q_t(y,A_{j,n}) -  \mu_{\infty}(A_{j,n})\right|\leq 2^n a  e^{-c t}.
\end{align*}
Therefore for any $y\in E$ and any $n$
\begin{align*}
\left | \int_\Et g(z) Q_t(y,dz) - \int_\Et g(z)\mu_{\infty}(dz)\right |\leq 2 \|g-g_n\|_\infty +  2^n a  e^{-c t} \leq  2^{-n+1} + 2^n a e^{-c t}.
\end{align*}
Choosing $n=\lfloor c t / (2\log 2)\rfloor$, we get \eqref{H2 sature} (for new constants $a>0$ and $c>0$) in the case where $g$ takes its values in $[0,1]$. Applying this result to $g/\|g\|_\infty$ proves  \eqref{H2 sature} for positive bounded functions $g$. The extension to  any measurable bounded function $g$ is obtained by considering the decomposition $g=g_+-g_-$ where $g_+$ and $g_-$ respectively denote the positive and negative part of $g$.

\subsection{Proof of Corollary~\ref{lemVar}}
 
To prove the first statement of Corollary~\ref{lemVar}, note that 
$$\E \left(\int_0^t g(X_s)ds\right)  - t \int_E g(z)\mu_{\infty}(dz) =\int_0^t \int_E \left(\int_E g(z)Q_s(y,dz) - \int_E g(z)\mu_{\infty}(dz)\right)\mu_0(dy)ds$$
where $\mu_0$ denotes the distribution of $X_0$. Using Proposition~\ref{InvMeas}, we get 
\begin{align*}
\left|\E \left(\int_0^t g(X_s)ds\right)  - t \int_E g(z)\mu_{\infty}(dz)\right| &\leq \int_0^t  \sup_{y\in E} \left|\int_E g(z)Q_s(y,dz) - \int_E g(z)\mu_{\infty}(dz)\right|ds\\
&\leq a\|g\|_\infty \int_0^t e^{-c s}ds
\end{align*}
hence the result. Next, in order to prove the second statement, let us write $E_s$ for
   \[\Et_s:=\E[g(X_s)] = \int_{\Et} \int_{\Et} g(y) Q_s(x,dy) \mu_0(dx).\]
For any $v \geq s$ we have $\E[g(X_v) | \F_s] = \int_{\Et} g(z) Q_{v-s}(X_s,dz)$, so 
  \begin{align*}
    &\var \left( \int_0^t g(X_s) ds \right)  =  2 \int_{s=0}^t \int_{v=s}^t \E \left[ (g(X_s) - E_s)(\int_{\Et} g(z) Q_{v-s}(X_s,dz)-E_v)\right] dv ds \\
    & =  2 \int_{s=0}^t \int_{v=0}^{t-s} \iint_{\Et}  (g(y) - E_s)\left(\int_{\Et} g(z) Q_v(y,dz)-E_{v+s}\right) Q_s(x,dy)\mu_0(dx) dv ds \\
    & =  2 \int_{s=0}^t \iint_{\Et}  (g(y) - E_s)\left[\int_{v=0}^{t-s} \left(\int_{\Et} g(z) Q_v(y,dz)- \int_{z \in \Et} g(z) \mu_{\infty}(dz)\right) dv \right] Q_s(x,dy)\mu_0(dx) ds\\
    &    + 2 \int_{s=0}^t \iint_{\Et}  (g(y) - E_s)\left[ \int_{v=s}^{t} \left(\int_{z \in \Et} g(z) \mu_{\infty}(dz)  - \iint_{\Et} g(z) Q_{v}(u,dz) \mu_0(du) \right)dv\right] Q_s(x,dy)\mu_0(dx) ds.
     \end{align*}

  Thanks to Proposition~\ref{InvMeas}, each term in the square brackets above is uniformly bounded in $s$, $t$ and $y$, so there exists a positive constant $c_0$ such that 
   \begin{align*}
     \var \left( \int_0^t g(X_s) ds \right) & \leq c_0 \|g\|_\infty \int_{s=0}^t \iint_{\Et}  (g(y) + E_s ) Q_s(x,dy)\mu_0(dx) ds=2c_0 \|g\|_\infty\int_{s=0}^t E_s ds.
   \end{align*}

\subsection{Proof of Proposition~\ref{lemMart}}

In this proof we denote by $N_t^\beta$ and $N_t^\delta$ the number of births and of deaths before $t$. In order to encompass all cases in the same proof, we consider $N^\gamma_t$ where $\gamma$ is either $\beta$, $\delta$ or $\alpha$ and $N_t^\gamma=N_t$ when $\gamma=\alpha$.
 We first show the result for the filtration $\F_t$ instead of the filtration $\F_{t^+}.$  
The counting process $N^\gamma_t$ is clearly adapted to $\F_t$ and its $\F_t$-intensity is obtained by $\lambda^\gamma(t)=\lim_{h\to 0^+} \lambda_h^\gamma(t)$, almost surely,  where
\[\lambda_h^\gamma(t)= \frac 1 h \E \left( N^\gamma_{t+h}-N^\gamma_t \big | \F_t\right).\]
This makes sense  if for instance $\lambda_h^\gamma(t)$ is uniformly bounded for any $t\geq 0$ and any $0\leq h\leq 1$ (see  formula (3.5) in Chapter~2 of \cite{bremaud}), which is our case as shown below. It is indeed not difficult to deduce that under these assumptions, by applications of Fubini and the dominated convergence theorem, 
\begin{align*}
 \E(N^\gamma_{t+s}-N^\gamma_t \big |\F_t)&=\E\left(\lim_{h\to 0^+}\int_t^{t+s} \frac 1 h (N^\gamma_{u+h}-N^\gamma_u)du\Big |\F_t\right)\\
&=\lim_{h\to 0^+}\E\left(\int_t^{t+s} \lambda_h^\gamma(u)du\Big |\F_t\right)=
 \E\left(\int_t^{t+s} \lambda^\gamma(u)du\Big |\F_t\right),
 \end{align*}
 which shows that $\int_0^{t} \lambda^\gamma(u)du$ is the $\F_t$-compensator of $N^\gamma_t$, i.e. $N^\gamma_t -\int_0^{t} \lambda^\gamma(u)du$ is a  $\F_t$-martingale.

Let us prove that $\lambda_h^\gamma(t)$ is uniformly bounded and $\lim_{h\to 0^+}  \lambda_h^\gamma(t) = \gamma(X_t)$. By the Markov property, for any $y\in \Et$,
\begin{align}\label{lambdah}
\frac 1h\E \left( N^\gamma_{t+h}-N^\gamma_t \big | X_t=y \right) &=  \frac 1h \E \left( N^\gamma_{h}\big | X_0=y\right)\nonumber\\
&=\frac 1h\P\left(N^\gamma_{h}=1\big | X_0=y\right) +\frac 1h \sum_{j\geq 2}j\P\left(N^\gamma_{h}=j\big | X_0=y\right).
\end{align}
On one hand, for any $j\geq 2$,
\begin{align*}
\P\left(N^\gamma_{h}=j\big | X_0=y\right) \leq \P\left(N_{h}=j\big | X_0=y\right)  \leq \P\left(N_{h}\geq j\big | X_0=y\right).
\end{align*}
To control this last term, we use the following lemma.
\begin{lem}\label{lemNumber}
Let  $N^* \sim \mathcal{P}(\alpha^* h)$ and $N_* \sim \mathcal{P}(\alpha_* h)$, where $\mathcal P(a)$ denotes the Poisson distribution with rate $a>0$.  Then for any $n\in\N$, $$\P(N^* \leq n) \leq \P(N_h \leq n) \leq \P(N_* \leq n).$$  
 \end{lem}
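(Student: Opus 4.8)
The statement is, at heart, a stochastic ordering: the three inequalities assert that the jump count $N_h$ is stochastically sandwiched between the two Poisson variables, i.e. $N_*$ is stochastically dominated by $N_h$, which in turn is stochastically dominated by $N^*$. The plan is to prove this by a pathwise coupling built directly from the iterative construction of the process in \eqref{cdf tau}, working conditionally on an arbitrary starting configuration $X_0=y$ (the bound will be uniform in $y$, hence holds unconditionally as well).

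First I would recall from \eqref{cdf tau} that, conditionally on the inter-jump motion $(Y^{(j)}_u)_{u\ge 0}$, the $(j{+}1)$-th inter-jump time $\tau_{j+1}$ has cumulative distribution function $F_{j+1}(t)=1-\exp(-\int_0^t\alpha(Y^{(j)}_u)\,du)$. Since $\alpha_*\le\alpha\le\alpha^*$ everywhere, integrating gives $\alpha_* t\le\int_0^t\alpha(Y^{(j)}_u)\,du\le\alpha^* t$, whence the almost-sure sandwich
\[
1-e^{-\alpha_* t}\ \le\ F_{j+1}(t)\ \le\ 1-e^{-\alpha^* t},\qquad t\ge 0,
\]
valid for every realization of $Y^{(j)}$. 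In words, the conditional law of $\tau_{j+1}$ is stochastically larger than an $\mathrm{Exp}(\alpha^*)$ variable and stochastically smaller than an $\mathrm{Exp}(\alpha_*)$ variable.

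Next I would turn this into a pathwise coupling via the inverse-CDF (quantile) trick. Realize each $\tau_{j+1}$ as $F_{j+1}^{-1}(U_{j+1})$ for i.i.d. uniforms $U_{j+1}$ independent of the past and of $Y^{(j)}$, and set $\tau_{j+1}^{*}:=-\log(1-U_{j+1})/\alpha_*$ and $\tau_{j+1}^{**}:=-\log(1-U_{j+1})/\alpha^*$, the inverse CDFs of $\mathrm{Exp}(\alpha_*)$ and $\mathrm{Exp}(\alpha^*)$ evaluated at the same uniforms. Since $G\le F$ implies $F^{-1}\le G^{-1}$ for generalized inverses, the sandwich yields, almost surely and for every $j$, $\tau_{j+1}^{**}\le\tau_{j+1}\le\tau_{j+1}^{*}$, where crucially $(\tau_{j+1}^{*})_j$ are i.i.d. $\mathrm{Exp}(\alpha_*)$ and $(\tau_{j+1}^{**})_j$ are i.i.d. $\mathrm{Exp}(\alpha^*)$, being deterministic functions of the fresh uniforms alone. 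Writing $T_k=\sum_{j=1}^k\tau_j$, $T_k^{*}=\sum_{j=1}^k\tau_j^{*}$ and $T_k^{**}=\sum_{j=1}^k\tau_j^{**}$, summation gives $T_k^{**}\le T_k\le T_k^{*}$ for all $k$. Using $\{N_h\ge k\}=\{T_k\le h\}$ and the analogous identities for the two Poisson processes $N_{*,h}=\#\{k:T_k^{*}\le h\}\sim\mathcal P(\alpha_* h)$ and $N^{*}_{h}=\#\{k:T_k^{**}\le h\}\sim\mathcal P(\alpha^* h)$, I obtain the pathwise sandwich $N_{*,h}\le N_h\le N^{*}_{h}$. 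Taking probabilities of the events $\{\cdot\le n\}$ and reversing inclusions then yields $\P(N^*\le n)\le\P(N_h\le n)\le\P(N_*\le n)$, uniformly in $y$, which is the claim.

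The only genuinely delicate point — the main obstacle — is the conditional quantile coupling across successive jumps: one must verify that each fresh uniform $U_{j+1}$ may be taken independent of $(\mathcal F_{T_j},Y^{(j)})$, so that the coupled exponentials are truly i.i.d. and independent of the trajectory, while the inequality $\tau_{j+1}^{**}\le\tau_{j+1}\le\tau_{j+1}^{*}$ persists conditionally on the random CDF $F_{j+1}$. This is exactly compatible with the step-by-step generation of the process in Section~\ref{appendix:BDM def}. As a more computational alternative, one could instead start from the explicit integral formula for $\P(N_h=i\mid X_0=y)$ established in the proof of Lemma~\ref{lemcount} and compare it against the Poisson weights by substituting $\alpha_*$ or $\alpha^*$ inside the nested integrals and exponentials; summing over $i\le n$ gives the two CDF inequalities, but this route requires extra care because enlarging $\alpha$ increases the jump densities while shrinking the survival factor, so the comparison is not a clean term-by-term bound.
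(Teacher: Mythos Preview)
Your coupling argument is correct and genuinely different from the paper's proof. The paper proceeds analytically: it writes $\P(N_t\ge n)=\P(T_n\le t)=\E\big(\1_{t\ge T_{n-1}}(1-e^{-\int_0^{t-T_{n-1}}\alpha(Y_u^{(n-1)})\,du})\big)$, bounds the exponent by $\alpha_*(t-T_{n-1})$ and $\alpha^*(t-T_{n-1})$, and then unfolds this into the recursive convolution inequality $\P(T_n\le t)\ge\int_0^t\P(T_{n-1}\le t-s)\gamma_1(s)\,ds$ with $\gamma_1$ the $\mathrm{Exp}(\alpha_*)$ density, iterating down to a Gamma integral that is exactly $\P(N_*\ge n)$. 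Your route instead realizes every inter-jump time via the quantile transform with a fresh uniform, couples it pathwise between $\mathrm{Exp}(\alpha^*)$ and $\mathrm{Exp}(\alpha_*)$ variables built from the same uniform, and sums to get $N_{*,h}\le N_h\le N_h^{*}$ almost surely. The coupling is cleaner, yields a strictly stronger pathwise statement, and sidesteps the iterated convolution bookkeeping; the paper's recursion is more self-contained in that it never explicitly enlarges the probability space. The ``delicate point'' you flag is handled correctly: because the iterative construction of Section~\ref{appendix:BDM def} generates $\tau_{j+1}$ freshly given $\mathcal F_{T_j}$ and $Y^{(j)}$, one may indeed take the $U_{j+1}$ i.i.d.\ and independent of all past information, so the coupled exponentials are genuinely i.i.d.
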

 
 \begin{proof}
 For any $t\geq 0$ and any $n$,
 \begin{align*}
  \P(N_t \geq n) = \P(T_n \leq t) & = \P(0 \leq T_n- T_{n-1} \leq t - T_{n-1})\\
  & = \E \left( \1_{t \geq T_{n-1}} \P\left( T_n- T_{n-1} \leq t - T_{n-1} | T_{n-1}, X_{T_{n-1}},Y^{(n-1)}\right)\right)\\
  & = \E \left( \1_{t \geq T_{n-1}} \left(1- e^{-\int_{0}^{t-T_{n-1}}\alpha(Y_{u}^{(n-1)}) du} \right)\right).
 \end{align*}
We deduce that
 \begin{equation*}
   \E\left(  \1_{t \geq T_{n-1}}\left(1-e^{-\alpha_*( t-T_{n-1})} \right)\right) \leq \P(N_t \geq n) \leq  \E\left(  \1_{t \geq T_{n-1}}\left(1-e^{-\alpha^*( t-T_{n-1})} \right)\right).
 \end{equation*}
Let  $e_*$ and $e^*$ be two random variables independent of $T_{n-1}$ distributed according to an exponential distribution with rate $\alpha_*$ and $\alpha^*$, respectively. The bounds above are nothing else than  $\P( e_* \leq t- T_{n-1})$ and $\P( e^* \leq t- T_{n-1})$ so that
\begin{equation}\label{YY}
  \P( e_* \leq t- T_{n-1}) \leq \P(N_t \geq n) \leq \P( e^* \leq t- T_{n-1}). 
 \end{equation}
We start from the lower bound to prove the first inequality of the lemma.  Denote by $\gamma_{j}$ the probability density function of a Gamma distribution with parameters $j$ and $\alpha_*$. We have that 
\[\P( e_* \leq t- T_{n-1}) = \int_{0}^{t} \P( T_{n-1} \leq t-s) \gamma_{1}(s) ds, \]
so for any $n$, 
 \[ \P(T_n \leq t) =   \P(N_t \geq n)  \geq  \int_{0}^{t} \P( T_{n-1} \leq t-s) \gamma_{1}(s) ds.\]
 Iterating this inequality, we obtain
 \begin{align*}
\int_{0}^{t} \P( T_{n-1} \leq t-s) \gamma_{1}(s) ds & \geq  \int_{0}^{t} \int_{0}^{t-s} \P( T_{n-2} \leq t-s-u ) \gamma_{1}(u) du  \gamma_{1}(s) ds\\
   & =  \int_{0}^{t} \P( T_{n-2} \leq t-z) \left(\int_{0}^{z} \gamma_{1}(z-s) \gamma_{1}(s) ds \right) dz\\
   & =  \int_{0}^{t}  \P( T_{n-2} \leq t-z) \gamma_{2}(z) dz
  \end{align*}
and  recursively 
 $$ \P(N_t \geq n)  \geq \int_{0}^{t}\P( T_{n-1} \leq t-s ) \gamma_{1}(s) ds \geq \int_{0}^{t} \gamma_{n}(z) dz.$$ 
This lower bound equals $\P(N_* \geq n)$ where $N_*\sim \mathcal P(\alpha_* t)$, which proves one inequality in Lemma \ref{lemNumber}. 
The other inequality is obtained similarly by starting from the upper-bound in \eqref{YY} and by replacing $\gamma_{j}$ with a Gamma distribution with parameters $j$ and $\alpha^*$. 
\end{proof}

It is easy to verify that this lemma remains true by replacing the probabilities in its statement by conditional probabilities, whereby   $\P\left(N_{h}\geq j\big | X_0=y\right)\leq\P(N^*\geq j)\leq (\alpha^*h)^{j}/j!$. Consequently
\[\frac 1h\sum_{j\geq 2}j\P\left(N^\gamma_{h}=j\big | X_0=y\right)\leq \frac 1h \sum_{j\geq 2} j \frac{(\alpha^*h)^{j}}{j!}\leq h(\alpha^*)^2 e^{\alpha^*h}.\]
On the other hand, 
\[\frac 1h\P\left(N^\gamma_{h}=1\big | X_0=y\right) = \frac 1h\P\left(N^\gamma_{h}=1,N_{h}=1\big | X_0=y\right) + \frac 1h\P\left(N^\gamma_{h}=1,N_{h}\geq 2\big | X_0=y\right).\]

The last term is lower than $\frac 1h\P\left(N_{h}\geq 2\big | X_0=y\right)$ which is less than $\frac 1h (\alpha^*h)^{2}/2$ by the same arguments as above.
 We shall finally prove that $\left(\frac 1h\P(N^\gamma_{h}=1,N_{h}=1\big | X_0=y)\right)_{h\leq 1}$ is uniformly bounded and that
 $\lim_{h \rightarrow 0} \frac 1h\P(N^\gamma_{h}=1,N_{h}=1\big | X_0=y) = \gamma(y).$
 Let us first consider the case where $\gamma=\beta$ and recall that $n(y)$ is the index $n$ such that $y\in\Et_n.$
 By \eqref{cdf tau} along with the continuity of $\alpha$ and $(Y_u^{(n(y))})_{u \geq 0}$,
 \begin{align}
  &\P\left(N^\beta_{h}=1,N_{h}=1\big | X_0=y\right) = \P\left(T_1 \leq h, \text{first jump is a birth}, T_2>h\big | X_0=y\right)\nonumber\\
  &= \int_{z \in \Et_{n(y)+1}}  \int_0^h\E\left[ \alpha( Y_s^{(0)}) K(Y_s^{(0)},dz) e^{-\int_0^s \alpha( Y_u^{(0)})du} e^{-\int_0^{h-s} \alpha( Y_v^{(1)})dv} \big | X_0=y\right] ds \label{control1naissance}\\
  &\leq h \alpha^*. \nonumber
 \end{align}
We obtain similarly for $\gamma=\delta$ and $\gamma=\alpha$
\begin{align*}
  &\P\left(N^\delta_{h}=1,N_{h}=1\big | X_0=y\right) = \P\left(T_1 \leq h, \text{first jump is a death}, T_2>h\big | X_0=y\right)\\
  &= \int_{z \in \Et_{n(y)-1}}  \int_0^h\E\left[ \alpha( Y_s^{(0)}) K(Y_s^{(0)},dz) e^{-\int_0^s \alpha( Y_u^{(0)})du} e^{-\int_0^{h-s} \alpha( Y_v^{(1)})dv} \big | X_0=y\right] ds\\
  &\leq h \alpha^*
  \end{align*}
 and
 \begin{align*}
  \P\left(N_{h}=1\big | X_0=y\right) &= \int_{z \in \Et}  \int_0^h\E\left[ \alpha( Y_s^{(0)}) K(Y_s^{(0)},dz) e^{-\int_0^s \alpha( Y_u^{(0)})du} e^{-\int_0^{h-s} \alpha( Y_v^{(1)})dv} \big | X_0=y \right] ds\\
  &\leq h \alpha^*.
 \end{align*}
So $\left(\frac 1h\P(N^\gamma_{h}=1,N_{h}=1\big | X_0=y)\right)_{h\leq 1}$ is uniformly bounded whatever $\gamma=\beta$ or $\gamma=\delta$ or $\gamma=\alpha$. 

Let us show that $\lim_{h \rightarrow 0} \frac 1h\P(N^\gamma_{h}=1,N_{h}=1\big | X_0=y) = \gamma(y)$ for $\gamma=\beta,$ the other cases $\gamma=\delta$ and $\gamma = \alpha$ being treated similarly.
 Using $(\ref{control1naissance}),$ the inequalities $|1-e^{-x}|\leq x$ for $x\geq0$,  $\int_0^{h-s} \alpha( Y_v^{(1)})dv \leq (h-s) \alpha^*,$  $\int_0^s \alpha( Y_u^{(0)})du \leq \alpha^* s$ and $\int_{z \in \Et_{n(y)+1}} \alpha( Y_s^{(0)}) K(Y_s^{(0)},dz) = \beta(Y_s^{(0)}),$ we obtain
 \begin{align*}
 & \left|\frac 1h\P(N^\beta_{h}=1,N_{h}=1\big | X_0=y) - \beta(y)\right| \\
 &\leq \int_{z \in \Et_{n(y)+1}} \frac 1h \int_0^h\E\left[ \alpha( Y_s^{(0)}) K(Y_s^{(0)},dz) e^{-\int_0^s \alpha( Y_u^{(0)})du} |1-e^{-\int_0^{h-s} \alpha( Y_v^{(1)})dv}|\big | X_0=y\right] ds\\
 &+ \int_{z \in \Et_{n(y)+1}} \frac 1h \int_0^h\E\left[ \alpha( Y_s^{(0)}) K(Y_s^{(0)},dz)|1- e^{-\int_0^s \alpha( Y_u^{(0)})du} | \big | X_0=y\right] ds\\
 &+ \frac 1h \int_0^h\E\left[| \beta( Y_s^{(0)}) - \beta(y) | \big | X_0=y\right] ds\\
 &\leq \frac{(\alpha^*)^2 h}{2} + \frac{(\alpha^*)^2 h}{2} + \frac 1h \int_0^h\E\left[| \beta( Y_s^{(0)}) - \beta(y) | \big | X_0=y\right] ds,
 \end{align*}
and the continuity of $\beta$ and $(Y_s^{(0)})_{s \geq 0}$ entail the result.

This concludes the proof that $\gamma(X_{t})$ is the $\F_t$-intensity of $N_t^\gamma$, meaning that  $N_t^\gamma-\int_0^t \gamma(X_{s})ds$ is a martingale with respect to $\F_{t}$. The extension of this result for the filtration $\F_{t^+}$ is a consequence of Lemma \ref{LemmaMartAugmented} in Appendix~\ref{sec:appendixB},  since the process $N_t^{\gamma} - \int_0^t \gamma( X_s)ds$ is a right-continuous $\F_t$-martingale satisfying $\sup_{0<\varepsilon <\eta}|N_{t+\varepsilon}^{\gamma} - \int_0^{t+\varepsilon} \gamma( X_s)ds |\leq N_{t+\eta}^{\gamma} + \alpha^*( t+\eta)$ and $N_{t+\eta}^{\gamma} \in L^1$ by Lemma \ref{lemNumber}.

A left-continuous (predictable) version of the intensity is $\gamma(X_{t^-})$. Since for any bounded measurable function $g$, the function $s\mapsto g(X_{s^-})$ is predictable with respect to $\F_{s^+}$, we deduce that $M_t=\int_0^t g(X_{s^-})[dN_s^{\gamma}- \gamma(X_{s})ds]$ is also a $\F_{t^+}$-martingale  (see Section~17.2 in \cite{Bass}), proving the second statement of the lemma. 
Finally,  writing $M_t(g)$ to stress the dependence of $M_t$ on $g$, we have that for any bounded measurable function $g$, $\E(M_t(g))=0$ and by (17.8) in \cite{Bass}, 
\begin{align*}
\E(M_t^2(g))=\E\left(\int_0^t g^2(X_{s^-})dN^\gamma_s\right)
=\E\left(M_t(g^2)\right)+\E\left(\int_0^t g^2(X_{s})\gamma(X_s)ds\right)=\E\left(\int_0^t g^2(X_{s})\gamma(X_s)ds\right).
\end{align*}

 \appendix

 \renewcommand{\thesection}{Appendix S-\Alph{section}}

 \section{Technical lemmas for jump-move processes}\label{sec:appendixA}

Recall that for a filtration $\F = (\F_t)_{t \geq 0}$, we define its right-continuous augmented filtration $(\F_{t^+})$ by $\F_{t^+} = \bigcap_{u>t} \F_u.$ For a stopping time $\tau$ we also define
\[\F_\tau = \{ A \in \mathcal{E}: A \cap \{\tau \leq t\} \in \F_t, \forall t\}\]
and 
\[\F_{\tau^+} = \{ A \in \mathcal{E}: A \cap \{\tau < t\} \in \F_t, \forall t\}.\]

We set  $\tau_n = \sum_{k \geq 0} \frac{k+1}{2^n}\1_{\frac{k}{2^n} < \tau \leq \frac{k+1}{2^n}},$ which defines a non-increasing sequence of stopping times that converges to $\tau.$

\begin{lem}\label{Lemmanumero0}
 
  Let $\F = (\F_t)_{t \geq 0}$ be a complete filtration and $\tau$ a $\F$-stopping time which is almost surely finite and satisfies $\P(\tau=t) = 0$ for every $t.$

  \begin{enumerate}
  
  \item[(i)] We have the equality $\F_\tau = \F_{\tau^+}.$ As a consequence
  \[F_\tau = \bigcap_{n \geq 1} \F_{\tau_n}.\]

  \item[(ii)] For every random variable $Z \in L^1(\P),$ we get the following almost sure and $L^1(\P)$ convergence:
    \[ \E[Z |\mathcal{F}_\tau] = \li \E[Z |\mathcal{F}_{\tau_n}].\]

  \end{enumerate}
  
  \end{lem}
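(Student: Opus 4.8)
The plan is to prove the two set-equalities in (i) first, and then deduce (ii) at no extra cost from the backward (Lévy downward) martingale convergence theorem.

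For $\F_\tau=\F_{\tau^+}$, one inclusion holds for every stopping time and needs no hypothesis: if $A\in\F_\tau$, then $A\cap\{\tau<t\}=\bigcup_{m\geq 1}\bigl(A\cap\{\tau\leq t-1/m\}\bigr)$, and each term lies in $\F_{t-1/m}\subseteq\F_t$, so $A\in\F_{\tau^+}$. The reverse inclusion is where the hypotheses are used. Given $A\in\F_{\tau^+}$, I would split $A\cap\{\tau\leq t\}=\bigl(A\cap\{\tau<t\}\bigr)\cup\bigl(A\cap\{\tau=t\}\bigr)$. The first piece belongs to $\F_t$ by definition of $\F_{\tau^+}$; the second is a subset of $\{\tau=t\}$, which is $\P$-null since $\P(\tau=t)=0$, hence lies in $\F_t$ because the filtration $\F$ is complete. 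Thus $A\cap\{\tau\leq t\}\in\F_t$ for all $t$, i.e. $A\in\F_\tau$, giving $\F_{\tau^+}\subseteq\F_\tau$ and therefore equality.

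For the consequence $\F_\tau=\bigcap_{n}\F_{\tau_n}$ I would argue both inclusions. Since $\tau\leq\tau_{n+1}\leq\tau_n$, monotonicity of the $\sigma$-fields $\F_S\subseteq\F_T$ for stopping times $S\leq T$ gives $\F_\tau\subseteq\F_{\tau_{n+1}}\subseteq\F_{\tau_n}$, whence $\F_\tau\subseteq\bigcap_n\F_{\tau_n}$. Conversely, fix $A\in\bigcap_n\F_{\tau_n}$ and $t>0$. Because $\tau_n\downarrow\tau$ with $\tau_n\geq\tau$, one checks the pointwise identity $\{\tau<t\}=\bigcup_n\{\tau_n<t\}$; and for each fixed $n$, $A\cap\{\tau_n<t\}=\bigcup_{m\geq 1}\bigl(A\cap\{\tau_n\leq t-1/m\}\bigr)\in\F_t$ since $A\in\F_{\tau_n}$. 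Taking the union over $n$ yields $A\cap\{\tau<t\}\in\F_t$, so $A\in\F_{\tau^+}=\F_\tau$ by the first part. Hence $\bigcap_n\F_{\tau_n}\subseteq\F_\tau$, and equality follows.

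Finally, (ii) falls out of (i): the sequence $(\F_{\tau_n})_n$ is \emph{decreasing} (as $\tau_{n+1}\leq\tau_n$), $(\E[Z\mid\F_{\tau_n}])_n$ is a reverse martingale by the tower property, and $\bigcap_n\F_{\tau_n}=\F_\tau$ by (i); the backward martingale convergence theorem then delivers both the almost sure and the $L^1(\P)$ convergence $\li\E[Z\mid\F_{\tau_n}]=\E[Z\mid\F_\tau]$. The only genuine obstacle is the reverse inclusion $\F_{\tau^+}\subseteq\F_\tau$: without the diffuseness $\P(\tau=t)=0$ and the completeness of $\F$ the two $\sigma$-fields generally differ, so these two assumptions must be invoked precisely at that step, while everything else reduces to routine manipulation of the defining families and a citation of the standard convergence theorem.
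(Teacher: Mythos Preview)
Your proof is correct and follows essentially the same strategy as the paper: establish $\F_\tau=\F_{\tau^+}$ by splitting off the null set $\{\tau=t\}$, then obtain $\F_\tau=\bigcap_n\F_{\tau_n}$, and conclude (ii) via backward martingale convergence. The only minor difference is in the reverse inclusion $\bigcap_n\F_{\tau_n}\subseteq\F_\tau$: the paper invokes the standard identity $\F_{\tau^+}=\bigcap_n\F_{\tau_n^+}$ together with $\F_{\tau_n}\subseteq\F_{\tau_n^+}$, whereas you verify directly that any $A\in\bigcap_n\F_{\tau_n}$ satisfies $A\cap\{\tau<t\}\in\F_t$ via $\{\tau<t\}=\bigcup_n\{\tau_n<t\}$; both routes land in $\F_{\tau^+}$ and then use part one.
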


\paragraph{Proof of Lemma \ref{Lemmanumero0}}

\begin{enumerate}

 \item[$(i)$] Recall that $\F_\tau \subset \F_{\tau^+},$ $\F_{\tau_n} \subset \F_{\tau_n^+}$ and $\F_{\tau^+} = \bigcap_{n\geq 1} \F_{\tau_n^+}$ is always true. 
 
 Set $A \in \F_{\tau^+}$ and $t\geq 0.$ By definition of $\F_{\tau^+},$ to prove that $A \in \F_\tau$ it is enough to show that $A \cap \{\tau=t\} \in \F_t,$ which follows from $\P(A \cap \{\tau=t\})=0$ and the fact that $(\F_t)_{t \geq 0}$ is complete. We get then $\F_\tau = \bigcap_{n \geq 1} \F_{\tau_n^+}.$ Since $\F_{\tau_n} \subset \F_{\tau_n^+},$  then $\bigcap_{n \geq 1} \F_{\tau_n} \subset \bigcap_{n \geq 1} \F_{\tau_n^+} = \F_\tau.$ For the other inclusion, for every $n\geq 1$, $\F_\tau \subset \F_{\tau_n}$ since $\tau \leq \tau_n.$ We therefore obtain $\F_\tau \subset \bigcap_{n\geq 1} \F_{\tau_n}.$

 \item[$(ii)$] Let $Z \in L^1(\P).$  For $n\leq 0,$ we set $\mathcal{G}_n=\F_{\tau_{-n}}$ and $Y_n = \E[Z|\mathcal{G}_n].$
  We get then for $n <0,$ $\mathcal{G}_n=\F_{\tau_{-n}} \subset \F_{\tau_{-n-1}} = \mathcal{G}_{n+1}$ and 
\begin{align*}
 \E[Y_{n+1} | \mathcal{G}_n ] &= \E[ \E[ Z | \F_{\tau_{-n-1}}] | \F_{\tau_{-n}} ]\\
 &= \E[ Z|\F_{\tau_{-n}}]\\
 &= Y_n,
\end{align*}
so
$(Y_n)_{n \in -\mathbb{N}}$ is a backward martingale indexed by $-\N.$ More, $\E[ |Y_n|] \leq \E[|Z|]$ so $\sup_{n \in -\N} \E[|Y_n|] <+\infty.$ As a consequence, $(Y_n)_{n \in -\mathbb{N}}$ converges almost surely and in $L^1(\P)$  to a random variable $Z_{\infty}$ when $n \rightarrow -\infty$ (see \cite{legall2013}). Hence $\lim_{n \rightarrow +\infty}\limits \E[ Z|\F_{\tau_n}] =Z_{\infty}$ almost surely and in $L^1(\P).$
Since $\F_{\tau_m} \subset \F_{\tau_n}$ for $m \geq n,$ $Y_m=\E[Z|\F_{\tau_m}]$ is $\F_{\tau_n}$-measurable for all $m \geq n$ so $Z_{\infty}$ is  $\F_{\tau_n}$-measurable for all $n$, meaning that it is $\bigcap_{n \geq 1}\limits \F_{\tau_n}$-measurable, which implies that  $Z_{\infty}$ is $\F_\tau$-measurable by $(i)$.

Now, for $A \in \F_\tau = \bigcap_{n \geq 1}\limits \F_{\tau_n},$
\begin{align*}
 \E[Z_{\infty}1_A ]&= \lim_{n \rightarrow +\infty} \E[\E[Z|\F_{\tau_n}]1_A] \quad \textrm{ by the } L^1(\P) \textrm{-convergence}\\
 &= \E[ Z\1_A] \quad \textrm{ because $A$ is } \F_{\tau_n} \textrm{-measurable},
\end{align*}
that is $Z_{\infty} = \E[Z|\F_\tau]$ almost surely. $\Box$

\end{enumerate}

\begin{lem}\label{LemmaMartAugmented}
 Let $M$ be a right-continuous $\F$-martingale. Assume that for each $t \geq 0,$ we may find $\eta >0$ such that $\sup_{0<\varepsilon <\eta} |M_{t+\varepsilon}| \in L^1.$ Then $M$ is a martingale with respect to the filtration $(\F_{t^+})_{t \geq 0}.$
\end{lem}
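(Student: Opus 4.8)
The plan is to verify directly the two defining properties of a martingale with respect to the filtration $(\F_{t^+})_{t\geq 0}$: adaptedness together with integrability, and the conditional-expectation identity. Adaptedness and integrability are immediate, since each $M_t$ is $\F_t$-measurable, hence $\F_{t^+}$-measurable, and $M_t\in L^1$ because $M$ is an $\F$-martingale. The only substantial point is therefore to show that $\E[M_t\mid\F_{s^+}]=M_s$ for every $0\leq s\leq t$, the case $s=t$ being trivial.

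First I would fix $s<t$ and an arbitrary event $A\in\F_{s^+}$. Since $\F_{s^+}=\bigcap_{u>s}\F_u$, we have $A\in\F_u$ for every $u\in(s,t)$, so the $\F$-martingale property of $M$ gives $\E[M_t\1_A]=\E[M_u\1_A]$ for all such $u$. The idea is then to let $u\downarrow s$ and pass to the limit on both sides. By right-continuity of the paths of $M$, $M_u\to M_s$ almost surely as $u\downarrow s$, so the left-hand side is constant while $M_u\1_A\to M_s\1_A$ almost surely.

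The key step, and the place where the hypothesis enters, is the justification of the interchange of limit and expectation $\E[M_u\1_A]\to\E[M_s\1_A]$. Here I would apply the domination assumption at the point $s$ (that is, with $t$ in the statement replaced by $s$): there exists $\eta>0$ such that $G:=\sup_{0<\varepsilon<\eta}|M_{s+\varepsilon}|\in L^1$. For $u=s+\varepsilon$ with $0<\varepsilon<\eta$ we then have $|M_u\1_A|\leq G$, so dominated convergence yields $\E[M_u\1_A]\to\E[M_s\1_A]$. Combined with the identity above, this gives $\E[M_t\1_A]=\E[M_s\1_A]$ for every $A\in\F_{s^+}$. Since $M_s$ is $\F_s$-measurable, hence $\F_{s^+}$-measurable and integrable, this characterisation of the conditional expectation forces $\E[M_t\mid\F_{s^+}]=M_s$, which is the desired identity.

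The main obstacle is precisely this limit interchange: the almost sure convergence $M_u\to M_s$ alone does not allow passage to the limit under the expectation, and it is exactly to remedy this that the local $L^1$-domination of $M$ to the right of $s$ is assumed. An alternative route would replace dominated convergence by L\'evy's downward convergence theorem applied to the decreasing sequence of $\sigma$-fields $\F_{u_n}$ along a sequence $u_n\downarrow s$, using $\bigcap_n\F_{u_n}=\F_{s^+}$ (as in Lemma~\ref{Lemmanumero0}); this identifies $\lim_n\E[M_t\mid\F_{u_n}]=\E[M_t\mid\F_{s^+}]$, while the martingale property gives $\E[M_t\mid\F_{u_n}]=M_{u_n}\to M_s$, leading to the same conclusion. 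I would nevertheless prefer the dominated-convergence argument, since it uses the stated hypothesis verbatim and keeps the proof self-contained.
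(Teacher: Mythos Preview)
Your proof is correct and follows essentially the same approach as the paper: fix $A$ in the right-augmented $\sigma$-field, use the $\F$-martingale property at times slightly to the right, and pass to the limit by dominated convergence using the stated $L^1$-domination hypothesis. The only cosmetic difference is that the paper writes the martingale identity as $\E[M_s\mid\F_{t^+}]=M_t$ with $s>t$, whereas you swap the roles of $s$ and $t$.
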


\paragraph{Proof of Lemma \ref{LemmaMartAugmented}}

For every $t\geq 0,$ $M_t \in L^1$ and $M_t$ is $\F_t$-adapted so it is $\F_{t^+}$-adapted. Let $s>t$ and $A \in \F_{t^+}.$ By definition of $\F_{t^+},$ for all $\varepsilon >0,$ $A \in \F_{t+\varepsilon}$ and for $\varepsilon <s-t,$
\begin{align*}
 \E[M_s\1_A] &= \E[ \E[M_s | \F_{t+\varepsilon}]\1_A]\\
 &= \E[M_{t+\varepsilon}\1_A]
\end{align*}
because $M$ is a $\F$-martingale. Then
\begin{align*}
 \E[M_s\1_A] &= \lim_{\varepsilon \rightarrow 0} \E[M_{t+\varepsilon}\1_A]\\
 &= \E[M_t\1_A]
\end{align*}
by dominated convergence. This yields $\E[M_s|\F_{t^+}] = M_t$ which concludes the proof. $\Box$

\section{Coupling  lemmas for birth-death-move processes}\label{sec:appendixB}

 Let $(X^{(1)}_t)_{t \geq 0}$ and $(X^{(2)}_t)_{t \geq 0}$ be two independent copies of the birth-death-move process $X$, with respective initial distributions $\phi_1$ and $\phi_2$, and jumping times $(T_j^{(1)})_{j \geq 0}$ and $(T_j^{(2)})_{j \geq 0}.$ We shall write $(\F_t^{(k)})_{t \geq 0}$ for the completed natural filtration of $X^{(k)}$, $k=1,2$, and  $\mathcal{G}_t=\sigma(\F_t^{(1)},\F_t^{(2)}) = \sigma(X_u^{(1)}, X_v^{(2)}, u \leq t, v \leq t ) \cup \mathcal{N}.$ Consider 
\[\tau = \inf \{ t >0 : (X_t^{(1)},X_t^{(2)}) = (\varnothing,\varnothing) \}.\]

\begin{lem}\label{Lemmaexpboundtau}
 Assume \eqref{existence}. There exist $a>0$ and $c>0$ (which do not depend on $\phi_1$ and $\phi_2$) such that for all $t>0,$
 \[\P_{\phi_1\times \phi_2}(\tau >t) \leq a e^{-ct}.\]
In particular, $\tau <+\infty$ $\P_{\phi_1\times \phi_2}$-almost surely.
 \end{lem}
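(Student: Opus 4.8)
The plan is to reduce the exponential tail bound to a single uniform ``hitting $(\varnothing,\varnothing)$ within one window'' estimate, and then iterate it using the Markov property.

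First I would establish the following uniform claim: there exist $s_0>0$ and $p_0\in(0,1)$, depending only on $n^*$ and on the bounds $\alpha_*,\alpha^*$ and $\delta_*:=\min_{1\le n\le n^*}\delta_n$ (the latter positive by \eqref{existence}), such that for every pair of initial configurations $x_1,x_2\in E$,
\[\P_{x_1\times x_2}(\tau\le s_0)\ge p_0.\]
Since $X^{(1)}$ and $X^{(2)}$ are independent, it suffices to force each process separately to sit at $\varnothing$ throughout the common sub-interval $[s_0/2,s_0]$: the intersection of these two independent events entails $\tau\le s_0$. Writing $G_x$ for the event that the process started at $x$ is empty on all of $[s_0/2,s_0]$, I would then prove $\inf_{x\in E}\P_x(G_x)\ge p_1>0$, so that $\P_{x_1\times x_2}(\tau\le s_0)\ge \P_{x_1}(G_{x_1})\P_{x_2}(G_{x_2})\ge p_1^2=:p_0$, and integrate against $\phi_1,\phi_2$.

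To bound $\P_x(G_x)$ uniformly I would track only the cardinality and exhibit one favourable scenario: starting from $n(x)\le n^*$ individuals, the first $n(x)$ jumps are all deaths (driving the process to $\varnothing$), each of these inter-jump times is at most $s_0/(2n^*)$ (so $\varnothing$ is reached before $s_0/2$), and then no further jump occurs before $s_0$. Each ingredient is bounded uniformly: at any configuration $z$ of cardinality $\ge 1$ the next jump is a death with probability $\delta(z)/\alpha(z)\ge \delta_*/\alpha^*=:q_0>0$; an inter-jump time is at most $s_0/(2n^*)$ with probability at least $1-e^{-\alpha_* s_0/(2n^*)}=:r_0>0$ because the total rate is $\ge\alpha_*$; and once at $\varnothing$ (where there are no individuals to move and $\delta(\varnothing)=0$, while $\alpha(\varnothing)\le\alpha^*$) no jump occurs for a duration $\le s_0$ with probability $\ge e^{-\alpha^* s_0}=:r_1>0$. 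Revealing the trajectory jump by jump and using that, by the iterative construction of the process (equivalently the Markov property at the jump times $T_j$), the continuation after each jump is a fresh birth-death-move process started at $X_{T_j}$, these factors multiply to give $\P_x(G_x)\ge (q_0r_0)^{n(x)}r_1\ge (q_0r_0)^{n^*}r_1=:p_1$, uniformly in $x$ since $q_0r_0\in(0,1)$ and $n(x)\le n^*$.

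Finally I would promote the one-window estimate to the exponential tail by iteration. On $\{\tau>ks_0\}$, the Markov property of the (independent, hence jointly Markov) pair $(X^{(1)},X^{(2)})$ applied at time $ks_0$ gives $\P\bigl(\tau>(k+1)s_0\mid\mathcal G_{ks_0}\bigr)\le 1-p_0$, so by induction $\P_{\phi_1\times\phi_2}(\tau>ks_0)\le(1-p_0)^k$; for general $t$, taking $k=\lfloor t/s_0\rfloor$ yields $\P_{\phi_1\times\phi_2}(\tau>t)\le a\,e^{-ct}$ with $a=(1-p_0)^{-1}$ and $c=-s_0^{-1}\log(1-p_0)>0$, both independent of $\phi_1,\phi_2$, and $\tau<\infty$ almost surely follows by letting $t\to\infty$. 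I expect the main obstacle to be the first claim: making the lower bound on $\P_x(G_x)$ genuinely independent of the starting configuration (not merely of its cardinality) and correctly decoupling, at each jump, the ``death'' decision from the waiting-time control — which is what forces the careful sequential conditioning on $(Y^{(j)},\tau_{j+1})$ rather than a naive product bound.
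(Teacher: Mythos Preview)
Your proposal is correct and follows essentially the same route as the paper: both arguments establish a uniform lower bound $\P_{x_1\times x_2}(\tau\le t_0)\ge \rho>0$ by forcing each independent copy to undergo $n(x)$ consecutive deaths within a fraction of the window and then remain at $\varnothing$, using the bounds $\delta/\alpha\ge\delta_*/\alpha^*$, $\alpha\ge\alpha_*$, $\alpha\le\alpha^*$ together with $n(x)\le n^*$, and then iterate via the Markov property of the pair. The only differences are cosmetic --- you make the common empty interval $[s_0/2,s_0]$ explicit and spell out the geometric iteration, whereas the paper just requires both copies to sit at $\varnothing$ through time $t_0$ and defers the iteration to a citation --- and your awareness of the need to condition sequentially on $(Y^{(j)},\tau_{j+1})$ to justify the product bound is exactly the point where the paper's own computation is slightly informal.
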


 \paragraph{Proof of Lemma \ref{Lemmaexpboundtau}}
 
 Let us denote by $(Y_{k,u}^{(n)})_{u\geq 0}$, for $k=1,2$, the Markov process driving the motion of $X^{(k)}_t$ in $E_n$ and by $(T^{(k)}_j)_{j\geq 0}$ the jump times of $X^{(k)}_t$. 
Let $t_0>0$, we have
\begin{multline}\label{coupling1}
\P_{\phi_1\times\phi_2}(\tau\leq t_0|X_0^{(1)},X_0^{(2)})\\=\1_{\{(X_0^{(1)},X_0^{(2)})=(\varnothing,\varnothing)\}} +\sum_{(n_1,n_2)\neq (0,0)} \P_{\phi_1\times\phi_2}(\tau\leq t_0|X_0^{(1)},X_0^{(2)})\1_{\{n(X_0^{(1)})=n_1,n(X_0^{(2)})=n_2\}}.
\end{multline}
On the event $\{n(X_0^{(1)})=n_1,n(X_0^{(2)})=n_2\}$ with $n_1,n_2\geq 1$,
\begin{align}\label{couplingtime}
\P_{\phi_1\times\phi_2}(\tau\leq t_0|X_0^{(1)},X_0^{(2)})\geq \P_{\phi_1\times\phi_2}\bigg(&\bigcap_{j=1}^{n_1}\{ T^{(1)}_j-T^{(1)}_{j-1}\leq \frac{t_0}{n_1}, X^{(1)}_{T^{(1)}_j}\in E_{n_1-j}\},\   T^{(1)}_{n_1+1}>t_0,\nonumber\\
& \bigcap_{j=1}^{n_2}\{T^{(2)}_j-T^{(2)}_{j-1}\leq \frac{t_0}{n_2}, X^{(2)}_{T^{(2)}_j}\in E_{n_2-j}\},\   T^{(2)}_{n_2+1}>t_0|X_0^{(1)},X_0^{(2)}\bigg)\nonumber\\
\geq \P_{\phi_1}\bigg(\bigcap_{j=1}^{n_1}&\{ T^{(1)}_j-T^{(1)}_{j-1}\leq \frac{t_0}{n_1}, X^{(1)}_{T^{(1)}_j}\in E_{n_1-j}\},\   T^{(1)}_{n_1+1}- T^{(1)}_{n_1}>t_0| X_0^{(1)}\bigg)\nonumber\\
\times \P_{\phi_2}\bigg(\bigcap_{j=1}^{n_2}&\{T^{(2)}_j-T^{(2)}_{j-1}\leq \frac{t_0}{n_2}, X^{(2)}_{T^{(2)}_j}\in E_{n_2-j}\},\   T^{(2)}_{n_2+1}-T^{(2)}_{n_2}>t_0|X_0^{(2)}\bigg),
\end{align}
where we have used the independence between $\phi_1$ and $\phi_2$. 
Each term of this product is treated similarly and we only detail the first one. On the event $n(X_0^{(1)})=n_1$ with $n_1\geq 1$,
\begin{align}\label{alldead}
&\P_{\phi_1}\bigg(\bigcap_{j=1}^{n_1}\{ T^{(1)}_j-T^{(1)}_{j-1}\leq \frac{t_0}{n_1}, X^{(1)}_{T^{(1)}_j}\in E_{n_1-j}\},\   T^{(1)}_{n_1+1}- T^{(1)}_{n_1}>t_0| X_0^{(1)}\bigg)\nonumber\\
=&\P\left(T^{(1)}_{n_1+1}- T^{(1)}_{n_1}>t_0|X^{(1)}_{T^{(1)}_{n_1}}=\varnothing\right) \E_{\phi_1}\left[ \P_{\phi_1}\left(T^{(1)}_1\leq \frac{t_0}{n_1}, X^{(1)}_{T^{(1)}_1}\in E_{n_1-1} |X^{(1)}_{0}, (Y_{1,u}^{(n_1)})_{u\geq 0}\right)| X_0^{(1)}\right]
\nonumber\\
&\prod_{j=2}^{n_1}\E_{\phi_1}\left[ \P_{\phi_1}\left(T^{(1)}_j-T^{(1)}_{j-1}\leq \frac{t_0}{n_1}, X^{(1)}_{T^{(1)}_j}\in E_{n_1-j} |X^{(1)}_{T^{(1)}_{j-1}}, (Y_{1,u}^{(n_1-j+1)})_{u\geq 0}\right)\bigg | A_{j-1}, X_0^{(1)} \right] 
\end{align}
where  for $j\geq1$,  $A_j=\bigcap_{i=1}^{j}\{ T^{(1)}_i-T^{(1)}_{i-1}\leq \frac{t_0}{n_1}, X^{(1)}_{T^{(1)}_i}\in E_{n_1-i}\}$. For any $j\geq 2$ and any $1\leq n\leq n^*$, on the event $n(X^{(1)}_{T^{(1)}_{j-1}})=n$, by definition of the process $(X_t)_{t\geq 0}$,
\begin{align*}
\P_{\phi_1}\bigg(T^{(1)}_j-T^{(1)}_{j-1}\leq \frac{t_0}{n_1}, X^{(1)}_{T^{(1)}_j}\in E_{n-1} & |X^{(1)}_{T^{(1)}_{j-1}}, (Y_{1,u}^{(n)})_{u\geq 0}\bigg) \\
&= \P_{\phi_1}\bigg(T^{(1)}_1\leq \frac{t_0}{n_1}, X^{(1)}_{T^{(1)}_1}\in E_{n-1} |X^{(1)}_{0}, (Y_{1,u}^{(n)})_{u\geq 0}\bigg)\\
&= \frac{\delta(Y_{1,T_1^{(1)}}^{(n)})}{\alpha(Y_{1,T_1^{(1)}}^{(n)})}(1-e^{-\int_0^{t_0/n_1}\alpha(Y_{1,u}^{(n)})du})\\
&\geq \frac{\delta_*}{\alpha^*}(1-e^{-\alpha_* t_0/n_1})\\
&\geq \frac{\delta_*}{\alpha^*}(1-e^{-\alpha_* t_0/n^*}),
\end{align*}
where $\delta_*=\min_{1\leq n\leq n^*} \delta_n.$ This inequality is uniform in $n$. Applying it in \eqref{alldead} along with the inequality $$\P\left(T^{(1)}_{n_1+1}- T^{(1)}_{n_1}>t_0|X^{(1)}_{T^{(1)}_{n_1}}=\varnothing\right)=e^{-\alpha(\varnothing)t_0}\geq e^{-\alpha^*t_0},$$ we get on the event $n(X_0^{(1)})=n_1$ with $n_1\geq 1$ that
\begin{align*}
\P_{\phi_1}\bigg(\bigcap_{j=1}^{n_1}\{ T^{(1)}_j-T^{(1)}_{j-1}\leq \frac{t_0}{n_1}, X^{(1)}_{T^{(1)}_j}\in E_{n_1-j}\},\   T^{(1)}_{n_1+1}- T^{(1)}_{n_1}>t_0| X_0^{(1)}\bigg)& \geq e^{-\alpha^*t_0} (\frac{\delta_*}{\alpha^*}(1-e^{-\alpha_* t_0/n^*}))^{n_1}\\
&\geq e^{-\alpha^*t_0} (\frac{\delta_*}{\alpha^*}(1-e^{-\alpha_* t_0/n^*}))^{n^*}.
\end{align*}
 Coming back to \eqref{couplingtime}, we deduce that on the event $\{n(X_0^{(1)})=n_1,n(X_0^{(2)})=n_2\}$ with $n_1,n_2\geq 1$
\[\P_{\phi_1\times\phi_2}(\tau\leq t_0|X_0^{(1)},X_0^{(2)})\geq \rho_{t_0}\]
for some $\rho_{t_0}>0$ that depends on $t_0$ but not on $n_1$, $n_2$,  $X_0^{(1)}$ and $X_0^{(2)}$. By a similar argument we obtain the same result on the event $\{n(X_0^{(1)})=n_1,n(X_0^{(2)})=n_2\}$ with $n_1\geq 1,n_2=0$ or $n_1=0,n_2\geq 1$. We conclude from \eqref{coupling1} that for any $t_0>0$,
\[\P_{\phi_1\times\phi_2}(\tau>t_0)\leq 1-\rho_{t_0}\]
where $\rho_{t_0}$ does not depend on $\phi_1$ and $\phi_2$. By a standard argument, see for instance \cite{lotwick1981}, we then deduce that 
\[\P_{\phi_1\times\phi_2}(\tau>t)\leq a e^{-ct}\]
for some $a>0$ and $c>0$ that do not depend on $\phi_1$ and $\phi_2$. $\Box$

\bigskip

Let us now consider for $k=1,2,$
\[ \tau_+^{(k)}=\inf \{ t >\tau : X_t^{(k)} \neq \varnothing \}.\]

 \begin{lem}\label{Lemmanumero1}
  For every positive $t$,
  \begin{align*}
   \P_{\phi_1 \times \phi_2} (\tau_+^{(1)} > t+\tau | \mathcal{G}_{\tau} ) &= \P( T_1^{(1)} > t | X_0^{(1)} = \varnothing)\\
   \P_{\phi_1 \times \phi_2} (\tau_+^{(2)} > t+\tau | \mathcal{G}_{\tau} ) &= \P( T_1^{(2)} > t | X_0^{(2)} = \varnothing)
  \end{align*}
thereby we have the following equalities in distribution, denoted by $\overset{\mathcal L}{=}$,
 \[ \left[(\tau_+^{(1)} - \tau) | \mathcal{G}_{\tau}\right] \overset{\mathcal L}{=} \left[(\tau_+^{(2)} - \tau) | \mathcal{G}_{\tau} \right]\overset{\mathcal L}{=} \left[T_1^{(1)} | (X_0^{(1)}=\varnothing)\right] \overset{\mathcal L}{=}  \left[T_1^{(2)} | (X_0^{(2)} = \varnothing)\right].\]
 \end{lem}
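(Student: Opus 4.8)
The plan is to reduce the statement to the elementary observation that, started from $\varnothing$, the process has an exponentially distributed first jump time, and then to transfer the Markov property from deterministic times to the random time $\tau$ by the discretisation machinery of Lemma~\ref{Lemmanumero0}.

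First I would compute the right-hand sides explicitly. Since $E_0=\{\varnothing\}$ is a single point and $\P_x(Y_t\in E_n,\ \forall t)=\1_{x\in E_n}$, the motion started at $\varnothing$ is frozen, $Y_u=\varnothing$ for all $u$; together with $\delta(\varnothing)=0$ (so $\alpha(\varnothing)=\beta(\varnothing)$), formula~\eqref{cdf tau} gives
\[\P(T_1^{(k)}>t\mid X_0^{(k)}=\varnothing)=\exp\Big(-\int_0^t\alpha(\varnothing)\,du\Big)=e^{-\alpha(\varnothing)t},\qquad k=1,2,\]
so $T_1^{(k)}\mid\{X_0^{(k)}=\varnothing\}$ is $\mathrm{Exp}(\alpha(\varnothing))$. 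Moreover, from $\varnothing$ the only admissible jump is a birth, hence the first exit time from $\varnothing$ coincides with the first jump time.

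Next I would set up the stopping time. By Lemma~\ref{Lemmaexpboundtau}, $\tau<\infty$ $\P_{\phi_1\times\phi_2}$-a.s.; by right-continuity of the paths and the fact that the pair enters $(\varnothing,\varnothing)$ through a jump and then remains there for a positive time, $\tau$ is a $(\mathcal{G}_t)$-stopping time with $X_\tau^{(1)}=X_\tau^{(2)}=\varnothing$, and $\P(\tau=t)=0$ for every $t$ (a jump at a fixed time has probability zero). The hypotheses of Lemma~\ref{Lemmanumero0} therefore hold for $\F=\mathcal{G}$, and with the dyadic times $\tau_n\downarrow\tau$, part~(ii) yields, for fixed $t>0$,
\[\P_{\phi_1\times\phi_2}(\tau_+^{(1)}>\tau+t\mid\mathcal{G}_\tau)=\lim_{n\to\infty}\E\big[\1_{\tau_+^{(1)}>\tau+t}\mid\mathcal{G}_{\tau_n}\big]\quad\text{a.s.}\]
Since $\tau_n$ takes only the values $(k+1)2^{-n}$, conditioning on $\mathcal{G}_{\tau_n}$ reduces, on each $\{\tau_n=(k+1)2^{-n}\}$, to conditioning on $\mathcal{G}_{(k+1)2^{-n}}$, where the ordinary Markov property (Theorem~\ref{Markov}, applied to the independent pair) is available. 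On the $\mathcal{G}_{\tau_n}$-measurable event $\{X_{\tau_n}^{(1)}=\varnothing\}$ coordinate~$1$ is again at $\varnothing$, so by the memoryless form of its first-jump law and the decomposition $\tau_+^{(1)}-\tau=(\tau_+^{(1)}-\tau_n)+(\tau_n-\tau)$,
\[\E\big[\1_{\tau_+^{(1)}>\tau+t}\mid\mathcal{G}_{\tau_n}\big]\,\1_{X_{\tau_n}^{(1)}=\varnothing}=e^{-\alpha(\varnothing)(t-(\tau_n-\tau))_+}\,\1_{X_{\tau_n}^{(1)}=\varnothing}.\]
On the complement $\{X_{\tau_n}^{(1)}\neq\varnothing\}$ coordinate~$1$ has already left $\varnothing$ in $(\tau,\tau_n]$, so $\tau_+^{(1)}-\tau\le\tau_n-\tau\to0$, whence $\1_{\tau_+^{(1)}>\tau+t}=0$ eventually and $\P(X_{\tau_n}^{(1)}\neq\varnothing\mid\mathcal{G}_\tau)\to0$. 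Letting $n\to\infty$ and using $\tau_n-\tau\downarrow0$ gives $e^{-\alpha(\varnothing)t}$, which is exactly $\P(T_1^{(1)}>t\mid X_0^{(1)}=\varnothing)$; the identity for coordinate~$2$ is obtained identically.

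Finally, the two conditional probabilities equalling the deterministic function $t\mapsto e^{-\alpha(\varnothing)t}$ shows that, given $\mathcal{G}_\tau$, both $\tau_+^{(1)}-\tau$ and $\tau_+^{(2)}-\tau$ are $\mathrm{Exp}(\alpha(\varnothing))$-distributed, the common law of $T_1^{(1)}\mid\{X_0^{(1)}=\varnothing\}$ and $T_1^{(2)}\mid\{X_0^{(2)}=\varnothing\}$; the four stated equalities in law follow at once. I expect the main obstacle to be precisely the transfer of the Markov property to the random time $\tau$: only the ordinary Markov property is granted by Theorem~\ref{Markov}, so one must route through Lemma~\ref{Lemmanumero0}, and the one genuinely delicate point is to control, in the limit, the vanishing event that a coordinate exits $\varnothing$ during $(\tau,\tau_n]$ — which is handled by the memoryless shift $e^{-\alpha(\varnothing)(t-(\tau_n-\tau))_+}\to e^{-\alpha(\varnothing)t}$ together with $\P(X_{\tau_n}^{(1)}\neq\varnothing\mid\mathcal{G}_\tau)\to0$.
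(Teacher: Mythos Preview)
Your argument is correct and follows a genuinely different route from the paper. The paper does not discretise $\tau$ directly; instead it decomposes according to which jump time realises $\tau$, writing
\[
\P_{\phi_1\times\phi_2}(\tau_+^{(1)}>t+\tau\mid\mathcal G_\tau)=\sum_{j\ge1}\big(\cdots\big)\1_{\tau=T_j^{(1)}}+\big(\cdots\big)\1_{\tau=T_j^{(2)}}.
\]
On $\{\tau=T_j^{(1)}\}$ the claim is immediate from the construction (the process restarts at $X^{(1)}_{T_j^{(1)}}=\varnothing$). On $\{\tau=T_j^{(2)}\}$ the paper further localises by the index $k$ with $T_{k-1}^{(1)}\le\tau<T_k^{(1)}$, applies Lemma~\ref{Lemmanumero0} to $T_j^{(2)}$ (not to $\tau$), reduces to deterministic dyadic times, uses independence of the two coordinates to drop $\mathcal F^{(2)}$, and finally invokes the memoryless property of the $\mathrm{Exp}(\alpha(\varnothing))$ holding time. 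Your approach collapses this case distinction by applying Lemma~\ref{Lemmanumero0} directly to $\tau$; this is shorter and more uniform, at the cost of having to control the small event where a coordinate has already left $\varnothing$ by time $\tau_n$.

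One imprecision worth tightening: on $\{X_{\tau_n}^{(1)}=\varnothing\}$ you write the conditional expectation as $e^{-\alpha(\varnothing)(t-(\tau_n-\tau))_+}$, implicitly using $\tau_+^{(1)}>\tau_n$. But $\{X_{\tau_n}^{(1)}=\varnothing\}$ does not exclude the event that $X^{(1)}$ jumped out of and back into $\varnothing$ during $(\tau,\tau_n]$; on that sub-event $\tau_+^{(1)}\le\tau_n$ and your displayed identity fails. The clean split is by $\{\tau_+^{(1)}>\tau_n\}$ versus $\{\tau_+^{(1)}\le\tau_n\}$ (both $\mathcal G_{\tau_n}$-measurable): on the first set your formula is exact via the Markov property at the dyadic time, and on the second $\1_{\tau_+^{(1)}>\tau+t}=0$ for $n$ large enough that $\tau_n-\tau<t$. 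Since $\tau_+^{(1)}>\tau$ a.s.\ and $\tau_n\downarrow\tau$, the second set is eventually empty a.s., and the limit $e^{-\alpha(\varnothing)t}$ follows as you claim.
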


\paragraph{Proof of Lemma \ref{Lemmanumero1}}
 We only prove the first equality since the second one is obtained similarly. Given the two first identities, the last statement of the lemma is straightforward. We known by Lemma~\ref{Lemmaexpboundtau} that $\tau <+\infty$, so for any $t>0$
 \begin{equation}\label{ad_eg1}
  \P_{\phi_1 \times \phi_2}( \tau^{(1)}_+ > t+\tau |\mathcal{G}_{\tau}) = \sum_{j\geq 1}  \P_{\phi_1 \times \phi_2}( \tau^{(1)}_+ > t+\tau |\mathcal{G}_{\tau}) \1_{\tau=T_j^{(1)}} +  \P_{\phi_1 \times \phi_2}( \tau^{(1)}_+ > t+\tau |\mathcal{G}_{\tau}) \1_{\tau=T_j^{(2)}}.
  \end{equation}
On one hand, for any $j\geq 1$, since $\{\tau=T_j^{(1)}\} \in \mathcal{G}_{\tau} \cap  \mathcal{G}_{T_{j}^{(1)}}$,
 \begin{align*}
  \P_{\phi_1 \times \phi_2}( \tau^{(1)}_+ > t+\tau |\mathcal{G}_{\tau}) \1_{\tau=T_j^{(1)}}&= 
    \P_{\phi_1  \times \phi_2}( T_{j+1}^{(1)} >t+T_j^{(1)} |\mathcal{G}_{T_j^{(1)}}) \1_{\tau=T_j^{(1)}}\nonumber\\
    &=  \P_{\phi_1} (T_{1}^{(1)} >t | X_0^{(1)} = X^{(1)}_{T_j^{(1)}})\1_{\tau=T_j^{(1)}}  \end{align*}
 by construction of the process $X^{(1)}$, and so 
  \begin{align}\label{ad_eg2}
 \P_{\phi_1 \times \phi_2}( \tau^{(1)}_+ > t+\tau |\mathcal{G}_{\tau}) \1_{\tau=T_j^{(1)}}&= \P(T_{1}^{(1)} >t | X_0^{(1)} = \varnothing )\1_{\tau=T_j^{(1)}}.
 \end{align}
On the other hand
\begin{align}\label{ap_lim3}
 \P_{\phi_1 \times \phi_2}( \tau^{(1)}_+ > t+\tau |\mathcal{G}_{\tau}) \1_{\tau=T_j^{(2)}}&= \sum_{k \geq 1} 
  \P_{\phi_1 \times \phi_2}( \tau^{(1)}_+ > t+\tau |\mathcal{G}_{\tau}) \1_{\tau=T_j^{(2)}}  \1_{T_{k-1}^{(1)} \leq \tau < T_{k}^{(1)}}\nonumber\\
 &= \sum_{k \geq 1}  \P_{\phi_1 \times \phi_2}( T_k^{(1)} >t+T^{(2)}_j |\mathcal{G}_{T^{(2)}_j})\1_{\tau=T_j^{(2)}} \1_{T_{k-1}^{(1)} \leq \tau < T_{k}^{(1)}}.
 \end{align}
 
 Consider now $(T^{(2)}_{n,j})_{n\geq 1}$ a sequence of stopping times, decreasing to $T^{(2)}_j$ and taking values in $\{\frac{k}{2^n}, k\in\N\}$. We get from Lemma \ref{Lemmanumero0} that
 \begin{equation}\label{ap_lim1}
 \P_{\phi_1 \times \phi_2}( T_k^{(1)} >t+T^{(2)}_j |\mathcal{G}_{T^{(2)}_j})= \li \P_{\phi_1 \times \phi_2}( T_k^{(1)} >t+T^{(2)}_j |\mathcal{G}_{T^{(2)}_{n,j}})\end{equation}
 almost surely. 
Letting $\epsilon_n=\P_{\phi_1 \times \phi_2} (t+T^{(2)}_{j}<  T_k^{(1)} \leq t+T^{(2)}_{n,j} | \mathcal{G}_{T^{(2)}_{n,j}})$, we have \begin{align}\label{ap_lim2}
 \P_{\phi_1 \times \phi_2}( T_k^{(1)} >t+T^{(2)}_j |\mathcal{G}_{T^{(2)}_{n,j}})&= \P_{\phi_1 \times \phi_2}(T_k^{(1)} >t+T^{(2)}_{n,j} | \mathcal{G}_{T^{(2)}_{n,j}}) + \epsilon_n \nonumber\\
 &= \sum_{l \geq 0} \P_{\phi_1 \times \phi_2}(T_k^{(1)} >t+\frac{l}{2^n} | \mathcal{G}_{\frac{l}{2^n}}) \1_{T^{(2)}_{n,j}=\frac{l}{2^n}} + \epsilon_n\nonumber \\
  &= \sum_{l \geq 0} \P_{\phi_1}(T_k^{(1)} >t+\frac{l}{2^n} | \mathcal{F}^{(1)}_{\frac{l}{2^n}})\1_{T^{(2)}_{n,j}=\frac{l}{2^n}} + \epsilon_n.
 \end{align}
 Here we have used the fact that the event $B=\{T_k^{(1)} >t+\frac{l}{2^n}\}$ being independent of $\mathcal{F}^{(2)}_{\frac{l}{2^n}}$, and $\mathcal{F}^{(2)}_{\frac{l}{2^n}}$ being independent of $\mathcal{F}^{(1)}_{\frac{l}{2^n}}$, $\P_{\phi_1 \times \phi_2}(B|\mathcal{G}_{\frac{l}{2^n}})=\P_{\phi_1 \times \phi_2}(B|\mathcal{F}^{(1)}_{\frac{l}{2^n}})$.
Now, on the event $\{\tau=T_j^{(2)}\} \cap \{T_{k-1}^{(1)} \leq \tau < T_{k}^{(1)}\}$, since $l/2^n>T_{k-1}^{(1)}$ and $X_{T_{k-1}^{(1)}}^{(1)}=\varnothing$,
 \[ \P_{\phi_1}(T_k^{(1)} >t+\frac{l}{2^n} | \mathcal{F}^{(1)}_{\frac{l}{2^n}}) = \P_{\phi_1}(T^{(1)}_k - T^{(1)}_{k-1} > t + \frac{l}{2^n} - T^{(1)}_{k-1} |\mathcal{F}^{(1)}_{\frac{l}{2^n}}) = \P(T^{(1)}_1 >t | X_0^{(1)} = \varnothing)\1_{T^{(1)}_{k} > t\frac{l}{2^n}},\]
where the last equality comes from the fact that given $X^{(1)}_{T^{(1)}_{k-1}}=\varnothing,$ the distribution of $T_k^{(1)} - T^{(1)}_{k-1}$ is an exponential distribution with rate $\alpha(\varnothing)$ having the memoryless property. 
We get then
\[\li \sum_{l \leq 0} \P_{\phi_1}(T_k^{(1)} >t+\frac{l}{2^n} | \mathcal{F}^{(1)}_{\frac{l}{2^n}}) \1_{T^{(2)}_{n,j}=\frac{l}{2^n}} \1_{\tau=T_j^{(2)}} \1_{T_{k-1}^{(1)} \leq \tau < T_{k}^{(1)}} =\P(T^{(1)}_1 >t | X_0^{(1)} = \varnothing) \1_{\tau=T_j^{(2)}} \1_{T_{k-1}^{(1)} \leq \tau < T_{k}^{(1)}} .\]
On the other hand,  $\E[\varep_n] = \P_{\phi_1 \times \phi_2}( t+T^{(2)}_j < T^{(1)}_k \leq t + T^{(2)}_{n,j})$ tends to 0 by the dominated convergence theorem, so there exists a subsequent $\varphi(n)$ such that $\varep_{\varphi(n)}\to 0$ almost surely. 
From \eqref{ap_lim1}, taking the limit in \eqref{ap_lim2} for this subsequence, we deduce that 
\[  \P_{\phi_1 \times \phi_2}( T_k^{(1)} >t+T^{(2)}_j |\mathcal{G}_{T^{(2)}_j})\1_{\tau=T_j^{(2)}} \1_{T_{k-1}^{(1)} \leq \tau < T_{k}^{(1)}} = \P(T^1_{(1)} >t | X_0^{(1)} = \varnothing) \1_{\tau=T_j^{(2)}} \1_{T_{k-1}^{(1)} \leq \tau < T_{k}^{(1)}}\]
which in view of \eqref{ap_lim3} gives
\[ \P_{\phi_1 \times \phi_2}( \tau^{(1)}_+ > t+\tau|\mathcal{G}_{\tau}) \1_{\tau=T_j^{(2)}} = \P( T_1^{(1)} >t | X_0^{(1)}=\varnothing) \1_{\tau=T_j^{(2)}}.\]
Combining \eqref{ad_eg1}, \eqref{ad_eg2} and this last result concludes the proof. $\Box$

 \begin{lem}\label{Lemmanumero2}
  For every bounded measurable function $g$ and every $t\geq 0$,
     \begin{align*}
    \E_{\phi_1 \times \phi_2} [g(X^{(1)}_{t+\tau})|\mathcal{G}_{\tau}] &= \E[g(X^{(1)}_t) | X_0^{(1)}=\varnothing]\\
    \E_{\phi_1 \times \phi_2} [g(X^{(2)}_{t+\tau})|\mathcal{G}_{\tau}] &= \E[g(X^{(2)}_t) | X_0^{(2)}=\varnothing],
   \end{align*}
thereby 
\[ \E_{\phi_1 \times \phi_2} [g(X^{(1)}_{t+\tau})|\mathcal{G}_{\tau}] = \E_{\phi_1 \times \phi_2} [g(X^{(2)}_{t+\tau})|\mathcal{G}_{\tau}].\]
 In particular, for every $A \in \mathcal{E}$ and every $t \geq 0,$
 \[\P_{\phi_1 \times \phi_2}(X_t^{(1)} \in A, \tau \leq t) = \P_{\phi_1 \times \phi_2}(X_t^{(2)} \in A, \tau \leq t).\]
 \end{lem}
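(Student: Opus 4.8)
The plan is to establish the two displayed conditional identities separately; by the symmetric roles of $X^{(1)}$ and $X^{(2)}$ it suffices to prove the first, after which the equality of the two conditional expectations is automatic, since both sides equal $\E[g(X_t)\mid X_0=\varnothing]$ (the law being common to the two independent copies). To prove $\E_{\phi_1\times\phi_2}[g(X_{t+\tau}^{(1)})\mid\mathcal{G}_\tau]=\E[g(X_t^{(1)})\mid X_0^{(1)}=\varnothing]$ I would reuse the exact decomposition of Lemma~\ref{Lemmanumero1}: since $\tau<\infty$ almost surely by Lemma~\ref{Lemmaexpboundtau}, and the joint configuration $\varnothing$ is reached either because $X^{(1)}$ just jumped into $\varnothing$ or because $X^{(2)}$ did while $X^{(1)}$ was already empty, I split over the events $\{\tau=T_j^{(1)}\}$ and $\{\tau=T_j^{(2)}\}$, $j\ge 1$.

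On $\{\tau=T_j^{(1)}\}$ the coupling time coincides with a jump time of $X^{(1)}$, which is an $\F^{(1)}$-stopping time, and this event is $\mathcal{G}_{T_j^{(1)}}$-measurable with $\mathcal{G}_\tau$ agreeing with $\mathcal{G}_{T_j^{(1)}}$ there. Conditioning on $\mathcal{G}_{T_j^{(1)}}$ and using the independence of the two processes (so that the post-$T_j^{(1)}$ evolution of $X^{(1)}$ is independent of $X^{(2)}$) together with the Markov property of $X^{(1)}$ at $T_j^{(1)}$, I obtain $\E[g(X_{t+T_j^{(1)}}^{(1)})\mid\mathcal{G}_{T_j^{(1)}}]=\E_{\phi_1}[g(X_t^{(1)})\mid X_0^{(1)}=X^{(1)}_{T_j^{(1)}}]$, which on this event equals $\E[g(X_t^{(1)})\mid X_0^{(1)}=\varnothing]$ because $X^{(1)}_{T_j^{(1)}}=\varnothing$.

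The case $\{\tau=T_j^{(2)}\}$ is the main obstacle: there $\tau$ is not an $\F^{(1)}$-stopping time, since it falls strictly inside an interval $T_{k-1}^{(1)}\le\tau<T_k^{(1)}$ on which $X^{(1)}$ rests at $\varnothing$ (having entered it at $T_{k-1}^{(1)}$), so the Markov property of $X^{(1)}$ cannot be invoked at $\tau$ directly. I would reproduce the dyadic approximation of Lemma~\ref{Lemmanumero1}: approximate $T_j^{(2)}$ from above by stopping times $T^{(2)}_{n,j}$ valued in $\{l2^{-n}\}$, condition on the deterministic dyadic times $\mathcal{G}_{l2^{-n}}$ where the ordinary Markov property of $X^{(1)}$ applies, and use that $\{T_k^{(1)}>t+l2^{-n}\}$ is independent of $\F^{(2)}_{l2^{-n}}$ to reduce conditioning on $\mathcal{G}_{l2^{-n}}$ to conditioning on $\F^{(1)}_{l2^{-n}}$. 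The crux is the memorylessness of the holding time at $\varnothing$: since the process does not move in $E_0=\{\varnothing\}$, the waiting time $T_k^{(1)}-T_{k-1}^{(1)}$ is exponential with rate $\alpha(\varnothing)$, so conditionally on $X^{(1)}_{T_{k-1}^{(1)}}=\varnothing$ and on having waited until $\tau$, the residual trajectory of $X^{(1)}$ after $\tau$ is distributed exactly as a fresh copy started from $\varnothing$. Passing to the limit along a subsequence on which the error terms $\varepsilon_n$ vanish almost surely, as in Lemma~\ref{Lemmanumero1}, gives $\E[g(X_{t+\tau}^{(1)})\mid\mathcal{G}_\tau]\1_{\tau=T_j^{(2)}}=\E[g(X_t^{(1)})\mid X_0^{(1)}=\varnothing]\1_{\tau=T_j^{(2)}}$.

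Summing over $j$ combines the two cases into the first displayed identity; the second follows by symmetry, and the right-hand sides coincide because the two copies share the same law, which yields the equality of the two conditional expectations. For the final assertion I take $g=\1_A$ and observe that the argument in fact identifies the conditional law of $(X_{s+\tau}^{(k)})_{s\ge0}$ given $\mathcal{G}_\tau$ with the law of the process issued from $\varnothing$; in particular $\E[\1_{X_{s+\tau}^{(k)}\in A}\mid\mathcal{G}_\tau]=\psi_A(s)$ with $\psi_A(s)=\P(X_s\in A\mid X_0=\varnothing)$, and this holds simultaneously for all $s$ (taking rational $s$ and using right-continuity), so it extends to the $\mathcal{G}_\tau$-measurable random time $s=t-\tau$ on $\{\tau\le t\}$. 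Since $X_t^{(k)}=X_{(t-\tau)+\tau}^{(k)}$ there, taking expectations gives $\P(X_t^{(k)}\in A,\tau\le t)=\E[\1_{\tau\le t}\,\psi_A(t-\tau)]$, the same value for $k=1,2$, which is the claim.
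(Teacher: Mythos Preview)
Your proof is correct but takes a different route from the paper's. The paper does \emph{not} re-run the decomposition over $\{\tau=T_j^{(1)}\}$ versus $\{\tau=T_j^{(2)}\}$ with dyadic approximation; instead it uses Lemma~\ref{Lemmanumero1} as a black box. Concretely, the paper splits $g(X^{(1)}_{t+\tau})$ according to whether the first post-$\tau$ jump $\tau_+^{(1)}$ of $X^{(1)}$ has occurred by time $t+\tau$: on $\{t+\tau<\tau_+^{(1)}\}$ the process still sits at $\varnothing$, while on the complement one applies the Markov property at the jump time $\tau_+^{(1)}$ to write $\E[g(X^{(1)}_{t+\tau})\mid\mathcal G_{\tau_+^{(1)}}]=P_{t+\tau-\tau_+^{(1)}}g(X^{(1)}_{\tau_+^{(1)}})$, with $X^{(1)}_{\tau_+^{(1)}}$ drawn from $K(\varnothing,\cdot)$. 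Lemma~\ref{Lemmanumero1} then supplies the conditional law of $\tau_+^{(1)}-\tau$ given $\mathcal G_\tau$, and a direct comparison with the analogous decomposition of $\E[g(X_t^{(1)})\mid X_0^{(1)}=\varnothing]$ along $\{t<T_1^{(1)}\}$ versus $\{t\ge T_1^{(1)}\}$ finishes the proof in a few lines. This is more modular: the technical work (dyadic approximation, memorylessness at $\varnothing$) is packaged once in Lemma~\ref{Lemmanumero1} and reused here cleanly, whereas your argument effectively reproves that lemma in a strengthened form. Conversely, your treatment of the final assertion---identifying the full conditional law of the post-$\tau$ path and then substituting the $\mathcal G_\tau$-measurable time $t-\tau$ via a freezing argument---is more explicit than the paper's, which simply records it as a straightforward consequence.
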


\paragraph{Proof of Lemma \ref{Lemmanumero2}} 
 Let $g$ be a bounded measurable function and $t \geq 0$. Denote by $P_t$ the transition function of  $(X_t)_{t\geq 0}$, i.e. for any $s,t\geq 0$, $P_tg(X_s)=\E(g(X_{s+t})|\F_s)$. Note that both processes $X^{(1)}$ and $X^{(2)}$ have the same transition function $P_t$. We only prove the first equality of the lemma since the rest can be verified similarly or is a straightforward consequence. 
  \begin{align*}
  \E_{\phi_1 \times \phi_2} [g(X_{t+\tau}^{(1)} )|\mathcal{G}_{\tau} ] &= \E_{\phi_1 \times \phi_2} [g(X_{t+\tau}^{(1)} ) \1_{t+\tau <T_+^{(1)}}|\mathcal{G}_{\tau} ] + \E_{\phi_1 \times \phi_2} [g(X_{t+\tau}^{(1)} ) \1_{t+\tau \geq T_+^{(1)}}|\mathcal{G}_{\tau} ]\\
  &= g(\varnothing) \P_{\phi_1 \times \phi_2}(T_+^{(1)} >t+\tau|\mathcal{G}_{\tau}) + \E_{\phi_1 \times \phi_2}[P_{t+\tau-T_+^{(1)}}g(X^{(1)}_{T_+^{(1)}}) \1_{t+\tau \geq T_+^{(1)}}|\mathcal{G}_{\tau}]\\
  &= g(\varnothing) \P_{{\phi_1 \times \phi_2}}(T_+^{(1)} >t+\tau|\mathcal{G}_{\tau}) + \int_{z \in E_1} \E_{\phi_1 \times \phi_2}[ P_{t+\tau-T_+^{(1)}}g(z)\1_{t+\tau \geq T_+^{(1)}}|\mathcal{G}_{\tau}]K( \varnothing, dz).
 \end{align*}
We get from Lemma \ref{Lemmanumero1} that 
\[ \E_{\phi_1 \times \phi_2} [g(X_{t+\tau}^{(1)} )|\mathcal{G}_{\tau} ] = g(\varnothing) \P(T_1^{(1)} >t|X_0^{(1)}=\varnothing) + \int_{z \in E_1} \E[ P_{t-T_1^{(1)}}g(z)\1_{t \geq T_1^{(1)}}|X_0^{(1)}=\varnothing]K( \varnothing, dz).\]
 Notice to conclude that 
  \begin{align*}
  \E[g(X_t^{(1)})| X_0^{(1)} = \varnothing] &= \E[g(X_t^{(1)})\1_{t < T_1^{(1)}}| X_0^{(1)} = \varnothing]+\E[g(X_t^{(1)})\1_{t \geq T_1^{(1)}}| X_0^{(1)} = \varnothing]\\
  &= g(\varnothing) \P(T_1^{(1)} >t|X_0^{(1)}=\varnothing) + \E[P_{t-T_1^{(1)}}g(X_{T_1}^{(1)}) \1_{t \geq T_1^{(1)}}| X_0^{(1)} = \varnothing]\\
  &= g(\varnothing) \P(T_1^{(1)} >t|X_0^{(1)}=\varnothing) + \int_{z \in E_1} \E[ P_{t-T_1^{(1)}}g(z)\1_{t \geq T_1^{(1)}}|X_0^{(1)}=\varnothing]K( \varnothing, dz). \ \Box
 \end{align*}

\bibliographystyle{chicago}
\bibliography{bibref}

\begin{thebibliography}{}

\bibitem[\protect\citeauthoryear{Athreya and Ney}{Athreya and
  Ney}{2012}]{athreya2012}
Athreya, K.~B. and P.~E. Ney (2012).
\newblock {\em Branching Processes}, Volume 196.
\newblock Springer Science \& Business Media.

\bibitem[\protect\citeauthoryear{Aza{\"\i}s, Dufour, and
  G\'egout-Petit}{Aza{\"\i}s et~al.}{2013}]{azais2013}
Aza{\"\i}s, R., F.~Dufour, and A.~G\'egout-Petit (2013).
\newblock Nonparametric estimation of the jump rate for non-homogeneous marked
  renewal processes.
\newblock {\em Annales de l'I.H.P. Probabilit\'es et statistiques\/}~{\em
  49\/}(4), 1204--1231.

\bibitem[\protect\citeauthoryear{Aza{\"\i}s, Dufour, and
  G{\'e}gout-Petit}{Aza{\"\i}s et~al.}{2014}]{azais2014}
Aza{\"\i}s, R., F.~Dufour, and A.~G{\'e}gout-Petit (2014).
\newblock Non-parametric estimation of the conditional distribution of the
  interjumping times for piecewise-deterministic markov processes.
\newblock {\em Scandinavian Journal of Statistics\/}~{\em 41\/}(4), 950--969.

\bibitem[\protect\citeauthoryear{Bass}{Bass}{2011}]{Bass}
Bass, R.~F. (2011).
\newblock {\em Stochastic processes}, Volume~33.
\newblock Cambridge University Press.

\bibitem[\protect\citeauthoryear{Bolte and Cordelieres}{Bolte and
  Cordelieres}{2006}]{Bolte2006}
Bolte, S. and F.~Cordelieres (2006).
\newblock A guided tour into subcellular colocalization analysis in light
  microscopy.
\newblock {\em J Microscopy\/}~{\em 224}, 213--232.

\bibitem[\protect\citeauthoryear{Boulanger, Gueudry, Munch, Cinquin,
  Paul-Gilloteaux, Bardin, Gu\'erin, Senger, Blanchoin, and Salamero}{Boulanger
  et~al.}{2014}]{Boulanger2014}
Boulanger, J., C.~Gueudry, D.~Munch, B.~Cinquin, P.~Paul-Gilloteaux, S.~Bardin,
  C.~Gu\'erin, F.~Senger, L.~Blanchoin, and J.~Salamero (2014).
\newblock Fast high-resolution 3{D} total internal reflection fluorescence
  microscopy by incidence angle scanning and azimuthal averaging.
\newblock {\em Proc Natl Acad Sci USA\/}~{\em 111\/}(48), 17164--17169.

\bibitem[\protect\citeauthoryear{Br{\'e}maud}{Br{\'e}maud}{1981}]{bremaud}
Br{\'e}maud, P. (1981).
\newblock {\em Point processes and queues: martingale dynamics}.
\newblock Springer.

\bibitem[\protect\citeauthoryear{Briane, Vimond, and Kervrann}{Briane
  et~al.}{2019}]{Briane19}
Briane, V., M.~Vimond, and C.~Kervrann (2019).
\newblock {An overview of diffusion models for intracellular dynamics
  analysis}.
\newblock {\em Briefings in Bioinformatics\/}.
\newblock bbz052.

\bibitem[\protect\citeauthoryear{Chiu, Stoyan, Kendall, and Mecke}{Chiu
  et~al.}{2013}]{chiu2013}
Chiu, S.~N., D.~Stoyan, W.~S. Kendall, and J.~Mecke (2013).
\newblock {\em Stochastic geometry and its applications}.
\newblock John Wiley \& Sons.

\bibitem[\protect\citeauthoryear{{\c{C}}inlar and Kao}{{\c{C}}inlar and
  Kao}{1991}]{cinlar1991}
{\c{C}}inlar, E. and J.~S. Kao (1991).
\newblock Particle systems on flows.
\newblock {\em Applied Stochastic Models and Data Analysis\/}~{\em 7\/}(1),
  3--15.

\bibitem[\protect\citeauthoryear{Comas}{Comas}{2009}]{comas2009}
Comas, C. (2009).
\newblock Modelling forest regeneration strategies through the development of a
  spatio-temporal growth interaction model.
\newblock {\em Stochastic Environmental Research and Risk Assessment\/}~{\em
  23\/}(8), 1089--1102.

\bibitem[\protect\citeauthoryear{Comas and Mateu}{Comas and
  Mateu}{2008a}]{comas2008b}
Comas, C. and J.~Mateu (2008a).
\newblock On random and gibbsian particle motions for point processes evolving
  in space and time.
\newblock {\em Communications in Statistics—Simulation and
  Computation\/}~{\em 37\/}(2), 380--395.

\bibitem[\protect\citeauthoryear{Comas and Mateu}{Comas and
  Mateu}{2008b}]{comas2008}
Comas, C. and J.~Mateu (2008b).
\newblock Space--time dependence dynamics for birth--death point processes.
\newblock {\em Statistics \& probability letters\/}~{\em 78\/}(16), 2715--2719.

\bibitem[\protect\citeauthoryear{Comte and Genon-Catalot}{Comte and
  Genon-Catalot}{2011}]{comte2011}
Comte, F. and V.~Genon-Catalot (2011).
\newblock Estimation for l{\'e}vy processes from high frequency data within a
  long time interval.
\newblock {\em The Annals of Statistics\/}~{\em 39\/}(2), 803--837.

\bibitem[\protect\citeauthoryear{Costes, Daelemans, Cho, Dobbin, Pavlakis, and
  Lockett}{Costes et~al.}{2004}]{Costes2004}
Costes, S., D.~Daelemans, E.~Cho, Z.~Dobbin, G.~Pavlakis, and S.~Lockett
  (2004).
\newblock Automatic and quantitative measurement of protein-protein
  colocalization in live cells.
\newblock {\em Biophysical J\/}~{\em 86\/}(6), 3993--4003.

\bibitem[\protect\citeauthoryear{Darwin}{Darwin}{1956}]{darwin1956}
Darwin, J. (1956).
\newblock The behaviour of an estimator for a simple birth and death process.
\newblock {\em Biometrika\/}~{\em 43\/}(1/2), 23--31.

\bibitem[\protect\citeauthoryear{Davis}{Davis}{1984}]{davis1984}
Davis, M.~H. (1984).
\newblock Piecewise-deterministic markov processes: A general class of
  non-diffusion stochastic models.
\newblock {\em Journal of the Royal Statistical Society: Series B
  (Methodological)\/}~{\em 46\/}(3), 353--376.

\bibitem[\protect\citeauthoryear{Diggle}{Diggle}{1985}]{diggle1985}
Diggle, P. (1985).
\newblock A kernel method for smoothing point process data.
\newblock {\em Journal of the Royal Statistical Society: Series C (Applied
  Statistics)\/}~{\em 34\/}(2), 138--147.

\bibitem[\protect\citeauthoryear{Eisele}{Eisele}{1981}]{eisele1981}
Eisele, T. (1981).
\newblock Diffusions avec branchement et interaction.
\newblock {\em Annales scientifiques de l'Universit{\'e} de Clermont-Ferrand 2.
  Math{\'e}matiques\/}~{\em 69\/}(19), 21--34.

\bibitem[\protect\citeauthoryear{Feller}{Feller}{1939}]{Feller1939}
Feller, W. (1939).
\newblock Die {G}rundlagen der {V}olterraschen {T}heorie des {K}ampfes ums
  {D}asein in wahrscheinlichkeitstheoretischer {B}ehandlung.
\newblock {\em Acta Biotheoretica\/}~{\em 5\/}(1), 11--40.

\bibitem[\protect\citeauthoryear{Ferraty and Vieu}{Ferraty and
  Vieu}{2006}]{ferraty2006}
Ferraty, F. and P.~Vieu (2006).
\newblock {\em Nonparametric functional data analysis: theory and practice}.
\newblock Springer Science \& Business Media.

\bibitem[\protect\citeauthoryear{Figueroa-L{\'o}pez}{Figueroa-L{\'o}pez}{2009}]{figueroa2009}
Figueroa-L{\'o}pez, J.~E. (2009).
\newblock Nonparametric estimation for l{\'e}vy models based on
  discrete-sampling.
\newblock {\em Lecture notes-monograph series\/}, 117--146.

\bibitem[\protect\citeauthoryear{Gidon, Bardin, Cinquin, Boulanger, Waharte,
  Heliot, de~la Salle, Hanau, Kervrann, Goud, and Salamero}{Gidon
  et~al.}{2012}]{Gidon}
Gidon, A., S.~Bardin, B.~Cinquin, J.~Boulanger, F.~Waharte, L.~Heliot, H.~de~la
  Salle, D.~Hanau, C.~Kervrann, B.~Goud, and J.~Salamero (2012).
\newblock A {R}ab11{A}/{M}yosin {V}b/{R}ab11-{FIP}2 complex frames two late
  recycling steps of {L}angerin from the {ERC} to the {P}lasma membrane.
\newblock {\em Traffic\/}~{\em 13\/}(6), 815--833.

\bibitem[\protect\citeauthoryear{Guan}{Guan}{2008}]{guan2008}
Guan, Y. (2008).
\newblock On consistent nonparametric intensity estimation for inhomogeneous
  spatial point processes.
\newblock {\em Journal of the American Statistical Association\/}~{\em
  103\/}(483), 1238--1247.

\bibitem[\protect\citeauthoryear{H{\"a}bel, Myllym{\"a}ki, and
  Pommerening}{H{\"a}bel et~al.}{2019}]{habel2019}
H{\"a}bel, H., M.~Myllym{\"a}ki, and A.~Pommerening (2019).
\newblock New insights on the behaviour of alternative types of
  individual-based tree models for natural forests.
\newblock {\em Ecological modelling\/}~{\em 406}, 23--32.

\bibitem[\protect\citeauthoryear{H{\"a}rdle}{H{\"a}rdle}{1990}]{hardle1990}
H{\"a}rdle, W. (1990).
\newblock {\em Applied nonparametric regression}.
\newblock Number~19. Cambridge university press.

\bibitem[\protect\citeauthoryear{H{\"o}pfner, Hoffmann, and
  L{\"o}cherbach}{H{\"o}pfner et~al.}{2002}]{hopfner2002}
H{\"o}pfner, R., M.~Hoffmann, and E.~L{\"o}cherbach (2002).
\newblock Non-parametric estimation of the death rate in branching diffusions.
\newblock {\em Scandinavian Journal of Statistics\/}~{\em 29\/}(4), 665--692.

\bibitem[\protect\citeauthoryear{H{\"o}pfner and L{\"o}cherbach}{H{\"o}pfner
  and L{\"o}cherbach}{1999}]{hopfner1999}
H{\"o}pfner, R. and E.~L{\"o}cherbach (1999).
\newblock Statistical models for birth and death on a flow: Local absolute
  continuity and likelihood ratio processes.
\newblock {\em Scandinavian journal of statistics\/}~{\em 26\/}(1), 107--128.

\bibitem[\protect\citeauthoryear{Huber et~al.}{Huber et~al.}{2012}]{huber2012}
Huber, M. et~al. (2012).
\newblock Spatial birth--death swap chains.
\newblock {\em Bernoulli\/}~{\em 18\/}(3), 1031--1041.

\bibitem[\protect\citeauthoryear{Ikeda, Nagasawa, and Watanabe}{Ikeda
  et~al.}{1968}]{ikeda1968}
Ikeda, N., M.~Nagasawa, and S.~Watanabe (1968).
\newblock Branching markov processes ii.
\newblock {\em Journal of Mathematics of Kyoto University\/}~{\em 8\/}(3),
  365--410.

\bibitem[\protect\citeauthoryear{Jaqaman, Loerke, Mettlen, Kuwata, Grinstein,
  Schmid, and Danuser}{Jaqaman et~al.}{2008}]{jaqaman2008}
Jaqaman, K., D.~Loerke, M.~Mettlen, H.~Kuwata, S.~Grinstein, S.~L. Schmid, and
  G.~Danuser (2008).
\newblock Robust single-particle tracking in live-cell time-lapse sequences.
\newblock {\em Nature methods\/}~{\em 5\/}(8), 695.

\bibitem[\protect\citeauthoryear{Keiding}{Keiding}{1975}]{keiding1975}
Keiding, N. (1975).
\newblock Maximum likelihood estimation in the birth-and-death process.
\newblock {\em The Annals of Statistics\/}~{\em 3\/}(2), 363--372.

\bibitem[\protect\citeauthoryear{Kendall}{Kendall}{1949}]{Kendall49}
Kendall, D.~G. (1949).
\newblock Stochastic processes and population growth.
\newblock {\em Journal of the Royal Statistical Society. Series B
  (Methodological)\/}~{\em 11\/}(2), 230--282.

\bibitem[\protect\citeauthoryear{Kunita}{Kunita}{1984}]{kunita}
Kunita, H. (1984).
\newblock Stochastic differential equations and stochastic flows of
  diffeomorphisms.
\newblock In P.~L. Hennequin (Ed.), {\em {\'E}cole d'{\'E}t{\'e} de
  Probabilit{\'e}s de Saint-Flour XII - 1982}, Berlin, Heidelberg, pp.\
  143--303. Springer Berlin Heidelberg.

\bibitem[\protect\citeauthoryear{Lagache, Sauvonnet, Danglot, and
  Olivo-Marin}{Lagache et~al.}{2015}]{Lagache2015}
Lagache, T., N.~Sauvonnet, L.~Danglot, and J.-C. Olivo-Marin (2015).
\newblock Statistical analysis of molecule colocalization in bioimaging.
\newblock {\em Cytometry Part A\/}~{\em 87\/}(6), 568--579.

\bibitem[\protect\citeauthoryear{Lavancier, P\'ecot, Zengzhen, and
  Kervrann}{Lavancier et~al.}{2020}]{Lavancier18}
Lavancier, F., T.~P\'ecot, L.~Zengzhen, and C.~Kervrann (2020).
\newblock Testing independence between two random sets for the analysis of
  colocalization in bioimaging.
\newblock {\em Biometrics\/}~{\em 76\/}(1), 36--46.

\bibitem[\protect\citeauthoryear{Le~Gall}{Le~Gall}{2016}]{legall2013}
Le~Gall, J.-F. (2016).
\newblock {\em Brownian motion, martingales, and stochastic calculus}.
\newblock Springer.

\bibitem[\protect\citeauthoryear{Le~Gu{\'e}vel}{Le~Gu{\'e}vel}{2021}]{leguevel}
Le~Gu{\'e}vel, R. (2021).
\newblock {Exponential inequalities for the supremum of some counting processes
  and their square martingales}.
\newblock {\em to appear in {C}omptes {R}endus {M}athématique (preprint
  hal-02275583 available online)\/}.

\bibitem[\protect\citeauthoryear{Loader}{Loader}{2006}]{loader2006}
Loader, C. (2006).
\newblock {\em Local regression and likelihood}.
\newblock Springer Science \& Business Media.

\bibitem[\protect\citeauthoryear{L{\"o}cherbach}{L{\"o}cherbach}{2002}]{locherbach2002likelihood}
L{\"o}cherbach, E. (2002).
\newblock Likelihood ratio processes for markovian particle systems with
  killing and jumps.
\newblock {\em Statistical inference for stochastic processes\/}~{\em 5\/}(2),
  153--177.

\bibitem[\protect\citeauthoryear{Lotwick and Silverman}{Lotwick and
  Silverman}{1981}]{lotwick1981}
Lotwick, H. and B.~Silverman (1981).
\newblock Convergence of spatial birth-and-death processes.
\newblock In {\em Mathematical Proceedings of the Cambridge Philosophical
  Society}, Volume~90, pp.\  155--165. Cambridge University Press.

\bibitem[\protect\citeauthoryear{Martinussen and Scheike}{Martinussen and
  Scheike}{2007}]{martinussen2007}
Martinussen, T. and T.~H. Scheike (2007).
\newblock {\em Dynamic regression models for survival data}.
\newblock Springer Science \& Business Media.

\bibitem[\protect\citeauthoryear{Masuda and Holme}{Masuda and
  Holme}{2017}]{masuda2017}
Masuda, N. and P.~Holme (2017).
\newblock {\em Temporal network epidemiology}.
\newblock Springer.

\bibitem[\protect\citeauthoryear{Mateu, Schoenberg, Diez, Gonz{\'a}lez, and
  Lu}{Mateu et~al.}{2015}]{mateu2015}
Mateu, J., F.~P. Schoenberg, D.~M. Diez, J.~A. Gonz{\'a}lez, and W.~Lu (2015).
\newblock On measures of dissimilarity between point patterns: Classification
  based on prototypes and multidimensional scaling.
\newblock {\em Biometrical Journal\/}~{\em 57\/}(2), 340--358.

\bibitem[\protect\citeauthoryear{M{\o}ller}{M{\o}ller}{1989}]{moller1989}
M{\o}ller, J. (1989).
\newblock On the rate of convergence of spatial birth-and-death processes.
\newblock {\em Annals of the Institute of Statistical Mathematics\/}~{\em
  41\/}(3), 565--581.

\bibitem[\protect\citeauthoryear{M{\o}ller and S{\o}rensen}{M{\o}ller and
  S{\o}rensen}{1994}]{moller1994}
M{\o}ller, J. and M.~S{\o}rensen (1994).
\newblock Statistical analysis of a spatial birth-and-death process model with
  a view to modelling linear dune fields.
\newblock {\em Scandinavian journal of statistics\/}~{\em 21\/}(1), 1--19.

\bibitem[\protect\citeauthoryear{M{\o}ller and Waagepetersen}{M{\o}ller and
  Waagepetersen}{2004}]{moeller:waagepetersen:03}
M{\o}ller, J. and R.~P. Waagepetersen (2004).
\newblock {\em Statistical Inference and Simulation for Spatial Point
  Processes}.
\newblock Chapman and Hall/CRC, Boca Raton.

\bibitem[\protect\citeauthoryear{P{\'e}cot, Bouthemy, Boulanger, Chessel,
  Bardin, Salamero, and Kervrann}{P{\'e}cot et~al.}{2014}]{pecot2014background}
P{\'e}cot, T., P.~Bouthemy, J.~Boulanger, A.~Chessel, S.~Bardin, J.~Salamero,
  and C.~Kervrann (2014).
\newblock Background fluorescence estimation and vesicle segmentation in live
  cell imaging with conditional random fields.
\newblock {\em IEEE Transactions on Image Processing\/}~{\em 24\/}(2),
  667--680.

\bibitem[\protect\citeauthoryear{P{\'e}cot, Kervrann, Bardin, Goud, and
  Salamero}{P{\'e}cot et~al.}{2008}]{pecot2008patch}
P{\'e}cot, T., C.~Kervrann, S.~Bardin, B.~Goud, and J.~Salamero (2008).
\newblock Patch-based markov models for event detection in fluorescence
  bioimaging.
\newblock In {\em International Conference on Medical Image Computing and
  Computer-Assisted Intervention}, pp.\  95--103. Springer.

\bibitem[\protect\citeauthoryear{P{\'e}cot, Zengzhen, Boulanger, Salamero, and
  Kervrann}{P{\'e}cot et~al.}{2018}]{pecot2018}
P{\'e}cot, T., L.~Zengzhen, J.~Boulanger, J.~Salamero, and C.~Kervrann (2018).
\newblock A quantitative approach for analyzing the spatio-temporal
  distribution of 3d intracellular events in fluorescence microscopy.
\newblock {\em eLife\/}~{\em 7}, e32311.

\bibitem[\protect\citeauthoryear{Pommerening and Grabarnik}{Pommerening and
  Grabarnik}{2019}]{pommerening2019}
Pommerening, A. and P.~Grabarnik (2019).
\newblock {\em Individual-Based Methods in Forest Ecology and Management}.
\newblock Springer.

\bibitem[\protect\citeauthoryear{Preston}{Preston}{1975}]{preston}
Preston, C. (1975).
\newblock Spatial birth and death processes.
\newblock {\em Advances in applied probability\/}~{\em 7\/}(3), 371--391.

\bibitem[\protect\citeauthoryear{Ramlau-Hansen}{Ramlau-Hansen}{1983}]{ramlau1983}
Ramlau-Hansen, H. (1983).
\newblock Smoothing counting process intensities by means of kernel functions.
\newblock {\em The Annals of Statistics\/}, 453--466.

\bibitem[\protect\citeauthoryear{Renshaw, Comas, and Mateu}{Renshaw
  et~al.}{2009}]{renshaw2009}
Renshaw, E., C.~Comas, and J.~Mateu (2009).
\newblock Analysis of forest thinning strategies through the development of
  space--time growth--interaction simulation models.
\newblock {\em Stochastic Environmental Research and Risk Assessment\/}~{\em
  23\/}(3), 275--288.

\bibitem[\protect\citeauthoryear{Renshaw and S{\"a}rkk{\"a}}{Renshaw and
  S{\"a}rkk{\"a}}{2001}]{renshaw2001}
Renshaw, E. and A.~S{\"a}rkk{\"a} (2001).
\newblock Gibbs point processes for studying the development of
  spatial-temporal stochastic processes.
\newblock {\em Computational statistics \& data analysis\/}~{\em 36\/}(1),
  85--105.

\bibitem[\protect\citeauthoryear{Reynaud-Bouret}{Reynaud-Bouret}{2003}]{reynaud2003}
Reynaud-Bouret, P. (2003).
\newblock Adaptive estimation of the intensity of inhomogeneous poisson
  processes via concentration inequalities.
\newblock {\em Probability Theory and Related Fields\/}~{\em 126\/}(1),
  103--153.

\bibitem[\protect\citeauthoryear{Reynaud-Bouret and Schbath}{Reynaud-Bouret and
  Schbath}{2010}]{reynaud2010}
Reynaud-Bouret, P. and S.~Schbath (2010).
\newblock Adaptive estimation for hawkes processes; application to genome
  analysis.
\newblock {\em The Annals of Statistics\/}~{\em 38\/}(5), 2781--2822.

\bibitem[\protect\citeauthoryear{Reynolds}{Reynolds}{1973}]{reynolds1973}
Reynolds, J.~F. (1973).
\newblock On estimating the parameters of a birth-death process.
\newblock {\em Australian Journal of Statistics\/}~{\em 15\/}(1), 35--43.

\bibitem[\protect\citeauthoryear{Sadahiro}{Sadahiro}{2019}]{Sadahiro19}
Sadahiro, Y. (2019).
\newblock Analysis of the appearance and disappearance of point objects over
  time.
\newblock {\em International Journal of Geographical Information
  Science\/}~{\em 33\/}(2), 215--239.

\bibitem[\protect\citeauthoryear{S{\"a}rkk{\"a} and Renshaw}{S{\"a}rkk{\"a} and
  Renshaw}{2006}]{sarkka2006}
S{\"a}rkk{\"a}, A. and E.~Renshaw (2006).
\newblock The analysis of marked point patterns evolving through space and
  time.
\newblock {\em Computational Statistics \& Data Analysis\/}~{\em 51\/}(3),
  1698--1718.

\bibitem[\protect\citeauthoryear{Schuhmacher and Xia}{Schuhmacher and
  Xia}{2008}]{schuhmacher2008}
Schuhmacher, D. and A.~Xia (2008).
\newblock A new metric between distributions of point processes.
\newblock {\em Advances in applied probability\/}~{\em 40\/}(3), 651--672.

\bibitem[\protect\citeauthoryear{Silverman}{Silverman}{1986}]{silverman1986}
Silverman, B.~W. (1986).
\newblock {\em Density estimation for statistics and data analysis}, Volume~26.
\newblock CRC press.

\bibitem[\protect\citeauthoryear{Skorokhod}{Skorokhod}{1964}]{skorokhod1964}
Skorokhod, A.~V. (1964).
\newblock Branching diffusion processes.
\newblock {\em Theory of Probability \& Its Applications\/}~{\em 9\/}(3),
  445--449.

\bibitem[\protect\citeauthoryear{Thorisson}{Thorisson}{2000}]{thorisson2000}
Thorisson, H. (2000).
\newblock {\em Coupling, Stationarity, and Regeneration}.
\newblock Springer, New York.

\bibitem[\protect\citeauthoryear{Van~Es, Gugushvili, and Spreij}{Van~Es
  et~al.}{2007}]{van2007}
Van~Es, B., S.~Gugushvili, and P.~Spreij (2007).
\newblock A kernel type nonparametric density estimator for decompounding.
\newblock {\em Bernoulli\/}~{\em 13\/}(3), 672--694.

\bibitem[\protect\citeauthoryear{Wang and Zhu}{Wang and Zhu}{2002}]{wang2002}
Wang, Y. and S.-C. Zhu (2002).
\newblock A generative method for textured motion: Analysis and synthesis.
\newblock In {\em European Conference on Computer Vision}, pp.\  583--598.
  Springer.

\bibitem[\protect\citeauthoryear{Wolff}{Wolff}{1965}]{wolff1965}
Wolff, R.~W. (1965).
\newblock Problems of statistical inference for birth and death queuing models.
\newblock {\em Operations Research\/}~{\em 13\/}(3), 343--357.

\end{thebibliography}

 \end{document}